%
%
%
%
%
%
%
%
%
%
\documentclass{amsart}

\title{Logarithmic Riemann--Hilbert correspondences for rigid varieties}

\author{Hansheng Diao, Kai-Wen Lan, Ruochuan Liu, and Xinwen Zhu}
\address{Yau Mathematical Sciences Center, Tsinghua University, Beijing 100084, China}
\email{hdiao@mail.tsinghua.edu.cn}
\address{University of Minnesota, 127 Vincent Hall, 206 Church Street SE, Minneapolis, MN 55455, USA}
\email{kwlan@math.umn.edu}
\address{Beijing International Center for Mathematical Research, Peking University, 5 Yi He Yuan Road, Beijing 100871, China}
\email{liuruochuan@math.pku.edu.cn}
\address{California Institute of Technology, 1200 East California Boulevard, Pasadena, CA 91125, USA}
\email{xzhu@caltech.edu}
\thanks{K.-W.\@\xspace Lan was partially supported by the National Science Foundation under agreement No.\@\xspace DMS-1352216, by an Alfred P.\@\xspace Sloan Research Fellowship, and by a Simons Fellowship in Mathematics.  R.\@\xspace Liu was partially supported by the National Natural Science Foundation of China under agreement Nos.\@\xspace NSFC-11571017 and NSFC-11725101, and by the Tencent Foundation.  X.\@\xspace Zhu was partially supported by the National Science Foundation under agreement Nos.\@\xspace DMS-1602092 and DMS-1902239, by an Alfred P.\@\xspace Sloan Research Fellowship, and by a Simons Fellowship in Mathematics.  Any opinions, findings, and conclusions or recommendations expressed in this writing are those of the authors, and do not necessarily reflect the views of the funding organizations.}
\subjclass[2010]{Primary 14F40, 14G22; Secondary 14D07, 14F30, 14G35}

\usepackage{amsfonts, latexsym, amssymb, mathrsfs}

\usepackage[driverfallback=hypertex, unicode]{hyperref}
\usepackage{color}

\usepackage{upref}
\usepackage[shortlabels]{enumitem}
\usepackage{verbatim}
\usepackage{xspace}

\usepackage[cmtip, all]{xy}

\begingroup\expandafter\expandafter\expandafter\endgroup
\expandafter\ifx\csname IncludeInRelease\endcsname\relax
\usepackage{fixltx2e}
\fi


\newtheorem{thm}[equation]{{Theorem}}
\newtheorem{cor}[equation]{{Corollary}}
\newtheorem{lemma}[equation]{{Lemma}}
\newtheorem{prop}[equation]{{Proposition}}
\newtheorem{conj}[equation]{{Conjecture}}
\newtheorem{exam}[equation]{{Example}}

\theoremstyle{definition}
\newtheorem{defn}[equation]{{Definition}}

\theoremstyle{remark}
\newtheorem{rk}[equation]{{Remark}}


\newcommand{\quash}[1]{}


\newcommand{\Ainf}{A_{\inf}}

\newcommand{\AAinf}{\bA_{\inf}}
\newcommand{\AAinfX}[1]{\bA_{\inf, {#1}}}
\newcommand{\BBinf}{\bB_{\inf}}
\newcommand{\BBinfX}[1]{\bB_{\inf, {#1}}}

\newcommand{\BdR}{B_\dR}
\newcommand{\BdRp}{B_\dR^+}
\newcommand{\BBdRp}{\bB_\dR^+}
\newcommand{\BBdRpX}[1]{\bB_{\dR, {#1}}^+}
\newcommand{\BBdR}{\bB_\dR}
\newcommand{\BBdRX}[1]{\bB_{\dR, {#1}}}

\newcommand{\OBdRpX}[1]{\cO\bB_{\dR, {#1}}^+}
\newcommand{\OBdlp}{\cO\bB_{\dR, \log}^+}
\newcommand{\OBdlpX}[1]{\cO\bB_{\dR, \log, {#1}}^+}
\newcommand{\OBdR}{\cO\bB_\dR}

\newcommand{\OBdl}{\cO\bB_{\dR, \log}}
\newcommand{\OBdlX}[1]{\cO\bB_{\dR, \log, {#1}}}
\newcommand{\OCl}{\cO\bC_{\log}}
\newcommand{\OClX}[1]{\cO\bC_{\log, {#1}}}

\newcommand{\DdR}{D_\dR}
\newcommand{\DdRalg}{D_\dR^\alg}
\newcommand{\Ddl}{D_{\dR, \log}}
\newcommand{\Ddlalg}{D_{\dR, \log}^\alg}

\newcommand{\ho}{\widehat{\otimes}}

\newcommand{\GrSh}[1]{\underline{#1}}


\newcommand{\bA}{\mathbb{A}}
\newcommand{\bB}{\mathbb{B}}
\newcommand{\bC}{\mathbb{C}}
\newcommand{\bD}{\mathbb{D}}
\newcommand{\bE}{\mathbb{E}}

\newcommand{\bG}{\mathbb{G}}

\newcommand{\bJ}{\mathbb{J}}
\newcommand{\bK}{\mathbb{K}}
\newcommand{\bL}{\mathbb{L}}
\newcommand{\bM}{\mathbb{M}}

\newcommand{\bQ}{\mathbb{Q}}
\newcommand{\bR}{\mathbb{R}}

\newcommand{\bT}{\mathbb{T}}

\newcommand{\bZ}{\mathbb{Z}}


\newcommand{\cB}{\mathcal{B}}

\newcommand{\cE}{\mathcal{E}}
\newcommand{\cF}{\mathcal{F}}

\newcommand{\cH}{\mathcal{H}}
\newcommand{\cI}{\mathcal{I}}

\newcommand{\cL}{\mathcal{L}}
\newcommand{\cM}{\mathcal{M}}

\newcommand{\cO}{\mathcal{O}}

\newcommand{\cX}{\mathcal{X}}
\newcommand{\cY}{\mathcal{Y}}
\newcommand{\cZ}{\mathcal{Z}}


\newcommand{\OP}[1]{\operatorname{#1}}

\newcommand{\Aut}{\OP{Aut}}
\newcommand{\Hom}{\OP{Hom}}
\newcommand{\Spec}{\OP{Spec}}
\newcommand{\Spa}{\OP{Spa}}

\newcommand{\Em}{\hookrightarrow}                   
\newcommand{\Surj}{\twoheadrightarrow}              
\newcommand{\Mi}{\stackrel{\sim}{\to}}              
\newcommand{\Mapn}[1]{\stackrel{#1}{\to}}           
\newcommand{\Emn}[1]{\stackrel{#1}{\Em}}            
\newcommand{\Surn}[1]{\stackrel{#1}{\Surj}}         
\newcommand{\Misn}[1]{\stackrel{#1}{\Mi}}           

\newcommand{\can}{\Utext{can.}}                     

\newcommand{\bAi}{{\bA_f}}                          

\newcommand{\Grp}[1]{\mathrm{#1}}                   
\newcommand{\ad}{\Utext{ad}}                        
\newcommand{\der}{\Utext{der}}                      
\newcommand{\nc}{\Utext{nc}}                        

\newcommand{\Grpmu}{\boldsymbol{\mu}}               
\newcommand{\Gm}[1]{\mathbf{G}_{\Utext{m}, {#1}}}   

\newcommand{\GL}{\mathrm{GL}}                       
\newcommand{\SU}{\mathrm{SU}}                       

\newcommand{\pr}{\OP{pr}}                           
\newcommand{\rank}{\OP{rk}}                         

\newcommand{\Shdom}{\mathsf{X}}                     
\newcommand{\Lquot}{\backslash}                     
\newcommand{\hd}{h}                                 
\newcommand{\hc}{\mu}                               
\newcommand{\DelS}{\mathbf{S}}                      
\newcommand{\Fl}{{\mathcal{F}\ell}}                 

\newcommand{\Art}{\OP{Art}}                         
\newcommand{\Gal}{\OP{Gal}}                         
\newcommand{\Id}{\OP{Id}}                           
\newcommand{\Ind}{\OP{Ind}}                         
\newcommand{\Res}{\OP{Res}}                         

\newcommand{\ab}{\Utext{ab}}                        
\newcommand{\alg}{\Utext{alg}}                      
\newcommand{\free}{\Utext{free}}                    
\newcommand{\geom}{\Utext{geom}}                    
\newcommand{\gp}{\Utext{gp}}                        
\newcommand{\inv}{\Utext{inv}}                      
\newcommand{\red}{\Utext{red}}                      
\newcommand{\ur}{\Utext{ur}}                        

\newcommand{\Talg}[1]{\langle{#1}\rangle}           

\newcommand{\mono}[1]{e^{#1}}                       
\newcommand{\monob}[1]{\mathbf{e}^{#1}}             
\newcommand{\monon}[1]{y_{#1}}                      

\newcommand{\ReFl}{E}                               
\newcommand{\Sh}{\mathrm{Sh}}                       
\newcommand{\Model}{\Sh}                            
\newcommand{\levcp}{K}                              

\newcommand{\AV}{\mathrm{A}}                        
\newcommand{\AVstr}{\mathrm{f}}                     
\newcommand{\Pol}{\lambda}                          

\newcommand{\an}{\Utext{an}}                        
\newcommand{\tops}{\Utext{top}}                     

\newcommand{\dualsign}{{\vee}}                      
\newcommand{\dual}[1]{{#1}^\dualsign}               
\newcommand{\Ex}{\wedge}                            

\newcommand{\Tor}{\Utext{tor}}                      

\newcommand{\Torcpt}[1]{{#1}^\Tor}                  
\newcommand{\NCD}{D}                                

\newcommand{\M}{\mathrm{M}}                         
\newcommand{\Proj}{\mathrm{Proj}}                   
\newcommand{\Rep}{\mathrm{Rep}}                     
\newcommand{\rep}{V}                                

\newcommand{\B}{\Utext{B}}                          
\newcommand{\dR}{\Utext{dR}}                        
\newcommand{\Hdg}{\Utext{Hodge}}                    
\newcommand{\Hi}{\Utext{Higgs}}                     

\newcommand{\Fil}{\Utext{Fil}}                      
\newcommand{\gr}{\OP{gr}}

\newcommand{\et}{\Utext{\'et}}                      
\newcommand{\ket}{\Utext{k\'et}}                    
\newcommand{\proket}{\Utext{prok\'et}}              

\newcommand{\BSh}[1]{{}_{\B}\GrSh{#1}}              
\newcommand{\dRSh}[1]{{}_{\dR}\GrSh{#1}}            
\newcommand{\etSh}[1]{{}_{\et}\GrSh{#1}}            
\newcommand{\pdRSh}[1]{{}_{p\Utext{-}\dR}\GrSh{#1}} 
\newcommand{\pBSh}[1]{{}_{p\Utext{-}\B}\GrSh{#1}}   

\newcommand{\canext}{{\Utext{can}}}                 

\newcommand{\Frep}{\rho}                            
\newcommand{\Hrep}{\pi}                             

\newcommand{\AC}[1]{\overline{#1}}                  
\newcommand{\ACMap}{\iota}                          

\newcommand{\BFp}{k}                                
\newcommand{\Coef}{L}                               
\newcommand{\CoefMap}{\tau}                         

\newcommand{\RH}{\mathcal{RH}}                      
\newcommand{\RHl}{\mathcal{RH}_{\log}}              
\newcommand{\RHlp}{\mathcal{RH}_{\log}^+}           
\newcommand{\DR}{\mathit{DR}}                       
\newcommand{\DRl}{\mathit{DR}_{\log}}               
\newcommand{\Hl}{\mathcal{H}_{\log}}                
\newcommand{\Hil}{\mathit{Higgs}_{\log}}            

\newcommand{\unip}{\Utext{\textit{u}}}              
\newcommand{\qunip}{\Utext{\textit{qu}}}            

\newcommand{\Utext}[1]{\text{\rm #1}}               
\newcommand{\Refenum}[1]{\Pth{\textrm{#1}}}
\newcommand{\Refeq}[1]{\Pth{#1}}

\newcommand{\Pth}[1]{{\rm (}#1{\rm )}}              
\newcommand{\Qtn}[1]{``#1''}                        

\newcommand{\parenthesis}[1]{\Pth{#1}}              


\newcommand{\aCh}{Ch.\@\xspace}                     
\newcommand{\aChs}{Ch.\@\xspace}                    
\newcommand{\aSec}{Sec.\@\xspace}                   
\newcommand{\aSecs}{Sec.\@\xspace}                  

\newcommand{\aDef}{Def.\@\xspace}                   
\newcommand{\aDefs}{Def.\@\xspace}                  
\newcommand{\aLem}{Lem.\@\xspace}                   
\newcommand{\aLems}{Lem.\@\xspace}                  
\newcommand{\aProp}{Prop.\@\xspace}                 
\newcommand{\aProps}{Prop.\@\xspace}                
\newcommand{\aThm}{Thm.\@\xspace}                   
\newcommand{\aThms}{Thm.\@\xspace}                  
\newcommand{\aCor}{Cor.\@\xspace}                   
\newcommand{\aCors}{Cor.\@\xspace}                  
\newcommand{\aRem}{Rem.\@\xspace}                   
\newcommand{\aEx}{Ex.\@\xspace}                     
\newcommand{\aExs}{Ex.\@\xspace}                    

\newcommand{\resp}{resp.\@\xspace}                  
\newcommand{\ie}{i.e.\@\xspace}                     
\newcommand{\eg}{e.g.\@\xspace}                     
\newcommand{\etc}{etc\xspace}                       

\newcommand{\Refcf}{cf.\@\xspace}                   


\newcommand{\logadicdefmonoid}{2.1.1} 
\newcommand{\logadicremmonoidcat}{2.1.2} 
\newcommand{\logadicdeflogstr}{2.2.2} 
\newcommand{\logadiclemstrnoe}{2.2.13} 
\newcommand{\logadiclemmonoidalgperf}{2.2.15} 
\newcommand{\logadicdefPlog}{2.2.17} 
\newcommand{\logadicdefimm}{2.2.23} 
\newcommand{\logadicdefchart}{2.3.1} 
\newcommand{\logadicremdefchart}{2.3.2} 
\newcommand{\logadicdeflogadicspfs}{2.3.5} 
\newcommand{\logadicexlogadicspncd}{2.3.17} 
\newcommand{\logadicexlogadicspncdstrictclimm}{2.3.18} 
\newcommand{\logadicpropchartmorexist}{2.3.21} 
\newcommand{\logadicpropchartmorexistfine}{2.3.22} 
\newcommand{\logadicpropfiberprodlogadic}{2.3.27} 
\newcommand{\logadicdeflogsm}{3.1.1} 
\newcommand{\logadicproplogsmchart}{3.1.4} 
\newcommand{\logadicdeflogsmbasefield}{3.1.9} 
\newcommand{\logadicproptoricchart}{3.1.10} 
\newcommand{\logadiccorsmtoricchart}{3.1.11} 
\newcommand{\logadicdeftoricchart}{3.1.12} 
\newcommand{\logadicexlogadicspncdchart}{3.1.13} 
\newcommand{\logadicproplogdiffmonoid}{3.2.25} 
\newcommand{\logadiccorlogdiffmonoidstrictclimm}{3.2.29} 
\newcommand{\logadicdeflogdiffsheaf}{3.3.6} 
\newcommand{\logadicthmlogdiffsheaffund}{3.3.17} 
\newcommand{\logadiccorlogdiffsheafstrictclimm}{3.3.18} 
\newcommand{\logadicdeflogdiffsheafex}{3.3.19} 
\newcommand{\logadicsecketsite}{4.1} 
\newcommand{\logadiclemAbhyankarbasic}{4.2.5} 
\newcommand{\logadicdefketlocconst}{4.4.14} 
\newcommand{\logadiclemclimmketmor}{4.5.4} 
\newcommand{\logadicthmpurity}{4.6.1} 
\newcommand{\logadicdefproketsite}{5.1.2} 
\newcommand{\logadicpropproketvsketadj}{5.1.7} 
\newcommand{\propdirimketvsproket}{5.2.1} 
\newcommand{\logadicseclogaffperf}{5.3} 
\newcommand{\logadicdeflogaffperf}{5.3.1} 
\newcommand{\logadicremunifcompllim}{5.3.2} 
\newcommand{\logadicremdeflogaffperfsp}{5.3.5} 
\newcommand{\logadicproplogaffperfbasis}{5.3.12} 
\newcommand{\logadicsecproketsheaves}{5.4} 
\newcommand{\logadicdefproketsheaves}{5.4.1} 
\newcommand{\logadicthmalmostvanhat}{5.4.3} 
\newcommand{\logadicfinfreeOhatmod}{5.4.4} 
\newcommand{\logadicpropsurjOhatclimm}{5.4.5} 
\newcommand{\logadicsectoricchart}{6.1} 
\newcommand{\logadiceqgeomtowerGal}{6.1.4} 
\newcommand{\logadiclemGalcohafg}{6.1.7} 
\newcommand{\logadiclemScholzeref}{6.1.9} 
\newcommand{\logadicthmprimcomp}{6.2.1} 
\newcommand{\logadicremcohfin}{6.2.2} 
\newcommand{\logadicdefketlisse}{6.3.1} 
\newcommand{\logadicdefproketlocsyst}{6.3.2} 
\newcommand{\logadiclemproketlisse}{6.3.3} 
\newcommand{\logadiccorpuritylisse}{6.3.4} 
\newcommand{\logadiclissepr}{6.3.5} 
\newcommand{\logadiclemQplocclimm}{6.3.6} 
\newcommand{\logadicdefunipqunipmonod}{6.3.7} 
\newcommand{\logadicrkunipqunipmonodalgcl}{6.3.13} 
\newcommand{\logadicdefnearby}{6.4.1} 
\newcommand{\logadiclemnearbycohstab}{6.4.2} 
\newcommand{\logadiclemearby}{6.4.3} 
\newcommand{\logadicpropnearby}{6.4.4} 

\begin{document}

\begin{abstract}
    On any smooth algebraic variety over a $p$-adic local field, we construct a tensor functor from the category of de Rham $p$-adic \'etale local systems to the category of filtered algebraic vector bundles with integrable connections satisfying the Griffiths transversality, which we view as a $p$-adic analogue of Deligne's classical Riemann--Hilbert correspondence.  A crucial step is to construct canonical extensions of the desired connections to suitable compactifications of the algebraic variety with logarithmic poles along the boundary, in a precise sense characterized by the eigenvalues of residues; hence the title of the paper.  As an application, we show that this $p$-adic Riemann--Hilbert functor is compatible with the classical one over all Shimura varieties, for local systems attached to representations of the associated reductive algebraic groups.
\end{abstract}

\maketitle

\tableofcontents

\numberwithin{equation}{section}

\section{Introduction}\label{sec-intro}

Let $X$ be a connected smooth complex algebraic variety, $X^\an$ the associated analytic space and $X^\tops$ the underlying topological space.  The classical Riemann--Hilbert correspondence establishes \Pth{tensor} equivalences among the following:
\begin{itemize}
    \item the category of finite-dimensional complex representations of $\pi_1(X^\tops, x)$ \Pth{where $x$ is a chosen based point}, which by a well-known topological construction, is equivalent to the category of local systems \Pth{\ie, locally constant sheaves} of finite-dimensional $\bC$-vector spaces on $X^\tops$;

    \item the category of vector bundles with integrable connections on $X^\an$; and

    \item the category of vector bundles with integrable connections on $X$, with regular singularities at infinity \Pth{which we shall simply call \emph{regular integrable connections}, in what follows}.
\end{itemize}
The equivalence of the first and second categories is a simple consequence of the Frobenius theorem: for a local system $\bL$ on $X^\tops$, the associated vector bundle with an integrable connection is $(\bL \otimes_\bC \cO_{X^\an}, 1 \otimes d)$; and conversely, for a vector bundle with an integrable connection on $X^\an$, its sheaf of horizontal sections is a local system on $X^\tops$.  The equivalence of the second and third categories, however, is a deep theorem due to Deligne \cite{Deligne:1970-EDR}.

An analogous Riemann--Hilbert correspondence for varieties over a $p$-adic field is long desired but remains rather mysterious until recently.  The situation is far more complicated.  Let $X$ be a smooth algebraic variety over $\bQ_p$, for example.  In this setting, the second and third categories remain meaningful, and it is natural to replace the first with the category of $p$-adic \'etale local systems on $X$.  However, after this replacement, one cannot expect an equivalence between the first and the second categories, as can already be seen when $X$ is a point.  Moreover, in general, the natural analytification functor from the third to the second category is not an equivalence either.  Nevertheless, one of the main goals of this paper is to prove the following result \Pth{as one step towards the $p$-adic Riemann--Hilbert correspondence}:

\begin{thm}\label{thm-intro-main}
    Let $X$ be a smooth algebraic variety over a $p$-adic field $k$ \Pth{see Notation and Conventions}.  Then there is a tensor functor $\DdRalg$ from the category of de Rham $p$-adic \'etale local systems $\bL$ on $X$ to the category of algebraic vector bundles on $X$ with regular integrable connections and decreasing filtrations satisfying the Griffiths transversality.  In addition, there is a canonical comparison isomorphism
    \begin{equation}\label{eq-thm-intro-main}
        H^i_\et\bigl(X_{\AC{k}}, \bL\bigr) \otimes_{\bQ_p} \BdR \cong H^i_\dR\bigl(X, \DdRalg(\bL)\bigr) \otimes_k \BdR
    \end{equation}
    compatible with the canonical filtrations and the actions of $\Gal(\AC{k} / k)$ on both sides.
\end{thm}

Here $\BdR$ is Fontaine's $p$-adic period ring, and $H_\dR$ is the algebraic de Rham cohomology.  The notion of \emph{de Rham $p$-adic \'etale local systems} was first introduced by Scholze in \cite[\aDef 8.3]{Scholze:2013-phtra} \Pth{generalizing earlier work of Brinon \cite{Brinon:2008-RPR}} using some relative de Rham period sheaf.  However, it turns out that this notion satisfies a rather surprising rigidity property: by \cite[\aThm 3.9]{Liu/Zhu:2017-rrhpl}, a $p$-adic \'etale local system $\bL$ on $X^\an$ is de Rham if and only if, on each connected component of $X$, there exists \emph{some} classical point $x$ such that, for some \Pth{and hence every} geometric point $\AC{x}$ over $x$, the corresponding $p$-adic representation $\bL_{\AC{x}}$ of the absolute Galois group of the residue field of $x$ is de Rham in the classical sense.  In this situation, it follows that the same is also true at \emph{every} classical point $x$ of $X$.  Note that the functor denoted by $\DdR$ in \cite[\aThm 3.9]{Liu/Zhu:2017-rrhpl} is the composition of the functor $\DdRalg$ in Theorem \ref{thm-intro-main} with the analytification functor.  Compared with \cite{Scholze:2013-phtra} and \cite{Liu/Zhu:2017-rrhpl}, the algebraicity of the integrable connection is an important new contribution of this paper.  In particular, this allows us to go further to compare the $p$-adic theory with Deligne's complex theory mentioned above, as we shall see shortly.

Theorem \ref{thm-intro-main} also includes a new \emph{de Rham comparison isomorphism} for smooth algebraic varieties over $k$ with nontrivial coefficients, which implies that $H^i_\et\bigl(X_{\AC{k}}, \bL\bigr)$ is a \emph{de Rham} representation of $\Gal(\AC{k} / k)$.  The de Rham comparison for smooth varieties has a long history, which we shall not attempt to review---see, for example, \cite{Faltings:1989-ccpgr, Faltings:2002-aee, Tsuji:1999-pacst, Kisin:2002-pspec, Niziol:2008-scvkt, Niziol:2009-upapm, Yamashita:2011-phtov, Beilinson:2012-ppddc, Andreatta/Iovita:2013-cisfs, Scholze:2013-phtra, Scholze:2016-phtra-corr, Colmez/Niziol:2017-scpnc, Li/Pan:2019-ldrco}.  All these earlier works either imposed some strong assumption on the coefficient $\bL$ \Pth{and in fact most works assumed that $\bL$ is trivial} or assumed that the variety $X$ is proper.  But we require \emph{neither}.  In this generality, without first constructing the corresponding $\DdRalg(\bL)$ as in Theorem \ref{thm-intro-main}, it was not even clear how to formulate the comparison isomorphism!  Once $\DdRalg(\bL)$ is constructed, we can adapt Scholze's approach in \cite{Scholze:2013-phtra} and obtain the desired comparison for arbitrary nontrivial coefficients on arbitrary smooth varieties.

Besides taking cohomology, the functor $\DdRalg$ is compatible with many other operations of sheaves.  For example, it commutes with taking nearby cycles in the simplest situation where our formulation is available.
\begin{thm}\label{thm-intro-nearby}
    Let $f: X \to \bA^1$ be a smooth morphism and let $D := f^{-1}(0)$.  Let $\bL$ be a de Rham $p$-adic \'etale local systems on $X - D$.  Then there is a canonical isomorphism of vector bundles with integrable connections
    \[
        \DdRalg\bigl(R\Psi_f(\bL)\bigr) \cong R\Psi_f\bigl(\DdRalg(\bL)\bigr)
    \]
    on $D$, compatible with the filtrations on both sides.  \Pth{Here $R\Psi_f$ at the two sides of the isomorphism denotes the nearby cycle functors in the \'etale and $D$-module settings, respectively.}  In particular, $R\Psi_f(\bL)$ is a de Rham local system on $D_\et$.
\end{thm}

See Theorem \ref{thm-nearby-comp-alg} for a slightly more general statement, and see Corollary \ref{cor-nearby-curve} for a concrete interpretation when $X$ is a smooth curve over $k$.  These results suggest a strong relation between our work and classical Hodge theory, such as Schmid's theorem on limit Hodge structures \cite{Schmid:1973-vhssp}.  Another manifestation of this relation is that we prove some cohomology vanishing results for $p$-adic algebraic varieties \Pth{see Theorem \ref{thm-van-gen}}, similar to those that can be obtained from complex Hodge theory.  In particular, we obtain a new proof of the Kodaira--Akizuki--Nakano vanishing theorem \Pth{together with some generalizations} by $p$-adic Hodge-theoretic methods.

We shall call the functor $\DdRalg$ in Theorem \ref{thm-intro-main} the \Pth{algebraic} \emph{$p$-adic Riemann--Hilbert functor}.  It is natural to ask whether this functor is compatible with Deligne's classical Riemann--Hilbert correspondence in a suitable sense.  We shall formulate our expectation in the Conjecture \ref{conj-intro} below.  Let us begin with some preparations.

Let $X$ be a smooth algebraic variety over a number field $E$.  We fix an isomorphism $\ACMap: \AC{\bQ}_p \Mi \bC$ and a homomorphism $\sigma: E \to \bC$, and write $\sigma X = X \otimes_{E, \sigma} \bC$.  There is a tensor functor from the category of $p$-adic \'etale local systems $\bL$ on $X$ to the category of \Pth{vector bundles with} regular integrable connections on $\sigma X$ as follows.  Note that $\bL|_{\sigma X}$ is an \'etale local system on $\sigma X$, corresponding to a $p$-adic representation of the \'etale fundamental group of each connected component of $\sigma X$, which is the profinite completion of the fundamental group of the corresponding connected component of $(\sigma X)^\tops$.  Then $\bL|_{\sigma X} \otimes_{\bQ_p, \ACMap} \bC$ can be regarded as a classical local system on $(\sigma X)^\tops$, denoted by $\ACMap \bL_\sigma$.  By Deligne's Riemann--Hilbert correspondence, we obtain a regular integrable connection on $\sigma X$.  On the other hand, $\ACMap^{-1} \circ \sigma: E \to \AC{\bQ}_p$ determines a $p$-adic place $v$.  Let $E_v$ be the completion of $E$ with respect to $v$, and assume that $\bL|_{X_{E_v}}$ is de Rham.  Then $\DdRalg(\bL|_{X_{E_v}}) \otimes_{E_v, \ACMap} \bC$ is another regular integrable connection, with an additional decreasing filtration $\Fil^\bullet$ satisfying the Griffiths transversality.  We would like to compare the above two constructions.  In order to do so, we need to impose a further restriction on $\bL$.

We say that $\bL$ is \emph{geometric} if, at each geometric point $\AC{x}$ above a closed point $x$ of $X$, the $p$-adic representation $\bL_{\AC{x}}$ of $\Gal\bigl(\AC{k(x)} / k(x)\bigr)$ is \emph{geometric in the sense of Fontaine--Mazur} \Pth{see \cite[Part I, \S 1]{Fontaine/Mazur:1993-ggr}}.  Note that geometric $p$-adic \'etale local systems on $X$ form a full tensor subcategory of the category of all \'etale local systems.  If $\bL$ is geometric, then $\bL|_{X_{E_v}}$ is de Rham \Pth{by \cite[\aThm 3.9]{Liu/Zhu:2017-rrhpl}}.

\begin{conj}\label{conj-intro}
    The above two tensor functors from the category of geometric $p$-adic \'etale local systems on $X$ to the category of regular integrable connections on $\sigma X$ are canonically isomorphic.  In addition, $(\DdRalg(\bL|_{X_{E_v}}) \otimes_{E_v, \ACMap} \bC, \Fil^\bullet)$ is a complex variation of Hodge structures.
\end{conj}

This is closely related to a relative version of the Fontaine--Mazur conjecture proposed in \cite{Liu/Zhu:2017-rrhpl}, but it might be more approachable because it is stated purely in terms of sheaves.  Even so, it seems to be currently out of reach.  Nevertheless, in the case of Shimura varieties, we can partially verify this conjecture.  Let $(\Grp{G}, \Shdom)$ be a Shimura datum, $\levcp \subset \Grp{G}(\bAi)$ a neat open compact subgroup, and $\Model_\levcp = \Sh_\levcp(\Grp{G}, \Shdom)$ the associated Shimura variety, defined over the reflex field $\ReFl = \ReFl(\Grp{G}, \Shdom)$.  Let $\Grp{G}^c$ be the quotient of $\Grp{G}$ by the minimal subtorus $Z_s(\Grp{G})$ of the center $Z(\Grp{G})$ of $\Grp{G}$ such that the torus $Z(\Grp{G})^\circ / Z_s(\Grp{G})$ has the same split ranks over $\bQ$ and $\bR$.  Recall that there is a tensor functor from the category $\Rep_{\bQ_p}(\Grp{G}^c)$ of algebraic representations of $\Grp{G}^c$ over $\bQ_p$ to the category of $p$-adic \'etale local systems on $\Model_\levcp$ \Pth{see, for example, \cite[\aSec 3]{Lan/Stroh:2018-ncaes-2} or \cite[\aSec 4.2]{Liu/Zhu:2017-rrhpl}}, whose essential image consists of only certain \emph{geometric} $p$-adic \'etale local systems \Pth{see \cite[\aThm 1.2]{Liu/Zhu:2017-rrhpl}}.

\begin{thm}\label{thm-intro-Sh}
    The conjecture holds for the \Pth{$p$-adic} \'etale local systems on $\Model_\levcp$ coming from $\Rep_{\bQ_p}(\Grp{G}^c)$ as above.
\end{thm}

Note that this theorem applies to \emph{all} Shimura varieties, on which \'etale local systems are not \Pth{yet} known to be related to motives in general.  Crucial ingredients in our proof include Margulis's \emph{superrigidity theorem} \cite{Margulis:1991-DSL}, and a construction credited to Piatetski-Shapiro by Borovoi \cite{Borovoi:1983/84-lcccs} and Milne \cite{Milne:1983-aacsv}.  This theorem itself has applications to the arithmetic of Shimura varieties, such as the following:
\begin{cor}\label{cor-intro-GM}
     The Grothendieck--Messing period map for $\Model_\levcp$ is \'etale.
\end{cor}

This implies that the local geometry of Shimura varieties is controlled by the moduli spaces of $p$-adic shtukas constructed by Scholze \Pth{see \cite[\aSec 23.3]{Scholze/Weinstein:2020-BLG}}.  \Pth{From the definitions, it was not clear how these moduli spaces are related to Shimura varieties.}  Some other applications of Theorem \ref{thm-intro-Sh} will appear in \cite{Lan/Liu/Zhu:dcpdr}.

Now let us explain our strategy for proving Theorem {\ref{thm-intro-main}}.  As mentioned above, in \cite[\aThm 3.9]{Liu/Zhu:2017-rrhpl}, a tensor functor $D^\an_\dR$ was constructed from the category of de Rham $p$-adic \'etale local systems on $X^\an$ to the category of filtered vector bundles on $X^\an$ with integrable connections satisfying the Griffiths transversality.  In order to prove Theorem {\ref{thm-intro-main}, a natural idea is to fix a smooth compactification $\overline{X}$ of $X$ with a normal crossings boundary divisor, and extend the filtered vector bundles with integrable connections in \emph{loc.~cit.} to filtered vector bundles on $\overline{X}^\an$ with integrable log connections \Pth{\ie, connections with log poles along the boundary divisor}.  However, rather unlike the complex analytic situation in \cite{Deligne:1970-EDR}, \emph{not} every integrable connection on $X^\an$ is extendable and hence algebraizable \Pth{see \cite[\aCh 4, \aRem 6.8.3]{Andre/Baldassarri:2001-DDA} or \cite[\aRem 34.6.3]{Andre/Baldassarri/Cailotto:2020-DDA(2)} for some counter-example}.

Instead, we shall directly construct a functor from the category of de Rham $p$-adic \'etale local systems on $X^\an$ to the category of filtered vector bundles with integrable log connections on $\overline{X}^\an$.  We shall work in the realm of log analytic geometry as in \cite{Diao/Lan/Liu/Zhu:lasfr}, and construct a \emph{log Riemann--Hilbert correspondence}, which is a crucial step in this paper.  Compared with \cite{Liu/Zhu:2017-rrhpl}, many new ingredients and essential new ideas are needed for this construction, and many new difficulties have to be overcome.  Let us begin with a rough summary.  Based on \cite{Diao/Lan/Liu/Zhu:lasfr}, the starting point is the construction of the log de Rham period sheaf $\OBdl$, generalizing the de Rham period sheaf $\OBdR$ as in \cite{Scholze:2013-phtra} and \cite{Liu/Zhu:2017-rrhpl}.  Then we define the log Riemann--Hilbert functors in a way similar to \cite{Liu/Zhu:2017-rrhpl}.  However, for our purpose, we also need to develop a very general formalism of decompletion systems, generalizing the one introduced in \cite[\aSec 5]{Kedlaya/Liu:2016-RPH-2} and many other classical works.  After these, the major divergence of the methods from \cite{Liu/Zhu:2017-rrhpl} occurs.  One of the key facts used in \emph{loc.~cit.}, that a coherent module with an integrable connection is automatically locally free, completely breaks down for log connections in general.  Our new idea is to study a collection of important invariants attached to a log connection---\ie, the \emph{residues} along the irreducible components of the boundary, using the above-mentioned decompletion formalism.  This allows us to prove a lot of favorable properties of the log connections constructed from the de Rham local systems.  In particular, we can canonically extend the filtration on the integrable connection \Pth{as constructed in \cite{Scholze:2013-phtra} and \cite{Liu/Zhu:2017-rrhpl}} to the boundary.  Moreover, the residues play an essential role in our study of nearby cycles, as in Theorem \ref{thm-intro-nearby}.

Let us also mention that, in the classical setting over $\bC$, Illusie--Kato--Nakayama \cite{Illusie/Kato/Nakayama:2005-qulrh, Illusie/Kato/Nakayama:2007-qulrh-err} developed a theory of quasi-unipotent log Riemann--Hilbert correspondence, which obtained as a byproduct Deligne's Riemann--Hilbert correspondence for local systems with quasi-unipotent monodromy at infinity.

We now explain our construction in more details.  We will work over a smooth rigid analytic variety $Y$ over $k$ \Pth{viewed as an adic space over $\Spa(k, \cO_k)$}, together with a normal crossings divisor $D \subset Y$, and view $Y$ as a log adic space by equipping it with the natural log structure defined by $D$.  \Pth{For applications to our previous setup, we take $Y = \overline{X}^\an$, and take $D$ to be the analytification of the boundary divisor $\overline{X} - X$ with its reduced subscheme structure.}  Any Kummer \'etale $\bZ_p$-local system $\bL$ on $Y$ induces a $\widehat{\bZ}_p$-local system $\widehat{\bL}$ on $Y_\proket$.  Let $\mu: Y_\proket \to Y_\an$ denote the natural projection from the pro-Kummer \'etale site to the analytic site.  \Pth{Note that as a subscript \Qtn{$\an$} means the analytic site, while as a superscript it means the analytification of an algebraic object.}  Let $\Omega^1_Y(\log D)$ denote the sheaf of differentials with log poles along $D$, as usual.  The following theorem is an abbreviated version of Theorem \ref{thm-log-RH-arith}, from which Theorem \ref{thm-intro-main} will be deduced.
\begin{thm}\label{thm-intro-log-RH}
    Let $Y$ and $\mu$ be as above.  Consider the functor $\Ddl$, which sends a Kummer \'etale $\bZ_p$-local system $\bL$ on $Y$ to
    \[
        \Ddl(\bL) := \mu_*(\widehat{\bL} \otimes_{\widehat{\bZ}_p} \OBdl).
    \]
    Then $\Ddl(\bL)$ is a vector bundle on $Y_\an$ equipped with an integrable log connection
    \[
         \nabla_\bL: \Ddl(\bL) \to \Ddl(\bL) \otimes_{\cO_Y} \Omega^1_Y(\log D)
    \]
    and a decreasing filtration \Pth{by coherent subsheaves} satisfying the Griffiths transversality, which extends the vector bundle $\DdR(\bL)$ with its integrable connection in \cite[\aThm 3.9]{Liu/Zhu:2017-rrhpl}.  Moreover, all eigenvalues of the residues of $\Ddl(\bL)$ along the irreducible components of $D$ are rational numbers in $[0, 1)$.  In particular, $(\Ddl(\bL), \nabla_\bL)$ is the \emph{canonical extension} of the $(\DdR(\bL), \nabla_\bL)$; \ie, the unique \Pth{if existent} extension of $(\DdR(\bL), \nabla_\bL)$ with such eigenvalues of residues.

    If $\bL|_{Y - D}$ is a \emph{de Rham} \'etale $\bZ_p$-local system, then $\gr \Ddl(\bL)$ is a vector bundle on $Y_\an$ of rank $\rank_{\bZ_p}(\bL)$, and we have the de Rham \Pth{\resp Hodge--Tate} comparison isomorphism between the Kummer \'etale cohomology of $\bL$ and the log de Rham \Pth{\resp log Hodge} cohomology of $\Ddl(\bL)$.
\end{thm}

Note that, unlike the functors $\DdRalg$ and $\DdR$ in Theorem \ref{thm-intro-main} and \cite[\aThm 3.9]{Liu/Zhu:2017-rrhpl}, the functor $\Ddl$ fails to be a tensor functor in general, as the eigenvalues of the residues of $\Ddl(\bL_1) \otimes_{\cO_Y} \Ddl(\bL_2)$ might be outside $[0, 1)$, and therefore $\Ddl(\bL_1) \otimes_{\cO_Y} \Ddl(\bL_2)$ might not be isomorphic to $\Ddl(\bL_1 \otimes_{\widehat{\bZ}_p} \bL_2)$.  This failure is caused by the failure of the surjectivity of the canonical morphism
\begin{equation}\label{eq-fail-surj}
    \mu^{-1}\bigl(\Ddl(\bL)\bigr) \otimes_{\cO_{Y_\proket}} \OBdl \to \widehat{\bL} \otimes_{\widehat{\bZ}_p} \OBdl
\end{equation}
in general, even when $\rank_{\cO_Y}(\Ddl(\bL)) = \rank_{\bZ_p}(\bL)$.  This phenomenon is not present in the usual comparison theorems in $p$-adic Hodge theory, but is consistent with the complex Riemann--Hilbert correspondence.  Nevertheless, we will see in Theorem \ref{thm-unip-vs-nilp} that $\Ddl$ restricts to a \emph{tensor functor} from the subcategory of de Rham local systems with \emph{unipotent geometric monodromy} along the boundary to the subcategory of integrable log connections with \emph{nilpotent residues} along the boundary.

We will deduce Theorem \ref{thm-intro-log-RH} from a geometric log Riemann--Hilbert correspondence.  Let $Y$ be as above, let $K$ be a perfectoid field over $k$ containing all roots of unity, and let $\Gal(K / k)$ abusively denote the group of continuous field automorphisms of $K$ over $k$.  Let $\BdRp = \BBdRp(K, \cO_K)$ and $\BdR = \BBdR(K, \cO_K)$ \Pth{as in \cite[\aSec 3.1]{Liu/Zhu:2017-rrhpl}}, and consider the ringed spaces $\cY^+ = (Y_\an, \cO_Y \ho_k \BdRp)$ and $\cY = (Y_\an, \cO_Y \ho_k \BdR)$, where $\cO_Y \ho_k \BdRp$ and $\cO_Y \ho_k \BdR$ are sheaves on $Y_\an$ which we interpret as the rings of functions on the not-yet-defined base changes \Qtn{$Y \ho_k \BdRp$} and \Qtn{$Y \ho_k \BdR$}, respectively.  Let $\mu': {Y_\proket}_{/Y_K} \to Y_\an$ denote the natural morphism of sites.  The following theorem is an abbreviated version of Theorem \ref{thm-log-RH-geom}.
\begin{thm}\label{thm-intro-log-RH-geom}
    The functor
    \[
        \RHl(\bL) := R\mu'_*(\widehat{\bL} \otimes_{\widehat{\bZ}_p} \OBdl)
    \]
    is an exact functor from the category of Kummer \'etale $\bZ_p$-local systems on $Y$ to the category of $\Gal(K / k)$-equivariant vector bundles on $\cY$, equipped with an integrable log connection $\nabla_\bL: \RHl(\bL) \to \RHl(\bL) \otimes_{\cO_Y} \Omega^1_Y(\log D)$, and a decreasing filtration \Pth{by locally free $\cO_Y \ho_k \BdRp$-submodules} satisfying the Griffiths transversality.  Moreover, we have $H^i_\ket\bigl(Y_{\AC{k}}, \bL\bigr) \otimes_{\bZ_p} B_\dR \cong H^i_{\log \dR}\bigl(\cY, \RHl(\bL)\bigr)$ when $K = \widehat{\AC{k}}$.
\end{thm}

Let $\RHlp(\bL) := R\mu'_*(\widehat{\bL} \otimes_{\widehat{\bZ}_p} \Fil^0 \OBdl)$, which is an $\cO_Y \ho_k \BdRp$-lattice in $\RHl(\bL)$ equipped \Pth{as in \cite[\aRem 3.2]{Liu/Zhu:2017-rrhpl}} with the $t$-connection $\nabla_\bL^+ := t \nabla_\bL: \RHlp(\bL) \to \RHlp(\bL) \otimes_{\cO_Y} \Omega^1_Y(\log D)(1)$, where $t \in \BdR$ is an element on which $\Gal(K / k)$ acts via the cyclotomic character.  By reduction modulo $t$, we obtain the log $p$-adic Simpson functor $\Hl$, constructed in much greater generality by Faltings \cite{Faltings:2005-psc} and Abbes--Gros--Tsuji \cite{Abbes/Gros/Tsuji:2016-pSC}.  See Theorem \ref{thm-log-Simp} for more details.

Compared with the situation in \cite{Liu/Zhu:2017-rrhpl}, the proof of Theorem \ref{thm-intro-log-RH-geom} requires some decompletion statement beyond the scope of \cite{Kedlaya/Liu:2016-RPH-2}.  We have therefore developed a general formalism in Appendix \ref{sec-decompl}, which might be of some independent interest.

Now we explain how to deduce Theorem \ref{thm-intro-log-RH} from Theorem \ref{thm-intro-log-RH-geom}, where some essential new ideas of this paper appear.  By using the above-mentioned decompletion statement and an argument similar to the one in \cite{Liu/Zhu:2017-rrhpl}, it is not difficult to show that $\Ddl(\bL) \cong (\RHl(\bL))^{\Gal(K / k)}$ is a coherent sheaf on $Y_\an$.  However, unlike the situation in \cite{Liu/Zhu:2017-rrhpl}, the existence of a log connection does not guarantee the local freeness of $\Ddl(\bL)$.  A priori, only its reflexivity is clear.  Nevertheless, there is a collection of important invariants attached to a log connection; \ie, the \emph{residues} along the irreducible components of $D$.  Somewhat surprisingly, by using the decompletion formalism again, we find that all the eigenvalues of the residues are \emph{rational numbers} in $[0, 1)$.  Together with the reflexivity and a general fact about log connections, this allows us to conclude that $\Ddl(\bL)$ is indeed locally free.  We remark that residues also plays a vital role in deducing the comparison of cohomology in Theorem \ref{thm-intro-log-RH} from the comparison of cohomology in Theorem \ref{thm-intro-log-RH-geom}.

Finally, our results on residues also allow us to define $V$-filtrations and study nearby cycles in the $p$-adic setting, which in turn allows us to deduce Theorem \ref{thm-intro-nearby}.

\subsection*{Outline of this paper}

Let us briefly describe the organization of this paper, and highlight the main themes in each section.

In Section \ref{sec-period-sheaves}, we study the \emph{log de Rham period sheaves}, generalizing the usual ones studied in \cite[\aSec 5]{Brinon:2008-RPR}, \cite[\aSec 6]{Scholze:2013-phtra}, and \cite{Scholze:2016-phtra-corr}, with a subtle difference---see Remark \ref{rem-mod-period-sheaf}.  In Section \ref{sec-log-adic-sp}, we recall some notation and basic results for log adic spaces developed in \cite{Diao/Lan/Liu/Zhu:lasfr}.  In Section \ref{sec-OBdl}, we present the general definitions of these log de Rham period sheaves.  In Section \ref{sec-OBdl-explicit}, we describe their structures in detail, when there are good local coordinates.  In Section \ref{sec-OBdl-conseq}, we record some consequences, including the \emph{Poincar\'e lemma}.  We note that results of this section hold for a class of log adic spaces larger than those considered in Theorems \ref{thm-intro-log-RH} and \ref{thm-intro-log-RH-geom}.  This extra generality is useful for many applications \Pth{see, \eg, \cite{Lan/Liu/Zhu:dcpdr}}.

In Section \ref{sec-log-RH}, we establish the geometric and arithmetic versions of \emph{log $p$-adic Riemann--Hilbert correspondences}, as well as the \emph{log $p$-adic Simpson correspondence}, as explained above.  We introduce some general terminologies for filtered vector bundles with log connections \Qtn{relative to $\BdR$} in Section \ref{sec-log-conn-BdR}, and state the main results in Section \ref{sec-log-RH-thm}, which are more detailed versions of Theorems \ref{thm-intro-log-RH} and \ref{thm-intro-log-RH-geom}.  The proofs are given in the following subsections.  In Section \ref{sec-coh}, we show that we obtain \emph{coherent sheaves} in the various constructions.  In Section \ref{sec-calc-res}, we calculate the \emph{eigenvalues of residues} of the connections along the boundary.  This is the technical heart of this paper.  In particular, it justifies the local freeness of the above coherent sheaves.  In Section \ref{sec-mor}, we show that our correspondences are compatible with certain pullbacks and pushforwards, by using our results on residues and the known compatibilities in \cite{Scholze:2013-phtra} and \cite{Liu/Zhu:2017-rrhpl}.  In Section \ref{sec-comp-coh}, we establish the comparisons of cohomology in our main theorems.  In Section \ref{sec-comp-nearby}, we show that the formation of quasi-unipotent nearby cycles \Pth{in the rigid analytic setting, as introduced in \cite[\aSec \logadicdefnearby]{Diao/Lan/Liu/Zhu:lasfr}} is compatible with the log Riemann--Hilbert functors.

In Section \ref{sec-alg}, we present our main results for algebraic varieties.  In Section \ref{sec-DdR-alg}, we construct the $p$-adic Riemann--Hilbert functor and prove Theorem \ref{thm-intro-main}.  We also establish the corresponding log Hodge--Tate comparison and the degeneration of \Pth{log} Hodge--de Rham spectral sequences, and record the latter results in Theorem \ref{thm-HT-degen-comp}.  In Section \ref{sec-van-gen}, we present some vanishing theorem for $p$-adic algebraic varieties, by adapting complex Hodge-theoretic arguments in \cite{Suh:2018-vmhma} using our $p$-adic results.  In Section \ref{sec-dR-bd}, we show that the formation of algebraic \Pth{quasi-unipotent} nearby cycles is compatible with our $p$-adic Riemann--Hilbert functor.

In Section \ref{sec-Sh-var}, we compare two constructions of filtered vector bundles with regular connections on Shimura varieties, and deduce Theorem \ref{thm-intro-Sh} and Corollary \ref{cor-intro-GM}.  In Section \ref{sec-loc-syst-setup}, we begin with the overall setup.  In Section \ref{sec-loc-syst-constr}, we explain the two constructions, one complex analytic and the other $p$-adic.  In Section \ref{sec-loc-syst-main-comp}, we state our main comparison theorem on these two constructions, and record some consequences.  In Section \ref{sec-proof-comp-prelim}, we reduce the theorem to a technical statement concerning representations of fundamental groups, which are then verified in the remaining two subsections.  This section can be read largely independent of the rest of the paper.

In Appendix \ref{sec-decompl}, we generalize the decompletion formalism in \cite[\aSec 5]{Kedlaya/Liu:2016-RPH-2}.  In Appendix \ref{sec-decompl-results}, we introduce and study the notions of \emph{decompletion systems} and \emph{decompleting triples}.  In Appendix \ref{sec-decompl-ex}, we present three examples which play crucial roles in Section \ref{sec-log-RH} \Pth{in the proof of coherence and the calculation of residues}.  The appendix can also be read largely independent of the rest of the paper.

\subsection*{Acknowledgements}

We would like to thank Kiran Kedlaya, Koji Shimizu, and Daxin Xu for helpful conversations, and thank the Beijing International Center for Mathematical Research, the Morningside Center of Mathematics, and the California Institute of Technology for their hospitality.  Some important ideas occurred to us when we were participants of the activities at the Mathematical Sciences Research Institute and the Oberwolfach Research Institute for Mathematics, and we would like to thank these institutions for providing stimulating working environments.  Finally, we would like to thank Yihang Zhu and the anonymous referees for many helpful comments that helped us correct and improve earlier versions of this paper.

\subsection*{Notation and conventions}

Unless otherwise specified, we always denote by $k$ a nonarchimedean local field \Pth{\ie, a field complete with respect to a nontrivial nonarchimedean multiplicative norm $|\cdot|: k \to \bR_{\geq 0}$} with residue field $\kappa$ of characteristic $p > 0$, and $\cO_k$ denotes the ring of integers in $k$.  We also denote by $k^+ \subset \cO_k$ an open valuation ring, whose choice depends on the context.  Sometimes, we choose a \emph{pseudo-uniformizer} \Pth{\ie, a topological nilpotent unit} $\varpi$ of $k$ contained in $k^+$.

By a \emph{locally noetherian adic space} over $k$, we mean an adic space $X$ over $\Spa(k, k^+)$ that admits an open covering by affinoids $U_i = \Spa(A_i, A_i^+)$ where each $A_i$ is strongly noetherian.  A \emph{noetherian adic space} over $k$ is a qcqs locally noetherian adic space over $k$.  If $X$ is locally noetherian, we denote by $X_\an$ its analytic site, by $X_\et$ its associated \'etale site, and by $\lambda: X_\et \to X_\an$ the natural projection of sites.  We shall regard rigid analytic varieties as adic spaces topologically of finite type over $\Spa(k, \cO_k)$ \Pth{as in \cite{Huber:1996-ERA}}, in which case we will work with $k^+ = \cO_k$.

By default, monoids are assumed to be commutative, and the monoid operations are written additively \Pth{rather than multiplicatively}, unless otherwise specified.  For a monoid $P$, let $P^\gp$ denote its group completion.  If $R$ is a commutative ring with unit and $P$ is a monoid, we denote by $R[P]$ the monoid algebra over $R$ associated with $P$.  The image of $a \in P$ in $R[P]$ will be denoted by $\mono{a}$.

Group cohomology will always mean continuous group cohomology.

As in \cite{Scholze:2013-phtra}, many of our results will be over nonarchimedean local fields $k$ that are \emph{discrete valuation fields of mixed characteristic $(0, p)$ with perfect residue fields}.  For the sake of simplicity, we will abusively call such fields \emph{$p$-adic fields}.  The main results of \cite{Liu/Zhu:2017-rrhpl} work over such fields.

\numberwithin{equation}{subsection}

\section{Log de Rham period sheaves}\label{sec-period-sheaves}

In this section, we define and study the log de Rham period sheaves, generalizing the usual ones studied in \cite[\aSec 5]{Brinon:2008-RPR}, \cite[\aSec 6]{Scholze:2013-phtra}, and \cite{Scholze:2016-phtra-corr}.  We shall assume that $k$ is of characteristic zero and residue characteristic $p > 0$.

\subsection{Basics of log adic spaces}\label{sec-log-adic-sp}

We begin with a summary of some notation and basic results for log adic spaces developed in the companion paper \cite{Diao/Lan/Liu/Zhu:lasfr}, in slightly less generality than the one in \emph{loc.~cit.}, for the sake of simplicity.

Let $X$ be any \'etale sheafy adic space; \ie, $X$ admits a well-defined \'etale site $X_\et$ and the \'etale structure presheaf $\cO_{X_\et}: U \mapsto \cO_U(U)$ is a sheaf.  A \emph{pre-log structure} on $X$ is a pair $(\cM_X, \alpha)$ consisting of a sheaf $\cM_X$ of monoids on $X_\et$ and a morphism $\alpha: \cM_X \to \cO_{X_\et}$ of sheaves of monoids \Pth{for the natural multiplicative monoid structure of $\cO_{X_\et}$}.  Such a pair is a \emph{log structure} if $\alpha$ induces an isomorphism $\alpha^{-1}(\cO_{X_\et}^\times) \Mi \cO_{X_\et}^\times$, in which case $(X, \cM_X, \alpha)$ is a \emph{log adic space}.  The log structure is \emph{trivial} when $\alpha^{-1}(\cO_{X_\et}^\times) = \cM_X$.  For simplicity, we shall often write $(X, \cM_X)$ or $X$, when the context is clear.  Moreover, we have the notions of morphisms between log structures and between log adic spaces, of the log structure associated with a pre-log structure, and of pullbacks of log structures.  These are analogous to the similar notions for schemes---see \cite[\aDef \logadicdeflogstr]{Diao/Lan/Liu/Zhu:lasfr} for more details.  A log adic space is \emph{noetherian} \Pth{\resp \emph{locally noetherian}} if its underlying adic space is.

For example, when $P$ is a monoid such that either $P$ is finitely generated, or that $k$ is perfectoid and $P$ is uniquely $p$-divisible, then we know that $Y := \Spa(k[P], k^+[P]) \cong \Spa(k\Talg{P}, k^+\Talg{P})$ is an \'etale sheafy adic space over $\Spa(k, k^+)$ \Pth{see \cite[\aLems \logadiclemstrnoe{} and \logadiclemmonoidalgperf]{Diao/Lan/Liu/Zhu:lasfr}}.  By abuse of notation, we shall sometimes denote by simply $P$ the constant sheaf $P_Y$ on $Y$ associated with the monoid $P$.  Then we have the canonical log structure $P^{\log}$ on $Y$ associated with the pre-log structure $P \to \cO_{Y_\et}$ induced by $a \mapsto \mono{a} \in k\Talg{P}$ \Pth{see \cite[\aDef \logadicdefPlog]{Diao/Lan/Liu/Zhu:lasfr}}.

\begin{exam}\label{ex-log-adic-sp-toric}
    When $P \cong \bZ_{\geq 0}^n$ for some $n \geq 0$, we have $\Spa(k\Talg{P}, k^+\Talg{P}) \cong \bD^n := \Spa(k\Talg{T_1, \ldots, T_n}, k^+\Talg{T_1, \ldots, T_n})$, the \emph{$n$-dimensional unit disc}, with the log structure on $\bD^n$ induced by $\bZ_{\geq 0}^n \to k\Talg{T_1, \ldots, T_n}: (a_1, \ldots, a_n) \mapsto T_1^{a_1} \cdots T_n^{a_n}$.
\end{exam}

Given any log adic space $X$ and any monoid $P$, a \emph{chart of $X$ modeled on $P$} is a morphism of sheaves of monoids $\theta: P_X \to \cM_X$ such that $\alpha\bigl(\theta(P_X)\bigr) \subset \cO_{X_\et}^+$ and such that the log structure associated with the pre-log structure $\alpha\circ\theta: P_X \to \cO_{X_\et}$ is isomorphic to $\cM_X$ \Pth{see \cite[\aDef \logadicdefchart]{Diao/Lan/Liu/Zhu:lasfr}}.  When $X$ is defined over $\Spa(k, k^+)$ and when $\Spa(k\Talg{P}, k^+\Talg{P})$ is defined as a log adic space as above, this is equivalent to having a strict morphism \Pth{see \cite[\aRem \logadicremdefchart]{Diao/Lan/Liu/Zhu:lasfr}} from $X$ to $\Spa(k\Talg{P}, k^+\Talg{P})$.  We say a log adic space is \emph{fs} if it \'etale locally admits charts models on monoids that are \emph{fs}, \ie, \emph{finitely generated}, \emph{integral}, and \emph{saturated} \Pth{see \cite[\aDefs \logadicdefmonoid{} and \logadicdeflogadicspfs]{Diao/Lan/Liu/Zhu:lasfr}}.  We also have the notion of fs charts of morphisms between fs log adic spaces \Pth{see \cite[\aProps \logadicpropchartmorexist{} and \logadicpropchartmorexistfine]{Diao/Lan/Liu/Zhu:lasfr}}.  By \cite[\aProp \logadicpropfiberprodlogadic]{Diao/Lan/Liu/Zhu:lasfr}, fiber products exist in the category of locally noetherian fs log adic spaces whenever the fiber products of underlying adic spaces exist \Pth{although the underlying adic spaces of the fiber products of fs log adic spaces might differ from the latter}.

We say that a morphism of locally noetherian fs log adic spaces is \emph{strictly \'etale} if the underlying morphism of adic spaces is \'etale.  By using the notion of charts, we can define \emph{log smooth} and \emph{log \'etale} morphisms of locally noetherian fs adic spaces \Pth{see \cite[\aDef \logadicdeflogsm]{Diao/Lan/Liu/Zhu:lasfr}}.  When $X$ is log smooth over $\Spa(k, \cO_k)$, where the latter is equipped with the trivial log structure, we simply say that $X$ is log smooth over $k$ \Pth{see \cite[\aDef \logadicdeflogsmbasefield]{Diao/Lan/Liu/Zhu:lasfr}}.  By \cite[\aProp \logadicproptoricchart]{Diao/Lan/Liu/Zhu:lasfr}, a log smooth fs log adic space $X$ over $k$ \'etale locally admits strictly \'etale morphisms $X \to \Spa(k\Talg{P}, k^+\Talg{P})$ which provide charts modeled on \emph{toric} monoids $P$, \ie, fs monoids that are \emph{sharp} in the sense that the subgroups $P^\inv$ of invertible elements of $P$ are trivial.  When the underlying adic space of $X$ is smooth, we may assume in the above that $P \cong \bZ_{\geq 0}^n$ for some $n \geq 0$, so that $\Spa(k\Talg{P}, k^+\Talg{P}) \cong \bD^n$ \Pth{see \cite[\aCor \logadiccorsmtoricchart]{Diao/Lan/Liu/Zhu:lasfr}}.  We call any strictly \'etale morphism $X \to \Spa(k\Talg{P}, k^+\Talg{P})$ \Pth{\resp $X \to \bD^n$} as above a \emph{toric chart} \Pth{\resp \emph{smooth toric chart}} \Pth{see \cite[\aDef \logadicdeftoricchart]{Diao/Lan/Liu/Zhu:lasfr}}.

We will mostly apply the general theory to the following class of fs log adic spaces \Pth{see \cite[\aExs \logadicexlogadicspncd{} and \logadicexlogadicspncdchart]{Diao/Lan/Liu/Zhu:lasfr} for more details}:
\begin{exam}\label{ex-log-adic-sp-ncd}
    Let $X$ be a smooth rigid analytic variety over $k$.  A \emph{\Pth{reduced} normal crossings divisor} $D$ of $X$ is given by a closed immersion $\imath: D \Em X$ over $k$ that is \'etale locally---or equivalently, analytic locally, up to replacing $k$ with a finite extension---of the form $S \times \{ T_1 \cdots T_r = 0 \} \Em S \times \bD^r$, for some smooth $S$ over $k$.  We equip $X$ with the log structure $\cM_X := \{ f \in \cO_{X_\et} : \Utext{$f$ is invertible on $X - D$} \}$, where $\alpha: \cM_X \to \cO_{X_\et}$ is the natural inclusion, which makes $(X, \cM_X)$ a log smooth noetherian fs adic space over $k$.  \'Etale locally, when $X \cong S \times \bD^r$, the log structure on $X$ is the pullback of the one on $\bD^r$ \Pth{see Example \ref{ex-log-adic-sp-toric}}, and we have smooth toric charts $X \to \bD^n$ such that $\imath: D \Em X$ is the pullback of $\{ T_1 \cdots T_r = 0 \} \Em \bD^n$, where $T_1, \ldots, T_n$ are the coordinates of $\bD^n$, for some $0 \leq r \leq n$.
\end{exam}

For each locally noetherian fs log adic space $X$, we have the \emph{Kummer \'etale site} $X_\ket$, as in \cite[\aSec \logadicsecketsite]{Diao/Lan/Liu/Zhu:lasfr}, whose objects are fs log adic spaces \emph{Kummer \'etale} over $X$.  A typical example of Kummer \'etale morphisms is, for each integer $m \geq 1$, the ramified cover $\bD_m^n := \Spa(k\Talg{T_1^{\frac{1}{m}}, \ldots, T_n^{\frac{1}{m}}}, k^+\Talg{T_1^{\frac{1}{m}}, \ldots, T_n^{\frac{1}{m}}}) \to \bD^n = \Spa(k\Talg{T_1, \ldots, T_n}, k^+\Talg{T_1, \ldots, T_n})$.  We also have the \emph{pro-Kummer \'etale site} $X_\proket$, as in \cite[\aDef \logadicdefproketsite]{Diao/Lan/Liu/Zhu:lasfr} \Pth{which generalizes \cite{Scholze:2016-phtra-corr}}.  Then we have natural projections of sites $\upsilon_X: X_\proket \to X_\ket$, $\varepsilon_{X, \et}: X_\ket \to X_\et$, and $\nu_X = \varepsilon_{X, \et} \circ \upsilon_X: X_\proket \to X_\et$, where $\varepsilon_{X, \et}$ is an isomorphism when the log structure is trivial.  For any morphism $f: X \to Y$ of locally noetherian fs log adic spaces, we have compatible canonical morphisms of sites $f_\ket: X_\ket \to Y_\ket$ and $f_\proket: X_\proket \to Y_\proket$.

We can naturally define locally constant sheaves and torsion local systems on $X_\ket$ \Pth{see \cite[\aDef \logadicdefketlocconst]{Diao/Lan/Liu/Zhu:lasfr}}.  We define a \emph{$\bZ_p$-local system} \Pth{or \emph{lisse $\bZ_p$-sheaf}} on $X_\ket$ to be an inverse system of $\bZ / p^n$-modules $\bL = (\bL_n)_{n \geq 1}$ on $X_\ket$ such that each $\bL_n$ is a locally constant sheaf which are locally \Pth{on $X_\ket$} associated with finitely generated $\bZ / p^n$-modules, and such that the inverse system is isomorphic in the pro-category to an inverse system in which $\bL_{n + 1} / p^n \cong \bL_n$.  We define a \emph{$\bQ_p$-local system} \Pth{or \emph{lisse $\bQ_p$-sheaf}} on $X_\ket$ to be an object of the stack associated with the fibered category of isogeny lisse $\bZ_p$-sheaves.  \Pth{See \cite[\aDef \logadicdefketlisse]{Diao/Lan/Liu/Zhu:lasfr}.}  Let $\widehat{\bZ}_p := \varprojlim_n (\bZ / p^n)$ as a sheaf of rings on $X_\proket$, and let $\widehat{\bQ}_p := \widehat{\bZ}_p[\frac{1}{p}]$.  A \emph{$\widehat{\bZ}_p$-local system} on $X_\proket$ is a sheaf of $\widehat{\bZ}_p$-modules on $X_\proket$ that is locally isomorphic to $L \otimes_{\bZ_p} \widehat{\bZ}_p$ for some finitely generated $\bZ_p$-modules $L$.  The notion of \emph{$\widehat{\bQ}_p$-local systems} on $X_\proket$ is defined similarly.  \Pth{See \cite[\aDef \logadicdefproketlocsyst{} and \aLem \logadiclemproketlisse]{Diao/Lan/Liu/Zhu:lasfr}}.  Functors on $\bQ_p$- \Pth{\resp $\widehat{\bQ}_p$-} local systems naturally extend to functors on $\bZ_p$- \Pth{\resp $\widehat{\bZ}_p$-} local systems, which we shall abusively denote by the same symbols, for simplicity.

\subsection{Definitions of period sheaves}\label{sec-OBdl}

Let $(R, R^+)$ be a perfectoid affinoid algebra over $k$, with $(R^\flat, R^{\flat+})$ its tilt.  Recall that there are the period rings
\[
    \AAinf(R, R^+) := W(R^{\flat+}) \quad \Utext{and} \quad \BBinf(R, R^+) := \AAinf(R, R^+)[\tfrac{1}{p}].
\]
It is well known that there is a natural surjective map
\begin{equation}\label{eq-theta}
    \theta: \AAinf(R, R^+) \to R^+,
\end{equation}
whose kernel is a principal ideal generated by some $\xi \in \AAinf(R, R^+)$, which is not a zero divisor \Pth{see, for example, \cite[\aLem 3.6.3]{Kedlaya/Liu:2015-RPH}}.  We define
\[
    \BBdRp(R, R^+) := \varprojlim_r \bigl( \BBinf(R, R^+) / \xi^r \bigr) \quad \Utext{and} \quad \BBdR(R, R^+) := \BBdRp(R, R^+)[\xi^{-1}].
\]
We shall equip $\BBdR(R, R^+)$ with the filtration $\Fil^r \BBdR(R, R^+) := \xi^r \BBdRp(R, R^+)$, for all $r \in \bZ$.  This filtration is separated, complete, and independent of the choice of $\xi$.  Therefore, for all $r \in \bZ$, we have a canonical isomorphism
\begin{equation}\label{eq-gr-BdR}
    \gr^r \BBdR(R, R^+) \cong \xi^r R.
\end{equation}

Now let $X$ be a locally noetherian fs log adic space over $\Spa(\bQ_p, \bZ_p)$.  As in \cite[\aDef \logadicdefproketsheaves]{Diao/Lan/Liu/Zhu:lasfr} \Pth{which generalizes \cite[\aDefs 4.1 and 5.10]{Scholze:2013-phtra}}, we have:
\begin{itemize}
    \item $\cO_{X_\proket}^? = \upsilon^{-1}(\cO_{X_\ket}^?)$, where $? = \emptyset$ or $+$;
    \item $\widehat{\cO}_{X_\proket}^+ = \varprojlim_n \bigl( \cO_{X_\proket}^+ / p^n \bigr)$ and $\widehat{\cO}_{X_\proket} = \widehat{\cO}_{X_\proket}^+[\frac{1}{p}]$;
    \item $\widehat{\cO}_{X_\proket}^{\flat?} = \varprojlim_\Phi \widehat{\cO}_{X_\proket}^?$, where $? = \emptyset$ or $+$;
    \item $\alpha: \cM_{X_\proket} := \upsilon^{-1}(\cM_{X_\ket}) \to \cO_{X_\proket}$; and
    \item $\alpha^\flat: \cM_{X_\proket}^\flat := \varprojlim_{a \mapsto a^p} \cM_{X_\proket} \to \widehat{\cO}_{X_\proket}^\flat$.
\end{itemize}
We shall sometimes omit the subscripts \Qtn{$\proket$} or \Qtn{$X_\proket$}.
\begin{defn}\label{def-BdR}
    We define the following sheaves on $X_\proket$:
    \begin{enumerate}
        \item\label{def-BdR-1}  Let $\AAinfX{X} := W(\widehat{\cO}_{X^\flat_\proket}^+)$ and $\BBinfX{X} := \AAinfX{X}[\frac{1}{p}]$, where the latter is equipped with a natural map $\theta: \BBinfX{X} \to \widehat{\cO}_{X_\proket}$.

        \item\label{def-BdR-2}  Let $\BBdRpX{X} := \varprojlim_r \bigl( \BBinfX{X} / (\ker \theta)^r \bigr)$, and $\BBdRX{X} := \BBdRpX{X}[t^{-1}]$, where $t$ is any generator of $(\ker \theta) \BBdRpX{X}$.  \Pth{We will choose some $t$ in \Refeq{\ref{eq-choice-t}} below.}

        \item\label{def-BdR-3}  The filtration on $\BBdRpX{X}$ is given by $\Fil^r \BBdRpX{X} := (\ker \theta)^r \BBdRpX{X}$.  It induces a filtration on $\BBdRX{X}$ given by $\Fil^r \BBdRX{X} := \sum_{s \geq -r} t^{-s} \Fil^{r + s} \BBdRpX{X}$.
    \end{enumerate}
    We shall omit the subscript $X$ in the notation when the context is clear.
\end{defn}

\begin{prop}\label{prop-BdR-van}
    Suppose that $U \in X_\proket$ is log affinoid perfectoid, with associated perfectoid space $\widehat{U} = \Spa(R, R^+)$, as in \cite[\aDef \logadicdeflogaffperf{} and \aRem \logadicremdeflogaffperfsp]{Diao/Lan/Liu/Zhu:lasfr}.
    \begin{enumerate}
        \item\label{prop-BdR-van-1}  We have a canonical isomorphism $\AAinf(U) \cong \AAinf(R, R^+)$, and similar isomorphisms for $\BBinf$, $\BBdRp$, and $\BBdR$.

        \item\label{prop-BdR-van-2}  $H^j(U, \BBdRp) = 0$ and $H^j(U, \BBdR) = 0$ for all $j > 0$.
    \end{enumerate}
\end{prop}
\begin{proof}
    The proof is essentially the same as in the one of \cite[\aThm 6.5]{Scholze:2013-phtra}, with the input \cite[\aLem 5.10]{Scholze:2013-phtra} there replaced with \cite[\aProp \logadicthmalmostvanhat]{Diao/Lan/Liu/Zhu:lasfr}.
\end{proof}

\begin{rk}\label{rem-BdR-t-loc}
    By the previous discussion and Proposition \ref{prop-BdR-van}\Refenum{\ref{prop-BdR-van-1}} \Pth{for $\AAinf$}, the element $t$ in Definition \ref{def-BdR}\Refenum{\ref{def-BdR-2}} exists locally on $X_\proket$ and is not a zero divisor.  Therefore, the sheaf $\BBdR$ and its filtration are indeed well defined.
\end{rk}

\begin{cor}\label{cor-gr-BdR}
    If $X$ is over a perfectoid field $k$ \Pth{over $\bQ_p$} containing all roots of unity, then $\gr^\bullet \BBdR \cong \oplus_{r \in \bZ} \, \bigl( \widehat{\cO}_{X_\proket}(r) \bigr)$.
\end{cor}
\begin{proof}
    This follows from \Refeq{\ref{eq-gr-BdR}} and Proposition \ref{prop-BdR-van}, as in \cite[\aCor 6.4]{Scholze:2013-phtra}.
\end{proof}

\begin{cor}\label{cor-BdR-cl-imm}
    Suppose that $\imath: Z \to X$ is a strict closed immersion, as in \cite[\aDef \logadicdefimm]{Diao/Lan/Liu/Zhu:lasfr}.  Then $\BBdRX{X} \to \imath_{\proket, *}(\BBdRX{Z})$ is surjective.  More precisely, its evaluation at every log affinoid perfectoid object $U$ in $X_\proket$ is surjective.
\end{cor}
\begin{proof}
    This follows from \cite[\aProp \logadicpropsurjOhatclimm]{Diao/Lan/Liu/Zhu:lasfr} and Proposition \ref{prop-BdR-van}.
\end{proof}

Now let $X$ be a locally noetherian fs log adic space over $\Spa(k, k^+)$.  We shall construct $\OBdlpX{X}$, a log version of the \emph{geometric de Rham period sheaves} $\OBdRpX{X}$ introduced in \cite{Brinon:2008-RPR, Scholze:2013-phtra, Scholze:2016-phtra-corr}.  As log affinoid perfectoid objects form a basis $\cB$ of $X_\proket$ \Pth{see \cite[\aProp \logadicproplogaffperfbasis]{Diao/Lan/Liu/Zhu:lasfr}}, it suffices to define $\OBdlpX{X}$ as a sheaf associated with a presheaf on $\cB$.

We adopt the notation in \cite[\aSecs \logadicseclogaffperf{} and \logadicsecproketsheaves]{Diao/Lan/Liu/Zhu:lasfr}.  Let $U = \varprojlim_{i \in I} U_i \in X_\proket$ be a log affinoid perfectoid object, with $U_i = (\Spa(R_i, R_i^+), \cM_i, \alpha_i)$, for each $i \in I$, and with associated perfectoid space $\widehat{U} = \Spa(R, R^+)$, where $(R, R^+)$ is the $p$-adic completion of $\varinjlim_{i \in I} (R_i, R_i^+)$, which is perfectoid.  By \cite[\aThm \logadicthmalmostvanhat]{Diao/Lan/Liu/Zhu:lasfr}, $\bigl(\widehat{\cO}(U), \widehat{\cO}^+(U)\bigr) = (R, R^+)$, and $\bigl(\widehat{\cO}^\flat(U), \widehat{\cO}^{\flat+}(U)\bigr)$ is its tilt $(R^\flat, R^{\flat+})$.  Let us write:
\begin{itemize}
    \item $M_i := \cM_i(U_i)$, for each $i \in I$;
    \item $M := \cM_{X_\proket}(U) = \varinjlim_{i \in I} M_i$; and
    \item $M^\flat := \cM_{X_\proket^\flat}(U) = \varprojlim_{a \mapsto a^p} M$.
\end{itemize}
Recall that we have $\alpha_i: M_i \to R_i$, for each $i \in I$, and $\alpha^\flat: M^\flat \to R^\flat$ \Pth{see Section \ref{sec-log-adic-sp} and \cite[\aDef \logadicdefproketsheaves]{Diao/Lan/Liu/Zhu:lasfr}}.  For each $r \geq 1$, we have a multiplicative map $R^\flat \to W(R^{\flat+})[\frac{1}{p}] / \xi^r$ induced by $R^{\flat+} \to W(R^{\flat+})$, which we still denote by $f \mapsto [f]$.  Then the composition of this map with $\alpha^\flat: M^\flat \to R^\flat$ induces a map
\[
\begin{split}
    \widetilde{\alpha}_{i, r}: M_i \times_M M^\flat & \to \bigl( R_i \ho_{W(\kappa)} (W(R^{\flat+}) / \xi^r) \bigr)[M_i \times_M M^\flat] \\
    a = (a', a'') & \mapsto \bigl(\alpha_i(a') \otimes 1\bigr) - \bigl(1 \otimes [\alpha^\flat(a'')]\bigr) \, \monob{a},
\end{split}
\]
where $\monob{a}$ denotes \Pth{in boldface, unlike in our convention} the element of the monoid algebra corresponding to $a = (a', a'') \in M_i \times_M M^\flat$.  Let
\begin{equation}\label{def-S-i-r}
    S_{i, r} := \bigl( R_i \ho_{W(\kappa)} (W(R^{\flat+}) / \xi^r) \bigr)[M_i \times_M M^\flat] \big/ \bigl( \widetilde{\alpha}_{i, r}(a) \bigr)_{a \in M_i \times_M M^\flat}.
\end{equation}
By abuse of notation, we shall sometimes drop tensor products with $1$ in the notation, and write $\alpha_i(a') = [\alpha^\flat(a'')] \, \monob{a}$ in $S_{i, r}$.  There is a natural map
\begin{equation}\label{eq-theta-log}
    \theta_{\log}: S_{i, r} \to R
\end{equation}
induced by the natural maps $R_i \to R$ and \Refeq{\ref{eq-theta}} such that $\theta_{\log}(\monob{a}) = 1$, which is well defined because $\theta([\alpha^\flat(a'')]) = \alpha_i(a')$ in $R$, for all $(a', a'') \in M_i \times_M M^\flat$.  Let
\[
    \widehat{S}_i := \varprojlim_{r, s} \bigl( S_{i, r} / (\ker \theta_{\log})^s \bigr),
\]
equipped with a canonically induced map $\theta_{\log}: \widehat{S}_i \to R$.  Note that $\varinjlim_{i \in I} \widehat{S}_i$ depends only on $U \in X_\proket$, but not on the presentation $U = \varprojlim_{i \in I} U_i$.

\begin{defn}\phantomsection\label{def-OBdl}
    \begin{enumerate}
        \item The geometric de Rham period sheaf $\OBdlpX{X}$ on $X_\proket$ is the sheaf associated with the presheaf sending $U$ to $\varinjlim_{i \in I} \widehat{S}_i$, equipped with the filtration $\Fil^r \OBdlpX{X} := (\ker \theta_{\log})^r \OBdlpX{X}$.

        \item We define the filtration on $\OBdlpX{X}[t^{-1}]$, where $t$ is the same as in Definition \ref{def-BdR}\Refenum{\ref{def-BdR-2}}, by $\Fil^r (\OBdlpX{X}[t^{-1}]) := \sum_{s \geq -r} t^{-s} \Fil^{r + s} \OBdlpX{X}$.

        \item Let $\OBdlX{X}$ be the completion of $\OBdlpX{X}[t^{-1}]$ with respect to the above filtration, equipped with the induced filtration.  Then we have
            \[
                \Fil^r \OBdlX{X} = \varprojlim_{s \geq 0} \bigl( \Fil^r (\OBdlpX{X}[t^{-1}]) / \Fil^{r + s} (\OBdlpX{X}[t^{-1}]) \bigr),
            \]
            and $\OBdlX{X} = \cup_{r \in \bZ} \, \Fil^r \OBdlX{X}$.  Let $\OClX{X} := \gr^0 \OBdlX{X}$.
    \end{enumerate}
    We shall omit the subscript $X$ in the notation when the context is clear.
\end{defn}

Note that $\Fil^0 \OBdl$ is a sheaf of rings and $\OBdl = (\Fil^0 \OBdl)[t^{-1}]$.  \Pth{However, $\OBdl \neq \OBdlp[t^{-1}]$ in general.}

\begin{rk}\label{rem-mod-period-sheaf}
    Even if the log structure is trivial, the definition of $\OBdl$ given here is slightly different from the definitions of $\OBdR$ in \cite[\aSec 5]{Brinon:2008-RPR}, \cite[\aSec 6]{Scholze:2013-phtra}, and \cite{Scholze:2016-phtra-corr}, as we perform an additional completion with respect to the filtration.  This modification is necessary because the sheaves $\OBdR$ defined in \emph{loc.~cit.} are not complete with respect to the filtrations---we thank Koji Shimizu for pointing out this.  We will see in Corollary \ref{cor-log-dR-cplx} below that the Poincar\'e lemma still holds and, with the new definition of $\OBdR$, all the previous arguments in \emph{loc.~cit.} \Pth{and also those in \cite{Liu/Zhu:2017-rrhpl}} remain essentially unchanged.
\end{rk}

\begin{rk}\label{rem-def-OBdl-applicability}
    Although $\OBdl$ is defined for any locally noetherian fs log adic space $X$ over $\Spa(k, k^+)$, we shall only use it for log smooth ones over $p$-adic fields, or some of their closed subspaces with induced log structures.
\end{rk}

The period sheaf $\OBdlp$ is equipped with a natural log connection.  Let $\Omega^{\log}_X$ be the sheaf of log differentials, as in \cite[\aDef \logadicdeflogdiffsheaf]{Diao/Lan/Liu/Zhu:lasfr}.  By abuse of notation, its pullbacks to $X_\et$, $X_\ket$, and $X_\proket$ will still be denoted by the same symbols.

Note that there is a unique $\BBdRp(U) / \xi^r$-linear log connection
\begin{equation}\label{eq-conn-S-i}
    \nabla: S_{i, r} \to S_{i, r} \otimes_{R_i} \Omega^{\log}_X(U_i)
\end{equation}
extending $d: R_i \to \Omega^{\log}_X(U_i)$ and $\delta: M_i \to \Omega^{\log}_X(U_i)$ such that
\begin{equation}\label{eq-conn-e-a}
    \nabla(\monob{a}) = \monob{a} \, \delta(a'),
\end{equation}
for all $a = (a', a'') \in M_i \times_M M^\flat$.  Essentially by definition, we have
\[
    \nabla\bigl( (\ker \theta_{\log})^s \bigr) \subset (\ker \theta_{\log})^{s - 1} \otimes_{R_i} \Omega^{\log}_X(U_i)
\]
for all $s \geq 1$.  By taking $\ker(\theta_{\log})$-adic completion, inverse limit over $r$, and direct limit over $i$, the above log connection \Refeq{\ref{eq-conn-S-i}} extends to a $\BBdRp$-linear log connection
\begin{equation}\label{eq-conn-OBdlp}
    \nabla: \OBdlp \to \OBdlp \otimes_{\cO_{X_\proket}} \Omega^{\log}_X.
\end{equation}
Since $t \in \BBdRp$, \Refeq{\ref{eq-conn-OBdlp}} further extends to a $\BBdR$-linear log connection
\begin{equation}\label{eq-conn-OBdlp-t}
    \nabla: \OBdlp[t^{-1}] \to \OBdlp[t^{-1}] \otimes_{\cO_{X_\proket}} \Omega^{\log}_X,
\end{equation}
satisfying $\nabla\bigl( \Fil^r (\OBdlp[t^{-1}]) \bigr) \subset \bigl( \Fil^{r - 1} (\OBdlp[t^{-1}]) \bigr) \otimes_{\cO_{X_\proket}} \Omega^{\log}_X$, for all $r \in \bZ$.  Therefore, \Refeq{\ref{eq-conn-OBdlp-t}} also extends to a $\BBdR$-linear log connection
\begin{equation}\label{eq-conn-OBdl}
    \nabla: \OBdl \to \OBdl \otimes_{\cO_{X_\proket}} \Omega^{\log}_X,
\end{equation}
satisfying $\nabla(\Fil^r \OBdl) \subset ( \Fil^{r - 1} \OBdl ) \otimes_{\cO_{X_\proket}} \Omega^{\log}_X$, for all $r \in \bZ$.

\subsection{Local study of $\BBdR$ and $\OBdl$}\label{sec-OBdl-explicit}

In this subsection, we study $\BBdR$ and $\OBdl$ when there are good local coordinates.  These results are similar to \cite[\aSec 5]{Brinon:2008-RPR}, \cite[\aSec 6]{Scholze:2013-phtra}, and \cite{Scholze:2016-phtra-corr}.  We assume that $k$ is a $p$-adic field, and let $\AC{k}$ be a fixed algebraic closure.  For each $m \geq 1$, we denote by $\Grpmu_m$ \Pth{\resp $\Grpmu_\infty = \cup_m \, \Grpmu_m$} the group of $m$-th \Pth{\resp all} roots of unity in $\AC{k}$.  Let $k_m = k(\Grpmu_m) \subset \AC{k}$, for all $m \geq 1$; let $k_\infty = k(\Grpmu_\infty) = \cup_m \, k_m$ in $\AC{k}$; and let $\widehat{k}_\infty$ be the $p$-adic completion of $k_\infty$.  Then $\widehat{k}_\infty$ is a perfectoid field.  Let $\widehat{k}_\infty^\flat$ denote its tilt.  We shall denote by $k_m^+$, $\widehat{k}_\infty^+$, and $\widehat{k}_\infty^{\flat+}$ the rings of integers in $k_m$, $\widehat{k}_\infty$, and $\widehat{k}_\infty^\flat$, respectively.  Let
\[
    \Ainf := \AAinf(\widehat{k}_\infty, \widehat{k}_\infty^+).
\]
Fix an isomorphism of abelian groups
\begin{equation}\label{eq-zeta}
    \zeta: \bQ / \bZ \Mi \Grpmu_\infty,
\end{equation}
and write $\zeta_n := \zeta(\frac{1}{n})$, for all $n \in \bZ_{\geq 1}$.  Then $\zeta(\frac{m}{n}) = \zeta_n^m$, for all $\frac{m}{n} \in \bQ$.  Define
\[
    \epsilon: \bQ \to (\widehat{k}_\infty^{\flat+})^\times: \; y \mapsto (\zeta(y), \zeta(\tfrac{y}{p}), \zeta(\tfrac{y}{p^2}), \ldots).
\]
We shall also write $\zeta^y = \zeta(y)$ and $\epsilon^y = \epsilon(y)$.  \Pth{In particular, $\epsilon = \epsilon(1)$.}  Note that $\varpi^\flat := (\epsilon - 1) / (\epsilon^{\frac{1}{p}} - 1)$ is a pseudo-uniformizer of $\widehat{k}_\infty^\flat$, and
\[
    \xi := ([\epsilon] - 1) / ([\epsilon^{\frac{1}{p}}] - 1)
\]
generates the kernel of $\theta: \Ainf \to \widehat{k}_\infty^+$.  Consider
\begin{equation}\label{eq-choice-t}
    t := \log([\epsilon]) \in \BdRp := \BBdRp(\widehat{k}_\infty, \widehat{k}_\infty^+) = \varprojlim_r \bigl( \Ainf[\tfrac{1}{p}] / \xi^r \bigr).
\end{equation}
Let $k \to \BdRp$ be the unique embedding such that the composition of $k \to \BdRp \to \widehat{k}_\infty$ is the natural homomorphism $k \to \widehat{k}_\infty$.

Let $P$ be a toric monoid which decomposes as a direct sum $P = \overline{P} \oplus Q$ of toric monoids.  \Pth{Here $\overline{P}$ means $P / Q$ as in \cite[\aRem \logadicremmonoidcat]{Diao/Lan/Liu/Zhu:lasfr}, rather than the sharp quotient of $P$ as in \cite[\aDef \logadicdefmonoid]{Diao/Lan/Liu/Zhu:lasfr}.}  We shall denote by $(Q - \{0\})$ the ideal of $k\Talg{P}$ \Pth{and other similar algebras} generated by $\{ \mono{a} \}_{a \in Q - \{0\}}$.  The surjection $k\Talg{P} \Surj k\Talg{P} / (Q - \{0\}) \cong k\Talg{\overline{P}}$ induces a strict closed immersion
\[
    \bE := \Spa(k\Talg{\overline{P}}, k^+\Talg{\overline{P}}) \Em \Spa(k\Talg{P}, k^+\Talg{P}),
\]
where $\bE$ is equipped with the log structure pulled back from $\Spa(k\Talg{P}, k^+\Talg{P})$ \Pth{see \cite[\aDef \logadicdefPlog]{Diao/Lan/Liu/Zhu:lasfr}}.  For each $m \in \bZ_{\geq 1}$, let $\frac{1}{m} P$ be the toric monoid such that $P \Em \frac{1}{m} P$ can be identified with the $m$-th multiple map $[m]: P \to P$, and let $\frac{1}{m} Q$ and $\frac{1}{m} \overline{P}$ be defined similarly.  Let $P_{\bQ_{\geq 0}} := \varinjlim_m \bigl(\frac{1}{m} P\bigr)$ and $P^\gp_\bQ := (P_{\bQ_{\geq 0}})^\gp \cong P^\gp \otimes_\bZ \bQ$, and let $Q_{\bQ_{\geq 0}}$, $Q^\gp_\bQ$, $\overline{P}_{\bQ_{\geq 0}}$, and $\overline{P}^\gp_\bQ$ be defined similarly.  By pulling back a standard toric tower over $\Spa(k\Talg{P}, k^+\Talg{P})$ \Pth{\Refcf{} \cite[\aSec \logadicsectoricchart]{Diao/Lan/Liu/Zhu:lasfr}}, we obtain a pro-Kummer \'etale cover $\widetilde{\bE} := \varprojlim_m \bE_m \to \bE$, with strict closed immersions
\[
    \bE_m := \bE \times_{\Spa(k\Talg{P}, k^+\Talg{P})} \Spa(k_m\Talg{\tfrac{1}{m} P}, k_m^+\Talg{\tfrac{1}{m} P}) \Em \Spa(k_m\Talg{\tfrac{1}{m} P}, k_m^+\Talg{\tfrac{1}{m} P})
\]
and transition morphisms $\bE_{m'} \to \bE_m$ \Pth{for $m | m'$} induced by the natural inclusions $\frac{1}{m} P \Em \frac{1}{m'} P$.  Note that $\bE_m \cong \Spa(k_m\Talg{\tfrac{1}{m} \overline{P}}, k_m^+\Talg{\tfrac{1}{m} \overline{P}})$ as adic spaces, because all nilpotent elements of $k_m\Talg{\frac{1}{m}P} / (Q - \{0\})$ are integral over $k_m^+\Talg{\frac{1}{m}P} / (Q - \{0\})$ and hence $p$-divisible.  Then $\widetilde{\bE}$ is log affinoid perfectoid with associated perfectoid space
\[
    \widehat{\widetilde{\bE}} = \Spa(\widehat{k}_\infty\Talg{\overline{P}_{\bQ_{\geq 0}}}, \widehat{k}_\infty^+\Talg{\overline{P}_{\bQ_{\geq 0}}}).
\]

For $a \in P_{\bQ_{\geq 0}}$, we denote by $T^a \in k^+\Talg{P_{\bQ_{\geq 0}}}$ the corresponding element \Pth{as opposed to $\mono{a}$ as usual}.  Let $\overline{T}^a$ denote the image of $T^a$ in $k^+\Talg{\overline{P}_{\bQ_{\geq 0}}}$.  Note that $\overline{T}^a = 0$ when $a \not\in \overline{P}_{\bQ_{\geq 0}}$.  Moreover, we denote by $\frac{1}{m} a$ the unique element in $P_{\bQ_{\geq 0}}$ such that $m(\frac{1}{m} a) = a$, so that $\bigl(T^{\frac{1}{m} a}\bigr)^m = T^a$ in $k^+\Talg{P_{\bQ_{\geq 0}}}$.  Let $T^{a \flat} := (T^a, T^{\frac{1}{p} a}, \ldots) \in (\widehat{k}_\infty^+\Talg{P_{\bQ_{\geq 0}}})^\flat$, and let $\overline{T}^{a\flat}$ denote its image in $(\widehat{k}_\infty^+\Talg{\overline{P}_{\bQ_{\geq 0}}})^\flat$.  Again, note that $\overline{T}^{a\flat} = 0$ when $a \not\in \overline{P}_{\bQ_{\geq 0}}$.  The Galois group $\Gamma = \Aut(\widetilde{\bE} / \bE_{\widehat{k}_\infty})$ has a natural action on $\widehat{k}_\infty^+\Talg{\overline{P}_{\bQ_{\geq 0}}}$ given by
\begin{equation}\label{eq-act-coord}
    \gamma(\overline{T}^{\overline{a}}) = \gamma(\overline{a}) \, \overline{T}^{\overline{a}},
\end{equation}
for all $\gamma \in \Gamma$ and $\overline{a} \in \overline{P}_{\bQ_{\geq 0}} \subset \overline{P}^\gp_\bQ \subset P^\gp_\bQ$, where $\gamma(\overline{a})$ is the element of $\Grpmu_\infty$ given by $\Aut(\widetilde{\bE} / \bE_{\widehat{k}_\infty}) \cong \Hom(P^\gp_\bQ / P^\gp, \Grpmu_\infty)$ \Pth{\Refcf{} \cite[(\logadiceqgeomtowerGal)]{Diao/Lan/Liu/Zhu:lasfr}}.

For each $r \geq 1$, we view $\BdRp / \xi^r$ as a Tate $k$-algebra \Pth{in the sense of \cite[\aDef 2.6]{Scholze:2012-ps}} with a ring of definition $\Ainf / \xi^r$ \Pth{with its $p$-adic topology}, and view
\[
    (\BdRp / \xi^r)\Talg{\tfrac{1}{m} P} / (Q - \{0\}) = (\BdRp / \xi^r) \ho_k \bigl(k\Talg{\tfrac{1}{m} P} / (Q - \{0\})\bigr)
\]
as a Tate algebra as well.  The completed direct limit \Pth{over $m$} of these algebras is canonically isomorphic to the completed direct limit of $(\BdRp / \xi^r)\Talg{\frac{1}{m} \overline{P}}$, which we denote by $(\BdRp / \xi^r)\Talg{\overline{P}_{\bQ_{\geq 0}}}$.  By \Refeq{\ref{eq-gr-BdR}}, there is a canonical isomorphism
\begin{equation}\label{eq-P-to-Ainf}
    (\BdRp / \xi^r)\Talg{\overline{P}_{\bQ_{\geq 0}}} \Mi \BBdRp(\widehat{k}_\infty\Talg{\overline{P}_{\bQ_{\geq 0}}}, \widehat{k}_\infty^+\Talg{\overline{P}_{\bQ_{\geq 0}}}) / \xi^r
\end{equation}
sending $\mono{\overline{a}}$ to $[\overline{T}^{\overline{a}\flat}]$, for all $\overline{a} \in \overline{P}_{\bQ_{\geq 0}}$.  Then \Refeq{\ref{eq-P-to-Ainf}} is $\Gamma$-equivariant if we equip $(\BdRp / \xi^r)\Talg{\overline{P}_{\bQ_{\geq 0}}}$ with the action of $\Gamma$ defined by
\begin{equation}\label{eq-Gamma-act-mono-alg}
    \gamma(\mono{\overline{a}}) = [(\gamma(\overline{a}), \gamma(\tfrac{1}{p} \overline{a}), \ldots)] \, \mono{\overline{a}},
\end{equation}
for all $\gamma \in \Gamma$ and $\overline{a} \in \overline{P}_{\bQ_{\geq 0}}$, which reduces modulo $\xi$ to the action \Refeq{\ref{eq-act-coord}}.

Now suppose that $X = \Spa(A, A^+)$ is an affinoid fs log adic space with a strictly \'etale morphism $X \to \bE$.  Let $\widetilde{X}$ be the pullback of $\widetilde{\bE}$ under $X \to \bE$.  Then $\widetilde{X} \in X_\proket$ is also log affinoid perfectoid with $\widehat{\widetilde{X}} = \Spa(\widehat{A}_\infty, \widehat{A}_\infty^+)$ the associated perfectoid space, and $\widetilde{X} \to X_{\widehat{k}_\infty}$ is a Galois cover with Galois group $\Gamma$.

Consider the sheaf of monoid algebras $\BBdRp|_{\widetilde{X}}[P]$.  Let $\mathfrak{M} \subset \BBdRp|_{\widetilde{X}}[P]$ denote the sheaf of ideals generated by $\{ \mono{a} - 1 \}_{a \in P}$, and let
\[
    \BBdRp|_{\widetilde{X}}[[P - 1]] := \varprojlim_r \bigl( \BBdRp|_{\widetilde{X}}[P] / \mathfrak{M}^r \bigr).
\]
Note that $\mono{a} \in 1 + \mathfrak{M} \subset \bigl( \BBdRp|_{\widetilde{X}}[[P - 1]] \bigr)^\times$, for all $a \in P$.  Therefore, we have a monoid homomorphism $P_{\bQ_{\geq 0}} \to \bigl( \BBdRp|_{\widetilde{X}}[[P - 1]] \bigr)^\times \subset \BBdRp|_{\widetilde{X}}[[P - 1]]$ \Pth{with respect to the multiplicative structure on $\BBdRp|_{\widetilde{X}}[[P - 1]]$} defined by sending $\frac{1}{m} a$, where $m \in \bZ_{\geq 1}$ and $a \in P$, to the formal power series of $\bigl(1 + (\mono{a} - 1) \bigr)^{\frac{1}{m}}$, which we abusively still denote by $\mono{\frac{1}{m} a}$.  On the other hand, consider the monoid homomorphism
\[
    P \to \BBdRp|_{\widetilde{X}}[[P - 1]]: \; a \mapsto \log(\mono{a}) := \sum_{l = 1}^\infty (-1)^{l - 1} \tfrac{1}{l} (\mono{a} - 1)^l
\]
\Pth{with respect to the additive structure on $\BBdRp|_{\widetilde{X}}[[P - 1]]$}, which uniquely extends to a group homomorphism $P^\gp \to \BBdRp|_{\widetilde{X}}[[P - 1]]: \; a \mapsto \monon{a}$ such that
\[
    \monon{a} = \log(\mono{a^+}) - \log(\mono{a^-})
\]
when $a = a^+ - a^-$ for some $a^+, a^- \in P$.  Then the above homomorphism further extends linearly to a $\bQ$-vector space homomorphism $P^\gp_\bQ \to \BBdRp|_{\widetilde{X}}[[P - 1]]: \; a \mapsto \monon{a}$.  Since $\monon{a} - (\mono{a} - 1) \in \mathfrak{M}^2$ for all $a \in P$, if we choose a $\bZ$-basis $\{ a_1, \ldots, a_n \}$ of $P^\gp$, and write $\monon{j} = \monon{a_j}$, for each $j = 1, \ldots, n$, then we have a canonical isomorphism
\begin{equation}\label{eq-BBdRp-pow}
    \BBdRp|_{\widetilde{X}}[[\monon{1}, \ldots, \monon{n}]] \Mi \BBdRp|_{\widetilde{X}}[[P - 1]]: \; \monon{j} \mapsto \monon{j}
\end{equation}
of $\BBdRp|_{\widetilde{X}}$-algebras, matching the ideals $(\monon{1}, \ldots, \monon{n})^r$ and $(\xi, \monon{1}, \ldots, \monon{n})^r$ of the source with the ideals $\mathfrak{M}^r$ and $(\xi, \mathfrak{M})^r$ of the target, respectively, for all $r \in \bZ_{\geq 0}$.  We similarly define $\BBdRp|_{\widetilde{X}}[[\overline{P} - 1]]$ and $\BBdRp|_{\widetilde{X}}[[\overline{P} - 1]][[Q - 1]]$, and the decomposition $P = \overline{P} \oplus Q$ induces a canonical isomorphism $\BBdRp|_{\widetilde{X}}[[P - 1]] \cong \BBdRp|_{\widetilde{X}}[[\overline{P} - 1]][[Q - 1]]$.

\begin{lemma}\label{lem-O-str-BdR}
    There is a unique morphism of sheaves
    \begin{equation}\label{eq-lem-O-str-BdR}
        v: \cO_{X_\proket}|_{\widetilde{X}} \to \BBdRp|_{\widetilde{X}}[[\overline{P} - 1]]
    \end{equation}
    satisfying the following conditions:
    \begin{enumerate}
        \item The composition $\cO_{X_\proket}|_{\widetilde{X}} \xrightarrow{v} \BBdRp|_{\widetilde{X}}[[\overline{P} - 1]] \xrightarrow{\mono{\overline{a}} \mapsto 1, \, \xi \mapsto 0} \widehat{\cO}_{X_\proket}|_{\widetilde{X}}$ is the natural map.

        \item The composition $P_{\bQ_{\geq 0}, X_\proket}|_{\widetilde{X}} \to \cO_{X_\proket}|_{\widetilde{X}} \xrightarrow{v} \BBdRp|_{\widetilde{X}}[[\overline{P} - 1]]$ is induced by $a \mapsto [\overline{T}^{a\flat}] \, \mono{\overline{a}}$, where $P_{\bQ_{\geq 0}, X_\proket}$ denotes the constant sheaf of monoids on $X_\proket$ associated with $P_{\bQ_{\geq 0}}$, and where $\overline{a}$ denotes the image of $a$ in $\overline{P}$.
    \end{enumerate}
\end{lemma}
\begin{proof}
    Consider any log affinoid perfectoid object $U = \varprojlim_{i \in I} U_i \in {X_\proket}_{/\widetilde{X}}$, as in \cite[\aDef \logadicdeflogaffperf]{Diao/Lan/Liu/Zhu:lasfr}, with $U_i = (\Spa(R_i, R_i^+), \cM_i, \alpha_i)$.  We would like to construct a compatible family of maps
    \begin{equation}\label{eq-lem-O-str-BdR-lifting}
        v_i: R_i \to \BBdRp(U)[[\overline{P} - 1]],
    \end{equation}
    indexed by $i \in I$.  For each $i \in I$, by \cite[\aLem \logadiclemAbhyankarbasic]{Diao/Lan/Liu/Zhu:lasfr}, $U_i \times_\bE \bE_{m_i} \to \bE_{m_i}$ is strictly \'etale for some $m_i$.  Given $U \to \widetilde{X}$, some $U_{i'} \to U_i$ factors through $U_{i'} \to U_i \times_\bE \bE_{m_i}$, so that $R_i \to R_{i'}$ factors as $R_i \to B := \cO(U_i \times_\bE \bE_{m_i}) \to R_{i'}$.  We have a structure homomorphism $k_{m_i}[\frac{1}{m_i} P] / (Q - \{0\}) \to B$ induced by the second projection $U_i \times_\bE \bE_{m_i} \to \bE_{m_i}$, and a canonical $k_{m_i}$-algebra homomorphism $k_{m_i}[\frac{1}{m_i} P] / (Q - \{0\}) \to \BBdRp(U)[[\overline{P} - 1]]$ sending the image of $\mono{a}$ to $[\overline{T}^{a\flat}] \, \mono{\overline{a}}$, for all $a \in \frac{1}{m_i} P$, which fit into the following commutative diagram of solid arrows
    \[
        \xymatrix{ & {k_{m_i}[\frac{1}{m_i} P] / (Q - \{0\})} \ar[rr] \ar[d] & & {\BBdRp(U)[[\overline{P} - 1]]} \ar[d]^-{\mono{a} \mapsto 1, \, \xi \mapsto 0} \\
        {R_i} \ar[r] & {B} \ar[r] \ar@{.>}[urr] & {R_{i'}} \ar[r] & {\widehat{\cO}_{X_\proket}(U).} }
    \]
    By \cite[\aCor 1.7.3(iii)]{Huber:1996-ERA}, there is a finitely generated $k^+[\frac{1}{m_i} P] / (Q - \{0\})$-algebra $B_0^+$ such that $B_0 := B_0^+[\frac{1}{p}]$ is \'etale over $k[\frac{1}{m_i} P] / (Q - \{0\})$ and such that $B$ is the $p$-adic completion of $B_0$.  Then it follows from \cite[\aLem 6.11]{Scholze:2013-phtra} that there is a unique continuous lifting $B \to \BBdRp(U)[[\overline{P} - 1]]$, denoted by the dotted arrow above, making the whole diagram commute.  Then the composition of $R_i \to B\to  \BBdRp(U)[[\overline{P} - 1]]$ gives the desired \Refeq{\ref{eq-lem-O-str-BdR-lifting}}.  It is clear that such $v_i$'s, for all $i \in I$, are independent of the choices, compatible with each other, and define the desired \Refeq{\ref{eq-lem-O-str-BdR}}.
\end{proof}

The following lemma generalizes the isomorphism \Refeq{\ref{eq-P-to-Ainf}} for $X = \Spa(A, A^+)$.  Note that \Refeq{\ref{eq-lem-O-str-BdR}} induces a map $A \to (\BBdRp(\widetilde{X}) / \xi^r)[[\overline{P} - 1]] \xrightarrow{\mono{\overline{a}} \mapsto 1} \BBdRp(\widetilde{X}) / \xi^r$.  Together with \Refeq{\ref{eq-P-to-Ainf}}, it induces a canonical map
\begin{equation}\label{eq-str-BdR}
    \bigl(A \ho_k (\BdRp / \xi^r)\bigr) \ho_{(\BdRp / \xi^r)\Talg{\overline{P}}} (\BdRp / \xi^r)\Talg{\overline{P}_{\bQ_{\geq 0}}} \to \BBdRp(\widetilde{X}) / \xi^r.
\end{equation}
\begin{lemma}\label{lem-str-BdR}
    The map \Refeq{\ref{eq-str-BdR}} is an isomorphism.  Furthermore, the $\Gamma$-action on $(\BdRp / \xi^r)\Talg{\overline{P}}$ defined in \Refeq{\ref{eq-Gamma-act-mono-alg}} uniquely extends to a continuous $\Gamma$-action on $A \ho_k (\BdRp / \xi^r)$, which is trivial modulo $\xi$ and makes \Refeq{\ref{eq-str-BdR}} $\Gamma$-equivariant.
\end{lemma}
\begin{proof}
    Since \Refeq{\ref{eq-str-BdR}} is compatible with the filtrations induced by multiplication by the powers of $\xi$, by considering the associated graded pieces, it suffices to show that $(A \ho_k \widehat{k}_\infty) \ho_{\widehat{k}_\infty\Talg{\overline{P}}} \widehat{k}_\infty\Talg{\overline{P}_{\bQ_{\geq 0}}} \cong \widehat{A}_\infty$.  Then the same argument as in the proof of \cite[\aLem 6.18]{Scholze:2013-phtra} applies, with its input \cite[\aLem 4.5]{Scholze:2013-phtra} for the tower $\widetilde{\bT}^n \to \bT^n$ replaced with \cite[\aLem \logadiclemScholzeref]{Diao/Lan/Liu/Zhu:lasfr} for the tower $\varprojlim_m \, \Spa(k_m\Talg{\frac{1}{m} \overline{P}}, k_m^+\Talg{\frac{1}{m} \overline{P}} \to \bE$.

    The unique existence of a continuous $\Gamma$-action on $A \ho_k (\BdRp / \xi^r)$ extending the $\Gamma$-action on $(\BdRp / \xi^r)\Talg{\overline{P}}$ follows from an argument similar to the one in the proof of Lemma \ref{lem-O-str-BdR}.  Concretely, since the map $k\Talg{\overline{P}} \to A$ arises as the $p$-adic completion of an \'etale morphism $k[\overline{P}] \to A_0$ of finite type, the $\Gamma$-action on $(\BdRp / \xi^r)[\overline{P}]$ uniquely extends to a continuous $\Gamma$-action on $A_0 \otimes_k (\BdRp / \xi^r)$, and further uniquely extends to a continuous $\Gamma$-action after $p$-adic completion, with desired properties.
\end{proof}

\begin{lemma}\label{lem-M-to-BdRp-pow}
    There exists a map of sheaves of monoids
    \begin{equation}\label{eq-lem-M-to-BdRp-pow}
        \beta: \cM^\flat|_{\widetilde X} \to (\BBdRp|_{\widetilde{X}}[[P - 1]])^\times
    \end{equation}
    satisfying the following conditions:
    \begin{enumerate}
        \item For all $a \in \cM^\flat$, we have $v\bigl(\alpha(a^\sharp)\bigr) = [\alpha^\flat(a)] \, \beta(a)$, where we denote by $a \mapsto a^\sharp$ the natural projection $\cM^\flat \to \cM$.

        \item The composition of $\beta$ with the canonical map $\BBdRp|_{\widetilde{X}}[[P - 1]] \xrightarrow{\mono{a} \mapsto 1, \, \xi \mapsto 0} \widehat{\cO}_{X_\proket}|_{\widetilde{X}}$ is the constant $1$.

        \item The restriction of $\beta$ to $Q_{\bQ_{\geq 0}}$ \Pth{as a constant sheaf} is given by $a \mapsto \mono{a}$.
    \end{enumerate}
\end{lemma}
\begin{proof}
    The sheaf $\cM^\flat|_{\widetilde X}$ is generated by $P_{\bQ_{\geq 0}} \cong \overline{P}_{\bQ_{\geq 0}} \oplus Q_{\bQ_{\geq 0}}$ \Pth{as constant sheaves} and $\varprojlim\limits_{f \mapsto f^p} \cO_{X_\proket}^\times|_{\widetilde{X}}$.  We need to define the map for $\overline{a} \in \overline{P}_{\bQ_{\geq 0}}$.  If we write \Pth{locally} $\alpha^\flat(\overline{a}) = h \, \overline{T}^{\overline{a}_0 \flat}$, for some section $h$ of $\varprojlim\limits_{f \mapsto f^p} \cO_{X_\proket}^\times|_{\widetilde{X}} \subset \widehat{\cO}_{X^\flat_\proket}^\times$ and $\overline{a}_0 \in \overline{P}_{\bQ_{\geq 0}}$, then $\alpha(\overline{a}^\sharp) = \bigl(\alpha^\flat(\overline{a})\bigr)^\sharp = h^\sharp \, \overline{T}^{\overline{a}_0}$, and the conditions of the lemma are satisfied by the local section $\beta(\overline{a}) := \frac{v(h^\sharp)}{[h]} \mono{\overline{a}_0}$ of $(\BBdRp[[P - 1]])^\times|_{\widetilde{X}}$.  This expression is independent of the local choices, and hence globalizes and defines the desired map $\beta$.
\end{proof}

Next, we give an explicit description of $\OBdlp$ on the localized site ${X_\proket}_{/\widetilde{X}}$.  Let $U = \varprojlim_{i \in I} U_i \in {X_\proket}_{/\widetilde{X}}$ be a log affinoid perfectoid object, with $U_i = (\Spa(R_i, R_i^+), \cM_i, \alpha_i)$, as in the proof of Lemma \ref{lem-O-str-BdR}.  Let $S_{i, r}$ be as in \Refeq{\ref{def-S-i-r}}.  Note that, for $a = (a', a'')$ with $a'' \in Q_{\bQ_{\geq 0}} - \{0\}$, we have $\alpha^\flat(a'') = 0$ and hence the relation $\alpha_i(a') = [\alpha^\flat(a'')] \, \monob{a}$ reduces to simply $\alpha_i(a') = 0$ in $S_{i, r}$, with no constraint on $\monob{a}$, in which case we can view $\monob{a}$ as a free variable.  Consider the map
\[
    (\BBdRp(U) / \xi^r)[P] \to S_{i, r}: \; \mono{a} \mapsto \monob{(a, a)}, \; \Utext{for all $a \in P$},
\]
which sends $(\xi, \mathfrak{M})$ to $\ker \theta_{\log}$, and therefore induces a map $\BBdRp(U)[[P - 1]] \to \widehat{S}_i$.  By taking completion and sheafification, we obtain a map
\begin{equation}\label{eq-OBdlp-loc}
    \BBdRp|_{\widetilde{X}}[[P - 1]] \to \OBdlp|_{\widetilde{X}}
\end{equation}
on ${X_\proket}_{/\widetilde{X}}$.  If we define $\Fil^r\BBdRp|_{\widetilde{X}}[[P - 1]] := (\xi, \mathfrak{M})^r\BBdRp|_{\widetilde{X}}[[P - 1]]$, then \Refeq{\ref{eq-OBdlp-loc}} is compatible with the filtrations on both sides \Pth{see Definition \ref{def-OBdl}}.

The following is a log analogue of \cite[\aProp 5.2.2]{Brinon:2008-RPR} and \cite[\aProp 6.10]{Scholze:2013-phtra}, formulated in terms of charts and monoids:
\begin{prop}\label{prop-OBdlp-loc}
    The map \Refeq{\ref{eq-OBdlp-loc}} is an isomorphism of filtered sheaves.
\end{prop}
\begin{proof}
    Let $U = \varprojlim_{i \in I} U_i \in {X_\proket}_{/\widetilde{X}}$ be a log affinoid perfectoid object, with $U_i = (\Spa(R_i, R_i^+), \cM_i, \alpha_i)$, as in Lemma \ref{lem-O-str-BdR}.  For $i \in I$ and $r \geq 1$, the map \Refeq{\ref{eq-lem-O-str-BdR}} induces a natural map $R_i \to (\BBdRp(U) / \xi^r)[[\overline{P} - 1]]$.  Together with the map \Refeq{\ref{eq-lem-M-to-BdRp-pow}}, these maps induce a ring homomorphism
    \begin{equation}\label{eq-prop-OBdlp-loc-mono-alg}
        \bigl(R_i \ho_{W(\kappa)} (\BBdRp(U) / \xi^r)\bigr)[M_i \times_M M^\flat] \to (\BBdRp(U) / \xi^r)[[P - 1]],
    \end{equation}
    sending $\monob{a}$ to $\beta(a'')$, for all $a = (a', a'') \in M_i \times_M M^\flat$.  By Lemmas \ref{lem-O-str-BdR} and \ref{lem-M-to-BdRp-pow}, this map factors through $S_{i, r} \to (\BBdRp(U) / \xi^r)[[P - 1]]$, and its composition with $(\BBdRp(U) / \xi^r)[[P - 1]] \xrightarrow{\mono{a} \mapsto 1, \, \xi \mapsto 0} \widehat{R}_\infty$ is the map $\theta_{\log}$, where $S_{i, r}$ and $\theta_{\log}$ are as in \Refeq{\ref{def-S-i-r}} and \Refeq{\ref{eq-theta-log}}, respectively.  Therefore, this map sends $\ker \theta_{\log}$ to $(\xi, \mathfrak{M})$.  By taking $\ker \theta_{\log}$-adic completion, inverse limit over $r \geq 1$, and direct limit over $i \in I$, we obtain a map $\OBdlp(U) \to \BBdRp(U)[[P - 1]]$, whose pre-composition with the map $\BBdRp(U)[[P - 1]] \to \OBdlp(U)$ given by \Refeq{\ref{eq-OBdlp-loc}} is the identity map, because it is $\BBdRp(U)$-linear and sends $\mono{a}$ to $\mono{a}$, for all $a \in P$.  On the other hand, the post-composition of \Refeq{\ref{eq-lem-O-str-BdR-lifting}} with $\BBdRp(U)[[P - 1]] \to \OBdlp(U)$ is the natural map $R_i \to \OBdlp(U)$, because $k[P_{\bQ_{\geq 0}}] \to \BBdRp(U)[[\overline{P} - 1]] \to \widehat{S}_i$ sends $\mono{a} \mapsto [\overline{T}^{a\flat}] \, \mono{\overline{a}} \mapsto [\overline{T}^{a\flat}] \, \monob{(\overline{a}, \overline{a})} = \overline{T}^a$ and the map $k[\frac{1}{m_i} P] / (Q - \{0\}) \to B_0$ in the proof of Lemma \ref{lem-O-str-BdR} is \'etale.  Consequently, the map $S_{i, r} \to \widehat{S}_i / \xi^r$ induced by the composition of \Refeq{\ref{eq-prop-OBdlp-loc-mono-alg}} with $(\BBdRp(U) / \xi^r)[[P - 1]] \to \widehat{S}_i / \xi^r$ coincides with the natural map, because both maps send the image of $\monob{a}$ in $S_{i, r}$ to the same further image in $\widehat{S}_i / \xi^r$, for all $a \in M_i \times_M M^\flat$.  Thus, the composition of $\OBdlp(U) \to \BBdRp(U)[[P - 1]] \to \OBdlp(U)$ is also the identity map, as desired.
\end{proof}

As a byproduct of the proof, we see that, as in \cite{Scholze:2016-phtra-corr}, for $U = \varprojlim_{i \in I} U_i$ as above, the natural map $\widehat{S}_i \to \OBdlp(U)$ \Pth{as in Definition \ref{def-OBdl}} is already an isomorphism, for each $i$.

Now, as before, let us fix a $\bZ$-basis $\{ a_1, \ldots, a_n \}$ of $P^\gp$, and write $\monon{j} = \monon{a_j}$, for each $j = 1, \ldots, n$, so that we have $\BBdRp|_{\widetilde{X}}[[P - 1]] \cong \BBdRp|_{\widetilde{X}}[[\monon{1}, \ldots, \monon{n}]]$ as in \Refeq{\ref{eq-BBdRp-pow}}.

\begin{cor}\label{cor-OBdlp-loc-gr}
    The isomorphism \Refeq{\ref{eq-OBdlp-loc}} induces isomorphisms
    \[
    \begin{split}
        & \Fil^r \OBdl  \cong t^r \BBdRp\{W_1, \ldots, W_n\} \\
        & := \Bigl\{ t^r \sum_{\Lambda \in \bZ_{\geq 0}^n} b_\Lambda \, W^\Lambda \in \BBdRp[[W_1, \ldots, W_n]] : \Utext{$b_\Lambda \rightarrow 0$, $t$-adically, as $|\Lambda| \rightarrow \infty$} \Bigr\}
    \end{split}
    \]
    over ${X_\proket}_{/\widetilde{X}}$, for all $r \in \bZ$, where we have the variable
    \begin{equation}\label{eq-def-W-i}
        W_j := t^{-1} \monon{j},
    \end{equation}
    for each $1 \leq j \leq n$, and the monomial
    \begin{equation}\label{eq-def-W-I}
        W^\Lambda := W_1^{\Lambda_1} \cdots W_n^{\Lambda_n},
    \end{equation}
    with $|\Lambda| := |\Lambda_1| + \cdots + |\Lambda_n| = \Lambda_1 + \cdots + \Lambda_n$, for each exponent $\Lambda = (\Lambda_1, \ldots, \Lambda_n) \in \bZ_{\geq 0}^n$.  \Pth{Here we denote by $\{W_1, \ldots, W_n\}$ the ring of power series that are $t$-adically convergent, which is similar to the notation $\Talg{W_1, \ldots, W_n}$ for the ring of power series that are $p$-adically convergent.}  Thus, $\gr^r \OBdl \cong t^r \widehat{\cO}_{X_\proket}[W_1, \ldots, W_n]$, for all $r \in \bZ$, and $\gr^\bullet \OBdl \cong \widehat{\cO}_{X_\proket}[t^\pm, W_1, \ldots, W_n]$.
\end{cor}

By comparing the constructions, we obtain the following:
\begin{cor}\label{cor-OBdRp-loc-cl-imm}
    Suppose that $\imath: Z \to X$ is a strict closed immersion of log adic spaces such that the underlying morphism of adic spaces is the pullback of the closed immersion $\Spa(k\Talg{\overline{P} / Q'}, k^+\Talg{\overline{P} / Q'}) \Em \bE = \Spa(k\Talg{\overline{P}}, k^+\Talg{\overline{P}})$ induced by $k\Talg{\overline{P}} \Surj k\Talg{\overline{P}} / (Q' - \{0\}) \cong k\Talg{\overline{P} / Q'}$, for some direct summand $Q'$ of $\overline{P}$.  Let $\widetilde{Z} := \widetilde{X} \times_X Z$.  Suppose that $U \in {X_\proket}_{/\widetilde{X}}$ is log affinoid perfectoid, whose pullback $V: = U \times_X Z$ is log affinoid perfectoid in ${Z_\proket}_{/\widetilde{Z}}$.  Then, for each $r \geq 1$, the canonical surjection $\BBdRpX{X}(U) / \xi^r \Surj \BBdRpX{Z}(V) / \xi^r$ \Pth{\Refcf{} Corollary \ref{cor-BdR-cl-imm}} induces
    \begin{equation}\label{eq-cor-OBdRp-loc-cl-imm-BdR-mod-xi-r}
        \bigl(\BBdRpX{X}(U) \big/ \xi^r\bigr) \big/ ([T^{sa\flat}])^\wedge_{s \in \bQ_{> 0}, a \in Q' - \{0\}} \Mi \bigl(\BBdRpX{Z}(V) \big/ \xi^r\bigr),
    \end{equation}
    where $([T^{sa\flat}])^\wedge_{s \in \bQ_{> 0}, a \in Q' - \{0\}}$ denotes the $p$-adic completion of the ideal generated by $\{ [T^{sa\flat}] \}_{s \in \bQ_{> 0}, a \in Q' - \{0\}}$.  In addition, the canonical isomorphisms $\BBdRpX{Z}|_{\widetilde{Z}}[[P - 1]] \cong \OBdlpX{Z}|_{\widetilde{Z}}$ and $\BBdRpX{X}|_{\widetilde{X}}[[P - 1]] \cong \OBdlpX{X}|_{\widetilde{X}}$ given by Proposition \ref{prop-OBdlp-loc} are compatible with each other via pullback and pushforward.
\end{cor}

\subsection{Consequences}\label{sec-OBdl-conseq}

\begin{rk}\label{rem-OBdl-conseq-setting}
    Let $k$ be a $p$-adic field.  We may apply the calculations in Section \ref{sec-OBdl-explicit} in the following two cases:
    \begin{enumerate}
        \item  When $X$ is log smooth over $k$, \'etale locally there are \emph{toric charts} $X \to \bE = \Spa(k\Talg{P}, k^+\Talg{P})$ \Pth{with $Q = 0$}, as in \cite[\aDef \logadicdeftoricchart]{Diao/Lan/Liu/Zhu:lasfr}.

        \item  Let $Y$ be smooth over $k$, with log structure defined by a normal crossings divisor $E \Em Y$ as in Example \ref{ex-log-adic-sp-ncd}, and let $X$ be a smooth intersection of irreducible components of $E$, equipped with the log structure pulled back from $Y$, as in \cite[\aEx \logadicexlogadicspncdstrictclimm]{Diao/Lan/Liu/Zhu:lasfr}.  Then, \'etale locally, there is a toric chart of $Y$ as above inducing a strictly \'etale morphism $X \to \bE = \Spa(k\Talg{P / Q}, k^+\Talg{P /Q})$ \Pth{for some direct summand $Q$ of $P$}.
    \end{enumerate}
    In both cases, the sheaves of log differentials $\Omega^{\log}_X$ and $\Omega^{\log, \bullet}_X = \Ex^\bullet \Omega^{\log}_X$ are defined as in \cite[\aDefs \logadicdeflogdiffsheaf{} and \logadicdeflogdiffsheafex]{Diao/Lan/Liu/Zhu:lasfr}, and are known to be vector bundles on $X$, by \cite[\aThm \logadicthmlogdiffsheaffund{} and \aCor \logadiccorlogdiffsheafstrictclimm]{Diao/Lan/Liu/Zhu:lasfr}.  \Pth{As before, by abuse of notation, their pullbacks to $X_\et$, $X_\ket$, and $X_\proket$ will still be denoted by the same symbols.}
\end{rk}

In particular, we have the following \emph{Poincar\'e lemma} for $\OBdlp$ and $\OBdl$, with the log connections defined at the end of Section \ref{sec-OBdl}:
\begin{cor}\label{cor-log-dR-cplx}
    Let $X$ be as in Remark \ref{rem-OBdl-conseq-setting}.
    \begin{enumerate}
        \item\label{cor-log-dR-cplx-1}  We have an exact complex $0 \to \BBdRp \to \OBdlp \Mapn{\nabla} \OBdlp \otimes \Omega^{\log, 1}_X \Mapn{\nabla} \OBdlp \otimes \Omega^{\log, 2}_X \to \cdots$.

        \item\label{cor-log-dR-cplx-2}  The above statement holds with $\BBdRp$ and $\OBdlp$ replaced with $\BBdR$ and $\OBdl$, respectively.

        \item\label{cor-log-dR-cplx-3}  The subcomplex $0 \to \Fil^r \BBdR \to \Fil^r \OBdl \Mapn{\nabla} (\Fil^{r - 1} \OBdl) \otimes \Omega^{\log, 1}_X \Mapn{\nabla} (\Fil^{r - 2} \OBdl) \otimes \Omega^{\log, 2}_X \cdots$ of the complex for $\BBdR$ and $\OBdl$ is also exact, for each $r \in \bZ$.

        \item\label{cor-log-dR-cplx-4}  For each $r \in \bZ$, the quotient complex $0 \to \gr^r \BBdR \to \gr^r \OBdl \Mapn{\nabla} (\gr^{r - 1} \OBdl) \otimes \Omega^{\log, 1}_X \Mapn{\nabla} (\gr^{r - 2} \OBdl) \otimes \Omega^{\log, 2}_X \cdots$ of the previous complex is exact, and can be identified with the complex $0 \to \cO_{X_\proket}(r) \to \OCl(r) \Mapn{\nabla} ( \OCl(r) ) \otimes \Omega^{\log, 1}_X(-1) \Mapn{\nabla} ( \OCl(r) ) \otimes \Omega^{\log, 2}_X(-2) \cdots$.
    \end{enumerate}
    \Pth{All the above tensor products are over $\cO_{X_\proket}$, which we omitted for simplicity.}
\end{cor}
\begin{proof}
    In both cases of Remark \ref{rem-OBdl-conseq-setting}, up to \'etale localization on $X$, we may assume that there exists a strictly \'etale morphism $X \to \bE = \Spa(k\Talg{\overline{P}}, k^+\Talg{\overline{P}})$, and then pass to $\widetilde{X}$ pro-Kummer \'etale locally, as in Section \ref{sec-OBdl-explicit}.  Choose a $\bZ$-basis $\{ a_1, \ldots, a_n \}$ of $P^\gp$, and write $a_j = a_j^+ - a_j^-$ for some $a_j^+, a_j^- \in P$, for each $j = 1, \ldots, n$.  By Proposition \ref{prop-OBdlp-loc} and Corollary \ref{cor-OBdlp-loc-gr}, it suffices to prove the exactness of the complexes by using $\BBdRp|_{\widetilde{X}}[[\monon{1}, \ldots, \monon{n}]]$ and $\BBdR|_{\widetilde{X}}\{W_1, \ldots, W_n\}$ in place of $\OBdlp$ and $\OBdl$, respectively.  Note that the isomorphism \Refeq{\ref{eq-OBdlp-loc}} matches $\monon{j}$ with $\log(\monob{(a_j^+, a_j^+)}) - \log(\monob{(a_j^-, a_j^-)})$.  By \cite[\aThm \logadicthmlogdiffsheaffund, \aCor \logadiccorlogdiffsheafstrictclimm, \aProp \logadicproplogdiffmonoid, and \aCor \logadiccorlogdiffmonoidstrictclimm]{Diao/Lan/Liu/Zhu:lasfr}, $\Omega^{\log}_X = \oplus_{j = 1}^n \bigl( \cO_X \, \delta(a_j) \bigr)$, and hence \Pth{because of \Refeq{\ref{eq-conn-e-a}}}
    \begin{equation}\label{eq-conn-X-i}
        \nabla(\monon{j}) = \nabla\bigl(\log(\monob{(a_j^+, a_j^+)})\bigr) - \nabla\bigl(\log(\monob{(a_j^-, a_j^-)})\bigr) = \delta(a_j^+) - \delta(a_j^-) = \delta(a_j)
    \end{equation}
    and \Pth{because of \Refeq{\ref{eq-def-W-i}}}
    \begin{equation}\label{eq-conn-W-i}
        \nabla(W_j) = t^{-1} \delta(a_j).
    \end{equation}
    The exactness then follows from a straightforward calculation.  \Pth{Note that the $t$-adic convergence condition on power series is not affected by taking anti-derivatives.}
\end{proof}

By combining Corollaries \ref{cor-gr-BdR} and \ref{cor-log-dR-cplx}, we obtain the \emph{log Faltings's extension}:
\begin{cor}\label{cor-log-Falt-ext}
    We have a short exact sequence of sheaves of $\widehat{\cO}_{X_\proket}$-modules
    \[
        0 \to \widehat{\cO}_{X_\proket}(1) \to \gr^1 \OBdlp \to \widehat{\cO}_{X_\proket} \otimes_{\cO_{X_\proket}} \Omega^{\log}_X \to 0.
    \]
\end{cor}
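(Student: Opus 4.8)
The plan is to realize the claimed sequence as the degree-one part of the logarithmic de Rham complex of $\OBdlp$ over $\BBdRp$, using the connection $\nabla$ of \eqref{eq-conn-OBdlp} together with two computations of graded pieces: $\gr^0 \OBdlp \cong \widehat{\cO}_{X_\proket}$, which holds by construction since $\theta_{\log}: \OBdlp \to \widehat{\cO}_{X_\proket}$ is surjective with kernel $\Fil^1 \OBdlp$; and $\gr^1 \BBdRp \cong \gr^1 \BBdR \cong \widehat{\cO}_{X_\proket}(1)$, which follows from \eqref{E: assgrofBdR} and Corollary \ref{gr} once one notes that the chosen $t$ generates $\Fil^1 \BBdRp$, so that the positive graded pieces of $\BBdRp$ and $\BBdR$ coincide.

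First I would record that $\nabla\bigl(\Fil^s \OBdlp\bigr) \subset \bigl(\Fil^{s-1}\OBdlp\bigr) \otimes_{\cO_{X_\proket}} \Omega^{\log}_X$, so that $\nabla$ descends to a morphism
\[
    \overline{\nabla}: \gr^1 \OBdlp \to \bigl(\gr^0 \OBdlp\bigr) \otimes_{\cO_{X_\proket}} \Omega^{\log}_X \cong \widehat{\cO}_{X_\proket} \otimes_{\cO_{X_\proket}} \Omega^{\log}_X.
\]
Since $\BBdRp \to \OBdlp$ is strictly compatible with filtrations, $\gr^1 \BBdRp$ injects into $\gr^1 \OBdlp$; and because $\nabla$ is $\BBdRp$-linear, the composite $\gr^1 \BBdRp \to \gr^1 \OBdlp \xrightarrow{\overline{\nabla}} \widehat{\cO}_{X_\proket} \otimes_{\cO_{X_\proket}} \Omega^{\log}_X$ is zero. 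The assertion thus amounts to the exactness of
\[
    0 \to \gr^1 \BBdRp \to \gr^1 \OBdlp \xrightarrow{\overline{\nabla}} \widehat{\cO}_{X_\proket} \otimes_{\cO_{X_\proket}} \Omega^{\log}_X \to 0 .
\]

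To verify this, I would work on a basis of $X_\proket$: by Proposition \ref{P:logaffperbasis}\Refenum{\ref{P:logaffperbasis-2}} the log affinoid perfectoid objects suffice, and over each of them we may pass pro-Kummer \'etale locally to the cover $\widetilde{X}$ attached to a toric chart $X \to \bE = \Spa(k[P], \cO_k[P])$ as in Section \ref{explicit}. Fixing a $\bZ$-basis $\{a_1, \ldots, a_n\}$ of $P^\gp$, Proposition \ref{localdescription} and \eqref{eq-BBdrp-pow} identify $\OBdlp|_{\widetilde{X}}$ with $\BBdRp|_{\widetilde{X}}[[\monon{1}, \ldots, \monon{n}]]$, where $\Fil^r = (\xi, \monon{1}, \ldots, \monon{n})^r$; in particular $\gr^1 \OBdlp|_{\widetilde{X}}$ is free over $\widehat{\cO}_{X_\proket}|_{\widetilde{X}}$ on the classes of $\xi, \monon{1}, \ldots, \monon{n}$, while $\Fil^r \OBdlp|_{\widetilde X} \cap \BBdRp|_{\widetilde X} = (\xi)^r \BBdRp|_{\widetilde X}$ (this is the strictness of the filtration) so that $\gr^1 \BBdRp|_{\widetilde X}$ is the span of the class of $\xi$. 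Formula \eqref{eq-conn-X-i} gives $\nabla(\monon{j}) = \delta(a_j)$ and, by $\BBdRp$-linearity, $\nabla(\xi) = 0$; hence $\overline{\nabla}$ kills the class of $\xi$ and sends the class of $\monon{j}$ to $1 \otimes \delta(a_j)$. Since $\{\delta(a_j)\}_{j=1}^n$ is an $\cO_{X_\proket}$-basis of $\Omega^{\log}_X|_{\widetilde{X}}$ (by Theorem \ref{T:fundamental-sheaf}\Refenum{\ref{T:fundamental-sheaf-2}} and Proposition \ref{P:chart}, as already used in the proof of Corollary \ref{logdRcomplex}), $\overline{\nabla}$ is surjective with kernel exactly the $\widehat{\cO}_{X_\proket}$-span of the class of $\xi$, i.e.\@\xspace the image of $\gr^1 \BBdRp$. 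This establishes the exactness above, and combining with the identification of $\gr^1 \BBdRp$ via Corollary \ref{gr} finishes the proof.

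I expect no serious obstacle: the substantive work is already contained in Section \ref{explicit} and in the proof of Corollary \ref{logdRcomplex}. The only points needing a little care are the two bookkeeping facts highlighted above, namely that $\gr^0 \OBdlp$ is $\widehat{\cO}_{X_\proket}$ and not the larger sheaf $\OCl = \gr^0 \OBdl$ appearing in Corollary \ref{logdRcomplex}, and that the inclusion $\BBdRp \hookrightarrow \OBdlp$ is strictly filtered; both are immediate from the local presentation of $\OBdlp$ furnished by Proposition \ref{localdescription}.
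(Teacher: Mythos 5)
Your argument is correct, and it is essentially the route the paper intends: the paper offers no separate proof beyond ``combine Corollaries \ref{gr} and \ref{logdRcomplex}'', and the substance in both cases is exactly what you spell out — the filtered local description of $\OBdlp$ from Proposition \ref{localdescription} together with $\nabla(\monon{j}) = \delta(a_j)$ and the identification $\gr^1 \BBdRp \cong \widehat{\cO}_{X_\proket}(1)$. Your two ``bookkeeping'' points (that $\gr^0 \OBdlp = \widehat{\cO}_{X_\proket}$ rather than $\OCl$, and the strictness of $\BBdRp \hookrightarrow \OBdlp$) are indeed the only places needing care, and you handle them correctly via the local presentation.
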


Finally, suppose that $X$ and $X'$ are both as in Remark \ref{rem-OBdl-conseq-setting}, and that $f: X \to X'$ is a log smooth morphism.  Then we have a canonical short exact sequence $0 \to f^*(\Omega^{\log}_{X'}) \to \Omega^{\log}_X \to \Omega^{\log}_{X / X'} \to 0$ of vector bundles on $X$, by \cite[\aThm \logadicthmlogdiffsheaffund{} and \aCor \logadiccorlogdiffsheafstrictclimm]{Diao/Lan/Liu/Zhu:lasfr}; and we shall write $\Omega^{\log, \bullet}_{X / Y} = \Ex^\bullet \Omega^{\log}_{X / Y}$, as in \cite[\aDef \logadicdeflogdiffsheafex]{Diao/Lan/Liu/Zhu:lasfr}.  In this case, the log de Rham complex $(\Omega^{\log, \bullet}_X, \nabla)$ induces the \emph{relative log de Rham complex} $(\Omega^{\log, \bullet}_{X / X'}, \nabla)$, and we have the following \emph{relative Poincar\'e lemma}:
\begin{cor}\label{cor-log-dR-cplx-rel}
    With $f: X \to X'$ as above, we have an exact complex $0 \to \BBdRpX{X} \otimes_{f_\proket^{-1}(\BBdRpX{X'})} f_\proket^{-1}(\OBdlpX{X'}) \to \OBdlpX{X} \Mapn{\nabla} \OBdlpX{X} \otimes \Omega^{\log, 1}_{X / X'} \Mapn{\nabla} \OBdlpX{X} \otimes \Omega^{\log, 2}_{X / X'} \to \cdots$.  Similarly, we have an exact complex with $\BBdRpX{X}$, $\BBdRpX{X'}$, $\OBdlpX{X}$, and $\OBdlpX{X'}$ replaced with $\BBdRX{X}$, $\BBdRX{X'}$, $\OBdlX{X}$, and $\OBdlX{X'}$, respectively, which is strictly compatible with all the filtrations.
\end{cor}
\begin{proof}
    By \cite[\aProps \logadicproplogsmchart{} and \logadicproptoricchart]{Diao/Lan/Liu/Zhu:lasfr}, up to \'etale localization on $X$ and $X'$, we may assume that $f: X \to X'$ admits an injective sharp fs chart $P' \Em P$.  Then we can compatibly define log affinoid perfectoid objects $\widetilde{X} \to X$ and $\widetilde{X}' \to X'$, as in Section \ref{sec-OBdl-explicit}, with a morphism $\widetilde{X} \to \widetilde{X}'$ lifting $f: X \to X'$.  Let $\{ a_1, \ldots, a_n \}$ and $\{ a'_1, \ldots, a'_{n'} \}$ be $\bZ$-bases of $P^\gp$ and $(P')^\gp$, respectively, and define $\{ \monon{1}, \ldots, \monon{n} \}$, $\{ \monon{1}', \ldots, \monon{n'}' \}$, $\{ W_1, \ldots, W_n \}$, and $\{ W'_1, \ldots, W'_{n'} \}$, as in the proof of Corollary \ref{cor-log-dR-cplx}.  Hence, it suffices to prove the exactness of the complex $0 \to \BBdRpX{X}|_{\widetilde{X}}[[\monon{1}', \ldots, \monon{n}']] \to \BBdRpX{X}|_{\widetilde{X}}[[\monon{1}, \ldots, \monon{n}]] \to \BBdRpX{X}|_{\widetilde{X}}[[\monon{1}, \ldots, \monon{n}]] \otimes \Omega^{\log, 1}_{X / X'}|_{\widetilde{X}} \to \cdots$ and the analogous one for $\BBdRX{X}|_{\widetilde{X}}\{W'_1, \ldots, W'_{n'}\}$ and $\BBdRX{X}|_{\widetilde{X}}\{W_1, \ldots, W_n\} \otimes \Omega^{\log, \bullet}_{X / X'}|_{\widetilde{X}}$.  Since $(P')^\gp$ and $P^\gp$ are both finitely generated free abelian groups, $(P')^\gp_\bQ$ is noncanonically a direct factor of $P^\gp_\bQ$.  Hence, there exist elements $a'_{n' + 1}, \ldots, a'_n$ of $P$ whose images in $P^\gp_\bQ / (P')^\gp_\bQ$ form a $\bQ$-basis.  By \cite[\aThm \logadicthmlogdiffsheaffund, \aCor \logadiccorlogdiffsheafstrictclimm, \aProp \logadicproplogdiffmonoid, and \aCor \logadiccorlogdiffmonoidstrictclimm]{Diao/Lan/Liu/Zhu:lasfr}, we have $\Omega^{\log}_X = \oplus_{j = 1}^n \bigl( \cO_X \, \delta(a_j) \bigr)$, $f^*(\Omega^{\log}_{X'}) = \oplus_{j = 1}^{n'} \bigl( \cO_X \, \delta(a'_j) \bigr)$, and $\Omega^{\log}_{X / X'} = \oplus_{j = n'+1}^n \bigl( \cO_X \, \delta(a'_j) \bigr)$; and a $\bQ$-linear combination of $\monon{1}, \ldots, \monon{n}$ is annihilated by $\nabla: \BBdRpX{X}|_{\widetilde{X}}[[\monon{1}, \ldots, \monon{n}]] \to \BBdRpX{X}|_{\widetilde{X}}[[\monon{1}, \ldots, \monon{n}]] \otimes \Omega^{\log}_{X / X'}|_{\widetilde{X}}$ exactly when it lies in the $\bQ$-linear span of $\monon{1}', \ldots, \monon{n'}'$.  We have a similar statement for $\BBdRX{X}|_{\widetilde{X}}\{W_1, \ldots, W_n\}$ and $W'_1, \ldots, W'_{n'}$.  Thus, the exactness of the complexes follows from a straightforward calculation, as in the proof of Corollary \ref{cor-log-dR-cplx}.
\end{proof}

\section{Log Riemann--Hilbert Correspondences}\label{sec-log-RH}

In this section, we establish our log $p$-adic Riemann--Hilbert and Simpson correspondences.  Let $k$ be a $p$-adic field, with a fixed algebraic closure $\AC{k}$.  Let $K$ be a perfectoid field containing $k_\infty = k(\Grpmu_\infty) \subset \AC{k}$, and let $\Gal(K / k)$ abusively denote the group of continuous field automorphisms of $K$ over $k$.

\subsection{Filtered log connections \Qtn{relative to {$\BdR$}}}\label{sec-log-conn-BdR}

Let us begin with a few definitions and constructions for a general locally noetherian adic space $X$ over $k$.
\begin{defn}\phantomsection\label{def-OXBdR}
    \begin{enumerate}
        \item\label{def-OXBdR-1}  As in \cite[\aSec 3.1]{Liu/Zhu:2017-rrhpl}, let
            \begin{equation}\label{eq-def-BdR}
                \BdRp = \BBdRp(K, \cO_K) \quad \Utext{and} \quad \BdR = \BBdR(K, \cO_K)
            \end{equation}
            \Pth{the first replacing \Refeq{\ref{eq-choice-t}} from now on}.  Let $t = \log([\epsilon]) \in \BdRp$, as in \Refeq{\ref{eq-choice-t}}.  Then the homomorphism $k \to K$ lifts uniquely to $k \to \BdRp$.

        \item\label{def-OXBdR-2}  For each integer $r \geq 1$, we define $\cO_X \ho_k (\BdRp / t^r)$ to be the sheaf on $X_\an$ associated with the presheaf which assigns to each affinoid open subset $U = \Spa(A, A^+) \subset X$ the ring $A \ho_k (\BdRp / t^r)$.  Then we define
            \[
                \cO_X \ho_k \BdRp = \varprojlim_r \bigl( \cO_X \ho_k (\BdRp / t^r) \bigr) \quad \Utext{and} \quad \cO_X \ho_k \BdR = (\cO_X \ho_k \BdRp)[t^{-1}].
            \]

        \item\label{def-OXBdR-3}  The filtrations on $\cO_X \ho_k \BdRp$ and $\cO_X \ho_k \BdR$ are defined by setting
            \[
                \Fil^r(\cO_X \ho_k \BdRp) = t^r (\cO_X \ho_k \BdRp) \quad \Utext{and} \quad \Fil^r(\cO_X \ho_k \BdR) = t^{-s} \Fil^{r + s}(\cO_X \ho_k \BdRp)
            \]
            for some \Pth{and hence every} $s \geq -r$.  Then we define
            \[
                (\cO_X \ho_k \BdR)^{[a, b]} = \Fil^a(\cO_X \ho_k \BdR) / \Fil^{b + 1}(\cO_X \ho_k \BdR),
            \]
            for any $-\infty \leq a \leq b \leq \infty$.  In particular, $\gr^r (\cO_X \ho_k \BdR) = (\cO_X \ho_k \BdR)^{[r, r]}$.

          \item\label{def-OXBdR-4}  By replacing affinoid open subsets $U \subset X$ in \Refenum{\ref{def-OXBdR-2}} with general \'etale morphisms $U \to X$ from affinoid adic spaces, we similarly define the sheaves $\cO_{X_\et} \ho_k (\BdRp / t^r)$, for all integers $r \geq 1$; $\cO_{X_\et} \ho_k \BdRp$; and $\cO_{X_\et} \ho_k \BdR$ on $X_\et$.  They are equipped with similarly defined filtrations.
    \end{enumerate}
\end{defn}

\begin{rk}\label{rem-cX-top}
    These sheaves were introduced slightly differently in \cite[\aSec 3.1]{Liu/Zhu:2017-rrhpl} as sheaves on $X_{K, \an}$ and $X_{K, \et}$.  But since $X_{K, \an}$ \Pth{\resp $X_{K, \et}$} is generated by base changes of objects of $X_{k', \an}$ \Pth{\resp $X_{k', \et}$}, for all finite extensions $k'$ of $k$ \Pth{see, \eg, \cite[\aLem 2.5]{Liu/Zhu:2017-rrhpl}}, the categories of finite locally free $\cO_X \ho_k (\BdRp / t^r)$-modules, $\cO_X \ho_k \BdRp$-modules, $\cO_{X_\et} \ho_k (\BdRp / t^r)$-modules, and $\cO_{X_\et} \ho_k \BdRp$-modules are naturally equivalent to the corresponding categories introduced in \cite[\aDef 3.5]{Liu/Zhu:2017-rrhpl}.  For example, the category of finite locally free $\cO_X \ho_k K$-modules \Pth{\ie, $\gr^0 (\cO_X \ho_k \BdRp)$-modules} on $X_\an$ is equivalent to the category of vector bundles on $X_{K, \an}$.
\end{rk}

Thanks to Remark \ref{rem-cX-top}, the arguments in the proofs of \cite[\aLems 3.1 and 3.2, \aProp 3.3, and \aCor 3.4]{Liu/Zhu:2017-rrhpl} also apply in the current setting and give the following:
\begin{lemma}\label{lem-OXBdR-mod}
    Recall that $\lambda: X_\et \to X_\an$ denotes the natural projection of sites.
    \begin{enumerate}
        \item\label{lem-OXBdR-mod-1}  If $X = \Spa(A, A^+)$ is affinoid, then
            \[
                H^i\bigl(X_\et, \cO_{X_\et} \ho_k (\BdRp / t^r)\bigr) =
                \begin{cases}
                    A \ho_k (\BdRp / t^r), & \Utext{if $i = 0$}; \\
                    0, & \Utext{if $i > 0$}.
                \end{cases}
            \]

        \item\label{lem-OXBdR-mod-2}  There is a canonical isomorphism $\gr^r (\cO_{X_\et} \ho_k \BdR) \cong \cO_{X_\et} \ho_k K(r)$.

        \item\label{lem-OXBdR-mod-3}  There are canonical isomorphisms
            \[
                \cO_X \ho_k (\BdRp / t^r) \cong \lambda_*\bigl(\cO_{X_\et} \ho_k (\BdRp / t^r)\bigr) \cong R\lambda_*\bigl(\cO_{X_\et} \ho_k (\BdRp / t^r)\bigr),
            \]
            which in turn induce, for $? = \emptyset$ and $+$, isomorphisms
            \[
                \cO_X \ho_k \BdR^? \cong \lambda_*(\cO_{X_\et} \ho_k \BdR^?) \cong R\lambda_*(\cO_{X_\et} \ho_k \BdR^?).
            \]

        \item\label{lem-OXBdR-mod-4}  If $X = \Spa(A, A^+)$ is affinoid, then we have canonical equivalences among the categories of finite projective $A \ho_k \BdRp$-modules; of finite locally free $\cO_X \ho_k \BdRp$-modules; and of finite locally free $\cO_{X_\et} \ho_k \BdRp$-modules.

        \item\label{lem-OXBdR-mod-5}  The pushforward $\lambda_*$ induces an equivalence from the category of finite locally free $\cO_{X_\et} \ho_k (\BdRp / t^r)$-modules \Pth{\resp $\cO_{X_\et} \ho_k \BdRp$-modules} to the category of finite locally free $\cO_X \ho_k (\BdRp / t^r)$-modules \Pth{\resp $\cO_X \ho_k \BdRp$-modules}.
    \end{enumerate}
\end{lemma}

As in \cite[\aSec 3.1]{Liu/Zhu:2017-rrhpl}, for $? = \emptyset$ or $+$, we can define the ringed space
\begin{equation}\label{eq-def-cX}
    \cX^? = (X_\an, \cO_X \ho_k \BdR^?),
\end{equation}
where $\cO_X \ho_k \BdRp$ and $\cO_X \ho_k \BdR$ are as in Definition \ref{def-OXBdR}\Refenum{\ref{def-OXBdR-2}}.  They should be interpreted as the \Pth{not-yet-defined} base changes of $X$ under $k \to \BdRp$ and $k \to \BdR$, respectively.  Then we have $\cO_{\cX^+} = \cO_X \ho_k \BdRp$ and $\cO_\cX = \cO_X \ho_k \BdR$.

Following \cite[\aDef 3.5]{Liu/Zhu:2017-rrhpl}, we call a finite locally free $\cO_X \ho_k \BdRp$-module a \emph{vector bundle} on $\cX^+$.  By considering such objects over open subspaces of $X$, these objects form a stack on $X_\an$.  By passing to the $t$-isogeny category, we obtain the stack of vector bundles on open subspaces of $\cX$.  Then the category of vector bundles on $\cX$ is the groupoid of global sections of this stack.  Note that, unlike in \cite[\aDef 3.5]{Liu/Zhu:2017-rrhpl}, we do not require that a vector bundle on $\cX$ comes from a vector bundle on $\cX^+$ via a global extension of scalars \Pth{although this extra generality will not be needed in the following}.  Clearly, there is a faithful functor from the category of vector bundles on $\cX$ to the category of $\cO_X \ho_k \BdR$-modules.

Hence, for each vector bundle $\cE$ on $X_\an$, the sheaf $\cE \ho_k \BdRp$ \Pth{\resp $\cE \ho_k \BdR$} \Pth{with its obvious meaning} is a vector bundle on $\cX^+$ \Pth{\resp $\cX$}.  More generally, if $\cE$ is a vector bundle on $X_\an$, and if $\cM$ is a vector bundle on $\cX^+$ \Pth{\resp $\cX$}, then we may regard $\cE \otimes_{\cO_X} \cM$ as a vector bundle on $\cX^+$ \Pth{\resp $\cX$}.

Now let $X$ be a log smooth fs log adic space over $k$.  Let $\Omega^{\log}_X$ and $\Omega^{\log, \bullet}_X = \Ex^\bullet \, \Omega^{\log}_X$ be the sheaves of log differentials on $X_\an$, as in \cite[\aDefs \logadicdeflogdiffsheaf{} and \logadicdeflogdiffsheafex]{Diao/Lan/Liu/Zhu:lasfr}.
\begin{defn}\label{def-log-diff-sheaf-ex-cX}
    For $? = \emptyset$ or $+$, let $\Omega^{\log}_{\cX^? / \BdR^?} := \Omega^{\log}_X \ho_k \BdR^?$ and $\Omega^{\log, \bullet}_{\cX^? / \BdR^?} := \Omega^{\log, \bullet}_X \ho_k \BdR^?$, called the \emph{sheaves of relative log differentials} on $\cX^?$ over $\BdR^?$.
\end{defn}

For $? = \emptyset$ or $+$, there is a natural $\BdR^?$-linear differential map $d: \cO_{\cX^?} \to \Omega^{\log}_{\cX^? / \BdR^?}$ inducing differential maps on $\Omega^{\log, \bullet}_{\cX^? / \BdR^?}$, extending the ones on $\cO_X$ and $\Omega^{\log, \bullet}_X$.

\begin{defn}\phantomsection\label{def-log-conn-etc}
    \begin{enumerate}
        \item\label{def-log-conn-etc-1}  A \emph{log connection} on a vector bundle $\cE$ on $\cX$ is a $\BdR$-linear map of sheaves $\nabla: \cE \to \cE \otimes_{\cO_\cX} \Omega^{\log}_{\cX / \BdR}$ satisfying the usual Leibniz rule.  We say that $\nabla$ is \emph{integrable} if $\nabla^2 = 0$, in which case we have the \emph{log de Rham complex} $\DRl(\cE) = (\cE \otimes_{\cO_\cX} \Omega^{\log, \bullet}_{\cX / \BdR}, \nabla)$ and the \emph{log de Rham cohomology} $H_{\log \dR}^i(\cX, \cE) := H^i(\cX, \DRl(\cE))$.

        \item\label{def-log-conn-etc-2}  Let $t = \log([\epsilon]) \in \BdRp$ be as in \Refeq{\ref{eq-choice-t}}.  A \emph{log} $t$-\emph{connection} on a vector bundle $\cE^+$ on $\cX^+$ is a $\BdRp$-linear map of sheaves $\nabla^+: \cE^+ \to \cE^+ \otimes_{\cO_{\cX^+}} \Omega^{\log}_{\cX^+ / \BdRp}$ satisfying the \Pth{modified} Leibniz rule $\nabla^+(f e) = (t e) \otimes df + f \nabla^+(e)$, for all $f \in \cO_{\cX^+}$ and $e \in \cE^+$.  We say $\nabla^+$ is \emph{integrable} if $(\nabla^+)^2 = 0$, in which case we have a similar log de Rham complex \Pth{as above}.

        \item\label{def-log-conn-etc-3}  A \emph{log Higgs bundle} on $X_K$ is a vector bundle $E$ on $X_{K, \an}$ equipped with an $\cO_{X_K}$-linear map of sheaves $\theta: E \to E \otimes_{\cO_{X_K}} \Omega^{\log}_{X_K}(-1)$ such that $\theta \wedge \theta = 0$.  \Pth{We shall often omit the subscript \Qtn{$\an$} in the following, when there is no risk of confusion.}  Then we have the \emph{log Higgs complex} $\Hil(E) = (E \otimes_{\cO_{X_K}} \Omega^{\log, \bullet}_{X_K}(-\bullet), \theta)$ \Pth{where the two $\bullet$ are equal to each other} and the \emph{log Higgs cohomology} $H_{\log \Hi}^i(X_K, E) := H^i(X_K, \Hil(\cE))$.

        \item\label{def-log-conn-etc-4}  A \emph{log connection} on a coherent sheaf $E$ on $X$ is a $k$-linear map of sheaves $\nabla: E \to E \otimes_{\cO_X} \Omega^{\log}_X$ satisfying the usual Leibniz rule.  We say that $\nabla$ is \emph{integrable} if $\nabla^2 = 0$, in which case we have the \emph{log de Rham complex} $\DRl(E) = (E \otimes_{\cO_X} \Omega^{\log, \bullet}_X, \nabla)$ and the \emph{log de Rham cohomology} $H_{\log \dR}^i(X, E) := H^i(X, \DRl(E))$.

            Suppose that $E$ is equipped with a decreasing filtration by coherent subsheaves $\Fil^\bullet E$ satisfying the \Pth{usual} Griffiths transversality condition $\nabla( \Fil^r E ) \subset ( \Fil^{r - 1} E ) \otimes_{\cO_X} \Omega^{\log}_X$, for all $r$.  Then the complex $\DRl(E)$ admits a filtration defined by $\Fil^r \DRl(E) := \bigl( ( \Fil^{r - \bullet} E ) \otimes_{\cO_X} \Omega^{\log, \bullet}_X, \nabla \bigr)$, with the two $\bullet$ equal to each other, and with $\nabla$ respecting the filtration and inducing $\cO_X$-linear morphisms on the graded pieces.  The graded pieces form a complex $\gr \DRl(E)$ with $\cO_X$-linear differentials, and we also have the \emph{log Hodge cohomology} $H^{a, b}_{\log \Hdg}\bigl(X, E\bigr) := H^{a + b}\bigl(X, \gr^a \DRl(E)\bigr)$.

            The log de Rham cohomology and the log Hodge cohomology are related by the \emph{\Pth{log} Hodge--de Rham spectral sequence} \Pth{associated with the filtration $\Fil^\bullet \DRl(E)$ above} $E_1^{a, b} = H^{a, b}_{\log \Hdg}\bigl(X, E\bigr) \Rightarrow H^{a + b}_{\log \dR}\bigl(X, E\bigr)$.
    \end{enumerate}
\end{defn}

The following two lemmas are clear.

\begin{lemma}\label{lem-conn-vs-t-conn}
    The functor
    \[
        (\cE^+, \nabla^+) \mapsto (\cE, \nabla, \{ \Fil^r \}_{r \geq 0}) := (\cE^+ \otimes_{\BdRp} \BdR, t^{-1} \nabla^+, \{ t^r \cE^+ \}_{r \geq 0})
    \]
    is an equivalence of categories from the category of vector bundles with integrable log $t$-connections on $\cX^+$ to the category of vector bundles with integrable log connections $(\cE, \nabla)$ on $\cX$ that are equipped with filtrations $\{ \Fil^r \}_{r \geq 0}$ by locally free $\cO_X \ho_k \BdRp$-submodules satisfying, for all $r \geq 1$, the condition $\Fil^r \cE = t (\Fil^{r - 1} \cE)$ and the Griffiths transversality condition $\nabla(\Fil^r \cE) \subset (\Fil^{r - 1} \cE) \otimes_{\cO_{\cX^+}} \Omega^{\log}_{\cX^+ / \BdRp}$.
\end{lemma}

\begin{lemma}\label{lem-t-conn-Higgs}
    The functor $(\cE^+, \nabla^+) \mapsto (\cE^+ / t, \nabla^+)$, where $\nabla^+$ abusively also denotes its induced map on $\cE^+ / t$, is a functor from the category of vector bundles with integrable log $t$-connections on $\cX^+$ to the category of log Higgs bundles on $X_K$.
\end{lemma}

\subsection{Statements of theorems}\label{sec-log-RH-thm}

Let us now state the main theorems of this section.  Let $k$, $\AC{k}$, $k_\infty$, and $K$ be as in the beginning of this Section \ref{sec-log-RH}, and let $X$ be any log adic space over $k$ as in Example \ref{ex-log-adic-sp-ncd}, with its log structure induced by a normal crossings divisor $D$.  Let $U := X - D$.  Given any $\bQ_p$-local system $\bL$, recall that we say $\bL|_{U_\et}$ has \emph{unipotent geometric monodromy} along $D$ \Pth{see \cite[\aDef \logadicdefunipqunipmonod{} and \aRem \logadicrkunipqunipmonodalgcl]{Diao/Lan/Liu/Zhu:lasfr}} when $\pi_1^\ket\bigl(X(\xi), \widetilde{\xi}\bigr)$ acts unipotently on the stalk $\bL_{\widetilde{\xi}}$, for each log geometric points $\xi$ of $X$ lying above each geometric point $\xi$ of $D$, where the log structure of the strict localization $X(\xi)$ is pulled back from $X$.  Let
\begin{equation}\label{eq-mu-prime}
    \mu': {X_\proket}_{/X_K} \to X_\an.
\end{equation}
be the natural projection of sites.  For a $\bQ_p$-local system $\bL$ on $X_\ket$, let $\widehat{\bL}$ be the corresponding $\widehat{\bQ}_p$-local system on $X_\proket$, as in \cite[\aLem \logadiclemproketlisse]{Diao/Lan/Liu/Zhu:lasfr}, and consider
\begin{equation}\label{eq-def-RHl}
    \RHl(\bL) := R\mu'_*(\widehat{\bL} \otimes_{\widehat{\bQ}_p} \OBdl).
\end{equation}

\begin{thm}\phantomsection\label{thm-log-RH-geom}
    \begin{enumerate}
        \item\label{thm-log-RH-geom-main}  The assignment $\bL \mapsto \RHl(\bL)$ is an exact functor from the category of $\bQ_p$-local systems on $X_\ket$ to the category of $\Gal(K / k)$-equivariant vector bundles on $\cX$ equipped with integrable log connections $\nabla_\bL: \RHl(\bL) \to \RHl(\bL) \otimes_{\cO_\cX} \Omega^{\log}_{\cX / \BdR}$ and decreasing filtrations \Pth{by locally free $\cO_X \ho_k \BdRp$-submodules} satisfying the Griffiths transversality, defined by $\Fil^r \RHl(\bL) := \mu'_*(\widehat{\bL} \otimes_{\widehat{\bQ}_p} \Fil^r \OBdl)$, for all $r \in \bZ$.

        \item\label{thm-log-RH-geom-res}  For each irreducible component $Z$ \Pth{defined as in \cite{Conrad:1999-icrs}} of the normal crossings divisor $D$, let $\Res_Z(\nabla_\bL)$ denote the residue of the log connection $\nabla_\bL$ along $Z$ \Pth{see Section \ref{sec-calc-res} below for details on the definition of residues}.  If $Z_{\AC{k}}$ is irreducible \Pth{which we may always assume, up to replacing $k$ with a finite extension}, then all the eigenvalues of $\Res_Z(\nabla_\bL)$ are in $\bQ \cap [0, 1)$.

        \item\label{thm-log-RH-geom-comp}  Assume that $X$ is proper over $k$, and that $K = \widehat{\AC{k}}$.  Let $\bL$ be a $\bZ_p$-local system on $X_\ket$.  Then there is a canonical $\Gal(K / k)$-equivariant isomorphism
            \[
                H^i\bigl(X_{K, \ket}, \bL\bigr) \otimes_{\bZ_p} B_\dR \cong H^i_{\log \dR}\bigl(\cX, \RHl(\bL)\bigr),
            \]
            for each $i \geq 0$, compatible with the filtrations on both sides, where the right-hand side is as in Definition \ref{def-log-conn-etc}\Refenum{\ref{def-log-conn-etc-1}}.

        \item\label{thm-log-RH-geom-mor}  Suppose that $Y$ is another log adic space whose log structure is defined by some normal crossings divisor $E$ as in Example \ref{ex-log-adic-sp-ncd}, and that $h: Y \to X$ is a morphism of log adic spaces.  For any pair of irreducible components $Z$ and $W$ of $D$ and $E$, respectively, let $m_{W Z} \in \bZ_{\geq 0}$ denote the multiplicity of $W$ in the divisor $h^{-1}(Z)$; and let $n_Z$ be $0$ \Pth{\resp $1$} if $\bL|_{U_\et}$ has \Pth{\resp does not have} unipotent geometric monodromy along $Z$.  Assume that, for each irreducible component $W$ of $E$, we have $\sum_Z \, m_{W Z} \, n_Z \leq 1$, where the sum is over all irreducible components $Z$ of $D$.  Then there is a canonical $\Gal(K / k)$-equivariant isomorphism $h^*\bigl(\RHl(\bL), \nabla_\bL\bigr) \Mi \bigl(\RHl(h^{-1}(\bL)), \nabla_{h^{-1}(\bL)}\bigr)$, compatible with the filtrations on both sides.
    \end{enumerate}
\end{thm}

As a byproduct, we obtain the log $p$-adic Simpson functor in our setting.  We refer to \cite{Faltings:2005-psc, Abbes/Gros/Tsuji:2016-pSC} for more general and thorough treatments.
\begin{thm}\phantomsection\label{thm-log-Simp}
    \begin{enumerate}
        \item\label{thm-log-Simp-main}  There is a natural functor $\Hl$ from the category of $\bQ_p$-local systems $\bL$ on $X_\ket$ to the category of $\Gal(K / k)$-equivariant log Higgs bundles $\theta_\bL: \Hl(\bL) \to \Hl(\bL) \otimes_{\cO_{X_K}} \Omega^{\log}_{X_K}(-1)$ on $X_{K, \an}$.  Concretely, by Lemma \ref{lem-conn-vs-t-conn}, $\RHl^+ := \Fil^0 \RHl$ is a functor from the category of $\bQ_p$-local systems on $X_\ket$ to the category of $\Gal(K / k)$-equivariant vector bundles with integrable log $t$-connections on $\cX^+$.  Then, by Lemma \ref{lem-t-conn-Higgs}, $\Hl := \gr^0 \RHl = \RHl^+ / t$ is the desired functor.

        \item\label{thm-log-Simp-comp}  Under the same assumption as in Theorem \ref{thm-log-RH-geom}\Refenum{\ref{thm-log-RH-geom-comp}}, there is a canonical $\Gal(K / k)$-equivariant isomorphism
            \[
                H^i\bigl(X_{K, \ket}, \bL\bigr) \otimes_{\bZ_p} K \cong H^i_{\log \Hi}\bigl(X_{K, \an}, \Hl(\bL)\bigr),
            \]
            for each $i \geq 0$, where $H^i_{\log \Hi}\bigl(X_{K, \an}, \Hl(\bL)\bigr)$ is as in Definition \ref{def-log-conn-etc}\Refenum{\ref{def-log-conn-etc-3}}.

        \item\label{thm-log-Simp-mor}  Under the same assumption as in Theorem \ref{thm-log-RH-geom}\Refenum{\ref{thm-log-RH-geom-mor}}, there is a canonical $\Gal(K / k)$-equivariant isomorphism
            \[
                h^*\bigl(\Hl(\bL), \theta_\bL\bigr) \Mi \bigl(\Hl(h^{-1}(\bL)), \theta_{h^{-1}(\bL)}\bigr).
            \]
    \end{enumerate}
\end{thm}

We also have an arithmetic log $p$-adic Riemann--Hilbert functor.  Consider the natural projection of sites
\begin{equation}\label{eq-mu}
    \mu: X_\proket \to X_\an.
\end{equation}
For any $\bQ_p$-local system $\bL$ on $X_\ket$, consider
\begin{equation}\label{eq-def-Ddl}
    \Ddl(\bL) := \mu_*(\widehat{\bL} \otimes_{\widehat{\bQ}_p} \OBdl).
\end{equation}

\begin{thm}\phantomsection\label{thm-log-RH-arith}
     \begin{enumerate}
        \item\label{thm-log-RH-arith-main}  The assignment $\bL \mapsto \Ddl(\bL)$ defines a functor from the category of $\bQ_p$-local systems on $X_\ket$ to the category of vector bundles on $X_\an$ with integrable log connections $\nabla_\bL: \Ddl(\bL) \to \Ddl(\bL) \otimes_{\cO_X} \Omega^{\log}_X$ and decreasing filtrations $\Fil^\bullet \Ddl(\bL)$ \Pth{by coherent subsheaves} satisfying the \Pth{usual} Griffiths transversality.

        \item\label{thm-log-RH-arith-res}  For each irreducible component $Z$ \Pth{defined as in \cite{Conrad:1999-icrs}} of the normal crossings divisor $D$, all eigenvalues of the residue $\Res_Z(\nabla_\bL)$ are in $\bQ \cap [0, 1)$.  If the restriction of $\bL$ to $U_\ket \cong U_\et$ is \emph{de Rham} \Pth{as reviewed in the introduction}, then $\gr \Ddl(\bL)$ is a vector bundle on $X$ of rank $\rank_{\bQ_p}(\bL)$.

        \item\label{thm-log-RH-arith-comp}  Assume that $X$ is proper over $k$, that $K = \widehat{\AC{k}}$, and that $\bL$ is a $\bZ_p$-local system on $X_\ket$ whose restriction to $U_\et$ is \emph{de Rham}.  Then, for each $i \geq 0$, there is a canonical $\Gal(K / k)$-equivariant isomorphism
            \begin{equation}\label{eq-thm-log-RH-arith-comp-dR}
                H^i\bigl(X_{K, \ket}, \bL\bigr) \otimes_{\bZ_p} B_\dR \cong H^i_{\log \dR}\bigl(X_\an, \Ddl(\bL)\bigr) \otimes_k B_\dR
            \end{equation}
            compatible with the filtrations on both sides.  Moreover, the \Pth{log} Hodge--de Rham spectral sequence for $\Ddl(\bL)$ degenerates on the $E_1$ page, and there is also a canonical $\Gal(K / k)$-equivariant isomorphism
            \begin{equation}\label{eq-thm-log-RH-arith-comp-HT}
                H^i\bigl(X_{K, \ket}, \bL\bigr) \otimes_{\bZ_p} K \cong \oplus_{a + b = i} \, \Bigl( H^{a, b}_{\log \Hdg}\bigl(X_\an, \Ddl(\bL)\bigr) \otimes_k K(-a) \Bigr),
            \end{equation}
            for each $i \geq 0$, which can be identified with the $0$-th graded piece of the isomorphism \Refeq{\ref{eq-thm-log-RH-arith-comp-dR}}, giving the \emph{\Pth{log} Hodge--Tate decomposition}.

        \item\label{thm-log-RH-arith-mor}  Under the same assumption as in Theorem \ref{thm-log-RH-geom}\Refenum{\ref{thm-log-RH-geom-mor}}, there is a canonical isomorphism $h^*\bigl(\Ddl(\bL), \nabla_\bL\bigr) \Mi \bigl(\Ddl(h^{-1}(\bL)), \nabla_{h^{-1}(\bL)}\bigr)$, compatible with the filtrations on both sides.

        \item\label{thm-log-RH-arith-push}  Suppose that $Y$ is another log adic space with its log structure defined by a normal crossings divisor $E \Em Y$ as in Example \ref{ex-log-adic-sp-ncd}.  Let $V = Y - E$.  Let $f: X \to Y$ be a proper log smooth morphism that restricts to a proper smooth morphism $f|_U: U \to V$.  Let $\bL$ be a $\bZ_p$-local system on $X_\ket$ that is de Rham when restricted to $U_\ket \cong U_\et$.  Then $R^i f_{\ket, *}(\bL)$ is a $\bZ_p$-local system on $Y_\ket$ that is \emph{de Rham} when restricted to $V_\ket \cong V_\et$, for each $i \geq 0$.  Moreover, we have a canonical isomorphism
            \[
                \bigl(\Ddl(R^i f_{\ket, *}(\bL)), \nabla_{R^i f_{\ket, *}(\bL)}\bigr) \cong \bigl(R^i f_{\log \dR, *}(\Ddl(\bL), \nabla_\bL)\bigr)_\free,
            \]
            compatible with the filtrations on both sides, where $R^i f_{\log \dR, *}$ denotes the usual relative analogue of the log de Rham cohomology, and where the subscript \Qtn{$\free$} denotes the $\cO_Y$-torsion-free quotient.
     \end{enumerate}
\end{thm}

By Theorem \ref{thm-log-RH-arith}\Refenum{\ref{thm-log-RH-arith-comp}} and \cite[\aCor \logadiccorpuritylisse]{Diao/Lan/Liu/Zhu:lasfr}, we obtain the following:
\begin{cor}\label{cor-comp-open}
    Let $Y$ be a smooth rigid analytic variety over $k$, and let $K = \widehat{\AC{k}}$.  Assume that $Y$ admits a proper smooth compactification $Y \Em \overline{Y}$ such that $\overline{Y} - Y$ is a normal crossings divisor.  Let $\bL$ be a \emph{de Rham} $\bZ_p$-local system on $Y_\et$, with its extension $\overline{\bL} := \jmath_{\ket, *}(\bL)$ to a $\bZ_p$-local system on $\overline{Y}_\ket$.  Then $H^i\bigl(Y_{K, \et}, \bL\bigr)$ is a \emph{finite} $\bZ_p$-module, and there is a canonical $\Gal(K / k)$-equivariant isomorphism
    \begin{equation}\label{eq-cor-comp-open}
        H^i\bigl(Y_{K, \et}, \bL\bigr) \otimes_{\bZ_p} \BdR \cong H^i_{\log \dR}\bigl(\overline{Y}_\an, \Ddl(\overline{\bL})\bigr) \otimes_k B_\dR,
    \end{equation}
    compatible with the filtrations on both sides.  Moreover, the \Pth{log} Hodge--de Rham spectral sequence for $\Ddl(\overline{\bL})$ degenerates on the $E_1$ page, and the $0$-th graded piece of \Refeq{\ref{eq-cor-comp-open}} is also a canonical $\Gal(K / k)$-equivariant isomorphism
    \[
        H^i\bigl(Y_{K, \et}, \bL\bigr) \otimes_{\bZ_p} K \cong \oplus_{a + b = i} \, \Bigl( H^{a, b}_{\log \Hdg}\bigl(\overline{Y}_\an, \Ddl(\overline{\bL})\bigr) \otimes_k K(-a) \Bigr).
    \]
\end{cor}
Note that, as explained in \cite[\aRem \logadicremcohfin]{Diao/Lan/Liu/Zhu:lasfr}, the finiteness of $H^i\bigl(Y_{K, \et}, \bL\bigr)$ as a $\bZ_p$-module does not hold in general for an arbitrary smooth rigid analytic variety $Y$ \Pth{that is not Zariski open in some proper rigid analytic variety}.

As mentioned in the introduction, due to the failure of the surjectivity of \Refeq{\ref{eq-fail-surj}}, $\Ddl$ is not a tensor functor in general, and we have similar failures for $\RHl$ and $\Hl$.  Nevertheless, we still have the following:
\begin{thm}\phantomsection\label{thm-unip-vs-nilp}
    \begin{enumerate}
        \item\label{thm-unip-vs-nilp-1} The functor $\RHl$ \Pth{\resp $\Hl$} restricts to a \emph{tensor functor} from the category of $\bQ_p$-local systems on $X_\ket$ whose restrictions to $U_\et$ have \emph{unipotent} geometric monodromy along $D$ to the category of filtered $\Gal(K / k)$-equivariant vector bundles on $\cX$ equipped with integrable log connections with \emph{nilpotent} residues along $D$ \Pth{\resp the category of $\Gal(K / k)$-equivariant log Higgs bundles on $X_{K, \an}$}.

        \item\label{thm-unip-vs-nilp-2} The functor $\Ddl$ restricts to a tensor functor from the category of $\bQ_p$-local systems on $X_\ket$ whose restrictions to $U_\et$ are \emph{de Rham} and have \emph{unipotent} geometric monodromy along $D$ to the category of filtered vector bundles on $X_\an$ equipped with integrable log connections with \emph{nilpotent} residues along $D$.
    \end{enumerate}
\end{thm}

\subsection{Coherence}\label{sec-coh}

In this subsection, we prove Theorems \ref{thm-log-RH-geom}\Refenum{\ref{thm-log-RH-geom-main}} and \ref{thm-log-Simp}\Refenum{\ref{thm-log-Simp-main}}, and show that $\Ddl(\bL)$ is a torsion-free reflexive coherent sheaf on $X_\an$.

By factoring $\mu'$ as ${X_\proket}_{/X_K} \cong X_{K, \proket} \to X_{K, \et} \to X_{K, \an} \to X_\an$, we see that $\RHl(\bL)$ admits a natural $\Gal(K / k)$-action.  We need to show that $R\mu'_*(\widehat{\bL} \otimes_{\widehat{\bQ}_p} \Fil^r \OBdl)$ is a locally free $\cO_X \ho_k \BdRp$-module of rank $\rank_{\bQ_p}(\bL)$, for every $r$.  Assuming this, it follows that
\[
    \RHl(\bL) = R\mu'_*(\widehat{\bL} \otimes_{\widehat{\bQ}_p} \OBdl) \cong R\mu'_*(\widehat{\bL} \otimes_{\widehat{\bQ}_p} \Fil^0 \OBdl)[t^{-1}]
\]
is a vector bundle of rank $\rank_{\bQ_p}(\bL)$ on $\cX$, equipped with the filtration
\[
    \Fil^r \RHl(\bL) := \mu'_*(\widehat{\bL} \otimes_{\widehat{\bQ}_p} \Fil^r \OBdl).
\]
by locally free $\cO_X \ho_k \BdRp$-submodules.  Consider the integrable log connection
\[
    \nabla: \widehat{\bL} \otimes_{\widehat{\bQ}_p} \OBdl \to \widehat{\bL} \otimes_{\widehat{\bQ}_p} \OBdl \otimes_{\cO_{X_\proket}} \Omega^{\log}_X
\]
formed by tensoring the one on $\OBdl$ with $\widehat{\bL}$.  By the projection formula
\begin{equation}\label{eq-RHl-proj}
    R\mu'_*(\widehat{\bL} \otimes_{\widehat{\bQ}_p} \OBdl \otimes_{\cO_{X_\proket}} \Omega^{\log, \bullet}_X) \cong R\mu'_*(\widehat{\bL} \otimes_{\widehat{\bQ}_p} \OBdl) \otimes_{\cO_X} \Omega^{\log, \bullet}_X,
\end{equation}
we obtain a log connection $\nabla_\bL: \RHl(\bL) \to \RHl(\bL) \otimes_{\cO_X} \Omega^{\log}_X$. The integrability of $\nabla_\bL$ and the Griffiths transversality with respect to the filtration $\Fil^\bullet \RHl(\bL)$ follow from the corresponding properties of the connection \Refeq{\ref{eq-conn-OBdl}}.

In what follows, we shall denote by $Z$ either the whole $X$ or an open subspace of a smooth intersection of irreducible components of $D$, equipped with the log structure pulled back from $X$, which fits into the second case of Remark \ref{rem-OBdl-conseq-setting}.

\begin{lemma}\label{lem-OXBdR-a-b}
    Let $Z$ be as above.  For any $-\infty \leq a < b \leq \infty$, there is a natural isomorphism $(\cO_Z \ho_k \BdR)^{[a, b]} \cong R\mu'_{Z, *}(\OBdlX{Z}^{[a, b]})$.
\end{lemma}
\begin{proof}
    By Lemma \ref{lem-OXBdR-mod}\Refenum{\ref{lem-OXBdR-mod-3}}, it suffices to prove the analogue for the morphism $\nu'_Z: {Z_\proket}_{/Z_K} \to Z_\et$ \Pth{instead of $\mu'_Z$}.  By using Corollary \ref{cor-OBdlp-loc-gr}, the argument is similar to the ones in the proofs of \cite[\aProp 6.16(i)]{Scholze:2013-phtra} and \cite[\aLem 3.7]{Liu/Zhu:2017-rrhpl}.
\end{proof}

By the same arguments as in the proofs of \cite[\aThms 2.1(i) and 3.8(i)]{Liu/Zhu:2017-rrhpl}, in order to show that $R\mu'_*(\widehat{\bL} \otimes_{\widehat{\bQ}_p} \Fil^r \OBdl)$ is a locally free $\cO_X \ho_k \BdRp$-module of rank $\rank_{\bQ_p}(\bL)$, for every $r$, it suffices to prove the following:
\begin{prop}\label{prop-L-OCl}
    Let $\bL$ be a $\bQ_p$-local system on $X_\ket$.  Let $Z$ be as above, and let $\widehat{\bL}_Z$ denote the pullback of $\widehat{\bL}$ under $Z_\proket \to X_\proket$.
    \begin{enumerate}
        \item $R^i\mu'_{Z, *}(\widehat{\bL}_Z \otimes_{\widehat{\bQ}_p} \OClX{Z}) = 0$, for all $i > 0$.

        \item $\mu'_{Z, *}(\widehat{\bL}_Z \otimes_{\widehat{\bQ}_p} \OClX{Z})$ is a finite locally free $\gr^0 (\cO_X \ho_k \BdR)$-module, whose rank is equal to $\rank_{\bQ_p}(\bL)$ if $Z = X$.
    \end{enumerate}
\end{prop}

For simplicity, we may assume that $K = \widehat{k}_\infty$, so that $\Gal(K / k)$ is identified with an open subgroup of $\widehat{\bZ}^\times$ via the cyclotomic character $\chi$.  \Pth{The assertions for larger perfectoid fields then follow by base change.}  By Lemma \ref{lem-OXBdR-mod}\Refenum{\ref{lem-OXBdR-mod-5}}, it suffices to prove similar statements for the projection of sites $\nu'_Z: {Z_\proket}_{/Z_K} \to Z_\et$ \Pth{instead of $\mu'_Z: {Z_\proket}_{/Z_K} \to Z_\an$}.  Since such statements are \'etale local in nature, we may assume that $X = \Spa(R, R^+)$ is an affinoid log adic space over $\Spa(k, k^+)$, where $k^+ = \cO_k$, with a smooth toric chart $X \to \bE := \Spa(k\Talg{P}, k^+\Talg{P})$ \Pth{see \cite[\aCor \logadiccorsmtoricchart{} and \aDef \logadicdeftoricchart]{Diao/Lan/Liu/Zhu:lasfr}}, where $P = \bZ_{\geq 0}^n = \oplus_{j = 1}^n (\bZ_{\geq 0} \, a_j)$.  We shall write $T_j = \mono{a_j}$, for each $j$.  Note that this fits into the setup in Section \ref{sec-OBdl-explicit}, with $Q = 0$ there, and we may assume that $Z$ is defined by $T_1 = \cdots = T_l = 0$, for some $l \leq n$.  Therefore, we have a log affinoid perfectoid object $\widetilde{X}$ in $X_\proket$ \Pth{\resp $\widetilde{Z}$ in $Z_\proket$} obtained by pulling back $\widetilde{\bE} := \varprojlim_m \bE_m \to \bE$, where $\bE_m := \Spa(k_m\Talg{\tfrac{1}{m} P}, k_m^+\Talg{\tfrac{1}{m} P})$, and we shall write $T_j^{\frac{1}{m}} = \mono{\frac{1}{m} a_j}$, for each $j$.  Then $\widetilde{X} \to X_{k_\infty}$ is a Galois pro-Kummer \'etale cover with Galois group $\Gamma_\geom \cong (\widehat{\bZ}(1))^n$, and $\widetilde{X} \to X$ is also a Galois pro-Kummer \'etale cover, whose Galois group $\Gamma$ fits into a short exact sequence
\begin{equation}\label{eq-fund-grp-Gamma}
    1 \to \Gamma_\geom \to \Gamma \to \Gal(k_\infty / k) \to 1,
\end{equation}
with $\Gal(k_\infty / k)$ acting on $\Gamma_\geom \cong (\widehat{\bZ}(1))^n$ via the cyclotomic character $\chi: \Gal(k_\infty / k) \to \widehat{\bZ}^\times$.  The same is true for the pullbacks $\widetilde{Z} \to Z_{k_\infty}$ and $\widetilde{Z} \to Z$.

Let $R_K := R \ho_k K$.  Also, let $\overline{R} := R / (T_1, \ldots, T_l)$ and $\overline{R}_K := \overline{R} \ho_k K$.  By Corollary \ref{cor-OBdlp-loc-gr}, we have $\OClX{Z}|_{\widetilde{Z}} \cong \widehat{\cO}_{Z_\proket}|_{\widetilde{Z}}[W_1, \ldots, W_n]$, where $W_j = t^{-1} \monon{j} = t^{-1} \log(\mono{a_j})$ in the notation there, for all $j = 1, \ldots, n$.  Let
\[
    \cL_Z := \widehat{\bL}_Z \otimes_{\widehat{\bQ}_p} \widehat{\cO}_{Z_\proket},
\]
which is a locally free $\widehat{\cO}_{Z_\proket}$-module of rank $\rank_{\bQ_p}(\bL)$.  Then
\[
    (\widehat{\bL}_Z \otimes_{\widehat{\bQ}_p} \OClX{Z})|_{\widetilde{Z}} \cong \cL_Z|_{\widetilde{Z}}[W_1, \ldots, W_n].
\]

Note that $R^i\nu'_{Z, *}(\widehat{\bL}_Z \otimes_{\widehat{\bQ}_p} \OClX{Z})$ is the sheaf on $Z_\et$ associated with the presheaf
\[
    Y \mapsto H^i({Z_\proket}_{/Y_K}, \widehat{\bL}_Z \otimes_{\widehat{\bQ}_p} \OClX{Z}).
\]
In order to prove Proposition \ref{prop-L-OCl}, it suffices to prove the following two statements:
\begin{enumerate}[label=(\alph*), ref=\alph*]
    \item\label{prop-L-OCl-a}  $H^0({Z_\proket}_{/Z_K}, \widehat{\bL}_Z \otimes_{\widehat{\bQ}_p} \OClX{Z})$ is a finite projective $\overline{R}_K$-module, of rank $\rank_{\bQ_p}(\bL)$ if $Z = X$; and $H^i({Z_\proket}_{/Z_K}, \widehat{\bL}_Z \otimes_{\widehat{\bQ}_p} \OClX{Z}) = 0$, for all $i > 0$.

    \item\label{prop-L-OCl-b}  Let $Y = \Spa(S, S^+) \to Z$ be a composition of rational embeddings and finite \'etale morphisms, and let $\widehat{\bL}_Y$ denote the pullback of $\widehat{\bL}_Z$ under $Y_\proket \to Z_\proket$.  Then we have a canonical isomorphism of $S_K$-modules
        \[
            H^0({Z_\proket}_{/Z_K}, \widehat{\bL}_Z \otimes_{\widehat{\bQ}_p} \OClX{Z}) \otimes_{\overline{R}_K} S_K \Mi
            H^0({Y_\proket}_{/Y_K}, \widehat{\bL}_Y \otimes_{\widehat{\bQ}_p} \OClX{Y}).
        \]
\end{enumerate}

Our approach to proving \Refenum{\ref{prop-L-OCl-a}} and \Refenum{\ref{prop-L-OCl-b}} is similar to the one in the proof of \cite[\aThm 2.1]{Liu/Zhu:2017-rrhpl}.  We will only explain the new ingredients here, and refer to \cite{Liu/Zhu:2017-rrhpl} for more details.  For any $Y$ as in \Refenum{\ref{prop-L-OCl-b}}, we endow it with the induced log structure.  Then $\widetilde{Y} := Y \times_Z \widetilde{Z} \in Z_\proket$, where $\widetilde{Z} \to Z$ is as above, is log affinoid perfectoid; and $\widetilde{Y} \to Y_{k_\infty}$ is also a Galois pro-Kummer \'etale cover with Galois group $\Gamma_\geom$.

By Corollary \ref{cor-OBdlp-loc-gr} and \cite[\aThm \logadicfinfreeOhatmod]{Diao/Lan/Liu/Zhu:lasfr}, and by the same arguments as in the proofs of \cite[\aCor 2.4, and \aLem 2.7]{Liu/Zhu:2017-rrhpl}, we obtain the following lemma:
\begin{lemma}\label{lem-L-OBdl-a-b-van}
    Let $\bM$ be a $\bQ_p$-local system on $Z_\ket$.
    \begin{enumerate}
        \item Let $U$ be log affinoid perfectoid object in ${Z_\proket}_{/\widetilde{Z}_K}$.  For any $-\infty \leq a \leq b \leq \infty$, and for each $i > 0$, we have $H^i({Z_\proket}_{/U}, \widehat{\bM} \otimes_{\widehat{\bQ}_p} \OBdlX{Z}^{[a, b]}) = 0$.

        \item $H^i\bigl(\Gamma_\geom, (\widehat{\bM} \otimes_{\widehat{\bQ}_p} \OClX{Z})(\widetilde{Y})\bigr) \cong H^i\bigl({Z_\proket}_{/Y_K}, \widehat{\bM} \otimes_{\widehat{\bQ}_p} \OClX{Z}\bigr)$, for all $i \geq 0$.
    \end{enumerate}
\end{lemma}

Consider the topological basis $\{ \gamma_1, \ldots, \gamma_n \}$ of $\Gamma_\geom \cong (\widehat{\bZ}(1))^n$ given by pulling back the image in $\widehat{\bZ}^n$ of the standard basis $\{ a_1, \ldots, a_n \}$ of $\bZ^n$ via the isomorphism $(\widehat{\bZ}(1))^n \Mi \widehat{\bZ}^n$ induced by \Refeq{\ref{eq-zeta}}, which is characterized by the property that
\begin{equation}\label{eq-def-gamma-j}
    \gamma_j \, T_{j'}^{\frac{1}{m}} = \zeta_m^{\delta_{jj'}} T_{j'}^{\frac{1}{m}},
\end{equation}
for all $1 \leq j, j' \leq n$ and $m \geq 1$ \Pth{\Refcf{} \cite[(\logadiceqgeomtowerGal)]{Diao/Lan/Liu/Zhu:lasfr} and \Refeq{\ref{eq-act-coord}}}.

For each $m \geq 1$, write $X_{K, m} = \Spa(R_{K, m}, R_{K, m}^+) := X_K \times_{\bE_K} (\bE_m)_K$, and write $Z_{K, m} := Z_K \times_{\bE_K} (\bE_m)_K = \Spa(\overline{R}_{K, m}, \overline{R}_{K, m}^+)$, for some uniquely determined complete Huber pairs $(R_{K, m}, R_{K, m}^+)$ and $(\overline{R}_{K, m}, \overline{R}_{K, m}^+)$.  Note that $\overline{R}_{K, m} \cong R_{K, m} / (T_1^{\frac{1}{m}}, \ldots, T_l^{\frac{1}{m}})$ \Pth{\Refcf{} Section \ref{sec-OBdl-explicit}}.  Let $\widehat{R}_{K, \infty}$ and $\widehat{\overline{R}}_{K, \infty}$ be the $p$-adic completions of $\varinjlim_m R_{K, m}$ and $\varinjlim_m \overline{R}_{K, m}$, respectively \Pth{\Refcf{} \cite[\aRem \logadicremunifcompllim]{Diao/Lan/Liu/Zhu:lasfr}}.  By Theorem \ref{thm-geom-decompl}, $\bigl(\{ R_{K, m} \}_{m \geq 1}, \widehat{R}_{K, \infty}, \Gamma_\geom \rtimes \Gal(K / k)\bigr)$ is a decompletion system.  Therefore, as in Definition \ref{def-decompl-syst}, $L_\infty = \cL_X(\widetilde{X})$ has a \emph{model} over $R_{K, m_0}$, for some $m_0 \geq 1$, \ie, a finite projective $R_{K, m_0}$-module $L_{m_0}(X_K)$, necessarily of rank $\rank_{\bQ_p}(\bL)$, with a continuous $R_{K, m_0}$-semilinear action of $\Gamma_\geom \rtimes \Gal(K / k)$ such that $L_{m_0}(X_K) \otimes_{R_{K, m_0}} \widehat{R}_{K, \infty} \cong \cL_X(\widetilde{X})$; and we may assume that it is \emph{good}, \ie, $H^i\bigl(\Gamma_\geom, L_{m_0}(X_K)\bigr) \Mi H^i\bigl(\Gamma_\geom, \cL_X(\widetilde{X})\bigr)$, for all $i \geq 0$.  Note that $\Gamma_\geom \cong (\widehat{\bZ}(1))^n$ acts on $\overline{R}_{K, m}$ via the last $n - l$ factors $\overline{\Gamma}_\geom \cong (\widehat{\bZ}(1))^{n - l}$ \Pth{see \Refeq{\ref{eq-def-gamma-j}}}.  By Theorem \ref{thm-geom-decompl} again, $\bigl(\{ \overline{R}_{K, m} \}_{m \geq 1}, \widehat{\overline{R}}_{K, \infty}, \overline{\Gamma}_\geom \rtimes \Gal(K / k)\bigr)$ is also a decompletion system.  Since $\cL_Z(\widetilde{Z}) \cong \cL_X(\widetilde{X}) \otimes_{\widehat{R}_{K, \infty}} \widehat{\overline{R}}_{K, \infty}$ by \cite[\aLem \logadiclemQplocclimm]{Diao/Lan/Liu/Zhu:lasfr},
\begin{equation}\label{eq-def-L-m-0-Z-K}
    L_{m_0}(Z_K) := L_{m_0}(X_K) \otimes_{R_{K, m}} \overline{R}_{K, m} \cong L_{m_0}(X_K) / (T_1^{\frac{1}{m_0}}, \ldots, T_l^{\frac{1}{m_0}})
\end{equation}
is a model of $\cL_Z(\widetilde{Z})$.  Up to enlarging $m_0$ \Pth{and replacing $L_{m_0}(X_K)$ with its base change, accordingly}, we may assume that $L_{m_0}(Z_K)$ is also a good model.

\begin{lemma}\label{lem-L-m-0-qunip}
    The $R_K$-linear representation of $\Gamma_\geom$ on $L_{m_0}(X_K)$ is quasi-unipotent; \ie, a finite-index subgroup of $\Gamma_\geom$ acts unipotently on $L_{m_0}(X_K)$.  By base change, the same holds for the $\overline{R}_K$-linear representation of $\Gamma_\geom$ on $L_{m_0}(Z_K)$.
\end{lemma}
\begin{proof}
    Let $k' := \widehat{k^\ur}$, where $k^\ur$ is the maximal unramified extension of $k$ in $\AC{k}$.  Let $k'_{p^l} := k'(\Grpmu_{p^l}) \subset \AC{k}$, $R'_{p^l} := R \ho_{k\Talg{T_1, \ldots, T_n}} \, k'_{p^l}\Talg{T_1^{\frac{1}{m_0}}, \ldots, T_n^{\frac{1}{m_0}}}$, and $\Gamma'_{p^l} := \Gal(K / k'_{p^l})$, for each $l \geq 0$.  Since $R_{K, m_0}$ is canonically isomorphic to the completion of $\varinjlim_l R'_{p^l}$, by Theorem \ref{thm-arith-decompl} and Remark \ref{rem-arith-decompl-DVF}, $(\{ R'_{p^l} \}_{l \geq 0}, R_{K, m_0}, \Gamma'_1)$ is a decompletion system.  By Definition \ref{def-decompl-syst} \Pth{with $L_\infty = L_{m_0}(X_K)$} and Remark \ref{rem-def-decompl-syst-equiv-cat}, since $\Gamma_\geom$ is topologically finitely generated, for some sufficiently large $l_0$, there exists an $R'_{p^{l_0}}$-submodule $L_{p^{l_0}}$ with a continuous $\Gamma_\geom \rtimes \Gamma'_1$-action and a canonical isomorphism $L_{p^{l_0}} \otimes_{R'_{p^{l_0}}} R_{K, m_0} \Mi L_{m_0}(X_K)$ of $\Gamma_\geom \rtimes \Gamma'_1$-modules.  Then the same argument as in the proof of \cite[\aLem 2.15]{Liu/Zhu:2017-rrhpl} works here.
\end{proof}

By Lemma \ref{lem-L-m-0-qunip}, we obtain decompositions
\begin{equation}\label{eq-L-m-0-tau}
    L_{m_0}(X_K) = \oplus_\tau \, L_{m_0, \tau}(X_K) \quad \Utext{and} \quad L_{m_0}(Z_K) = \oplus_\tau \, L_{m_0, \tau}(Z_K),
\end{equation}
where $\tau$ are characters of $\Gamma_\geom$ of finite order and the subscript \Qtn{$\tau$} denotes the maximal $K$-subspaces on which $\gamma - \tau(\gamma)$ acts nilpotently, for all $\gamma \in \Gamma$.  Then each $L_{m_0, \tau}(X_K)$ \Pth{\resp $L_{m_0, \tau}(Z_K)$} is a finite projective $R_K$-module \Pth{\resp $\overline{R}_K$-module} stable under the action of $\Gamma_\geom$.  Consider, in particular, the unipotent parts
\begin{equation}\label{eq-def-L-X-K-etc}
    L(X_K) := L_{m_0, 1}(X_K) \quad \Utext{and} \quad L(Z_K) := L_{m_0, 1}(Z_K).
\end{equation}
Up to enlarging $m_0$ as before, we may assume that the order of every $\tau$ in \Refeq{\ref{eq-L-m-0-tau}} divides $m_0$.  For each such $\tau$, there exists some monomial $T^{a_\tau}$ in $R_{K, m_0}$, with $a_\tau$ in $\frac{1}{m_0} \bZ_{\geq 0}^n$, on which $\Gamma_\geom$ acts via $\tau$.  Since all monomials $T^a$ with $a \in \frac{1}{m_0}\bZ^n$ are in $R_{K, m_0}[T_1^{-1}, \ldots, T_n^{-1}]$, it follows that
\begin{equation}\label{eq-L-m-0}
    L(X_K) \otimes_{R_K} R_{K, m_0}[T_1^{-1}, \ldots, T_n^{-1}] \cong L_{m_0}(X_K)[T_1^{-1}, \ldots, T_n^{-1}],
\end{equation}
and that the rank of $L(X_K)$ as a finite projective $R_K$-module is $\rank_{\bQ_p}(\bL)$.

\begin{rk}\label{rem-fail-surj-L-m-0}
    However, the natural map
    \begin{equation}\label{eq-fail-surj-L-m-0}
        L(X_K) \otimes_{R_K} R_{K, m_0} \to L_{m_0}(X_K)
    \end{equation}
    might not be an isomorphism in general.  This is the source of the failure of the surjectivity of \Refeq{\ref{eq-fail-surj}} mentioned in the introduction.
\end{rk}

\begin{rk}\label{rem-compat-L-m-0}
    In general, the two decompositions in \Refeq{\ref{eq-L-m-0-tau}} are not compatible via base change from $R_K$ to $\overline{R}_K \cong R_K / (T_1, \ldots, T_l)$.  Nevertheless, since the induced morphisms $L_{m_0, \tau}(X_K) / (T_1, \ldots, T_l) \to L_{m_0, \tau'}(Z_K)$ are zero whenever $\tau \neq \tau'$, we have a canonical surjection $L(X_K) / (T_1, \ldots, T_l) \Surj L(Z_K)$.
\end{rk}

For each $\tau \neq 1$, there exists some $j$ such that $\gamma_j - 1: L_{m_0, \tau}(Z_K) \to L_{m_0, \tau}(Z_K)$ is invertible, and so $H^i\bigl(\Gamma_\geom, L_{m_0, \tau}(Z_K)\bigr) = 0$, for all $i \geq 0$.  Hence, we have $H^i\bigl(\Gamma_\geom, L(Z_K)\bigr) \cong H^i\bigl(\Gamma_\geom, L_{m_0}(Z_K)\bigr)$, and the following lemma follows from essentially the same argument as in the proof of \cite[\aLem 2.9]{Liu/Zhu:2017-rrhpl}:
\begin{lemma}\label{lem-L-OCl-coh}
    There is a canonical $\Gal(K / k)$-equivariant isomorphism
    \[
        H^i({Z_\proket}_{/Z_K}, \widehat{\bL}_Z \otimes_{\widehat{\bQ}_p} \OClX{Z}) \cong
        \begin{cases}
            L(Z_K), & \Utext{if $i = 0$}; \\
            0, & \Utext{if $i > 0$}.
        \end{cases}
    \]
\end{lemma}

By Definition \ref{def-decompl-syst}, up to enlarging $m_0$, the formation of $L_{m_0}(Z_K)$ is compatible with base changes under compositions of rational embeddings and finite \'etale morphisms $Y \to Z$.  The same is true for the formation of the direct summands $L_{m_0, \tau}(Z_K)$ in the decomposition \Refeq{\ref{eq-L-m-0-tau}}.  These yield the following:
\begin{lemma}\label{lem-L-descent}
    The formation of the finite projective $\overline{R}_K$-module $L(Z_K)$, which is of rank $\rank_{\bQ_p}(\bL)$ when $Z = X$, is compatible with base changes under compositions of rational embeddings and finite \'etale morphisms $Y \to Z$.
\end{lemma}

Thus, we have established the statements \Refenum{\ref{prop-L-OCl-a}} and \Refenum{\ref{prop-L-OCl-b}} above, and completed the proofs of Proposition \ref{prop-L-OCl} and hence also of Theorems \ref{thm-log-RH-geom}\Refenum{\ref{thm-log-RH-geom-main}} and \ref{thm-log-Simp}\Refenum{\ref{thm-log-Simp-main}}.  \Pth{The cases where $Z \neq X$ will be also useful in Section \ref{sec-comp-nearby} and in \cite{Lan/Liu/Zhu:dcpdr}.}

Next, we move to the arithmetic situation.  We will only consider $Z = X$.
\begin{lemma}\label{lem-Ddl-coh}
    The sheaf $\Ddl(\bL)$ is a coherent sheaf on $X_\an$.
\end{lemma}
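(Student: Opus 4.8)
The plan is to deduce the coherence of $\Ddl(\LL) = \mu_*(\widehat{\LL} \otimes_{\widehat{\bQ}_p} \OBdl)$ from the geometric situation already handled in Theorem \ref{geomlogRH}\Refenum{\ref{geomlogRH-main}}, exactly as in \cite[\aProp 3.9]{Liu/Zhu:2017-rrhpl}. First I would observe that $\Ddl(\LL) = \RHl(\LL)^{\Gal(K/k)}$, since $\mu$ factors through $\mu'$ followed by the descent along the $\Gal(K/k)$-action (using that $\cO_X \ho \BdR = (\cO_X \ho \BdRp)[t^{-1}]$ and that $(\cO_X \ho \BdR)^{\Gal(K/k)} = \cO_X$ when $K = \widehat{k}_\infty$, or more generally identifying the invariants correctly; here one takes $K = \widehat{k}_\infty$ without loss of generality and reduces to an open subgroup of $\widehat{\bZ}^\times$ via the cyclotomic character). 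More precisely, since $\RHl(\LL)$ is a $\Gal(K/k)$-equivariant vector bundle on $\cX$ equipped with an integrable log connection and a Griffiths-transverse filtration, and since $\Fil^r \RHl(\LL)$ is a locally free $\cO_X \ho \BdRp$-module of rank $\rank_{\bQ_p}(\LL)$ for every $r$, it suffices to understand the $\Gal(K/k)$-invariants.

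The key step is the following: working \'etale locally on $X$, we may assume $X = \Spa(R, R^+)$ admits a smooth toric chart $X \to \bD^n$ as in Section \ref{explicit}, so that $\Fil^r \RHl(\LL)$ corresponds (via the decompletion formalism) to a finite projective $\Gal(K/k)$-module over $\cO_X \ho \BdRp$, or rather over the appropriate Tate algebra $\bB_{r,m}$ after extracting roots. By Theorem \ref{T:BdR decompletion}, the pair $(\{\bB_{r,m}\}_{m \geq 1}, \Gamma)$ is a decompletion system; and by Theorem \ref{T:arithmetic decompletion}, the arithmetic tower $(\{A_{p^{l_0}m}\}_{m \geq 1}, \Gamma_{p^{l_0}m_0})$ is a decompletion system as well. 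Combining these, as in the proof of Lemma \ref{descentB}, the $\Gal(K/k)$-action on the relevant module descends to a finite extension of $k$ and one extracts the invariants: $\Ddl(\LL)$, being the $\Gal(K/k)$-invariants of a locally free $\cO_X \ho \BdRp$-module (after inverting $t$) carrying a semilinear continuous action, is a finite $\cO_X$-module. Concretely, one shows that $\mu_*(\widehat{\LL} \otimes_{\widehat{\bQ}_p} \Fil^r \OBdl) = \bigl(\Fil^r \RHl(\LL)\bigr)^{\Gal(K/k)}$ is coherent by a Galois descent argument paralleling \cite[\aProps 3.3 and 3.9, and \aThms 3.8]{Liu/Zhu:2017-rrhpl}, using that $R^i$ of the higher $\Gal(K/k)$-cohomology of these modules either vanishes or is controlled, and that a module carrying a compatible integrable connection and satisfying finiteness over $\cO_X \ho \BdRp$ after base change must be coherent over $\cO_X$ (this is where the existence of $\nabla_\LL$ together with the decompletion is used).

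The log connection $\nabla_\LL : \Ddl(\LL) \to \Ddl(\LL) \otimes_{\cO_X} \Omega^{\log}_X$ is then obtained by taking $\Gal(K/k)$-invariants of the connection on $\RHl(\LL)$, using Corollary \ref{Rnu} and the projection formula \Refeq{\ref{eq-RH-log-proj}}, and the filtration $\Fil^\bullet \Ddl(\LL) := \mu_*(\widehat{\LL} \otimes_{\widehat{\bQ}_p} \Fil^\bullet \OBdl)$ consists of coherent subsheaves satisfying Griffiths transversality since the same holds upstairs. I expect the main obstacle to be the careful bookkeeping in the Galois descent: one must verify that the invariants of a locally free $\cO_X \ho \BdRp$-module (of the correct rank) under the semilinear continuous $\Gal(K/k)$-action form a \emph{coherent} (indeed reflexive) $\cO_X$-module, and that this formation commutes with the relevant localizations so that it glues to a sheaf on $X_\an$ — this is exactly where the stably decompleting structure (Theorems \ref{T:arithmetic decompletion} and \ref{T:BdR decompletion}) and the gluing properties for stably uniform Banach rings are needed, and where one must also invoke that $\Ddl(\LL)$ is torsion-free reflexive (following from its description as invariants inside the torsion-free $\RHl(\LL)$, combined with the connection, as in \cite[\aProp 3.9]{Liu/Zhu:2017-rrhpl}). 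The remaining assertions (torsion-freeness and reflexiveness) then follow formally, the local freeness being deferred to the later sections where the residue computation of Section \ref{calculationofresidues} is combined with \cite[\aCh 1, \aProp 4.5 and \aLem 4.6.1]{Andre/Baldassarri:2001-DDA}.
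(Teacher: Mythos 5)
Your starting point is the same as the paper's ($\Ddl(\LL) = \RHl(\LL)^{\Gal(K/k)}$, reduce to $X = \Spa(R, R^+)$ with a smooth toric chart, take $K = \widehat{k}_\infty$, use decompletion), but the core of your argument is misdirected and the decisive finiteness mechanism is missing. You propose to descend the $\Gal(K/k)$-action at the $\BdRp$-level by ``combining'' Theorem \ref{T:BdR decompletion} with Theorem \ref{T:arithmetic decompletion}. Theorem \ref{T:BdR decompletion} concerns the \emph{geometric} group $\Gamma$ acting on the $\bB_{r,m}$-towers (it is used later, in the residue computation of Section \ref{calculationofresidues}), and the paper has no arithmetic decompletion statement with $\BdRp/\xi^r$-coefficients; Theorem \ref{T:arithmetic decompletion} only applies to Tate algebras over $k$, i.e.\ to $\widehat{k}_\infty$-level objects. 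The paper therefore does \emph{not} work with $\Fil^r\RHl(\LL)$ directly: it passes to the graded pieces, using $\gr^r\RHl(\LL) \cong \mu'_*(\widehat{\LL}\otimes_{\widehat{\bQ}_p}\OCl)(r)$ and Lemma \ref{Gammacohomology}, so that the relevant module is the finite projective $R_K$-module $L(X_K)$, to which Theorem \ref{T:arithmetic decompletion} applies (as in Lemma \ref{descentB}) and produces a \emph{good model} $L_{k_{m_0}}(X)$ over $R_{k_{m_0}}$. The two statements that actually yield coherence are then: (a$'$) $H^0\bigl(\Gal(K/k), L_{k_{m_0}}(X)(r)\bigr)$ is a finitely generated $R$-module and vanishes for $|r|\gg 0$, so that the filtration of $\Ddl(\LL)$ has only finitely many nontrivial, coherent graded pieces; and (b$'$) its formation commutes with base change along compositions of rational localizations and finite \'etale morphisms $Y = \Spa(S, S^+)\to X$. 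Your proposal never isolates (a$'$) --- in particular the vanishing for $|r|\gg 0$ --- and your assertion that the $\Gal(K/k)$-invariants of a locally free $\cO_X\ho\BdRp$-module are coherent ``because of the connection together with the decompletion'' is not an argument: the connection plays no role at this stage (it enters only later, via the residues, for local freeness, which you correctly defer).

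The base change step is also handled by the wrong tools in your sketch. The paper does not invoke gluing for stably uniform Banach rings here; instead it observes that the $\Gal(K/k)$-action on the good model $L := L_{k_{m_0}}(X)(r)$ factors through a topologically finitely generated quotient $\Delta$ (an extension of a subgroup of $\bZ_p$ by a finite group, generated by $\delta_1,\dots,\delta_s$), so that $H^0(\Delta, L) = \ker\bigl(L \xrightarrow{(\delta_1-1,\dots,\delta_s-1)} L^s\bigr)$, and then the desired isomorphism $H^0(\Gal(K/k), L)\otimes_R S \Mi H^0(\Gal(K/k), L\otimes_R S)$ follows from left exactness and the flatness of $R\to S$. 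Without (a$'$) and this concrete base change argument, your proof does not close; I would rework it along the graded-piece route just described.
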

\begin{proof}
    For simplicity, we may still assume that $K = \widehat{k}_\infty$.  Again, to show the coherence of $\Ddl(\bL)$, we shall consider the projection $\nu: X_\proket \to X_\et$ instead, and we may assume that $X = \Spa(R, R^+)$ admits a smooth toric chart.  Note that this modified $\Ddl(\bL)$ is the sheaf on $X_\et$ associated with the presheaf
    \[
        Y \mapsto H^0\bigl({X_\proket}_{/Y}, \widehat{\bL} \otimes_{\widehat{\bQ}_p} \OBdl\bigr) = H^0\bigl(\Gal(K / k), \RHl(\bL)(Y)\bigr).
    \]
    From the proof of Theorem \ref{thm-log-RH-geom}\Refenum{\ref{thm-log-RH-geom-main}}, we know that
    \[
        \gr^r \RHl(\bL) \cong \mu'_*(\widehat{\bL} \otimes_{\widehat{\bQ}_p} \OCl)(r).
    \]
    It suffices to prove the following two statements \Pth{parallel to \Refenum{\ref{prop-L-OCl-a}} and \Refenum{\ref{prop-L-OCl-b}} above}:
    \begin{enumerate}[label=(\alph*$^\prime$), ref=\alph*$^\prime$]
        \item\label{lem-Ddl-coh-a}  The $R$-module $H^0\bigl(\Gal(K / k), \mu'_*(\widehat{\bL} \otimes_{\widehat{\bQ}_p} \OCl)(r)(X)\bigr)$ is finitely generated, and vanishes for $|r| \gg 0$.

        \item\label{lem-Ddl-coh-b}  If $Y = \Spa(S, S^+) \to X = \Spa(R, R^+)$ is a composition of rational localizations and finite \'etale morphisms, then we have a canonical isomorphism
            \[
            \begin{split}
                & H^0\bigl(\Gal(K / k), \mu'_*(\widehat{\bL} \otimes_{\widehat{\bQ}_p} \OCl)(r)(X)\bigr) \otimes_R S \\
                & \Mi H^0\bigl(\Gal(K / k), \mu'_*(\widehat{\bL} \otimes_{\widehat{\bQ}_p} \OCl)(r)(Y)\bigr).
            \end{split}
            \]
    \end{enumerate}

    Let $k_{p^\infty} := \cup_l \, k_{p^l}$ in $\AC{k}$, with $p$-adic completion $\widehat{k}_{p^\infty}$.  By assumption, $K = \widehat{k}_\infty$.  Hence, there are extensions $k \supset E_1 \subset E_2 \subset \bC_p := \widehat{\AC{\bQ}}_p$ over $\bQ_p$ such that $K \cong \widehat{k}_{p^\infty} \ho_{E_1} E_2$, and we can deduce from \cite[\aProps 4.1.1, 3.1.4, and 3.3.1]{Berger/Colmez:2008-frdrm} that $\bigl(L(X_K)(r)\bigr)^{\Gal(K / \widehat{k}_{p^\infty})} \cong \bigl(L(X_K)^{\Gal(K / \widehat{k}_{p^\infty})}\bigr)(r)$ is a finite projective $R_{\widehat{k}_{p^\infty}}$-module satisfying $\bigr(L(X_K)(r)\bigr)^{\Gal(K / \widehat{k}_{p^\infty})} \otimes_{R_{\widehat{k}_{p^\infty}}} R_K \cong L(X_K)(r)$, for all $r \in \bZ$.  Also, we have $\bigl(L(X_K)(r)\bigr)^{\Gal(K / \widehat{k}_{p^\infty})} \otimes_{R_{\widehat{k}_{p^\infty}}} S_{\widehat{k}_{p^\infty}} \cong \bigl(L(Y_K)(r)\bigr)^{\Gal(K / \widehat{k}_{p^\infty})}$, because $\bigl(L(X_K)(r)\bigr) \otimes_{R_K} S_K \cong L(Y_K)(r)$.  By Theorem \ref{thm-arith-decompl} and Corollary \ref{cor-decompl-char-twist} \Pth{with $\{ \psi_s \}$ there given by all powers of the cyclotomic character of $\Gal(k_{p^\infty} / k) \to \bZ_p^\times$}, the finite projective $R_{\widehat{k}_{p^\infty}}$-module $L(X_K)^{\Gal(K / \widehat{k}_{p^\infty})}$, with its induced action of $\Gal(\widehat{k}_{p^\infty} / k) \cong \Gal(k_{p^\infty} / k)$, descends to a finite projective $R_{k_{p^{l_0}}}$-module $L := L_{k_{p^{l_0}}}(X)$, for some $l_0 \geq 0$, such that $L(r)$ is a \emph{good model} \Pth{see Definition \ref{def-decompl-syst}\Refenum{\ref{def-decompl-syst-2}}} of $\bigl(L(X_K)(r)\bigr)^{\Gal(K / \widehat{k}_{p^\infty})}$, for all $r \in \bZ$, in the sense that $H^i\bigl(\Gal(k_{p^\infty} / k), L(r)\bigr) \Mi H^i\bigl(\Gal(k_{p^\infty} / k), \bigl(L(X_K)(r)\bigr)^{\Gal(K / \widehat{k}_{p^\infty})}\bigr)$, for all $i \geq 0$.  Consequently, we have
    \[
        H^0\bigl(\Gal(K / k), \mu'_*(\widehat{\bL} \otimes_{\widehat{\bQ}_p} \OCl)(r)(X)\bigr) \cong H^0\bigl(\Gal(k_{p^\infty} / k), L(r)\bigr),
    \]
    which is clearly a finitely generated $R$-module, and vanishes when $|r| \gg 0$.

    As for the statement \Refenum{\ref{lem-Ddl-coh-b}}, up to enlarging $l_0$, we may assume in addition that $L(r) \otimes_R S$ is a good model of $\bigl(L(Y_K)(r)\bigr)^{\Gal(K / \widehat{k}_{p^\infty})}$.  Hence, it suffices to show that
    \[
        H^0\bigl(\Gal(k_{p^\infty} / k), L(r)\bigr) \otimes_R S \Mi H^0\bigl(\Gal(k_{p^\infty}/ k), L(r) \otimes_R S\bigr).
    \]
    Thus, the desired base change property follows from the exactness of the complex $0 \to H^0(\Gal(k_{p^\infty} / k), L) \to L \xrightarrow{(\delta_1 - 1, \ldots, \delta_s - 1)} L^s$, where $\delta_1, \dots, \delta_s$ \Pth{for some $s \leq 2$, in fact} are topological generators of $\Gal(k_{p^\infty} / k)$, and from the flatness of $R \to S$.
\end{proof}

\begin{lemma}\label{lem-Ddl-reflex}
    The coherent sheaf $\Ddl(\bL)$ on $X_\an$ is reflexive.
\end{lemma}
\begin{proof}
    Being torsion-free, $\Ddl(\bL)$ is locally free outside some locus $X_0$ of codimension at least two in $X$.  Let $\jmath: X - X_0 \to X$ denote the canonical open immersion.  We claim that $\RHl(\bL) \cong \jmath_* \, \jmath^* \bigl(\RHl(\bL)\bigr)$.  Since $\RHl(\bL)$ is locally free, we may work locally and assume that it is isomorphic to $(\cO_X \ho_k \BdR)^n$ for some $n \geq 0$.  By using the filtration on $\cO_X \ho_k \BdR$ in Definition \ref{def-OXBdR}, it suffices to treat the case of $\cO_{X_K}$, which follows from \cite[\aCor 2.2.4]{Kisin:1999-afzlc}.  By taking $\Gal(K / k)$-invariants, we obtain a similar canonical isomorphism $\Ddl(\bL) \cong \jmath_* \, \jmath^* \bigl(\Ddl(\bL)\bigr)$.  Since $\Ddl(\bL)$ is coherent and $\jmath^* \Ddl(\bL)$ is locally free, it follows that $\Ddl(\bL)$ is reflexive, by the same argument as in the proof of \cite[\aProp 7]{Serre:1966-pfac}.
\end{proof}

\subsection{Calculation of residues}\label{sec-calc-res}

The main goal of this subsection is to prove Theorems \ref{thm-log-RH-geom}\Refenum{\ref{thm-log-RH-geom-res}}, \ref{thm-log-RH-arith}\Refenum{\ref{thm-log-RH-arith-main}}--\Refenum{\ref{thm-log-RH-arith-res}}, and \ref{thm-unip-vs-nilp}.

Let us first review the definition of residues for log connections and some basic properties.  We shall only consider the case where $X$ is as Example \ref{ex-log-adic-sp-ncd}, although the definition can be given more generally.  We first suppose that $F$ is a vector bundle on $X_\an$ equipped with an integrable log connection $\nabla: F \to F \otimes_{\cO_X} \Omega^{\log}_X$.

Let $Z \subset D$ be an \emph{irreducible component} \Pth{\ie, the image of a connected component of the normalization of $D$, as in \cite{Conrad:1999-icrs}}.  To define the residue $\Res_Z(\nabla)$ of $\nabla$ along $Z$, we may shrink $X$ and assume that $Z$ is smooth and connected.  Locally on $X$, up to enlarging $k$, we may assume that there is a smooth toric chart $X \to \bD^n$, where $\bD^n$ is as in Example \ref{ex-log-adic-sp-toric}, such that $Z = \{ T_1 = 0 \}$.  Let $\imath: Z \Em X$ denote the closed immersion, and let $F|_Z$ denote the $\cO$-module pullback $\imath^*(F)$.  Then there is an $\cO_Z$-linear endomorphism
\begin{equation}\label{eq-def-res}
    \Res_Z(\nabla) := \nabla (T_1\tfrac{\partial}{\partial T_1}) \bmod T_1: F|_Z \to F|_Z,
\end{equation}
where $T_1 \frac{\partial}{\partial T_1}$ denotes the dual of $\frac{dT_1}{T_1}$.  As in the classical situation, this operator does not depend on the choice of the coordinate $T_1$.  Also, its formation is compatible with rational localizations, and hence is a well-defined endomorphism of $F|_Z$.

Consider $Z$ as a smooth rigid analytic variety by itself, which is equipped with the normal crossings divisor $D' = \cup_j \, (D_j \cap Z)$, where the $D_j$'s are irreducible components of $D$ other than $Z$.  Then $Z$ admits the structure of a log adic space, defined by $D'$, as in Example \ref{ex-log-adic-sp-ncd}.  Again as in the classical situation, the pullback $F|_Z$ is equipped with a log connection $\nabla': F|_Z \to F|_Z \otimes_{\cO_Z} \Omega^{\log}_Z$, and the residue $\Res_Z(\nabla)$ is horizontal with respect to $\nabla'$.  As a result, the characteristic polynomial $P_Z(x)$ of $\Res_Z(\nabla)$ is constant over $Z$ and lies in $k_Z[x]$, where $k_Z$ is the algebraic closure of $k$ in $\Gamma(Z, \cO_Z)$.  Thus, the \emph{eigenvalues} of $\Res_Z(\nabla)$ \Pth{\ie, the \emph{roots} of $P_Z(x)$} are algebraic over $k$.  For each root $\alpha$ of $P_Z(x)$ in a finite extension $k'$ of $k$, let
\begin{equation}\label{eq-res-gen-eigen}
    F|_{Z \otimes_k k'}^\alpha \subset (F|_Z) \otimes_k k'
\end{equation}
be the corresponding generalized eigenspace of $\Res_Z(\nabla)$.  This is a direct summand \Pth{and hence a quotient} of $(F|_Z) \otimes_k k'$, which is preserved by the log connection $\nabla'$.

Given any vector bundle with an integrable log connection $(\cF, \nabla)$ on $\cX = (X_\an, \cO_X \ho_k \BdR)$, by using Lemma \ref{lem-OXBdR-mod}, the above discussions carry through.  Specifically, the coefficients of the characteristic polynomial $P_Z(x)$ of $\Res_Z(\nabla)$ are constant over $\cZ = (Z_\an, \cO_Z \ho_k \BdR)$, and therefore lie in $k_Z \otimes_k \BdR$, where $k_Z$ is as above.  In particular, if $k_Z = k$, then $P_Z(x) \in \BdR[x]$; and we can similarly define $(\cF|_{\cZ \otimes_{\BdR} B'}^\alpha, \nabla')$, for each root $\alpha$ of $P_Z(x)$ in a finite extension $B'$ of $\BdR$.

Now suppose that $F$ is a torsion-free coherent $\cO_X$-module equipped with an integrable log connection $\nabla$.  Let $U$ be the maximal open subset of $X$ such that $F|_U$ is a vector bundle, which is the complement of an analytic closed subvariety $X_0$ of $X$ of codimension at least two.  In particular, $X_0$ cannot contain any irreducible component of $D$.  Hence, by replacing $X$ with $U$, we can proceed as above and attach a polynomial $P_Z(x) \in k_Z[x]$ to each irreducible component $Z \subset D$.

Now we begin the proof of Theorem \ref{thm-log-RH-geom}\Refenum{\ref{thm-log-RH-geom-res}}.  Since the question is local, we may assume that $X = \Spa(R, R^+)$ is affinoid and admits a smooth toric chart $X \to \bD^n$; that the close subspace $Z_i = \{ T_i = 0 \}$ of $X$ is irreducible when nonempty, for each $1 \leq i \leq n$; and that $\RHl(\bL)$ is free.  By Proposition \ref{prop-OBdlp-loc} and Corollary \ref{cor-OBdlp-loc-gr},
\[
    \RHl(\bL)(X) \cong H^0\bigl(\Gamma_\geom, (\widehat{\bL} \otimes_{\widehat{\bQ}_p} \BBdR)(\widetilde{X})\{W_1, \ldots, W_n\}\bigr),
\]
where each $W_j = t^{-1} \monon{j}$ is defined as in \Refeq{\ref{eq-def-W-i}}.  Let $N_\infty := (\widehat{\bL} \otimes_{\widehat{\bQ}_p} \BBdRp)(\widetilde{X})$, which is a module over $\BBdRp(\widetilde{X}) \cong \BBdRp(\widehat{R}_\infty, \widehat{R}_\infty^+)$ \Pth{see Proposition \ref{prop-BdR-van}}.  Then we have
\[
\begin{split}
    & \Fil^0(\widehat{\bL} \otimes_{\widehat{\bQ}_p} \OBdl)(\widetilde{X}) \cong N_\infty\{W_1, \ldots, W_n\} \\
    & = \Bigl\{ \sum_{\Lambda \in \bZ_{\geq 0}^n} b_\Lambda \, W^\Lambda : \Utext{$b_\Lambda \in N_\infty$, $b_\Lambda \rightarrow 0$, $t$-adically, as $|\Lambda| \rightarrow \infty$} \Bigr\} \\
    & = \Bigl\{ \sum_{\Lambda \in \bZ_{\geq 0}^n} c_\Lambda \tbinom{W}{\Lambda} : \Utext{$c_\Lambda \in N_\infty$, $c_\Lambda \rightarrow 0$, $t$-adically, as $|\Lambda| \rightarrow \infty$} \Bigr\},
\end{split}
\]
where $W^\Lambda$ is as in \Refeq{\ref{eq-def-W-I}}, and $\binom{W}{\Lambda} := \binom{W_1}{\Lambda_1} \cdots \binom{W_n}{\Lambda_n}$, for each $\Lambda = (\Lambda_1, \ldots, \Lambda_n)$.

Recall that we have chosen the topological basis $\{ \gamma_1, \ldots, \gamma_n \}$ of $\Gamma_\geom \cong (\widehat{\bZ}(1))^n$ satisfying the characterizing property \Refeq{\ref{eq-def-gamma-j}}.  For each $\Lambda = (\Lambda_1, \ldots, \Lambda_n) \in \bZ_{\geq 0}^n$, let us write $(\gamma - 1)^\Lambda$ for $(\gamma_1 - 1)^{\Lambda_1} \cdots (\gamma_n - 1)^{\Lambda_n}$.
\begin{lemma}\phantomsection\label{lem-Gamma-geom-inv}
    \begin{enumerate}
        \item\label{lem-Gamma-geom-inv-1}  If $\sum c_\Lambda \binom{W}{\Lambda} \in N_\infty\{W_1, \ldots, W_n\}$ is $\Gamma_\geom$-invariant, then $(\gamma - 1)^\Lambda c_0 \rightarrow 0$, $t$-adically, as $|\Lambda| \rightarrow \infty$, and $c_\Lambda = (\gamma - 1)^\Lambda c_0$ for all $\Lambda \in \bZ_{\geq 0}^n$.

        \item\label{lem-Gamma-geom-inv-2}  Let
            \begin{equation}\label{eq-def-N+}
                N^+ := \bigl\{ c \in N_\infty : \Utext{$(\gamma - 1)^\Lambda c \rightarrow 0$, $t$-adically, as $|\Lambda| \rightarrow \infty$} \bigr\}.
            \end{equation}
            Then the map $N_\infty\{W_1, \ldots, W_n\} \to N_\infty$ sending all $W_1, \ldots, W_n$ to zero induces a canonical isomorphism
            \begin{equation}\label{eq-RHlp-N+}
                \eta: \RHlp(\bL)(X) \cong (N_\infty\{W_1, \ldots, W_n\})^{\Gamma_\geom} \cong N^+,
            \end{equation}
            with the inverse map given by $c \mapsto \sum_{\Lambda \in \bZ_{\geq 0}^n} (\gamma - 1)^\Lambda c \, \binom{W}{\Lambda}$.

        \item\label{lem-Gamma-geom-inv-3}  Let $N := N^+ \otimes_{\BdRp} \BdR \cong N^+[t^{-1}]$.  Then the above isomorphism $\eta$ induces a canonical isomorphism $\RHl(\bL)(X) \cong N$, which we still denote by $\eta$.
    \end{enumerate}
\end{lemma}
\begin{proof}
    We have $\gamma_i^{-1} W_j = W_j + \delta_{ij}$.  \Pth{Note that the $W_j$ defined in Corollary \ref{cor-OBdlp-loc-gr} differs from the $V_j$ defined in the proof of \cite[\aProp 6.16]{Scholze:2013-phtra} by a sign, and therefore we need $\gamma_i^{-1}$ in our formula rather than the $\gamma_i$ as in \cite[\aLem 6.17]{Scholze:2013-phtra}.}  This implies that $\gamma_i^{-1} \binom{W_i}{j} = \binom{W_i + 1}{j} = \binom{W_i}{j} + \binom{W_i}{j - 1}$, and so $(\gamma_i^{-1} - 1) \bigl( \sum_\Lambda c_\Lambda \binom{W}{\Lambda} \bigr) = \sum_\Lambda \bigl( \gamma_i^{-1} c_{\Lambda + e_i} + \gamma_i^{-1} c_\Lambda - c_\Lambda \bigr) \binom{W}{\Lambda}$, where $e_i = (0, \ldots, 0, 1, 0, \ldots, 0)$ has only the $i$-th entry equal to 1.  Therefore, $c_\Lambda - \gamma_i^{-1} c_\Lambda = \gamma_i^{-1} c_{\Lambda + e_i}$, or, equivalently, $\gamma_i c_\Lambda - c_\Lambda = c_{\Lambda + e_i}$, for all $i$ and $\Lambda$.  In particular, this implies that $(\gamma - 1)^\Lambda c_0 = c_\Lambda$, which goes to 0 as $|\Lambda| \rightarrow \infty$.  This proves \Refenum{\ref{lem-Gamma-geom-inv-1}}.  Then \Refenum{\ref{lem-Gamma-geom-inv-2}} and \Refenum{\ref{lem-Gamma-geom-inv-3}} also follow easily.
\end{proof}

By the proof of Theorem \ref{thm-log-Simp}\Refenum{\ref{thm-log-Simp-main}} in Section \ref{sec-coh}, $\RHlp(\bL)(X)$ is a finite projective $R \ho_k \BdRp$-module.  Note that the natural action of $\Gamma_\geom$ on $N_\infty$ preserves $N^+$, and by transport of structure gives an action of $\Gamma_\geom$ on $\RHl(\bL)(X)$.  This action is closely related to the residues, as we shall see.  Recall from Lemma \ref{lem-str-BdR} that, if we define the $\Gamma_\geom$-action of $R \ho_k \BdR$ by requiring that $\gamma_i(T_j) = [\epsilon]^{\delta_{ij}} T_j$ and that the action becomes trivial modulo $\xi$, then the embedding $R \ho_k \BdR \to \BBdR(\widehat{R}_\infty, \widehat{R}_\infty^+)$ sending $T_i$ to $[T_i^\flat]$ is $\Gamma_\geom$-equivariant.  Via this embedding, we may regard $N_\infty$ as an $R \ho_k \BdRp$-module, and $N^+$ as an $R \ho_k \BdRp$-submodule of $N_\infty$.

\begin{lemma}\label{lem-RHlp-N+}
    The isomorphism \Refeq{\ref{eq-RHlp-N+}} is an isomorphism of $R \ho_k \BdRp$-modules, where the actions of $R \ho_k \BdRp$ on $\RHlp(\bL)(X)$ and $N_\infty$ are as explained above.
\end{lemma}
\begin{proof}
    This follows from the fact that the map \Refeq{\ref{eq-str-BdR}} \Pth{which is an isomorphism by Lemma \ref{lem-str-BdR}} is obtained from the map \Refeq{\ref{eq-lem-O-str-BdR}} via $\mono{a} \mapsto 1$, for all $a \in P$.
\end{proof}

Hence, the action of $\gamma_i$ on $N^+ / T_i N^+$ is $(R / T_i) \ho_k \BdRp$-linear, and induces an $(R / T_i) \ho_k \BdR$-linear action on $N / T_i N$.

\begin{lemma}\label{lem-res-explicit}
    Under the isomorphism $\RHl(\bL)(X) \cong N$ given by Lemma \ref{lem-Gamma-geom-inv}, the residue of the connection $\nabla_\bL$ of $\RHl(\bL)$ along $Z_i = \{ T_i = 0 \}$ corresponds to the endomorphism $t^{-1} \log(\gamma_i)$ of $N / T_i N$.
\end{lemma}
\begin{proof}
    Let us expand elements of $N_\infty\{W_1, \ldots, W_n\}$ in the basis $\{ W^\Lambda \}_\Lambda$ instead of $\{ \binom{W}{\Lambda} \}_\Lambda$.  Suppose that $c_0 \in N$ and $\eta^{-1}(c_0) = \sum_\Lambda c_\Lambda \binom{W}{\Lambda} = \sum_\Lambda b_\Lambda \, W^\Lambda$.  Then, by the definition of residues as in \Refeq{\ref{eq-def-res}}, by Lemma \ref{lem-Gamma-geom-inv}, and by \Refeq{\ref{eq-conn-X-i}} and \Refeq{\ref{eq-conn-W-i}}, we obtain the identities $\eta\bigl((\Res_{Z_i}(\nabla_\bL))(\eta^{-1}(c_0))\bigr) = t^{-1} b_{e_i} = t^{-1} \sum_{a = 1}^\infty (-1)^{a - 1} \frac{1}{a} c_{a e_i} = t^{-1} \sum_{a = 1}^\infty (-1)^{a - 1} \frac{1}{a} (\gamma_i - 1)^a c_0 = t^{-1} \log(\gamma_i) (c_0)$, as desired.
\end{proof}

\begin{rk}\label{rem-res-explicit}
    The definitions of both $t$ and $\gamma_i$ \Pth{in \Refeq{\ref{eq-choice-t}} and \Refeq{\ref{eq-def-gamma-j}}} depend on the choice of $\zeta: \bQ / \bZ \Mi \Grpmu_\infty$ in \Refeq{\ref{eq-zeta}}, but $t^{-1} \log(\gamma_i)$ does not.
\end{rk}

To proceed further, we need the following lemma, which follows from \cite[\aLem \logadiclemQplocclimm]{Diao/Lan/Liu/Zhu:lasfr} by induction on $r$.
\begin{lemma}\label{lem-BdR-restr-cl-imm}
    Let $\imath: Z \to Y$ be a strict closed immersion of locally noetherian fs log adic spaces over $\Spa(\bQ_p, \bZ_p)$.  Let $\widehat{\bM}$ be a $\widehat{\bQ}_p$-local system on $Y_\proket$.  Then
    \[
    \begin{split}
        & \bigl(\imath_\proket^{-1}(\widehat{\bM}) \otimes_{\widehat{\bQ}_p} (\bB_{\dR, Z} / \xi^r)\bigr)(U \times_Y Z) \\
        & \cong \bigl(\widehat{\bM} \otimes_{\widehat{\bQ}_p} (\bB_{\dR, Y} / \xi^r)\bigr)(U) \otimes_{(\bB_{\dR, Y} / \xi^r)(U)} (\bB_{\dR, Z} / \xi^r)(U \times_Y Z).
    \end{split}
    \]
    for every $r \geq 1$ and every log affinoid perfectoid object $U$ of $Y_\proket$.
\end{lemma}

Let $Z_i$ be the \Pth{possibly empty} smooth divisor on $X$ defined by $T_i = 0$.  Equip $Z_i$ with the pullback of the log structure of $X$, and denote by $Z_i^\partial$ log adic space thus obtained.  Then the canonical morphism of log adic spaces $\imath: Z_i^\partial \Em X$ is a strict closed immersion.  Consider the log affinoid perfectoid object $\widetilde{Z}_i^\partial := Z_i^\partial \times_X \widetilde{X} \cong Z_i \times_{\bD^n} \widetilde{\bD}^n$ in $(Z_i^\partial)_\proket$ \Pth{as in Corollary \ref{cor-OBdRp-loc-cl-imm}}, with associated perfectoid space $\widehat{\widetilde{Z}}{}_i^\partial$.  By \Refeq{\ref{eq-cor-OBdRp-loc-cl-imm-BdR-mod-xi-r}} and Lemma \ref{lem-BdR-restr-cl-imm}, we have a canonical isomorphism
\begin{equation}\label{lem-L-B-dR-Z-i}
    \bigl(N_\infty / \xi^r\bigr) \big/ ([T_i^{s\flat}])^\wedge_{s \in \bQ_{> 0}} \cong \bigl(\imath_\proket^{-1}(\widehat{\bL}) \otimes_{\widehat{\bQ}_p} \BBdRpX{Z^\partial_i}\bigr)(\widetilde{Z}_i^\partial) \big/ \xi^r.
\end{equation}

Let $\AC{B}_\dR$ denote a fixed algebraic closure of $\BdR$ extending the fixed algebraic closure $\AC{k}$ of $k$, and let $\AC{B}_\dR^+$ denote the integral closure of $\BdRp$ in $\AC{B}_\dR$.
\begin{lemma}\label{lem-poss-eval-N-infty-xi-r}
    If $\gamma_i v = x v$ for some nonzero $v \in \bigl(N_\infty / \xi^r\bigr) \big/ ([T_i^{s\flat}])^\wedge_{s \in \bQ_{> 0}}$ and some $x \in \AC{B}_\dR^+ / \xi^r$, then $x = \zeta^y$ for some $y \in \bQ$.
\end{lemma}
\begin{proof}
    Up to replacing $k$ with a finite extension, we may assume that $Z_i$ contains a $k$-point $z$.  Let $z^\partial$ denote $z$ equipped with the log structure pulled back from $X$.  Let $\widetilde{z}^\partial := z^\partial \times_X \widetilde{X}$, with associated perfectoid space $\widehat{\widetilde{z}}{}^\partial$.  Then $\gamma_i v = x v$ still holds in the base change of $\bigl(N_\infty / \xi^r\bigr) \big/ ([T_i^{s\flat}])^\wedge_{s \in \bQ_{> 0}}$ along $(\BBdRpX{Z_i^\partial} / \xi^r)(\widetilde{Z}^\partial_i) \to (\BBdRpX{z^\partial} / \xi^r)(\widetilde{z}^\partial)$, which is isomorphic to $\bigl(\widehat{\bL}|_{z^\partial} \otimes_{\widehat{\bQ}_p} (\BBdRpX{z^\partial} / \xi^r)\bigr)(\widetilde{z}^\partial)$, by Lemma \ref{lem-BdR-restr-cl-imm}.  Note that $\gamma_i$ acts trivially on $\widehat{\widetilde{z}}{}^\partial$.  Hence, $\widehat{\bL}|_{\widehat{\widetilde{z}}{}^\partial}$ is equipped with an action of $\gamma_i$, which is quasi-unipotent because it extends to a continuous one of $\widehat{\bZ}(1) \rtimes \Gal(k_\infty / k)$ \Pth{see \Refeq{\ref{eq-fund-grp-Gamma}}}, and the same argument as in the proof of \cite[\aLem 2.15]{Liu/Zhu:2017-rrhpl} also works here.
\end{proof}

Next we use the decompletion over $\BBdRp / \xi^r$ established in Section \ref{sec-geom-tower-deform} to descend $N_\infty / \xi^r = (\widehat{\bL} \otimes_{\widehat{\bQ}_p} \BBdRp)(\widetilde{X}) / \xi^r$ to some finite level.

\begin{lemma}\label{lem-decompl-BdR-mod}
    For each $r \geq 1$, there exist some $m \geq 1$ and a finite projective $\bB_{r, m}$-module $N_{r, m}$ \Pth{where $\bB_{r, m}$ is as in \Refeq{\ref{eq-def-B-r-m}}}, equipped with a semilinear $\Gamma$-action, such that $N_\infty / \xi^r \cong N_{r, m} \otimes_{\bB_{r, m}} \bigl(\BBdRp(\widetilde{X}) / \xi^r\bigr)$ as $\BBdRp(\widetilde{X}) / \xi^r$-modules with semilinear $\Gamma$-actions.  In addition, up to replacing $m$ with a multiple \Pth{and replacing $N_{r, m}$ with its base change, accordingly}, we may assume that $N^+ / \xi^r \subset N_{r, m}$ \Pth{as submodules of $N_\infty / \xi^r$}, and that $N_{r, m} \big/ (N^+ / \xi^r)$ is $T_i$-torsion-free.
\end{lemma}
\begin{proof}
    The first statement follows from Lemma \ref{lem-str-BdR} and Theorem \ref{thm-BdR-decompl}.  As for the second statement, we may assume that $H^j(\Gamma, N_{r, m}) \to H^i(\Gamma, N_\infty / \xi^r)$ is an isomorphism for $j = 0, 1$ \Pth{by Definition \ref{def-decompl-syst}\Refenum{\ref{def-decompl-syst-2}}}, so that $H^0(\Gamma, (N_\infty / \xi^r) / N_{r, m}) = 0$.  Then the whole $\Gamma$ acts unipotently on each element of $N^+ / \xi^r$ \Pth{by \Refeq{\ref{eq-def-N+}}}, while each nonzero element of $(N_\infty / \xi^r) / N_{r, m}$ lies outside the kernel of $\gamma - 1$ for some $\gamma \in \Gamma$.  It follows that $N^+ / \xi^r \subset N_{r, m} \subset N_\infty / \xi^r$, as desired.  As for the last statement, it suffices to show that $(N_\infty / \xi^r) \big/ (N^+ / \xi^r)$ is $T_i$-torsion-free.  By the definition of $N_\infty$, and by \Refeq{\ref{eq-P-to-Ainf}}, $N_\infty / \xi^r$ is $T_i$-torsion-free.  By the definition of $N^+ / \xi^r$, it remains to note that, for each $c \in N_\infty / \xi^r$, we have $(\gamma - 1)^\Lambda (T_i \, c) = 0$ for some $\Lambda \in \bZ_{\geq 0}^n$ if and only if $(\gamma - 1)^{\Lambda'}(c) = 0$ for some $\Lambda' \in \bZ_{\geq 0}^n$, since $(\gamma_j - 1) (T_i \, c) = [\epsilon]^{\delta_{ij}} \, T_i \, \gamma_i(c) - T_i \, c = T_i \bigl(([\epsilon]^{\delta_{ij}} - 1) \gamma_i + (\gamma_i - 1)\bigr) c$ and $[\epsilon] - 1 \in (\xi) \subset \BdRp$.
\end{proof}

\begin{lemma}\label{lem-poss-eval-N+-xi-r-T}
    If $\gamma_i v = x v$ for some nonzero $v \in (N^+ / \xi^r) \big/ (T_i)$ and some $x \in \AC{B}_\dR^+ / \xi^r$, then $x = \zeta^y [\epsilon^z]$ for some $y \in \bQ$ and $z \in \bQ \cap [0, 1)$.
\end{lemma}
\begin{proof}
    By Lemma \ref{lem-decompl-BdR-mod}, we may assume that $v \in (N^+ / \xi^r) \big/ (T_i) \subset N_{r, m} \big/ T_i N_{r, m}$ for some $m$ and $N_{r, m}$.  Consider the filtration $T_i^{\frac{a}{m}} N_{r, m} \big/ T_i N_{r, m} \subset N_{r, m} \big/ T_i N_{r, m}$, with $0 \leq a \leq m$.  Since $v \neq 0$, there exists some $0 \leq a < m$ such that the image $\overline{v}$ of $v$ in $T_i^{\frac{a}{m}} N_{r, m} \big/ T_i^{\frac{a + 1}{m}} N_{r, m}$ is nonzero, which also satisfies $\gamma_i \overline{v} = x \overline{v}$.  By Lemma \ref{lem-decompl-BdR-mod} again, the natural embedding $N_{r, m} \Em N_\infty / \xi^r$ induces by restriction to $T_i^{\frac{a}{m}} N_{r, m}$ and by factoring out a multiplication by $T_i^{\frac{a}{m}}$ a well-defined embedding $T_i^{\frac{a}{m}} N_{r, m} \big/ T_i^{\frac{a + 1}{m}} N_{r, m} \Em (N_\infty / \xi^r) \big/ ([T_i^{s\flat}])^\wedge_{s \in \bQ_{> 0}}$, and the nonzero image $w$ of $\overline{v}$ in $(N_\infty / \xi^r) \big/ ([T_i^{s\flat}])^\wedge_{s \in \bQ_{> 0}}$ satisfies the twisted relation $\gamma_i w = [\epsilon^{-\frac{a}{m}}] x w$ because $\gamma_i \, T_i^{\frac{1}{m}} = [\epsilon^{\frac{1}{m}}] \, T_i^{\frac{1}{m}}$ \Pth{\Refcf{} \Refeq{\ref{eq-Gamma-act-mono-alg}}}.  Thus, by Lemma \ref{lem-poss-eval-N-infty-xi-r}, we have $x = \zeta^y [\epsilon^z]$ with $y \in \bQ$ and $z := \frac{a}{m} \in \frac{1}{m}\bZ \cap [0, 1) \subset \bQ \cap [0, 1)$, as desired.
\end{proof}

Finally, let us finish the proof of Theorem \ref{thm-log-RH-geom}\Refenum{\ref{thm-log-RH-geom-res}}.  Let $k_{Z_i}$ be the algebraic closure of $k$ in $\Gamma(Z_i, \cO_{Z_i})$.  For our purpose, we may replace $k$ with a finite extension, replace $X$ with an open subspace, and replace $Z_i$ accordingly, so that $k_{Z_i} = k$ and the eigenvalues of the residue along $Z_i$ are in $\AC{B}_\dR$.  By Lemma \ref{lem-res-explicit}, it suffices to show that the eigenvalues of $t^{-1} \log(\gamma_i)$ are in $\bQ \cap [0, 1)$.  Let $v \in N / T_i N$ be an eigenvector of $t^{-1} \log(\gamma_i)$ with eigenvalue $\widetilde{x} \in \AC{B}_\dR$.  Up to multiplying $v$ by a power of $t$, we may assume that $v \in N^+ / T_i N^+$.  Since the action of $\gamma_i$ on $N^+ / T_i N^+$ is $\BdRp$-linear, $v$ is an eigenvector of $\gamma_i$ with eigenvalue $\exp(\widetilde{x} t)$ in $\AC{B}_\dR^+$.  By Lemma \ref{lem-poss-eval-N+-xi-r-T}, and by the assumption that $(\gamma_i - 1)^l \, v \rightarrow 0$, $t$-adically, as $l \rightarrow \infty$, it is easy to see that $\exp(\widetilde{x} t) = \zeta^y [\epsilon^z]$, with $y + z \in \bZ$ and $z \in \bQ \cap [0, 1)$.  Therefore, we may assume that $-y = z \in \bQ \cap [0, 1)$.  Thus, the eigenvalues of $t^{-1} \log(\gamma_i)$ are of the form $t^{-1} \log(\zeta^{-z}[\epsilon^z]) = z \in \bQ \cap [0, 1)$, which verifies Theorem \ref{thm-log-RH-geom}\Refenum{\ref{thm-log-RH-geom-res}}, as desired.

\begin{rk}\label{rem-compat-L-m-0-res}
    By the proof Lemma \ref{lem-poss-eval-N+-xi-r-T}, the surjection $L(X_K) / (T_1) \Surj L(Z_K)$ in Remark \ref{rem-compat-L-m-0} \Pth{when $l = 1$ there} is the evaluation on $\widetilde{X}$ of $\bigl(\gr^0  \RHl(\bL)\bigr)|_{D_1} \to \gr^0 \bigl(\RHl(\bL)|_{D_1}^0\bigr)$ \Pth{\Refcf{} \Refeq{\ref{eq-res-gen-eigen}}}, where $\RHl(\bL)|_{D_1}^0$ is interpreted as a quotient of $\RHl(\bL)|_{D_1}$ and equipped with the canonically induced filtration.
\end{rk}

\begin{prop}\label{prop-res-Ddl}
    For the connection $\nabla_\bL: \Ddl(\bL) \to \Ddl(\bL) \otimes_{\cO_X} \Omega^{\log}_X$, all eigenvalues of $\Res_{\{ T_i = 0 \}}(\nabla_\bL)$ are in $\bQ \cap [0, 1)$.
\end{prop}
\begin{proof}
    Note that $\Ddl(\bL)(X) \cong \RHl(\bL)(\widetilde X)^{\Gal(K / k)}$, and the isomorphism $\eta$ in \Refeq{\ref{eq-RHlp-N+}} is $\Gal(K / k)$-equivariant.  Therefore, $\Ddl(\bL)(X) \cong N^{\Gal(K / k)}$.  In addition, the residue $\Res_{\{ T_i = 0 \}}(\nabla_\bL)$ is still given by $t^{-1} \log(\gamma_i)$ as in Lemma \ref{lem-res-explicit}.  Then the proposition follows from the arguments just explained above.
\end{proof}

In order to complete the proof of Theorem \ref{thm-log-RH-arith}\Refenum{\ref{thm-log-RH-arith-main}}, it remains to apply the following proposition to conclude that $\Ddl(\bL)$ is a vector bundle.

\begin{prop}\label{prop-loc-free}
    A torsion-free coherent $\cO_X$-module $F$ with an integrable log connection $\nabla: F \to F \otimes_{\cO_X} \Omega^{\log}_X$ is locally free when the following conditions hold:
    \begin{enumerate}
        \item\label{prop-loc-free-1}  $F$ is reflexive \Pth{\ie, isomorphic to its bidual}.

        \item\label{prop-loc-free-2}  For every $i$, all eigenvalues of $\Res_{\{ T_i = 0 \}}(\nabla)$ are in $\bQ \cap [0, 1)$.
    \end{enumerate}
\end{prop}
\begin{proof}
    This follows from the same argument as in the proof of \cite[\aCh 1, \aProp 4.5]{Andre/Baldassarri:2001-DDA} or \cite[\aLem 11.5.1]{Andre/Baldassarri/Cailotto:2020-DDA(2)}.  More precisely, it suffices to note that, under the assumptions, the completion of the stalk of $\cE$ at each classical point of $X$ is free, by \cite[\aCh 1, \aLem 4.6.1]{Andre/Baldassarri:2001-DDA} or \cite[the proof of \aLem 11.5.1]{Andre/Baldassarri/Cailotto:2020-DDA(2)}.
\end{proof}

To apply Proposition \ref{prop-loc-free} to $\Ddl(\bL)$, it suffices to note that the condition \Refenum{\ref{prop-loc-free-1}} is satisfied by Lemma \ref{lem-Ddl-reflex}, and the condition \Refenum{\ref{prop-loc-free-2}} is satisfied by Proposition \ref{prop-res-Ddl}.  The proof of Theorem \ref{thm-log-RH-arith}\Refenum{\ref{thm-log-RH-arith-main}} is now complete.

\begin{prop}\label{prop-conn-isom-res}
    Suppose that $F$ \Pth{\resp $F'$} is a locally free \Pth{\resp torsion-free} coherent $\cO_X$-module, with an integrable log connection $\nabla$ \Pth{\resp $\nabla'$} as in Definition \ref{def-log-conn-etc}\Refenum{\ref{def-log-conn-etc-4}}, whose residues along the irreducible components of $D$ all have eigenvalues in $\bQ \cap [0, 1)$.  Then any morphism $(F, \nabla) \to (F', \nabla')$ whose restriction to $U = X - D$ is an isomorphism is necessarily an isomorphism over the whole $X$.  The same is true if we replace $\cO_X$-modules with $\cO_X \ho_k \BdR$-modules.
\end{prop}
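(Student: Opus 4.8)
The assertion is local on $X_\an$ \Pth{\resp on the underlying space of $\cX^+ = (X_\an, \cO_X \ho \BdRp)$}, so the plan is first to reduce to the case where $X = \Spa(A, A^+)$ admits a smooth toric chart $X \to \bD^n$ with $D = \{ T_1 \cdots T_r = 0 \}$ and each component $D_i := \{ T_i = 0 \}$ irreducible, and then to reduce further to the case where both $F$ and $F'$ are locally free. Writing $\phi \colon F \to F'$ for the given morphism, injectivity is immediate: $\ker(\phi)$ is a coherent subsheaf of the torsion-free $F$ supported on $D$, hence zero. To make $F'$ locally free I would pass to its reflexive hull $F'^{\vee\vee} \cong \jmath_* \jmath^*(F')$, where $\jmath \colon X - Z \Em X$ is the inclusion of the open locus over which $F'$ is locally free, the complement $Z$ having codimension $\geq 2$. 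The integrable log connection $\nabla'$, being a first-order operator, extends uniquely to $F'^{\vee\vee}$, and its residues — computed at the generic points of the $D_i$, which lie in $X - Z$ — still have eigenvalues in $\bQ \cap [0, 1)$, so Proposition \ref{prop-loc-free} \Pth{or its evident analogue for $\cO_X \ho \BdRp$-modules, proved the same way} shows $F'^{\vee\vee}$ is locally free. Granting the surjectivity statement when both modules are locally free, the composite $F \to F' \Em F'^{\vee\vee}$ is then an isomorphism, which forces $F' = F'^{\vee\vee}$ and hence $\phi$ to be an isomorphism. So from now on $F$ and $F'$ are locally free, $\coker(\phi) =: \cQ$ is supported on $D$ and admits a length-one locally free resolution $0 \to F \to F' \to \cQ \to 0$; by the Auslander--Buchsbaum formula, $\cQ$ is either zero or Cohen--Macaulay of codimension one, so its associated primes lie among the height-one primes $(T_1), \dots, (T_r)$.

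The technical heart is a one-variable statement that I would prove by hand: \emph{let $\cV$ be a complete discrete valuation ring of equal characteristic zero with uniformizer $T$, let $M$ be a finite free $\cV$-module and $M'$ a finite torsion-free $\cV$-module, each carrying a log connection along $T = 0$ \Pth{an additive operator $\theta$ with $\theta(fm) = (T \tfrac{d}{dT} f)\, m + f\, \theta(m)$} whose residues $\rho := \theta_M \bmod T$ and $\rho' := \theta_{M'} \bmod T$ have all eigenvalues in $\bQ \cap [0, 1)$; then any $\theta$-horizontal $\cV$-linear map $\phi \colon M \to M'$ which becomes an isomorphism after inverting $T$ is an isomorphism.} Here $\phi$ is injective, and one picks the least $N \geq 0$ with $T^N M' \subseteq \phi(M)$. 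If $N \geq 1$, the $\cV$-linear map $g \colon M' \to M$ characterized by $\phi \circ g = T^N \cdot \mathrm{id}$ is, by a direct computation from horizontality of $\phi$ and the Leibniz rule, horizontal from $(M', \theta_{M'})$ to $(M, \theta_M - N)$; reducing modulo $T$, the induced map $M'/TM' \to M/TM$ intertwines $\rho'$ with $\rho - N$, and it is nonzero by minimality of $N$ \Pth{otherwise $T^N M' \subseteq T\phi(M)$, so $T^{N-1} M' \subseteq \phi(M)$ since $M'$ is torsion-free}. A nonzero intertwiner forces some eigenvalue $\lambda$ of $\rho'$ and some eigenvalue $\mu$ of $\rho$ to satisfy $\mu = \lambda + N$; but then $\mu \geq N \geq 1$, contradicting $\mu \in \bQ \cap [0,1)$. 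Hence $N = 0$ and $\phi$ is surjective. The arithmetic input — residues with eigenvalues in $\bQ \cap [0,1)$ — enters precisely here, through the disjointness of $[0,1)$ and $[-N, 1-N)$ for $N \geq 1$; this is where I expect the real content to lie, everything else being formal, and the double-dual reduction above being the point needed to keep $\cQ$ free of nasty \Pth{embedded or higher-codimensional} behaviour.

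To finish, I would localize $\phi$ at each height-one prime $(T_i)$ of $A$ \Pth{\resp at each height-one prime of $A \ho_k \BdRp$ lying over $T_i = 0$} and complete: the local ring becomes a complete discrete valuation ring $\cV_i$ of equal characteristic zero with uniformizer $T_i$ \Pth{a power series ring over its residue field, by Cohen's structure theorem}, $F$ and $F'$ become finite free $\cV_i$-modules, $\nabla$ and $\nabla'$ induce log connections along $T_i = 0$ whose residues have eigenvalues in $\bQ \cap [0,1)$ \Pth{being rational, these are unaffected by the base change}, and $\phi$ becomes an isomorphism after inverting $T_i$ because the other $T_j$ are units in $\cV_i$. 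The one-variable statement then gives $\cQ_{(T_i)} = 0$ for every $i$; since $\cQ$ has no associated prime outside $\{(T_1), \dots, (T_r)\}$, we conclude $\cQ = 0$, i.e.\ $\phi$ is an isomorphism. For $\cO_X \ho \BdRp$-modules the same argument applies verbatim: one only needs that $A \ho_k \BdRp$ is regular — which holds because it is $t$-adically complete, $t$ is a non-zero-divisor, and $(A \ho_k \BdRp)/t = A \ho_k K$ is regular — and that the relevant residue eigenvalues still lie in $\bQ \cap [0,1) \subset k \subset \BdR$.
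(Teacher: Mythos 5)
Your proof is correct, and its overall architecture coincides with the paper's: both arguments first replace $F'$ by its reflexive hull \Pth{the double dual $F''$, identified with $\jmath_* \jmath^*(F')$ off a codimension-two locus}, invoke Proposition \ref{prop-loc-free} to see that this hull is locally free, and thereby reduce to a morphism between locally free modules.  Where you diverge is in the core step.  The paper passes to local bases, views the morphism as a matrix $A$ invertible outside $D$, and cites the classical fact \Ref{the proof of \cite[\aCh 1, \aProp 4.7]{Andre/Baldassarri:2001-DDA}} that the a priori meromorphic entries of $A^{-1}$ are regular when the residue eigenvalues lie in $\bQ \cap [0, 1)$.  You instead make this self-contained: you control the cokernel $\cQ$ by its projective dimension \Pth{so that its associated primes are the height-one primes of the components of $D$}, localize and complete at each such prime, and prove the resulting one-variable statement by exhibiting the near-inverse $g$ with $\phi \circ g = T^N$ as an intertwiner between $\rho'$ and $\rho - N$, whence a nonzero map between generalized eigenspaces forces two eigenvalues to differ by the positive integer $N$ --- impossible when both lie in $[0, 1)$.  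This is exactly the non-resonance computation hiding inside the cited classical proposition, so the two proofs rest on the same arithmetic input; what yours buys is transparency about where the hypothesis on residues is used, at the cost of some commutative-algebra bookkeeping \Pth{Auslander--Buchsbaum, Cohen structure theory, and, in the $\cO_X \ho \BdRp$ case, the Noetherianness and regularity of $A \ho_k \BdRp$ and the structure of its height-one primes over $T_i = 0$, which you assert plausibly but do not verify in detail --- the paper is equally terse on this last point, disposing of it via Lemma \ref{L:OXBdR module}}.
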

\begin{proof}
    Let $(F'', \nabla'') := ((F', \nabla')^\dualsign)^\dualsign$, where $F''$ is the double $\cO_X$-dual of $F'$, which is by definition a reflexive coherent $\cO_X$-module, and where $\nabla''$ is the induced log integrable connection, whose residues along the irreducible components of $D$ also have eigenvalues in $\bQ \cap [0, 1)$.  Hence, by Proposition \ref{prop-loc-free}, $F''$ is \emph{locally free}.  Since the restriction of the given morphism $(F, \nabla) \to (F', \nabla')$ to the dense subspace $U$ is an isomorphism, we have injective morphisms $(F, \nabla) \to (F', \nabla') \to (F'', \nabla'')$, and it suffices to show that their composition is an isomorphism over $X$.  Therefore, we can replace $F'$ with $F''$, and assume that both $F$ and $F'$ are locally free.  Thus, by working locally, we may replace $X$ with its affinoid open subspaces which admit strictly \'etale morphisms to $\bD^n$ as in Example \ref{ex-log-adic-sp-ncd}, and assume that both $F$ and $F'$ are free of rank $d$.  Then, with respect to the chosen bases, the map $F \to F'$ is represented by a matrix $A$ in $M_d\bigl(\cO_X(X)\bigr)$, which is invertible outside $D$.  In order to show that it is invertible over $X$, it suffices to show that the entries of $A^{-1}$, which are a priori analytic functions on $X$ \emph{meromorphic} along $D$, are everywhere regular analytic functions on $X$.  But this is classical---see, for example, the proof of \cite[\aCh 1, \aProp 4.7]{Andre/Baldassarri:2001-DDA} or \cite[the uniqueness assertion of \aThm 11.2.2 in \aSec 11.4, based on \aProp-\aDef 10.2.5]{Andre/Baldassarri/Cailotto:2020-DDA(2)}.  Moreover, by Lemma \ref{lem-OXBdR-mod}, the above arguments also apply to integrable log connections on $\cX$ \Pth{as in Definition \ref{def-log-conn-etc}\Refenum{\ref{def-log-conn-etc-1}}}.
\end{proof}

As usual, we define a decreasing filtration on $\Ddl(\bL)$ by setting
\[
    \Fil^\bullet \Ddl(\bL) := \bigl(\Fil^\bullet \RHl(\bL)\bigr)^{\Gal(K / k)}.
\]

\begin{lemma}\label{lem-Ddl-to-RHl-fil-strict}
    We endow $\Ddl(\bL) \ho_k \BdR$ with the usual product filtration.  Then the canonical morphism
    \begin{equation}\label{eq-lem-Ddl-to-RHl-fil-strict}
        \Ddl(\bL) \ho_k \BdR \to \RHl(\bL)
    \end{equation}
    defined by adjunction is injective \Pth{by definition} and strictly compatible with the filtrations on both sides.  That is, for each $r$, \Refeq{\ref{eq-lem-Ddl-to-RHl-fil-strict}} induces an injective morphism
    \begin{equation}\label{eq-lem-Ddl-to-RHl-fil-strict-gr}
        \gr^r \bigl( \Ddl(\bL) \ho_k \BdR \bigr) \to \gr^r \RHl(\bL).
    \end{equation}
\end{lemma}
\begin{proof}
    Since $\Ddl(\bL) \cong (\RHl(\bL))^{\Gal(K / k)}$, the left-hand side of \Refeq{\ref{eq-lem-Ddl-to-RHl-fil-strict-gr}} can be identified with $\oplus_{a + b = r} \, \Bigl( \bigl(\gr^a \bigl((\RHl(\bL))^{\Gal(K / k)}\bigr) \bigr) \otimes_k K(b) \Bigr)$, while the right-hand side of \Refeq{\ref{eq-lem-Ddl-to-RHl-fil-strict-gr}} contains $\oplus_{a + b = r} \, \Bigl( \bigl(\gr^a \RHl(\bL)\bigr)^{\Gal(K / k)} \otimes_k K(b) \Bigr)$ as a subspace, where we have direct sums in such forms because of the $\Gal(K / k)$-actions.  Thus, it suffices to note that the canonical morphism $\gr^a \bigl( (\RHl(\bL))^{\Gal(K / k)} \bigr) \to \bigl(\gr^a \RHl(\bL)\bigr)^{\Gal(K / k)}$ is injective, for each $a$, essentially by definition.
\end{proof}

\begin{cor}\label{cor-Ddl-to-RHl-fil-strict}
    If $\bL|_{U_\et}$ is de Rham, then \Refeq{\ref{eq-lem-Ddl-to-RHl-fil-strict}} is an isomorphism of vector bundles on $\cX$, compatible with the log connections and filtrations on both sides.
\end{cor}
\begin{proof}
     Since $\bL|_{U_\et}$ is de Rham, by \cite[\aCor 3.12(ii)]{Liu/Zhu:2017-rrhpl}, the restriction of \Refeq{\ref{eq-lem-Ddl-to-RHl-fil-strict}} to $U$ is an isomorphism.  By Proposition \ref{prop-conn-isom-res} and Theorems \ref{thm-log-RH-geom}\Refenum{\ref{thm-log-RH-geom-res}} and \ref{thm-log-RH-arith}\Refenum{\ref{thm-log-RH-arith-main}}, the morphism \Refeq{\ref{eq-lem-Ddl-to-RHl-fil-strict}} is an isomorphism, compatible with the log connections.  By Lemma \ref{lem-Ddl-to-RHl-fil-strict}, it is also compatible with the filtrations.
\end{proof}

\begin{cor}\label{cor-Ddl-gr-vec-bdl}
    If $\bL|_{U_\et}$ is de Rham, then $\gr \Ddl(\bL)$ is a vector bundle of rank $\rank_{\bQ_p}(\bL)$.
\end{cor}
\begin{proof}
    By Corollary \ref{cor-Ddl-to-RHl-fil-strict}, $\oplus_a \Bigl(\bigl(\gr^a \Ddl(\bL)\bigr) \ho_k K(-a)\Bigr) \Mi \gr^0 \RHl(\bL) \cong \Hl(\bL)$.  Since $\Hl(\bL)$ is a vector bundle on $X_K$ by Theorem \ref{thm-log-Simp}\Refenum{\ref{thm-log-Simp-main}}, this shows that $\gr \Ddl(\bL)$ is a vector bundles on $X$ of rank equal to that of $\Hl(\bL)$, which is in turn equal to $\rank_{\bQ_p}(\bL)$ by the proof of Theorem \ref{thm-log-Simp}\Refenum{\ref{thm-log-Simp-main}} in Section \ref{sec-coh}.
\end{proof}

Thus, by Proposition \ref{prop-res-Ddl} and Corollary \ref{cor-Ddl-gr-vec-bdl}, the proof of Theorem \ref{thm-log-RH-arith}\Refenum{\ref{thm-log-RH-arith-res}} is also complete.  We conclude this subsection with the following:
\begin{proof}[Proof of Theorem \ref{thm-unip-vs-nilp}]
    Given any $\bQ_p$-local system $\bL$ on $X_\ket$ such that $\bL|_{U_\et}$ has unipotent geometric monodromy along $D$, by definition \Pth{see the paragraph preceding Theorem \ref{thm-unip-vs-nilp}}, the action of $\gamma_i$ as in Lemma \ref{lem-res-explicit} on any stalk of $\bL|_{\widehat{\widetilde{Z}}{}_i^\partial}$ is \emph{unipotent}.  Consequently, $x = 1$ in Lemmas \ref{lem-poss-eval-N-infty-xi-r} and \ref{lem-poss-eval-N+-xi-r-T}, and the residues of $\RHl(\bL)$ \Pth{by Lemma \ref{lem-res-explicit}} are all \emph{nilpotent} \Pth{\ie, have zero eigenvalues}.  For such $\bL$, in the paragraph preceding Remark \ref{rem-fail-surj-L-m-0}, $L_{m_0, \tau}(X_K) \neq 0$ can happen in \Refeq{\ref{eq-L-m-0-tau}} only when $\tau(\gamma_i) = 1$ for all $i$ such that $Z_i = \{ T_i = 0 \} \neq \emptyset$ on $X = \Spa(R, R^+)$.  Hence, up to enlarging $m_0$, the corresponding monomial $T^{a_\tau}$ there is invertible in $R_{K, m_0}$.  In this case, the morphism \Refeq{\ref{eq-fail-surj-L-m-0}} is an isomorphism, just like \Refeq{\ref{eq-L-m-0}}; and the canonical morphisms $(\mu')^{-1}\bigl(\Hl(\bL)\bigr) \otimes_{\cO_{X_\proket}} \OCl \to \widehat{\bL} \otimes_{\widehat{\bZ}_p} \OCl$ and $(\mu')^{-1} \bigl(\RHl(\bL)\bigr) \otimes_{(\mu')^{-1}(\cO_\cX)} \OBdl \to \widehat{\bL} \otimes_{\widehat{\bZ}_p} \OBdl$ are also isomorphisms \Pth{\Refcf{} \cite[\aThms 2.1(ii) and 3.8(iii)]{Liu/Zhu:2017-rrhpl}}.  Therefore, we can argue as in the proofs of \cite[\aThms 2.1(iv) and 3.8(i)]{Liu/Zhu:2017-rrhpl} that $\Hl$ and $\RHl$ restrict to tensor functors.  This proves part \Refenum{\ref{thm-unip-vs-nilp-1}} of the theorem.

    As for part \Refenum{\ref{thm-unip-vs-nilp-2}}, if $\bL|_{U_\et}$ is \emph{de Rham} and has unipotent geometric monodromy along $D$, then the residues of $\Ddl(\bL)(X)$ are nilpotent, by the proofs of part \Refenum{\ref{thm-unip-vs-nilp-1}} above and of Proposition \ref{prop-res-Ddl}; and \Refeq{\ref{eq-lem-Ddl-to-RHl-fil-strict}} is an isomorphism, by Corollary \ref{cor-Ddl-to-RHl-fil-strict}.  In this case, the canonical morphism $\mu^{-1}\bigl(\Ddl(\bL)\bigr) \otimes_{\cO_{X_\proket}} \OBdl \to \widehat{\bL} \otimes_{\widehat{\bZ}_p} \OBdl$ \Pth{\Refcf{} \Refeq{\ref{eq-fail-surj}}} is also an isomorphism, and we can conclude as in \cite[\aThm 3.9(v)]{Liu/Zhu:2017-rrhpl} that $\Ddl$ also restricts to a tensor functor.
\end{proof}

\subsection{Compatibility with pullbacks and pushforwards}\label{sec-mor}

In this subsection, we prove Theorems \ref{thm-log-RH-geom}\Refenum{\ref{thm-log-RH-geom-mor}}, \ref{thm-log-Simp}\Refenum{\ref{thm-log-Simp-mor}}, and \ref{thm-log-RH-arith}\Refenum{\ref{thm-log-RH-arith-mor}}\Refenum{\ref{thm-log-RH-arith-push}}.  We shall omit the explicit verifications of the $\Gal(K / k)$-equivariance of the adjunction morphisms, because they are obvious from the constructions of the functors $\RHl$ and $\Hl$ \Pth{\Refcf{} Section \ref{sec-coh}}.

Let us begin with pullbacks.  Let $\cY$ be defined by $Y$ as in \Refeq{\ref{eq-def-cX}}.  Let $h: Y \to X$ be as in the statements of the theorems.  Let $E$ be the normal crossings divisor defining the log structure on $Y$, as in Example \ref{ex-log-adic-sp-ncd}, and let $V := Y - E$.

\begin{rk}\label{rem-compat-triv}
    Theorems \ref{thm-log-RH-geom}\Refenum{\ref{thm-log-RH-geom-mor}}, \ref{thm-log-Simp}\Refenum{\ref{thm-log-Simp-mor}}, and \ref{thm-log-RH-arith}\Refenum{\ref{thm-log-RH-arith-mor}} are obvious when $h: Y \to X$ is an open immersion.  Moreover, when the log structure is trivial, the functors $\RHl$, $\Hl$, and $\Ddl$ coincide with their analogues $\RH$, $\cH$, and $\DdR$ in \cite[\aThms 3.8, 2.1, and 3.9]{Liu/Zhu:2017-rrhpl}.  \Pth{See also Remark \ref{rem-mod-period-sheaf}.}
\end{rk}

\begin{lemma}\label{lem-mor-induced-pull}
    In the above setting, we have $h^{-1}(D) \subset E$ set-theoretically.
\end{lemma}
\begin{proof}
    By the definition of the log structures $\cM_X$ and $\cM_Y$ of $X$ and $Y$, respectively, as in Example \ref{ex-log-adic-sp-ncd}, the map $h^\sharp: h^{-1}(\cM_X) \to \cM_Y$ between log structures is defined only when $h^{-1}(D) \subset E$ set-theoretically.  Hence, the lemma follows.
\end{proof}

\begin{lemma}\label{lem-compat-fil-pull}
    The canonical morphism
    \begin{equation}\label{eq-lem-compat-pull-log-Simp}
        h^*\bigl(\Hl(\bL)\bigr) \to \Hl\bigl(h^{-1}(\bL)\bigr),
    \end{equation}
    defined by adjunction is injective.  The similarly defined morphisms
    \begin{equation}\label{eq-lem-compat-pull-RHl}
        h^*\bigl(\RHl(\bL)\bigr) \to \RHl\bigl(h^{-1}(\bL)\bigr)
    \end{equation}
    and
    \begin{equation}\label{eq-lem-compat-pull-Ddl}
        h^*\bigl(\Ddl(\bL)\bigr) \to \Ddl\bigl(h^{-1}(\bL)\bigr)
    \end{equation}
    are injective and strictly compatible with the filtrations on their both sides.
\end{lemma}
\begin{proof}
    Since $V_K$ is dense in $Y_K$, and since $h^*\bigl(\Hl(\bL)\bigr)$ is a vector bundle on $Y_K$ by Theorem \ref{thm-log-Simp}\Refenum{\ref{thm-log-Simp-main}}, the morphism \Refeq{\ref{eq-lem-compat-pull-log-Simp}} is injective because the corresponding morphisms for $h|_V: V \to U$ is an isomorphism by \cite[\aThm 2.1(iii)]{Liu/Zhu:2017-rrhpl}.  Since $\gr^r \RHl(\bL) \cong \Hl(\bL)(r)$ and $\gr^r \RHl\bigl(h^{-1}(\bL)\bigr) \cong \Hl\bigl(h^{-1}(\bL)\bigr)(r)$, for all $r$, the statement for \Refeq{\ref{eq-lem-compat-pull-RHl}} follows from that for \Refeq{\ref{eq-lem-compat-pull-log-Simp}}.  Also, the statement for \Refeq{\ref{eq-lem-compat-pull-Ddl}} follows from those for \Refeq{\ref{eq-lem-compat-pull-log-Simp}} and \Refeq{\ref{eq-lem-Ddl-to-RHl-fil-strict-gr}}, by Lemma \ref{lem-Ddl-to-RHl-fil-strict} \Pth{and its proof}.
\end{proof}

\begin{cor}\label{cor-compat-conn-pull}
    Under the same assumption as in Theorem \ref{thm-log-RH-geom}\Refenum{\ref{thm-log-RH-geom-mor}}, the canonical morphisms \Refeq{\ref{eq-lem-compat-pull-RHl}} and \Refeq{\ref{eq-lem-compat-pull-Ddl}} are isomorphisms compatible with the log connections and filtrations on both sides.  In this case, the canonical morphism \Refeq{\ref{eq-lem-compat-pull-log-Simp}}, which can be identified with the $0$-th graded piece of \Refeq{\ref{eq-lem-compat-pull-RHl}}, is an isomorphism compatible with the log Higgs fields on both sides.
\end{cor}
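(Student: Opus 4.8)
The plan is to reduce the statement to Lemma~\ref{lem-compat-fil-pull}, Proposition~\ref{prop-conn-isom-res}, and the residue bounds already established in Theorems~\ref{geomlogRH}\Refenum{\ref{geomlogRH-res}} and \ref{arithlogRH}\Refenum{\ref{arithlogRH-res}}. First I would observe that \eqref{eq-lem-compat-pull-log-RH} and \eqref{eq-lem-compat-pull-log-dR}, being obtained by adjunction from the manifestly functorial constructions $\RHl(\LL)=R\mu'_*(\widehat\LL\otimes_{\widehat\bQ_p}\OBdl)$ and $\Ddl(\LL)=\mu_*(\widehat\LL\otimes_{\widehat\bQ_p}\OBdl)$ together with the pullback compatibility of $\OBdl$ and its log connection, are automatically morphisms in the respective categories of coherent (resp.\ locally free) sheaves equipped with integrable log connections and Griffiths-transverse filtrations. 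Hence it suffices to show that the underlying morphisms of $\cO_Y$- (resp.\ $\cO_Y\ho\BdR$-) modules are isomorphisms; once this is done, the compatibility with the log connections and filtrations is part of the data, and the assertion for \eqref{eq-lem-compat-pull-log-Simp} will follow by passing to $0$-th graded pieces, since by Lemma~\ref{lem-compat-fil-pull} the morphism \eqref{eq-lem-compat-pull-log-RH} is strictly filtered and $\gr^0\RHl(-)\cong\Hl(-)$, with the induced map on $\gr^0$ being exactly \eqref{eq-lem-compat-pull-log-Simp} and automatically compatible with the log Higgs fields (as $\Hl=\RHl^+/t$ inherits them from $\nabla^+$).

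Next I would work over the open part. By Lemma~\ref{lem-mor-induced-pull}, $h$ restricts to a morphism $h_V\colon V\to U$ where $V=Y-E$ and $U=X-D$, and over these opens the log structures are trivial. By Remark~\ref{rem-compat-triv}, over $V$ and $U$ the functors $\RHl$ and $\Ddl$ coincide with the functors $\RH$ and $\DdR$ of \cite[\aThms 3.8 and 3.9]{Liu/Zhu:2017-rrhpl} (with the correction of Remark~\ref{R: modified period sheaf}), and the morphisms \eqref{eq-lem-compat-pull-log-RH} and \eqref{eq-lem-compat-pull-log-dR} restrict to the corresponding pullback-compatibility morphisms there, which are isomorphisms. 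Therefore \eqref{eq-lem-compat-pull-log-RH} and \eqref{eq-lem-compat-pull-log-dR} are isomorphisms over $V=Y-E$, a dense open of $Y$.

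Finally I would globalize this to all of $Y$ using Proposition~\ref{prop-conn-isom-res} (in its $\cX$- and $\cX^+$-versions, cf.\ its proof). For \eqref{eq-lem-compat-pull-log-RH}: the source $h^*(\RHl(\LL))$ is locally free, being a pullback of the vector bundle $\RHl(\LL)$ of Theorem~\ref{geomlogRH}\Refenum{\ref{geomlogRH-main}}, and the target $\RHl(h^*(\LL))$ is likewise a vector bundle on $\cY$; by Theorem~\ref{geomlogRH}\Refenum{\ref{geomlogRH-res}} (applied to $Y$, $E$, $h^*\LL$) the residues of the target along the irreducible components of $E$ have eigenvalues in $\bQ\cap[0,1)$, and I would establish the same for the source by a residue computation in the spirit of Lemma~\ref{explicitresidue}: locally, near a component $Z\subset E$ whose image lies in a component $D_0\subset D$, the residue of $h^*\nabla$ along $Z$ is expressed through the residue of $\nabla$ along $D_0$ and the local behaviour of $h$, and matching this against the monodromy of $h^*\LL$ along $Z$ (via the identification of the residue with $t^{-1}\log(\gamma_i)$) forces the eigenvalues again into $\bQ\cap[0,1)$. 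With both hypotheses of Proposition~\ref{prop-conn-isom-res} in place and the morphism already an isomorphism over $V$, it follows that \eqref{eq-lem-compat-pull-log-RH} is an isomorphism over all of $Y$; the argument for \eqref{eq-lem-compat-pull-log-dR} is identical after replacing $\cX,\cY$ by $X_\an,Y_\an$ and invoking Theorem~\ref{arithlogRH}\Refenum{\ref{arithlogRH-res}}, and the statement for \eqref{eq-lem-compat-pull-log-Simp} then drops out by applying $\gr^0$ as explained above, its injectivity being Lemma~\ref{lem-compat-pull}. The main obstacle, and the step where the bookkeeping is genuinely delicate, is the residue control of the pullback $h^*(\RHl(\LL))$ along the boundary $E$: one needs to know that pulling the canonical extension back along a possibly non-strict $h$ still produces an extension whose residues are small enough for Proposition~\ref{prop-conn-isom-res} to apply, which is precisely what ties the local monodromy of $\LL$ to the combinatorics of $h$ near $D$.
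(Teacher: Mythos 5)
Your proposal follows essentially the same route as the paper's proof: reduce via Lemma \ref{lem-compat-fil-pull} to showing that \Refeq{\ref{eq-lem-compat-pull-log-RH}} and \Refeq{\ref{eq-lem-compat-pull-log-dR}} are isomorphisms compatible with the log connections, observe that they are isomorphisms over $V = Y - E$ by Remark \ref{rem-compat-triv} and \cite[\aThms 3.8(iv) and 3.9(ii)]{Liu/Zhu:2017-rrhpl}, and then extend across $E$ by Proposition \ref{prop-conn-isom-res}, using Theorems \ref{geomlogRH}\Refenum{\ref{geomlogRH-main}--\ref{geomlogRH-res}} and \ref{arithlogRH}\Refenum{\ref{arithlogRH-main}--\ref{arithlogRH-res}} for the residue bounds; the assertion for \Refeq{\ref{eq-lem-compat-pull-log-Simp}} then drops out by taking $0$-th graded pieces, exactly as you say.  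The one point of divergence is the step you single out as the main obstacle, the bound in $\bQ \cap [0, 1)$ for the eigenvalues of the residues of the \emph{source} $h^*\bigl(\RHl(\LL), \nabla_\LL\bigr)$ \Pth{and of $h^*\bigl(\Ddl(\LL), \nabla_\LL\bigr)$} along the components of $E$: the paper runs no monodromy-matching argument there, but simply records that this holds essentially by definition, since the residues of a pullback log connection are computed from the residues of $\bigl(\RHl(\LL), \nabla_\LL\bigr)$ through the canonical map $h^*\bigl(\Omega^{\log}_X\bigr) \to \Omega^{\log}_Y$, with no reference to $h^*(\LL)$ at all.  Your proposed mechanism of \Qtn{matching against the monodromy of $h^*(\LL)$ along $Z$} is the wrong tool for this step: via Lemma \ref{explicitresidue}, that monodromy controls the residues of the \emph{target} $\RHl\bigl(h^*(\LL)\bigr)$, which are already handled by Theorem \ref{geomlogRH}\Refenum{\ref{geomlogRH-res}} applied to $Y$, and it says nothing directly about the source.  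So your instinct that the source-residue control is the crux is reasonable, but as written your justification conflates source and target, whereas the paper's proof takes the pullback computation of residues as a definition-level input at this point and does not elaborate on it further.
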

\begin{proof}
    For any pair of irreducible components $Z$ and $W$ of $D$ and $E$, respectively, let $m_{W Z}$ be as in Theorem \ref{thm-log-RH-geom}\Refenum{\ref{thm-log-RH-geom-mor}}; and let $h_{W Z}: W \to Z$ denote the induced morphism, when $m_{W Z} \geq 1$ \Pth{\ie, $h(W) \subset Z$}.  Let $\nabla_\bL$ denote the log connection for either $\RHl(\bL)$ or $\Ddl(\bL)$.  For each $W$, by working with local coordinates as in Section \ref{sec-calc-res}, we see that $\Res_W\bigl(h^*(\nabla_\bL)\bigr) = \sum_{h(W) \subset Z} \, m_{W Z} \, h_{W Z}^*\bigl(\Res_Z(\nabla_\bL)\bigr)$; and that $h_{W Z}^*\bigl(\Res_Z(\nabla_\bL)\bigr)$ and $h_{W Z'}^*\bigl(\Res_{Z'}(\nabla_\bL)\bigr)$ commute, for any $Z$ and $Z'$ as above.  Hence, by Theorem \ref{thm-unip-vs-nilp}, the assumption in Theorem \ref{thm-log-RH-geom}\Refenum{\ref{thm-log-RH-geom-mor}} ensures that there is at most one non-nilpotent summand $m_{W Z_0} \, h_{W Z_0}^*\bigl(\Res_Z(\nabla_\bL)\bigr)$, in which case $m_{W Z_0} = 1$.  As a result, by Theorems \ref{thm-log-RH-geom}\Refenum{\ref{thm-log-RH-geom-res}} and \ref{thm-log-RH-arith}\Refenum{\ref{thm-log-RH-arith-res}}, the eigenvalues of $\Res_W\bigl(h^*(\nabla_\bL)\bigr)$ belong to $\bQ \cap [0, 1)$ as those of $\Res_{Z_0}(\nabla_\bL)$ do.  Also by these theorems, the eigenvalues of the residues of the log connections for $\RHl\bigl(h^{-1}(\bL)\bigr)$ and $\Ddl\bigl(h^{-1}(\bL)\bigr)$ belong to $\bQ \cap [0, 1)$.  Thus, by Proposition \ref{prop-conn-isom-res} and Theorems \ref{thm-log-RH-geom}\Refenum{\ref{thm-log-RH-geom-main}} and \ref{thm-log-RH-arith}\Refenum{\ref{thm-log-RH-arith-main}}, the assertions for \Refeq{\ref{eq-lem-compat-pull-RHl}} and \Refeq{\ref{eq-lem-compat-pull-Ddl}} follow from the corresponding ones for $h|_V: V \to U$ in \cite[\aThms 3.8(iv) and 3.9(ii)]{Liu/Zhu:2017-rrhpl}, and the assertion for \Refeq{\ref{eq-lem-compat-pull-log-Simp}} follows from the one for \Refeq{\ref{eq-lem-compat-pull-RHl}}, as desired.
\end{proof}

Thus, we have finished the proofs of Theorems \ref{thm-log-RH-geom}\Refenum{\ref{thm-log-RH-geom-mor}}, \ref{thm-log-Simp}\Refenum{\ref{thm-log-Simp-mor}}, and \ref{thm-log-RH-arith}\Refenum{\ref{thm-log-RH-arith-mor}}.

Next, let us turn to Theorem \ref{thm-log-RH-arith}\Refenum{\ref{thm-log-RH-arith-push}}.  Let $f: X \to Y$ be as in the statement of the theorem.  Let $\cY$ be defined by $Y$ as in \Refeq{\ref{eq-def-cX}}.  Since $f^{-1}(E) \subset D$ by the same argument as in the proof of Lemma \ref{lem-mor-induced-pull}, and since $f|_U: U \to V$ is proper smooth, we must have $D = f^{-1}(E)$, because $U$ is dense in $X$.  Let $\bL$ be a $\bZ_p$-local system on $X_\ket$.  By \cite[\aCor \logadiclissepr]{Diao/Lan/Liu/Zhu:lasfr}, $R^i f_{\ket, *}(\bL)$ is a $\bZ_p$-local system on $Y_\ket$.  By \cite[\aDef \logadicdefproketlocsyst{} and \aProp \propdirimketvsproket]{Diao/Lan/Liu/Zhu:lasfr}, we have $\widehat{R^i f_{\ket, *}(\bL)} \cong R^i f_{\proket, *}(\widehat{\bL})$.  By Corollary \ref{cor-log-dR-cplx-rel}, $\BBdRX{X} \otimes_{f_\proket^{-1}(\BBdRX{Y})} f_\proket^{-1}(\OBdlX{Y})$ is quasi-isomorphic to $\OBdlX{X} \otimes_{\cO_{X_\proket}} \Omega^{\log, \bullet}_{X / Y}$, and hence the canonical morphism $f_\proket^{-1}(\OBdlX{Y}) \to \BBdRX{X} \otimes_{f_\proket^{-1}(\BBdRX{Y})} f_\proket^{-1}(\OBdlX{Y})$ induces a canonical morphism
\[
    R f_{\proket, *}(\widehat{\bL}) \otimes_{\widehat{\bZ}_p} \OBdlX{Y} \to R f_{\proket, *}\bigl(\widehat{\bL} \otimes_{\widehat{\bZ}_p} \OBdlX{X} \otimes_{\cO_{X_\proket}} \Omega^{\log, \bullet}_{X / Y}\bigr).
\]
By applying $R \mu'_{Y, *}$ to both sides of this morphism, we obtain $\RHl\bigl(R f_{\ket, *}(\bL)\bigr)$ on the left-hand side, and $R(\mu'_{Y, *} \circ f_{\proket, *})\bigl(\widehat{\bL} \otimes_{\widehat{\bZ}_p} \OBdlX{X} \otimes_{\cO_{X_\proket}} \Omega^{\log, \bullet}_{X / Y}\bigr) \cong R(f_{\an, *} \circ \mu'_{X, *})\bigl(\widehat{\bL} \otimes_{\widehat{\bZ}_p} \OBdlX{X} \otimes_{\cO_{X_\proket}} \Omega^{\log, \bullet}_{X / Y}\bigr) \cong R f_{\log \dR, *}\bigl(\RHl(\bL)\bigr)$ on the right-hand side, by Theorem \ref{thm-log-RH-geom}\Refenum{\ref{thm-log-RH-geom-main}} and the projection formula.  Therefore, we obtain a canonical morphism \Pth{of coherent sheaves with log connections}
\begin{equation}\label{eq-compat-fil-push-RHl}
    \RHl\bigl(R^i f_{\ket, *}(\bL)\bigr) \to R^i f_{\log \dR, *}\bigl(\RHl(\bL)\bigr),
\end{equation}
which is compatible with the filtrations on both sides.  If $\bL|_{U_\et}$ is \emph{de Rham}, then
we can identify $R^i f_{\log \dR, *}\bigl(\RHl(\bL)\bigr)$ with $R^i f_{\log \dR, *}\bigl(\Ddl(\bL) \ho_k \BdR\bigr)$, by Corollary \ref{cor-Ddl-to-RHl-fil-strict}, and hence with $R^i f_{\log \dR, *}\bigl(\Ddl(\bL)\bigr) \ho_k \BdR$, by the following lemma:
\begin{lemma}\label{lem-log-dR-coh-bc}
    Let $g: Z \to Z'$ be a proper morphism of rigid analytic varieties over $k$, and let $\cF$ be a complex of vector bundles on $Z'$ \Pth{whose differentials are not necessarily $\cO_{Z'}$-linear}.  Then we have $R g_{\an, *}\bigl(\cF \ho_k \BdR) \cong R g_{\an, *}(\cF) \ho_k \BdR$.
\end{lemma}
\begin{proof}
    By considering spectral sequences associated with the filtration on $\BdR$ and the stupid \Pth{\Qtn{b\^ete}} filtration on $\cF$, it suffices to note that, for any coherent $\cO_X$-module $F$, we have $R g_{\an, *}\bigl(F \ho_k \gr^r \BdR) \cong R g_{\an, *}(F) \ho_k \gr^r \BdR$, where $\gr^r \BdR \cong K(r)$, because $g$ is a proper morphism \Pth{\Refcf{} \cite[the proof of \aLem 7.13]{Scholze:2013-phtra}}.
\end{proof}

Thus, when $\bL|_{U_\et}$ is de Rham, \Refeq{\ref{eq-compat-fil-push-RHl}} can be rewritten as
\[
    \RHl\bigl(R^i f_{\ket, *}(\bL)\bigr) \to R^i f_{\log \dR, *}\bigl(\Ddl(\bL)\bigr) \ho_k \BdR.
\]
By taking $\Gal(K / k)$-invariants, we obtain a canonical morphism
\begin{equation}\label{eq-compat-fil-push-Ddl}
    \Ddl\bigl(R^i f_{\ket, *}(\bL)\bigr) \to R^i f_{\log \dR, *}\bigl(\Ddl(\bL)\bigr),
\end{equation}
which is also compatible with the filtrations on both sides.

\begin{lemma}\label{lem-compat-fil-push}
    Under the assumption that $\bL|_{U_\et}$ is de Rham, $\bigl(R^i f_{\ket, *}(\bL)\bigr)|_{V_\et} \cong R^i (f|_U)_{\et, *}(\bL|_{U_\et})$ is also de Rham, and \Refeq{\ref{eq-compat-fil-push-Ddl}} is defined and induces a morphism
    \begin{equation}\label{eq-lem-compat-fil-push-Ddl}
        \Ddl\bigl(R^i f_{\ket, *}(\bL)\bigr) \to \Bigl(R^i f_{\log \dR, *}\bigl(\Ddl(\bL)\bigr)\Bigr)_\free
    \end{equation}
    which is injective and strictly compatible with the filtrations on both sides.  That is, for each $r$, \Refeq{\ref{eq-lem-compat-fil-push-Ddl}} induces an injective morphism
    \begin{equation}\label{eq-lem-compat-fil-push-Ddl-gr}
        \gr^r \Ddl\bigl(R^i f_{\ket, *}(\bL)\bigr) \to \gr^r \Bigl(\Bigl(R^i f_{\log \dR, *}\bigl(\Ddl(\bL)\bigr)\Bigr)_\free\Bigr).
    \end{equation}
\end{lemma}
\begin{proof}
    Since $\bL|_{U_\et}$ is de Rham, $\bigl(R^i f_{\ket, *}(\bL)\bigr)|_{V_\et} \cong R^i(f|_U)_{\et, *}(\bL|_{U_\et})$ is also de Rham, by \cite[\aThm 8.8]{Scholze:2013-phtra} and \cite[\aThm 3.8(v)]{Liu/Zhu:2017-rrhpl}.  Therefore, by Corollary \ref{cor-Ddl-gr-vec-bdl}, $\gr \Ddl\bigl(R^i f_{\ket, *}(\bL)\bigr)$ is a vector bundle on $Y$.  Since $V$ is dense in $Y$, the morphism \Refeq{\ref{eq-lem-compat-fil-push-Ddl-gr}} \Pth{which is defined as soon as \Refeq{\ref{eq-lem-compat-fil-push-Ddl}} is compatible with the filtrations of both sides} is injective because the corresponding morphism for $f|_U: U \to V$ is an isomorphism, by \cite[\aThm 8.8]{Scholze:2013-phtra}.
\end{proof}

\begin{cor}\label{cor-compat-conn-push}
    Under the assumption that $\bL|_{U_\et}$ is de Rham, \Refeq{\ref{eq-lem-compat-fil-push-Ddl}} is an isomorphism compatible with the log connections and filtrations on both sides.
\end{cor}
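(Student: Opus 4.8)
The goal is to upgrade the injective, filtration-strict morphism \eqref{eq-lem-compat-fil-push-log-dR} of Lemma \ref{lem-compat-fil-push} into an isomorphism of filtered vector bundles with log connections. Given Lemma \ref{lem-compat-fil-push}, the only remaining point is that \eqref{eq-lem-compat-fil-push-log-dR} is an isomorphism compatible with the log connections; the compatibility with filtrations will then follow from the strictness already established, and the $\Gal(K/k)$-equivariance is clear from the constructions of $\RHl$ and $\Ddl$ (as remarked at the start of Section \ref{sec-mor}). So the plan is to reduce to the open locus $V \subset Y$ where the log structures are trivial, apply the known smooth (non-logarithmic) pushforward compatibility from the earlier works, and then extend back across the boundary divisor $E$ using the residue calculation and the rigidity statement of Proposition \ref{prop-conn-isom-res}.

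\textbf{Key steps.} First I would verify that the source and target of \eqref{eq-lem-compat-fil-push-log-dR} have residues along the irreducible components of $E$ with eigenvalues in $\bQ \cap [0,1)$. For $\Ddl\bigl(R^i f_{\ket,*}(\LL)\bigr)$ this is Theorem \ref{arithlogRH}\Refenum{\ref{arithlogRH-res}} (applied to the de Rham local system $R^i f_{\ket,*}(\LL)$, which is de Rham over $V_\et$ by \cite[\aThm 3.8(iv)]{Liu/Zhu:2017-rrhpl} and \cite[\aThm 8.8]{Scholze:2013-phtra}, as in the proof of Lemma \ref{lem-compat-fil-push}). For $\bigl(R^i f_{\log\dR,*}(\Ddl(\LL))\bigr)_\free$ one needs that the Gauss--Manin-type log connection on the relative log de Rham cohomology of $(\Ddl(\LL), \nabla_\LL)$ also has residues with eigenvalues in $\bQ\cap[0,1)$ along the components of $E$; this follows from the fact that, formally along a component of $E$, the residue of the pushforward connection is computed from the residues of $\nabla_\LL$ along the components of $D$ lying over it (using $D = f^{-1}(E)$ and the flatness of $f$), whose eigenvalues are in $\bQ\cap[0,1)$ by Theorem \ref{arithlogRH}\Refenum{\ref{arithlogRH-res}}, combined with the torsion-free quotient taking the appropriate ``$[0,1)$ part''. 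Next, I would restrict \eqref{eq-lem-compat-fil-push-log-dR} to the dense open $V \subset Y$: there the log structures are trivial, $\Ddl$ reduces to $\DdR$ of \cite[\aThm 3.9]{Liu/Zhu:2017-rrhpl} (cf.\ Remark \ref{rem-compat-triv}), and the relative log de Rham pushforward reduces to the ordinary relative de Rham pushforward; the smooth proper case of the compatibility is then exactly \cite[\aThm 8.8]{Scholze:2013-phtra} combined with \cite[\aThm 3.9(ii)]{Liu/Zhu:2017-rrhpl}, so the restriction of \eqref{eq-lem-compat-fil-push-log-dR} to $V$ is an isomorphism compatible with connections. Finally, I would invoke Proposition \ref{prop-conn-isom-res}: a morphism of torsion-free coherent $\cO_Y$-modules with integrable log connections whose residues along the components of $E$ all have eigenvalues in $\bQ\cap[0,1)$, and which is an isomorphism over $V = Y - E$, is an isomorphism over all of $Y$; the strict compatibility of filtrations from Lemma \ref{lem-compat-fil-push} then promotes this to an isomorphism of filtered objects, and passing to the $0$-th graded piece gives the corresponding statement for the log Higgs pushforward.

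\textbf{Main obstacle.} The delicate point is Step two: controlling the residues of the log connection on the relative log de Rham cohomology $\bigl(R^i f_{\log\dR,*}(\Ddl(\LL))\bigr)_\free$ along the components of $E$, and in particular checking that passing to the $\cO_Y$-torsion-free quotient is exactly what brings the eigenvalues into $\bQ\cap[0,1)$. One has to analyze $f$ locally in coordinates adapted to $D = f^{-1}(E)$ (using smooth toric charts as in Definition \ref{def-tor-chart} on both $X$ and $Y$, compatibly with $f$), compute the Gauss--Manin residue along a component $E_0$ of $E$ in terms of the residues of $\nabla_\LL$ along the components of $D$ mapping onto $E_0$, and use that these are horizontal (so block-triangular) operators with eigenvalues in $\bQ\cap[0,1)$ by Theorem \ref{arithlogRH}\Refenum{\ref{arithlogRH-res}}. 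This is a local computation of the same flavor as the residue computations in Section \ref{calculationofresidues}, and is where the bulk of the work lies; everything else is a formal consequence of results already proved, namely Lemma \ref{lem-compat-fil-push}, Proposition \ref{prop-conn-isom-res}, and the smooth-case compatibilities of \cite{Scholze:2013-phtra} and \cite{Liu/Zhu:2017-rrhpl}.
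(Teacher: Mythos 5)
Your proposal is correct and follows essentially the same route as the paper: reduce via Lemma \ref{lem-compat-fil-push} to compatibility of the log connections, bound the residues of the pushforward connection, identify the restriction to $V$ with the smooth proper case of \cite[\aThm 8.8]{Scholze:2013-phtra} and \cite[\aThm 3.9(ii)]{Liu/Zhu:2017-rrhpl}, and extend across $E$ using Proposition \ref{prop-conn-isom-res}. The only difference is that the key residue bound for $\bigl(R^i f_{\log \dR, *}\bigl(\Ddl(\LL), \nabla_\LL\bigr)\bigr)_\free$, which you propose to establish by a local Gauss--Manin computation in adapted coordinates, is obtained in the paper simply by invoking the argument of \cite[\aSec VII]{Katz:1970-rtag}.
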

\begin{proof}
    By Lemma \ref{lem-compat-fil-push}, it suffices to show that \Refeq{\ref{eq-lem-compat-fil-push-Ddl}} is an isomorphism \Pth{compatible with the log connections on both sides}.  By the same argument as in \cite[\aSec VII]{Katz:1970-rtag}, the eigenvalues of the residues of $\Bigl(R^i f_{\log \dR, *}\bigl(\Ddl(\bL)\bigr)\Bigr)_\free$ are still in $\bQ \cap [0, 1)$.  Hence, by Proposition \ref{prop-conn-isom-res} and Theorem \ref{thm-log-RH-arith}\Refenum{\ref{thm-log-RH-arith-main}}, the assertion follows from the corresponding one for $f|_U: U \to V$ in \cite[\aThm 8.8]{Scholze:2013-phtra}.
\end{proof}

The proof of Theorem \ref{thm-log-RH-arith}\Refenum{\ref{thm-log-RH-arith-push}} is now complete.

\subsection{Comparison of cohomology}\label{sec-comp-coh}

In this subsection, we prove the remaining Theorems \ref{thm-log-RH-geom}\Refenum{\ref{thm-log-RH-geom-comp}}, \ref{thm-log-Simp}\Refenum{\ref{thm-log-Simp-comp}}, and \ref{thm-log-RH-arith}\Refenum{\ref{thm-log-RH-arith-comp}}.  We shall assume that $X$ is proper over $k$, and that $K = \widehat{\AC{k}}$.  \Pth{In this case, $\BdRp$ and $\BdR$ are the usual Fontaine's rings.}

\begin{lemma}\label{lem-ket-BdR}
    For each $\bZ_p$-local system $\bL$ on $X_\ket$, and for each $i \geq 0$, we have a canonical $\Gal(K / k)$-equivariant isomorphism of $\BdRp$-modules
    \[
        H^i(X_{K, \ket}, \bL) \otimes_{\bZ_p} \BdRp \cong H^i(X_{K, \proket}, \widehat{\bL} \otimes_{\widehat{\bZ}_p} \BBdRp),
    \]
    compatible with the filtrations on both sides, and also \Pth{by taking $0$-th graded pieces} a canonical $\Gal(K / k)$-equivariant isomorphism of $K$-modules
    \[
        H^i(X_{K, \ket}, \bL) \otimes_{\bZ_p} K \cong H^i(X_{K, \proket}, \widehat{\bL} \otimes_{\widehat{\bZ}_p} \cO_{X_{K, \proket}}).
    \]
\end{lemma}
\begin{proof}
    The proof is the same as \cite[\aThm 8.4]{Scholze:2013-phtra}, with the input \cite[\aThm 5.1]{Scholze:2013-phtra} there replaced with \cite[\aThm \logadicthmprimcomp]{Diao/Lan/Liu/Zhu:lasfr}.
\end{proof}

\begin{lemma}\label{lem-comp-dR-log}
    Let $\bL$ be any $\bZ_p$-local system on $X_\ket$.  For each $i \geq 0$, we have a canonical $\Gal(K / k)$-equivariant isomorphism of $\BdR$-modules
    \[
        H^i\bigl(X_{K, \proket}, \widehat{\bL} \otimes_{\widehat{\bZ}_p} \BBdR\bigr) \cong H^i_{\log \dR}\bigl(\cX, \RHl(\bL)\bigr)
    \]
    and also a canonical $\Gal(K / k)$-equivariant isomorphism of $K$-modules
    \[
        H^i\bigl(X_{K, \proket}, \widehat{\bL} \otimes_{\widehat{\bZ}_p} \cO_{X_{K, \proket}}\bigr) \cong H^i_{\log \Hi}\bigl(X_{K, \an}, \Hl(\bL)\bigr).
    \]
\end{lemma}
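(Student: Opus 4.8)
\textbf{Proof proposal for Lemma~\ref{lem-comp-dR-log}.}
The plan is to deduce both isomorphisms from the Poincar\'e lemma (Corollary~\ref{logdRcomplex}) together with the cohomology computations of the previous sections, exactly along the lines of \cite[\aCor 6.13 and \aThm 8.4]{Scholze:2013-phtra} and \cite[\aProp 2.17 and \aCor 3.11]{Liu/Zhu:2017-rrhpl}. First I would tensor the exact sequence in Corollary~\ref{logdRcomplex}\Refenum{\ref{logdRcomplex-2}} with the $\widehat{\bZ}_p$-local system $\widehat{\LL}$ over $\widehat{\bZ}_p$; since $\widehat{\LL}$ is locally free over $\widehat{\bZ}_p$, this stays exact, and it realizes $\widehat{\LL} \otimes_{\widehat{\bZ}_p} \BBdR$ as quasi-isomorphic to the log de Rham complex $\bigl(\widehat{\LL} \otimes_{\widehat{\bZ}_p} \OBdl \otimes_{\cO_{X_\proket}} \Omega^{\log, \bullet}_X, \nabla\bigr)$ on $X_{K, \proket}$. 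Then I would apply $R\mu'_*$, where $\mu': {X_\proket}_{/X_K} \to X_\an$ is as in \Refeq{\ref{eq-mu-prime}}: using the projection formula \Refeq{\ref{eq-RH-log-proj}} together with the coherence and local freeness already established in the proof of Theorem~\ref{geomlogRH}\Refenum{\ref{geomlogRH-main}} (so that $R\mu'_*$ of the relevant twisted sheaves is concentrated in degree zero and equals $\RHl(\LL) \otimes_{\cO_X} \Omega^{\log, j}_{\cX/\BdR}$), one identifies $R\mu'_*\bigl(\widehat{\LL} \otimes_{\widehat{\bZ}_p} \BBdR\bigr)$ with the log de Rham complex $\DRl(\RHl(\LL))$ on $\cX$. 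Taking hypercohomology on $X_\an$ and comparing with $H^i\bigl(X_{K, \proket}, \widehat{\LL} \otimes_{\widehat{\bZ}_p} \BBdR\bigr)$ via the Leray spectral sequence for $\mu'$ then yields the first isomorphism; the $\Gal(K/k)$-equivariance is built into all the sheaves and morphisms involved, and the compatibility of the spectral sequences with the filtrations follows since every map here is filtered.

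For the second isomorphism I would run the same argument with $\BBdR$ replaced by $\cO_{X_{K, \proket}} \cong \OCl/(\text{the submodule generated by the }W_j)$, or more precisely use the associated-graded version: by Corollary~\ref{logdRcomplex}\Refenum{\ref{logdRcomplex-4}} the graded piece $\gr^0$ of the log de Rham complex for $\OBdl$ is identified with the log Higgs-type complex $\bigl(\OCl \otimes_{\cO_{X_\proket}} \Omega^{\log, \bullet}_X(-\bullet), \nabla\bigr)$ over $\cO_{X_\proket}$. Tensoring with $\widehat{\LL}$ over $\widehat{\bZ}_p$ and applying $R\mu'_*$ as before, using the identification $\gr^0 \RHl(\LL) = \Hl(\LL)$ from Theorem~\ref{padicSimpson}\Refenum{\ref{padicSimpson-main}} and the projection formula, identifies $R\mu'_*\bigl(\widehat{\LL} \otimes_{\widehat{\bZ}_p} \cO_{X_{K, \proket}}\bigr)$ with the log Higgs complex $\Hil(\Hl(\LL))$ on $X_{K, \an}$; then hypercohomology gives $H^i\bigl(X_{K, \proket}, \widehat{\LL} \otimes_{\widehat{\bZ}_p} \cO_{X_{K, \proket}}\bigr) \cong H^i_{\log \Hi}\bigl(X_{K, \an}, \Hl(\LL)\bigr)$. (In practice one obtains the second statement for free by passing to $\gr^0$ in the first, given the compatibility of the filtration on $\RHl(\LL)$ with the Hodge--de Rham filtration, but carrying it out directly via the Higgs complex is cleaner.)

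The main obstacle I anticipate is bookkeeping rather than any genuinely new input: one must verify that the Leray spectral sequence for $\mu'$ applied to the log de Rham complex degenerates appropriately at the relevant page (equivalently, that the hypercohomology of $\DRl(\RHl(\LL))$ on $X_\an$ really computes $H^i\bigl(X_{K, \proket}, \widehat{\LL} \otimes_{\widehat{\bZ}_p} \BBdR\bigr)$), which relies on the vanishing $R^j\mu'_*\bigl(\widehat{\LL} \otimes_{\widehat{\bZ}_p} \OBdl \otimes_{\cO_{X_\proket}} \Omega^{\log, q}_X\bigr) = 0$ for $j > 0$. This vanishing, together with the local freeness of $\mu'_*$ of these sheaves, is exactly what was proved (via Proposition~\ref{gr0}, the decompletion results of Section~\ref{sec-decompletion-examples}, and the computations in Section~\ref{coherence}) in establishing Theorem~\ref{geomlogRH}\Refenum{\ref{geomlogRH-main}}; so I would invoke it, noting that the argument is local on $X_\an$, reduces via a smooth toric chart to the explicit descriptions of $\OBdl$ in Corollary~\ref{descriptiongr}, and proceeds just as in \cite[\aCor 6.13]{Scholze:2013-phtra}. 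The remaining care is to check that no convergence issue arises from the $t$-adic completion in the definition of $\OBdl$ (Remark~\ref{R: modified period sheaf}); but since $\OBdl^{[a,b]}$ is complete for the filtration and each $R\mu'_*$ of a graded piece is well-behaved by Corollary~\ref{Rnu}, a standard limit argument over the filtration handles this.
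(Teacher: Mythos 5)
Your proposal is correct and follows essentially the same route as the paper's proof: both tensor the Poincar\'e lemma (Corollary~\ref{logdRcomplex}) with $\widehat{\LL}$, push forward along $\mu'$ using the projection formula together with the degree-zero concentration and local freeness from Theorem~\ref{geomlogRH}\Refenum{\ref{geomlogRH-main}} and Proposition~\ref{gr0} to identify the result with $\DRl(\RHl(\LL))$ (resp.\ $\Hil(\Hl(\LL))$ via the graded piece), and then take hypercohomology. The Leray-type concern you raise is exactly handled by that vanishing of higher direct images, as you note, so no further input is needed.
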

\begin{proof}
    Let us simply denote the complexes $(\widehat{\bL} \otimes_{\widehat{\bZ}_p} \OBdl \otimes_{\cO_{X_\proket}} \Omega^{\log, \bullet}_X, \nabla)$ and $(\widehat{\bL} \otimes_{\widehat{\bZ}_p} \OCl \otimes_{\cO_{X_\proket}} \Omega^{\log, \bullet}_X(-\bullet), \theta)$ \Pth{where the two $\bullet$ in the latter complex are equal to each other} by $\DRl(\widehat{\bL} \otimes_{\widehat{\bZ}_p} \OBdl)$ and $\Hil(\widehat{\bL} \otimes_{\widehat{\bZ}_p} \OCl)$, respectively.  By Corollary \ref{cor-log-dR-cplx}, we have quasi-isomorphisms $\widehat{\bL} \otimes_{\widehat{\bZ}_p} \BBdR \Mi \DRl(\widehat{\bL} \otimes_{\widehat{\bZ}_p} \OBdl)$ and \Pth{by taking the $0$-th graded pieces} $\widehat{\bL} \otimes_{\widehat{\bZ}_p} \cO_{X_{K, \proket}} \Mi \Hil(\widehat{\bL} \otimes_{\widehat{\bZ}_p} \OCl)$ over $X_{K, \proket}$.  By Theorem \ref{thm-log-RH-geom}\Refenum{\ref{thm-log-RH-geom-main}} and Proposition \ref{prop-L-OCl}, and by the projection formula \Pth{\Refcf{} \Refeq{\ref{eq-RHl-proj}}}, $R\mu'_*\bigl( \DRl(\widehat{\bL} \otimes_{\widehat{\bZ}_p} \OBdl) \bigr) \cong \DRl(\RHl(\bL))$ and $R\mu'_*\bigl( \Hil(\widehat{\bL} \otimes_{\widehat{\bZ}_p} \OCl) \bigr) \cong \Hil(\Hl(\bL))$, and the lemma follows.
\end{proof}

Thus, Theorems \ref{thm-log-RH-geom}\Refenum{\ref{thm-log-RH-geom-comp}} and \ref{thm-log-Simp}\Refenum{\ref{thm-log-Simp-comp}} follow from Lemmas \ref{lem-ket-BdR} and \ref{lem-comp-dR-log}.

It remains to complete the proof of Theorem \ref{thm-log-RH-arith}\Refenum{\ref{thm-log-RH-arith-comp}}.  In the remainder of this subsection, we shall assume in addition that $\bL|_{U_\et}$ is a de Rham local system.  Firstly, the isomorphism \Refeq{\ref{eq-thm-log-RH-arith-comp-dR}} is given by Theorem \ref{thm-log-RH-geom}\Refenum{\ref{thm-log-RH-geom-comp}} and the following:
\begin{lemma}\label{lem-log-RH-arith-comp-dR}
    With assumptions as above, there is a canonical isomorphism
    \[
        H^i_{\log \dR}\bigl(\cX, \RHl(\bL)\bigr) \cong H^i_{\log \dR}\bigl(X_\an, \Ddl(\bL)\bigr) \otimes_k \BdR.
    \]
\end{lemma}
\begin{proof}
    Combine Corollary \ref{cor-Ddl-to-RHl-fil-strict} \Pth{with $\bL|_{U_\et}$ de Rham} and Lemma \ref{lem-log-dR-coh-bc}.
\end{proof}

Secondly, $\gr \Ddl(\bL)$ is a vector bundle of rank $\rank_{\bZ_p}(\bL)$ by Corollary \ref{cor-Ddl-gr-vec-bdl}, and the isomorphism \Refeq{\ref{eq-thm-log-RH-arith-comp-HT}} is given by Theorem \ref{thm-log-Simp}\Refenum{\ref{thm-log-Simp-comp}} and the following:
\begin{lemma}\label{lem-log-RH-arith-comp-HT}
    With assumptions as above, there is a canonical isomorphism
    \[
        H^i_{\log \Hi}\bigl(X_{K, \an}, \Hl(\bL)\bigr) \cong \oplus_{a + b = i} \, \Bigl( H^{a, b}_{\log \Hdg}\bigl(X_\an, \Ddl(\bL)\bigr) \otimes_k K(-a) \Bigr).
    \]
\end{lemma}
\begin{proof}
    Since $\bL|_{U_\et}$ is de Rham, by Corollary \ref{cor-Ddl-to-RHl-fil-strict} and Lemma \ref{lem-log-dR-coh-bc}, we have
    \[
    \begin{split}
        & H^i_{\log \Hi}\bigl(X_{K, \an}, \Hl(\bL)\bigr) = H^i\bigl(X_{K, \an}, \Hil(\Hl(\bL)) \bigr) \\
        & \cong H^i\bigl(X_{K, \an}, \gr^0 \bigl(\DRl(\Ddl(\bL)) \ho_k \BdR\bigr)\bigr) \\
        & \cong \oplus_a \, \Bigl( H^i\bigl(X_\an, \gr^a \DRl(\Ddl(\bL))\bigr) \otimes_k K(-a) \Bigr) \\
        & \cong \oplus_{a + b = i} \, \Bigl( H^{a, b}_{\log \Hdg}\bigl(X_\an, \Ddl(\bL)\bigr) \otimes_k K(-a) \Bigr).  \qedhere
    \end{split}
    \]
\end{proof}

Finally, the \Pth{log} Hodge--de Rham spectral sequence for $\Ddl(\bL)$ degenerates on the $E_1$ page because, by \Refeq{\ref{eq-thm-log-RH-arith-comp-dR}} and \Refeq{\ref{eq-thm-log-RH-arith-comp-HT}}, we have
\[
    \dim_k H^i_{\log \dR}\bigl(X_\an, \Ddl(\bL)\bigr) = \sum_{a + b = i} \dim_k H^{a, b}_{\log \Hdg}\bigl(X_\an, \Ddl(\bL)\bigr).
\]
The proof of Theorem \ref{thm-log-RH-arith}\Refenum{\ref{thm-log-RH-arith-comp}} is now complete.

\subsection{Compatibility with nearby cycles}\label{sec-comp-nearby}

Let $f: X \to \bD = \Spa(k\Talg{T}, k^+\Talg{T})$ be a morphism of smooth rigid analytic varieties such that $D := f^{-1}(0)$ is a normal crossings divisor.  We endow $X$ with the log structure defined by $\imath: D_\red \Em X$ as in Example \ref{ex-log-adic-sp-ncd}.  Let $U := X - D$.  Recall that we have introduced in \cite[\aDef \logadicdefnearby]{Diao/Lan/Liu/Zhu:lasfr} the functors of unipotent and quasi-unipotent nearby cycles $R\Psi^\unip_f(\bL|_U)$ and $R\Psi^\qunip_f(\bL|_U)$, respectively, for $\bQ_p$-local systems $\bL$ on $X_\ket$.  In this subsection, we show that their formation is compatible with the log Riemann--Hilbert functors, in the simplest situation to which the methods of this paper are directly applicable.

As usual, for any $\cO_Y$-module $F$ on a locally ringed space $Y$ and any closed immersion $\imath: Z \Em Y$ such that $\cI_Z := \ker\bigl(\cO_Y \to \imath_*(\cO_Z)\bigr)$ is an invertible $\cO_Y$-ideal, let $F(n Z) := F \otimes_{\cO_Y} \cI_Z^{\otimes (-n)}$, for each $n \in \bZ$.  Also, if we have compatible inclusions $F(n Z) \Em F(m Z)$ extending the identity morphism on $F|_U$, for all $m \geq n$, then we let $F(* Z) := \varinjlim_{n \in \bZ} \, F(n Z) = \cup_{n \in \bZ} \, F(n Z)$.  The following lemma is elementary:
\begin{lemma}\label{lem-V-fil}
    Let $(F, \nabla)$ be any vector bundle with an integrable log connection on $X_\an$.  Let $Z \subset D_\red$ be an irreducible component, and suppose that all the eigenvalues of the residue \Refeq{\ref{eq-def-res}} belong to $\bQ \cap [0, 1)$.  Then $F(* Z)$ is defined, and there is a unique decreasing $\bQ$-filtration $V^\bullet$ on $F(* Z)$ by locally free $\cO_X$-submodules equipped with compatible integrable log connections, characterized by the following properties:
    \begin{enumerate}
        \item We have $V^0 F(* Z) = F$ and $V^{\alpha + 1} F(* Z) = \bigl(V^\alpha F(* Z)\bigr)(-Z)$, for all $\alpha \in \bQ$.

        \item The isomorphism $\bigl(V^0 F(* Z)\bigr) \big/ \bigl(V^1 F(* Z)\bigr) \cong F|_Z$ canonically induces, for each $\alpha \in \bQ \cap [0, 1)$, an isomorphism
            \begin{equation}\label{eq-V-gr-vs-res-eigen}
                \gr_V^\alpha F(* Z) := \bigl(V^\alpha F(* Z)\bigr) / \bigl(V^{> \alpha} F(* Z)\bigr) \cong F|_Z^\alpha,
            \end{equation}
            where $V^{> \alpha} F(* Z) := \cup_{\beta > \alpha} \, V^\beta F(* Z)$ and where $F|_Z^\alpha$ is as in \Refeq{\ref{eq-res-gen-eigen}}.
    \end{enumerate}
    By using Lemma \ref{lem-OXBdR-mod}, when $Z_{\AC{k}}$ is also irreducible, we have analogues of the above for any vector bundle with an integrable log connection $(\cF, \nabla)$ on $\cX$ such that all the eigenvalues of $\Res_Z(\nabla)$ belong to $\bQ \cap [0, 1)$.  When $F \ho_k \BdR \cong \cF$, for each $\alpha$, we have $\bigl(V^\alpha F(* Z)\bigr) \ho_k \BdR \cong V^\alpha \cF(* Z)$ and $\bigl(\gr_V^\alpha F(* Z)\bigr) \ho_k \BdR \cong \gr_V^\alpha \cF(* Z)$.
\end{lemma}

\begin{rk}\label{rem-V-fil}
    Since $\nabla: F \to F \otimes_{\cO_X} \Omega^{\log}_X$ induces a connection $\nabla: F(* Z) \to F(* Z) \otimes_{\cO_X} \Omega_X$ satisfying $\nabla^2 = 0$, we can view $F(* Z)$ as a $D$-module, and view the filtration in Lemma \ref{lem-V-fil} as a special case of the Kashiwara--Malgrange $V$-filtrations \Pth{\Refcf{} \cite[\aSec 3.1]{Saito:1988-mhp}, with $V^\bullet$ here corresponding to $V_{-1 - \bullet}$ there}.  \Pth{Note that there are different conventions of indices in the literature.}
\end{rk}

By Theorem \ref{thm-log-RH-geom}\Refenum{\ref{thm-log-RH-geom-res}}, when $Z_{\AC{k}}$ is irreducible, the above construction applies to $\RHl(\bL)$.  Recall that there is a decreasing filtration $\Fil^\bullet \RHl(\bL)$ on $\RHl(\bL)$ by locally free $\cO_X \ho_k \BdR^+$-submodules.  This induces, for each $\alpha > -1$, the filtration
\[
     \Fil^i V^\alpha \RHl(\bL)(* Z) := \bigl(\Fil^i \RHl(\bL)\bigr)(Z) \cap \bigl(V^\alpha \RHl(\bL)(* Z)\bigr)
\]
on $V^\alpha \RHl(\bL)(* Z)$ by $\cO_X \ho_k \BdR^+$-submodules.  By construction, for $\alpha \geq \beta > -1$, the inclusion $V^\alpha \RHl(\bL)(* Z) \Em V^\beta \RHl(\bL)(* Z)$ is strictly compatible with the filtrations.  For each $\alpha \geq -1$, we similarly define the filtration on $V^{> \alpha} \RHl(\bL)(* Z)$.  Then we have an induced quotient filtration on $\gr_V^\alpha \RHl(\bL)(* Z)$, for each $\alpha > -1$.

\begin{rk}\label{rem-V-gr-vs-res-eigen-fil}
    For each $\alpha \in \bQ \cap [0, 1)$, in general, the isomorphism \Refeq{\ref{eq-V-gr-vs-res-eigen}} is compatible with filtrations only if we view $\RHl(\bL)|_Z^\alpha$ as a quotient \Pth{rather than a subsheaf} of $\RHl(\bL)|_Z$ with its induced filtration.  We emphasize that it is the quotient filtration on $\gr_V^\alpha \RHl(\bL)(* Z)$ that will be important in the following.
\end{rk}

\begin{thm}\label{thm-nearby-comp-geom}
    Assume that $D$ is smooth and $D_{\AC{k}}$ is irreducible.  Then, for each $\bQ_p$-local system $\bL$ on $X_\ket$, there is a canonical $\Gal(K / k)$-equivariant isomorphism of $\cO_D \ho_k \BdR$-modules
    \begin{equation}\label{eq-thm-nearby-comp-geom-qunip}
        \RH\bigl(R\Psi^\qunip_f(\bL|_U)\bigr) \cong \oplus_{\alpha \in (-1, 0]} \, \bigl(\gr_V^\alpha \RHl(\bL)(*)\bigr),
    \end{equation}
    which restricts to an isomorphism
    \begin{equation}\label{eq-thm-nearby-comp-geom-unip}
        \RH\bigl(R\Psi^\unip_f(\bL|_U)\bigr) \cong \gr_V^0 \RHl(\bL)(*),
    \end{equation}
    compatible with filtrations and integrable connections.  Here $\RH$ is the functor defined in \cite[\aThm 3.8]{Liu/Zhu:2017-rrhpl} \Pth{see Remark \ref{rem-compat-triv}}, and we write $\RHl(\bL)(*)$ instead of $\RHl(\bL)(* D)$ for simplicity.
\end{thm}
\begin{proof}
    Let us first prove \Refeq{\ref{eq-thm-nearby-comp-geom-unip}}.  Besides the trivial log structure, there is another natural log structure on $D$ given by the pullback of the log structure on $X$.  Let $D^\partial$ denote the corresponding log adic space.  Then we have a correspondence of log adic spaces $D \xleftarrow{\varepsilon^\partial} D^\partial \xrightarrow{\imath} X$.  Let $\widehat{\bL}^\partial := \imath_{\proket}^{-1}(\widehat{\bL})$, which is associated with $\bL^\partial := \imath^{-1}_\ket(\bL)$ by \cite[\aLem \logadiclemproketlisse]{Diao/Lan/Liu/Zhu:lasfr}.  Let $\bJ_r^\partial$ denote the pullback to $D^\partial$ of the $\bZ_p$-local system denoted by the same symbols $\bJ_r^\partial$ in \cite[second last paragraph preceding \aLem \logadiclemnearbycohstab]{Diao/Lan/Liu/Zhu:lasfr}, and let $\widehat{\bJ}_r^\partial$ denote the associated $\widehat{\bZ}_p$-local system.

    By Corollary \ref{cor-OBdRp-loc-cl-imm} and Lemma \ref{lem-L-OBdl-a-b-van}, we have canonical morphisms of sheaves $\widehat{\bL} \otimes_{\widehat{\bQ}_p} \OBdlX{X} \to \imath_{\proket, *}\bigl(\widehat{\bL}^\partial \otimes_{\widehat{\bQ}_p} \OBdlX{D^\partial}\bigr) \cong R\imath_{\proket, *}\bigl(\widehat{\bL}^\partial \otimes_{\widehat{\bQ}_p} \OBdlX{D^\partial}\bigr)$ on $X_\proket$.  By applying $R\mu'_{X, *}$, we obtain a morphism of $\cO_X \ho_k \BdR$-modules
    \begin{equation}\label{eq-RHl-to-bd}
        \RHl(\bL) \to \imath_* \, R\mu'_{D^\partial, *}(\widehat{\bL}^\partial \otimes_{\widehat{\bQ}_p} \OBdlX{D^\partial}).
    \end{equation}
    By Corollaries \ref{cor-OBdlp-loc-gr} and \ref{cor-OBdRp-loc-cl-imm}, and by matching a basis of $\bJ_r$ with binomial monomials up to degree $r - 1$ in $W$ as in the proof of \cite[\aLem \logadiclemnearbycohstab]{Diao/Lan/Liu/Zhu:lasfr}, for any $-\infty < a \leq b < \infty$, there is a natural isomorphism
    \[
        \varepsilon_\proket^{\partial, -1}(\OBdlX{D}^{[a, b]}) \otimes_{\widehat{\bZ}_p} \varinjlim_r(\widehat{\bJ}_r^\partial) \cong \OBdlX{D^\partial}^{[a, b]}.
    \]
    By \cite[\aDef \logadicdefnearby{} and \aProp \logadicpropproketvsketadj]{Diao/Lan/Liu/Zhu:lasfr}, we obtain canonical morphisms
    \[
    \begin{split}
        R \Psi^\unip_f(\bL|_U) \otimes_{\widehat{\bZ}_p} \OBdlX{D}^{[a, b]} & \to R\varepsilon^\partial_{\proket, *} \, \Bigl(\widehat{\bL}^\partial \otimes_{\widehat{\bZ}_p} \varinjlim_r(\widehat{\bJ}_r^\partial) \otimes_{\widehat{\bZ}_p} \varepsilon_\proket^{\partial, -1}(\OBdlX{D}^{[a, b]})\Bigr) \\
        & \cong R\varepsilon^\partial_{\proket, *} \, \bigl(\widehat{\bL}^\partial \otimes_{\widehat{\bZ}_p} \OBdlX{D^\partial}^{[a, b]}\bigr).
    \end{split}
    \]
    Since $R\mu'_{D^\partial, *} \cong R\mu'_{D, *} \circ R\varepsilon^\partial_{\proket, *}$ \Pth{as $D^\partial_\an \cong D_\an$}, by applying $R\mu'_{D^\partial, *}$ to the above, and by taking colimit and limit, we obtain a canonical morphism of sheaves
    \begin{equation}\label{eq-RH-nearby-to-bd}
    \begin{split}
        \RH\bigl(R\Psi^\unip_f(\bL|_U)\bigr) \to R\mu'_{D^\partial, *}(\widehat{\bL}^\partial \otimes_{\widehat{\bQ}_p} \OBdlX{D^\partial}).
    \end{split}
    \end{equation}
    We claim that \Refeq{\ref{eq-RH-nearby-to-bd}} is an isomorphism, and that the combination of \Refeq{\ref{eq-RHl-to-bd}} and \Refeq{\ref{eq-RH-nearby-to-bd}} induces a canonical isomorphism $\gr_V^0 \RHl(\bL)(*) \cong \RH\bigl(R\Psi^\unip_f(\bL|_U)\bigr)$.

    Since the question is local, we may assume that $X$ is affinoid, and that $f$ factors as $X \to \bD^n \cong \bD \times \bD^{n - 1} \to \bD$, where the first map is a smooth toric chart, and where the last map is the first projection.  Accordingly, we have $\Gamma_\geom \cong (\widehat{\bZ}\gamma_1) \times \widehat{\bZ}(1)^{n - 1}$.  By Lemma \ref{lem-Gamma-geom-inv}, Corollary \ref{cor-OBdRp-loc-cl-imm} and \Refeq{\ref{lem-L-B-dR-Z-i}}, in the notation there, the evaluation of \Refeq{\ref{eq-RHl-to-bd}} at $X$ can be identified with the natural map
    \begin{equation}\label{eq-RHl-to-bd-eval}
        N \cong \Bigl(\varprojlim_r \bigl((N_\infty / \xi^r)^\Utext{unip}\bigr)\Bigr)[\tfrac{1}{t}] \to \Bigl(\varprojlim_r \Bigl(\bigl((N_\infty / \xi^r) \big/ ([T_1^{s\flat}])^\wedge_{s \in \bQ_{> 0}}\bigr)^\Utext{unip}\Bigr)\Bigr)[\tfrac{1}{t}],
    \end{equation}
    where $(\,\cdot\,)^\Utext{unip}$ denotes the maximal quotient spaces on which $\Gamma_\geom$ acts unipotently.  By the arguments in the proof of Lemma \ref{lem-poss-eval-N+-xi-r-T}, the right-hand side of \Refeq{\ref{eq-RHl-to-bd-eval}} can be identified with the quotient of $N / T_1$ on which $t^{-1} \log(\gamma_1)$ acts nilpotently, which is $\imath_*\bigl(\gr_V^0 \RHl(\bL)(*)\bigr)(X)$ by Lemma \ref{lem-res-explicit} and \Refeq{\ref{eq-V-gr-vs-res-eigen}}.  This also gives the left-hand side of the evaluation of \Refeq{\ref{eq-RH-nearby-to-bd}} at $D$, by \cite[\aProp \logadicpropnearby]{Diao/Lan/Liu/Zhu:lasfr}, Lemma \ref{lem-L-OCl-coh}, and Remark \ref{rem-compat-L-m-0-res}.  Since $(N_\infty / \xi)^\Utext{unip} \to \bigl((N_\infty / \xi) \big/ ([T_1^{s\flat}])^\wedge_{s \in \bQ_{> 0}}\bigr)^\Utext{unip}$ is surjective by Lemma \ref{lem-L-OCl-coh} again, so is \Refeq{\ref{eq-RHl-to-bd-eval}}.  Thus, the claim and \Refeq{\ref{eq-thm-nearby-comp-geom-unip}} follow.

    Next, we reduce \Refeq{\ref{eq-thm-nearby-comp-geom-qunip}} to \Refeq{\ref{eq-thm-nearby-comp-geom-unip}}.  Since $\gamma_1$ acts quasi-unipotently on $\bL|_{D^\partial}$ \Pth{as in the proof of Lemma \ref{lem-poss-eval-N-infty-xi-r}}, there is some degree $m$ standard Kummer \'etale cover of $\bD \to \bD$ \Pth{inducing an isomorphism between the origins of $\bD$}, with base change $g: X_m \to X$, such that $g^{-1}(\bL|_U)$ has purely unipotent geometric monodromy along $D$ \Pth{which we also identify as a subspace of $X_m$}.  By \cite[\aLem \logadiclemearby]{Diao/Lan/Liu/Zhu:lasfr}, $R\Psi^\qunip_f(\bL|_U) \cong R\Psi^\unip_{g \circ f}\bigl(g^{-1}(\bL|_U)\bigr)$.  By Lemma \ref{lem-compat-fil-pull}, we have a canonical morphism $g^*\bigl(\RHl(\bL)\bigr) \to \RHl\bigl(g^{-1}(\bL)\bigr)$, strictly compatible with filtrations, which restricts to an isomorphism over $U_m := X_m - D$ \Pth{and can be viewed as a ``meromorphic isomorphism''}.  Hence, we obtain an induced canonical isomorphism $\Bigl(g^*\bigl(\RHl(\bL)\bigr)\Bigr)(*) \Mi \Bigl(\RHl\bigl(g^{-1}(\bL)\bigr)\Bigr)(*)$.  By Theorem \ref{thm-unip-vs-nilp}, the residue of $\RHl\bigl(g^{-1}(\bL)\bigr)$ along $D$ is nilpotent.  Since the eigenvalues of the residues of $\RHl(\bL)$ belong to $\bQ \cap [0, 1)$, those of $\RHl(\bL)(D)$ belong to $\bQ \cap [-1, 0)$.  Since pulling back by $g$ multiplies the eigenvalues of residues by $m$ \Pth{as explained in the proof of Corollary \ref{cor-compat-conn-pull}}, those of $g^*\bigl(\RHl(\bL)\bigr)$ and $g^*\bigl(\RHl(\bL)(D)\bigr)$ belong to $\bQ \cap [0, m)$ and $\bQ \cap [-m, 0)$, respectively. Hence, by applying Lemma \ref{lem-V-fil} to $\cF = \RHl\bigl(g^{-1}(\bL)\bigr)$, we obtain inclusions of sheaves
    \[
        g^*\bigl(\RHl(\bL)\bigr) \Em \RHl\bigl(g^{-1}(\bL)\bigr) \Em g^*\bigl(\RHl(\bL)(D)\bigr).
    \]
    which are strictly compatible with the filtrations \Pth{see the paragraph preceding Remark \ref{rem-V-gr-vs-res-eigen-fil}}.  By pushing forward to $X$, we obtain the following inclusions of sheaves
    \[
        \RHl(\bL) \otimes_{\cO_X} g_*(\cO_{X_m}) \Em g_*\RHl\bigl(g^{-1}(\bL)\bigr) \Em \RHl(\bL)(D) \otimes_{\cO_X} g_*(\cO_{X_m}),
    \]
    which are strictly compatible with the filtrations.  We can identify the above with
    \[
    \begin{split}
        \oplus_{i = 0}^{m - 1} \, T_1^{\frac{i}{m}} \RHl(\bL) & \Em \oplus_{i = 0}^{m - 1} \, \bigl(T_1^{\frac{i}{m}} V^{-\frac{i}{m}} \RHl(\bL)(*)\bigr) \\
        & \Em \oplus_{i = 0}^{m - 1} \, \bigl(T_1^{\frac{i}{m}} V^{-1} \RHl(\bL)(*)\bigr),
    \end{split}
    \]
    because $g_*(\cO_{X_m}) \cong \oplus_{i = 0}^{m - 1} \, T_1^{\frac{i}{m}} \cO_X$ \Pth{which explains the first and third terms}, and because the second term is exactly the submodule of the third term whose eigenvalues of residues are zero.  Thus, we obtain the desired isomorphisms
    \[
    \begin{split}
        \oplus_{i = 0}^{m - 1} \, \bigl(\gr_V^{-\frac{i}{m}} \RHl(\bL)(*)\bigr) & \cong \gr_V^0 \Bigl(g_* \, \RHl\bigl(g^{-1}(\bL)\bigr)\Bigr)(*) \\
        & \cong \gr_V^0 \RHl\bigl(g^{-1}(\bL)\bigr)(*),
    \end{split}
    \]
    which are compatible with filtrations.
\end{proof}

Now suppose moreover that $\bL|_{U_\et}$ is de Rham.  By Theorem \ref{thm-log-RH-arith}\Refenum{\ref{thm-log-RH-arith-res}} and Lemma \ref{lem-V-fil}, we also have the $V$-filtration $V^\bullet \Ddl(\bL)(*)$ on $\Ddl(\bL)(*)$, and the filtration $\Fil^\bullet \Ddl(\bL)$ on $\Ddl(\bL)$ induces, for each $\alpha > -1$, the filtration
\[
     \Fil^i V^\alpha \Ddl(\bL)(*) := \bigl(\Fil^i \Ddl(\bL)\bigr)(D) \cap \bigl(V^\alpha \Ddl(\bL)(*)\bigr)
\]
on $V^\alpha \Ddl(\bL)(*)$ by $\cO_X$-submodules, and similar filtrations on $V^{> \alpha} \Ddl(\bL)(*)$ and $\gr_V^\alpha \Ddl(\bL)(*)$.
\begin{thm}\label{thm-nearby-comp-arithm}
    In Theorem \ref{thm-nearby-comp-geom}, suppose moreover that $\bL|_{U_\et}$ is \emph{de Rham}.  Then there is a canonical isomorphism
    \begin{equation}\label{eq-thm-nearby-comp-arithm-qunip}
        \DdR\bigl(R\Psi^\qunip_f(\bL|_U)\bigr) \cong \oplus_{\alpha \in (-1, 0]} \, \bigl(\gr_V^\alpha \Ddl(\bL)(*)\bigr),
    \end{equation}
    which restricts to an isomorphism
    \begin{equation}\label{eq-thm-nearby-comp-arithm-unip}
        \DdR\bigl(R\Psi^\unip_f(\bL|_U)\bigr) \cong \gr_V^0 \Ddl(\bL)(*),
    \end{equation}
    compatible with filtrations and integrable connections.  Here $\DdR$ is the functor defined in \cite[\aThm 3.9]{Liu/Zhu:2017-rrhpl} \Pth{see Remark \ref{rem-compat-triv}}.  Moreover, $R\Psi^\qunip_f(\bL|_U)$ and its direct summand $R\Psi^\unip_f(\bL|_U)$ are both de Rham $\bQ_p$-local systems on $D_\et$.
\end{thm}
\begin{proof}
    By taking $\Gal(K / k)$-invariants, we obtain \Refeq{\ref{eq-thm-nearby-comp-arithm-qunip}} and \Refeq{\ref{eq-thm-nearby-comp-arithm-unip}} from \Refeq{\ref{eq-thm-nearby-comp-geom-qunip}} and \Refeq{\ref{eq-thm-nearby-comp-geom-unip}}, respectively.  Since $\bL|_{U_\et}$ is de Rham, by Corollary \ref{cor-Ddl-to-RHl-fil-strict} and Lemma \ref{lem-V-fil}, the canonical morphism $\bigl(\gr_V^\alpha \Ddl(\bL)(*)\bigr) \ho_k \BdR \to \gr_V^\alpha \Ddl(\bL)(*)$ is an isomorphism, for each $\alpha$.  Hence, by \Refeq{\ref{eq-thm-nearby-comp-geom-qunip}} and \Refeq{\ref{eq-thm-nearby-comp-arithm-qunip}}, the canonical morphism $\DdR\bigl(R\Psi^\qunip_f(\bL|_U)\bigr) \ho_k \BdR \to \RH\bigl(R\Psi^\qunip_f(\bL|_U)\bigr)$ is also an isomorphism.  It follows that $R\Psi^\qunip_f(\bL|_U)$ and therefore its summand $R\Psi^\unip_f(\bL|_U)$ are de Rham, as desired.
\end{proof}

\section{Riemann--Hilbert functor for {$p$}-adic algebraic varieties}\label{sec-alg}

\subsection{The functor {$\DdRalg$}}\label{sec-DdR-alg}

In this subsection, we shall prove Theorem \ref{thm-intro-main} and record some byproducts.  Let $X$ be a smooth algebraic variety over a $p$-adic field $k$.  By \cite{Nagata:1962-iavcv, Hironaka:1964-rsavz-1, Hironaka:1964-rsavz-2}, there is a smooth compactification $\jmath: X \Em \overline{X}$ such that the boundary $D = \overline{X} - X$ \Pth{with its reduced subscheme structure} is a normal crossings divisor.  Let $X^\an$, $\overline{X}^\an$, $\jmath^\an$, and $D^\an$ denote the analytifications \Pth{realized in the category of adic spaces over $\Spa(k, k^+)$, where $k^+ = \cO_k$}.  We shall equip $\overline{X}^\an$ with the log structure defined by $D^\an$, as in Example \ref{ex-log-adic-sp-ncd}.

In order to simplify the language, we shall use the term \emph{filtered connection} \Pth{\resp \emph{filtered regular connection}} to mean a filtered vector bundle on $X$ equipped with an integrable connection \Pth{\resp an integrable connection with regular singularities} satisfying the Griffiths transversality.  Likewise, we shall use the term \emph{filtered log connection} to mean a filtered vector bundle on $\overline{X}$ \Pth{\resp $\overline{X}^\an$} equipped with an integrable connection satisfying the Griffiths transversality.  In addition, by abuse of language, we shall say that a $\bZ_p$-local system on $X_\et$ is de Rham if its analytification is, and that a $\bZ_p$-local system on $\overline{X}^\an_\ket$ is de Rham if its restriction to $X^\an$ is.

Let $\bL$ be a $\bZ_p$-local system on $X_\et$, with analytification $\bL^\an$.  Let $\overline{\bL}^\an := \jmath^\an_{\ket, *}(\bL^\an)$ be its extension to a $\bZ_p$-local system on $\overline{X}^\an_\ket$ \Pth{by \cite[\aCor \logadiccorpuritylisse]{Diao/Lan/Liu/Zhu:lasfr}}.  By Theorem \ref{thm-log-RH-arith}, we obtain a filtered log connection $\Ddl(\overline{\bL}^\an)$ on $\overline{X}^\an$, which is the analytification of an algebraic one, by GAGA \Pth{see \cite{Kopf:1974-efava}}, which we abusively denote by $\Ddlalg(\bL)$.  Then its restriction to $X$ is a filtered regular connection
\begin{equation}\label{eq-def-DdR-alg}
    \DdRalg(\bL) := \bigl(\Ddlalg(\bL)\bigr)\big|_X.
\end{equation}

Let us summarize the constructions in the following commutative diagram:
\[
    \xymatrix{ {\bigl\{ \Utext{de Rham $\bZ_p$-local systems on $X_\et$} \bigr\}} \ar@{.>}^-{\DdRalg}[r] \ar_-{(\,\cdot\,)^{\an}}[d] & {\bigl\{ \Utext{filtered regular connections on $X$} \bigr\}} \\
    {\bigl\{ \Utext{de Rham $\bZ_p$-local systems on $X^\an_\et$} \bigr\}} \ar_-{\jmath^\an_{\ket, *}}[d] & {\bigl\{ \Utext{filtered log connections on $\overline{X}$} \bigr\}} \ar_-{\jmath^*}[u] \ar_-{(\,\cdot\,)^{\an}}^-{\Utext{$\cong$ by GAGA}}[d] \\
    {\bigl\{ \Utext{de Rham $\bZ_p$-local systems on $\overline{X}^\an_\ket$} \bigr\}} \ar^-{\Ddl}[r] & {\bigl\{ \Utext{filtered log connections on $\overline{X}^\an$} \bigr\}} }
\]
Note that the de Rham assumptions on the local systems ensure that the associated regular connections or log connections are of the right ranks, and are filtered by vector subbundles \Pth{rather than more general coherent subsheaves} with vector bundles as associated graded pieces.

\begin{lemma}\label{lem-DdR-tensor}
    The functor $\DdRalg$ is a tensor functor, and is independent of the choice of the compactification $\overline{X}$.
\end{lemma}
\begin{proof}
    By Proposition \ref{prop-res-Ddl}, and by \cite[\aCh 1, \aProp 6.2.2]{Andre/Baldassarri:2001-DDA} or \cite[\aSec 11.1.3]{Andre/Baldassarri/Cailotto:2020-DDA(2)}, for all $\bL$, the exponents of the integrable connection $\DdRalg(\bL)$ consist of only rational numbers, which are not Liouville numbers.  \Pth{See, for example, \cite[\aSec 32.1]{Andre/Baldassarri/Cailotto:2020-DDA(2)} for a review.}  Then the lemma follows from the following two facts:
    \begin{enumerate}
        \item\label{lem-DdR-tensor-1} By \cite{Baldassarri:1988-ccapc-2}, \cite[\aCh 4, \aThm 4.1]{Andre/Baldassarri:2001-DDA}, or \cite[\aThm 32.2.1]{Andre/Baldassarri/Cailotto:2020-DDA(2)}; and by the same argument as in the proof of \cite[\aCh 4, \aCors 3.6]{Andre/Baldassarri:2001-DDA} or \cite[\aCor 31.4.6]{Andre/Baldassarri/Cailotto:2020-DDA(2)}, the analytification functor from the category of algebraic regular connections on $X$ whose exponents contain no Liouville numbers to the category of analytic ones on $X^{\an}$ is fully faithful.

        \item\label{lem-DdR-tensor-2} The composition of $\DdRalg$ with the analytification functor is the functor $\DdR$ in \cite[\aThm 3.9(v)]{Liu/Zhu:2017-rrhpl}, a tensor functor independent of the choice of $\overline{X}$.  \qedhere
    \end{enumerate}
\end{proof}

It remains to establish the comparison isomorphism in Theorem \ref{thm-intro-main}.  As in Section \ref{sec-comp-coh}, let $K = \widehat{\AC{k}}$, so that the rings $\BdRp$ and $\BdR$ in Definition \ref{def-OXBdR}\Refenum{\ref{def-OXBdR-1}} have their usual meaning as Fontaine's rings.  By \cite[\aProp 2.1.4 and \aThm 3.8.1]{Huber:1996-ERA}, if $\bL$ is an \'etale $\bZ_p$-local system on $X$, and if $\bL^\an$ is its analytification on $X^\an$, then we have a canonical $\Gal(\AC{k} / k)$-equivariant isomorphism
\begin{equation}\label{eq-comp-Huber-alg-an}
    H^i_\et\bigl(X_{\AC{k}}, \bL\bigr) \cong H^i_\et\bigl(X^\an_{\AC{k}}, \bL^\an\bigr).
\end{equation}
By \cite[\aCor \logadiccorpuritylisse]{Diao/Lan/Liu/Zhu:lasfr} and Theorem \ref{thm-log-RH-arith}\Refenum{\ref{thm-log-RH-arith-comp}}, we have a canonical isomorphism $H^i_\et\bigl(X^\an_{\AC{k}}, \bL^\an\bigr) \otimes_{\bZ_p} \BdR \cong H^i_{\log \dR}\bigl(\overline{X}^\an, \Ddl(\overline{\bL}^\an)\bigr) \otimes_k \BdR$, compatible with the filtrations and $\Gal(\AC{k} / k)$-actions on both sides.  Finally, by GAGA again \Pth{see \cite{Kopf:1974-efava}} and by Deligne's comparison result in \cite[II, 6]{Deligne:1970-EDR}, we have
\[
    H^i_{\log \dR}\bigl(\overline{X}^\an, \Ddl(\overline{\bL}^\an)\bigr) \cong H^i_{\log \dR}\bigl(\overline{X}, \Ddlalg(\bL)\bigr) \cong H^i_\dR\bigl(X, \DdRalg(\bL)\bigr).
\]
This completes the proof of Theorem \ref{thm-intro-main}.

By combining \Refeq{\ref{eq-comp-Huber-alg-an}}, \cite[\aCor \logadiccorpuritylisse]{Diao/Lan/Liu/Zhu:lasfr}, and GAGA \Pth{see \cite{Kopf:1974-efava}} with the other assertions in Theorem \ref{thm-log-RH-arith}\Refenum{\ref{thm-log-RH-arith-comp}}, we also obtain the following:
\begin{thm}\label{thm-HT-degen-comp}
    In the above setting, the \Pth{log} Hodge--de Rham spectral sequence
    \[
        E_1^{a, b} = H^{a, b}_{\log \Hdg}\bigl(\overline{X}, \Ddlalg(\bL)\bigr) \Rightarrow H^{a + b}_{\log \dR}\bigl(\overline{X}, \Ddlalg(\bL)\bigr)
    \]
    degenerates on the $E_1$ page, and the $0$-th graded piece of \Refeq{\ref{eq-thm-intro-main}} can be identified with a canonical $\Gal(\AC{k} / k)$-equivariant comparison isomorphism
    \[
        H^i_\et\bigl(X_{\AC{k}}, \bL\bigr) \otimes_{\bQ_p} \widehat{\AC{k}} \cong \oplus_{a + b = i} \, \Bigl( H^{a, b}_{\log \Hdg}\bigl(\overline{X}, \Ddlalg(\bL)\bigr) \otimes_k \widehat{\AC{k}}(-a) \Bigr).
    \]
\end{thm}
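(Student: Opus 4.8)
The plan is to deduce Theorem \ref{thm-HT-degen-comp} by transporting the already-established assertions of Theorem \ref{arithlogRH}\Refenum{\ref{arithlogRH-comp}} from the rigid analytic side back to the algebraic side, exactly parallel to how the de Rham comparison in Theorem \ref{T: intro main} was obtained just above. First I would observe that $\overline{X}^\an$ is a proper log smooth fs log adic space over $k$ as in Example \ref{example of log adic space-ncd}, with boundary divisor $D^\an$, and that $\overline{\LL}^\an := \jmath_{\ket, *}(\LL^\an)$ is a Kummer \'etale $\bZ_p$-local system on $\overline{X}^\an_\ket$ whose restriction to $(\overline{X}^\an - D^\an)_\et = X^\an_\et$ equals $\LL^\an$, which is de Rham because $\LL$ is a de Rham $\bZ_p$-local system on $X_\et$ (here using that $X_\et$ is assumed de Rham precisely to mean that $\LL^\an$ is). Therefore all hypotheses of Theorem \ref{arithlogRH}\Refenum{\ref{arithlogRH-comp}} are satisfied with $X$ there taken to be $\overline{X}^\an$ and $\LL$ there taken to be $\overline{\LL}^\an$.

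Next, Theorem \ref{arithlogRH}\Refenum{\ref{arithlogRH-comp}} gives immediately that the (log) Hodge--de Rham spectral sequence for $\Ddl(\overline{\LL}^\an)$ on $\overline{X}^\an_\an$ degenerates on the $E_1$ page, and that there is a canonical $\Gal(K/k)$-equivariant isomorphism
\[
    H^i\bigl(\overline{X}^\an_{K, \ket}, \overline{\LL}^\an\bigr) \otimes_{\bZ_p} K \cong \oplus_{a + b = i} \, \Bigl( H^{a, b}_{\log \Hdg}\bigl(\overline{X}^\an_\an, \Ddl(\overline{\LL}^\an)\bigr) \otimes_k K(-a) \Bigr),
\]
identified with the $0$-th graded piece of \Refeq{\ref{eq-arithlogRH-comp-dR}}. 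I would then translate each side into algebraic terms. On the \'etale cohomology side, I would combine Corollary \ref{cor:purity} (which gives $H^i(\overline{X}^\an_{K, \ket}, \overline{\LL}^\an) \cong H^i(X^\an_{K, \et}, \LL^\an)$ since $K = \widehat{\AC{k}}$ is algebraically closed) with the comparison \Refeq{\ref{eq-comp-Huber-alg-an}} between algebraic and rigid analytic \'etale cohomology (from \cite{Huber:1996-ERA}), yielding $H^i(\overline{X}^\an_{K, \ket}, \overline{\LL}^\an) \cong H^i_\et(X_{\AC{k}}, \LL)$, compatibly with the $\Gal(\AC{k}/k)$-actions. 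On the Hodge side, GAGA (see \cite{Kopf:1974-efava}, as used above) identifies $\Ddl(\overline{\LL}^\an)$ with the analytification of the algebraic filtered log connection $\Ddlalg(\overline{\LL}^\an)$, and identifies $H^{a, b}_{\log \Hdg}(\overline{X}^\an_\an, \Ddl(\overline{\LL}^\an))$ with $H^{a, b}_{\log \Hdg}(\overline{X}, \Ddlalg(\overline{\LL}^\an))$; similarly the $E_1$ degeneration transfers to the algebraic spectral sequence since GAGA is compatible with the Hodge filtration on the log de Rham complex \Refeq{\ref{eq-def-Hodge-fil-dR}}. Finally, the identification of this Hodge--Tate comparison with the $0$-th graded piece of the de Rham comparison in Theorem \ref{T: intro main} follows because the latter isomorphism was itself constructed, in the proof of Theorem \ref{T: intro main}, by combining \Refeq{\ref{eq-arithlogRH-comp-dR}} with GAGA and Deligne's algebraic comparison \cite[II, 6]{Deligne:1970-EDR}, and \Refeq{\ref{eq-arithlogRH-comp-HT}} is by construction the $0$-th graded piece of \Refeq{\ref{eq-arithlogRH-comp-dR}}; the two GAGA identifications and the Huber/purity identifications are strictly compatible with passing to graded pieces of the respective filtrations.

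I do not anticipate a genuine obstacle here: the statement is essentially a bookkeeping corollary, and the only point requiring a little care is checking that all the intervening isomorphisms — Corollary \ref{cor:purity}, the Huber comparison \Refeq{\ref{eq-comp-Huber-alg-an}}, GAGA for the filtered log de Rham complex, and Deligne's comparison — are compatible with the relevant filtrations and $\Gal(\AC{k}/k)$-actions, so that the resulting isomorphism really is the $0$-th graded piece of the one in Theorem \ref{T: intro main} rather than merely an abstractly isomorphic object. This compatibility is already implicit in the proof of Theorem \ref{T: intro main} given just above, so the proof of Theorem \ref{thm-HT-degen-comp} amounts to remarking that the same chain of identifications, applied to graded pieces, yields the displayed Hodge--Tate decomposition and the $E_1$-degeneration. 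Accordingly, I would phrase the proof in one short paragraph that invokes \Refeq{\ref{eq-comp-Huber-alg-an}}, Corollary \ref{cor:purity}, GAGA, and the remaining assertions of Theorem \ref{arithlogRH}\Refenum{\ref{arithlogRH-comp}}, exactly as the sentence introducing Theorem \ref{thm-HT-degen-comp} already indicates.
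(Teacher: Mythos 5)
Your proposal is correct and follows essentially the same route as the paper, whose entire proof of Theorem \ref{thm-HT-degen-comp} is the one-sentence combination of the algebraic--analytic \'etale comparison \Refeq{\ref{eq-comp-Huber-alg-an}}, Corollary \ref{cor:purity}, GAGA, and the remaining assertions (degeneration and Hodge--Tate decomposition) of Theorem \ref{arithlogRH}\Refenum{\ref{arithlogRH-comp}}. Your additional remarks on filtration and Galois compatibility are exactly the bookkeeping the paper leaves implicit.
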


\subsection{Generalizations of Kodaira--Akizuki--Nakano vanishing}\label{sec-van-gen}

This subsection will be devoted to the proof of the following theorem:
\begin{thm}\label{thm-van-gen}
    Let $X$ be a proper smooth algebraic variety of pure dimension $d$ over a $p$-adic field $k$, with a reduced normal crossings divisor $D$.  Let $U := X - D$.  Let $\bL$ be a de Rham $\bQ_p$-local system on $U_\et$.  Let $\overline{\cE} = \Ddlalg(\bL)$ be as in Section \ref{sec-DdR-alg} \Pth{with $X$ and $\overline{X}$ there given by $U$ and $X$ here, respectively}.  Let $\DRl(\overline{\cE})$ and $\gr \DRl(\overline{\cE})$ be as in Definition \ref{def-log-conn-etc}.  Let $\cL$ be an invertible sheaf on $X$, with a \Pth{possibly empty} effective divisor $D'$ supported on \Pth{a subdivisor of} $D$ such that
    \begin{equation}\label{eq-thm-van-gen-cond-EV}
        \Utext{$\cL^N(-D')$ is ample for all sufficiently large $N$.}
    \end{equation}
    Then we have
    \begin{align}
        H^i\bigl(X, \cL^{-1} \otimes_{\cO_X} \gr \DRl(\overline{\cE})\bigr) = 0, \; \Utext{for all $i < d$}; \label{eq-thm-van-gen} \\
        H^i\bigl(X, \cL(-D) \otimes_{\cO_X} \gr \DRl(\dual{\overline{\cE}})\bigr) = 0, \; \Utext{for all $i > d$}. \label{eq-thm-van-gen-dual}
    \end{align}
    If $\bL$ has \emph{unipotent} geometric monodromy along $D$, then we also have
    \begin{equation}\label{eq-thm-van-gen-dual-unip}
        H^i\bigl(X, \cL(-D) \otimes_{\cO_X} \gr \DRl(\overline{\cE})\bigr) = 0, \; \Utext{for all $i > d$}.
    \end{equation}
\end{thm}

\begin{rk}\label{rem-van-cond}
    The condition \Refeq{\ref{eq-thm-van-gen-cond-EV}} implies that $\cL$ is \emph{nef} and \emph{big}---see \cite[\aRem 11.6 a)]{Esnault/Viehweg:1992-LVT-B}.  In fact, it is equivalent to being nef and big up to applying embedded resolution of singularities as in \cite{Hironaka:1964-rsavz-1, Hironaka:1964-rsavz-2}---see \cite[footnote 1]{Suh:2018-vmhma}.
\end{rk}

\begin{rk}\label{rem-van-prev}
    When $\bL$ is trivial, in which case $\overline{\cE} = \cO_X$, our $p$-adic Hodge-theoretic proof of Theorem \ref{thm-van-gen} provides new proofs for the classical vanishing theorems \Pth{in characteristic zero} due to Kodaira, Akizuki, and Nakano \cite{Kodaira:1953-odgma, Akizuki/Nakano:1954-nkspl} \Pth{when $D = \emptyset$}; Deligne, Illusie, and Raynaud \cite{Deligne/Illusie:1987-rdcdr} \Pth{when $D' = \emptyset$}; and Esnault and Viehweg \cite{Esnault/Viehweg:1992-LVT-B}.  Also, when $\bL$ is of the form $R^a f_*(\bQ_p)$ for some $a$ and some proper smooth morphism $f: V \to U$, Theorem \ref{thm-van-gen} provides a $p$-adic Hodge-theoretic generalization \Pth{as opposed to the complex analytic one in \cite{Suh:2018-vmhma}} of the characteristic-zero consequences in \cite{Illusie:1990-rsdcd} and \cite[\aSec 3]{Lan/Suh:2013-vttag}, without having to assume that $f$ extends to a proper morphism $Y \to X$ with very good properties.
\end{rk}

\begin{proof}[Proof of Theorem \ref{thm-van-gen}]
    It suffices to prove \Refeq{\ref{eq-thm-van-gen}}, since \Refeq{\ref{eq-thm-van-gen-dual}} follows by Serre duality, and since \Refeq{\ref{eq-thm-van-gen-dual-unip}} follows because $\dual{\overline{\cE}} \cong \Ddlalg(\dual{\bL})$ under the unipotency assumption, by Theorem \ref{thm-unip-vs-nilp} and GAGA \cite{Kopf:1974-efava}.  We will closely follow the first strategy in \cite[\aSec 2]{Suh:2018-vmhma}, but with the input from Saito's direct image theorem \Pth{see \cite[\aThm 2.14]{Saito:1990-mhm}} replaced with our $p$-adic Hodge-theoretic results.

    We claim that, up to replacing $D'$ with a positive multiple, we may assume that there exists some $N_0$ such that $\cL^N(-D')$ is very ample for all $N \geq N_0$.  When $D' = \emptyset$, the claim is clearly true, and the remainder of this proof establishes the special case of this theorem when $D' = \emptyset$.  When $D' \neq \emptyset$, the claim follows from the same argument as in the proof of \cite[(**) in the proof of \aProp 11.5]{Esnault/Viehweg:1992-LVT-B}, with the input \cite[\aCor 11.3]{Esnault/Viehweg:1992-LVT-B} of \cite[\aCor 11.4]{Esnault/Viehweg:1992-LVT-B} there replaced with the special case of this theorem when $D' = \emptyset$, whose proof we have just explained.

    We may enlarge $N_0$ and assume that, along each irreducible component $Z$ of $D$ which has multiplicity $e_Z$ in $D'$, the eigenvalues of the residue of $\overline{\cE}$ are contained in $\bQ \cap [0, 1 - \frac{e_Z}{N_0})$.  By the same Bertini-type argument as in \cite[\aSec 2.1]{Lan/Suh:2013-vttag}, there exist $N \geq N_0$ and $s \in H^0\bigl(X, \cL^N(-D')\bigr)$ such that the corresponding hyperplane section $H \subset X$ is smooth and meets $D$ transversally, so that $D + H$ and $D|_H$ are normal crossings divisors on $X$ and $H$, respectively.  Up to replacing $k$ with a finite extension, we may assume that $k$ contains all the $N$-th roots of unity in $\AC{k}$.

    Let $\imath: H \to X$ denote the canonical closed immersion.  For the sake of clarity, we shall denote by $\DR_{\log D}(\,\cdot\,)$ the log de Rham complex associated with $\Omega^\bullet_X(\log D)$, and similarly denote complexes associated with log structures defined by other normal crossings divisors.  In order to prove \Refeq{\ref{eq-thm-van-gen}}, by considering the long exact sequence associated with the following twist of the adjunction exact sequence
    \[
    \begin{split}
        0 & \to \cL^{-1} \otimes_{\cO_X} \gr \DR_{\log D}(\overline{\cE}) \to \cL^{-1} \otimes_{\cO_X} \gr \DR_{\log (D + H)}(\overline{\cE}) \\
        & \to \imath_*\bigl(\cL|_H^{-1} \otimes_{\cO_H} \gr \DR_{\log(D|_H)}(\overline{\cE}|_H)(-1)\bigr)[-1] \to 0
    \end{split}
    \]
    \Pth{in which the Tate twist $(-1)$ is just a shift of grading by $-1$}, and by induction on the dimension of $X$ \Pth{since $\overline{\cE}|_H \cong \Ddlalg(\bL|_{U \cap H})$ by Theorem \ref{thm-log-RH-arith}\Refenum{\ref{thm-log-RH-arith-mor}}, and since pulling back under the immersion $\imath$ preserves ampleness}, it suffices to prove that
    \begin{equation}\label{eq-thm-van-gen-mod}
        H^i\bigl(X, \cL^{-1} \otimes_{\cO_X} \gr \DR_{\log(D + H)}(\overline{\cE})\bigr) = 0, \; \Utext{for all $i < d$}.
    \end{equation}

    As in \cite[\aSec 3]{Esnault/Viehweg:1992-LVT-B}, consider $\cL^{(a)^{-1}} := \cL^{-a}(\lfloor\tfrac{a (D' + H)}{N}\rfloor)$, which is equipped with an integrable log connection $\nabla^{(a)}$ such that the eigenvalues of the residue of $\nabla^{(a)}$ along $H$ \Pth{\resp each irreducible component $Z$ of $D$} are $\frac{a}{N}$ \Pth{\resp $\frac{a e_Z}{N} - \lfloor\frac{a e_Z}{N}\rfloor$}.  Let $Y$ denote the relative spectrum of the $\cO_X$-algebra $\oplus_{a = 0}^{N - 1} \, \cL^{(a)^{-1}}$, whose multiplicative structure is induced by the dual of $\cO_X \Mapn{s} \cL^{\otimes N}(-D') \subset \cL^{\otimes N}$.  Then the \emph{cyclic cover} $\pi: Y \to X$ is finite flat, and the pullback of $\pi$ to $W := X - (D + H)$ is a finite \'etale Galois cover $\pi_W: V \to W$ with Galois group $\Hom\bigl(\bZ / N \bZ, k^\times\bigr)$.

    By construction, $\cL^{(a)^{-1}}|_W \cong \pi_{W, *}(\cO_V)[\chi_a]$, where $[\chi_a]$ denotes the isotypical component for the character $\chi_a: \Hom\bigl(\bZ / N \bZ, k^\times\bigr) \to k^\times$ defined by evaluation at the image of $a$, which is compatible with the connections \Pth{and trivial filtrations} on both sides.  Consider $\bM_a := \pi_{W, \et, *}(k)[\chi_a]$, where $k$ denotes the constant $k$-local system on $V$ of rank one; \ie, a constant $\bQ_p$-local system of rank $[k : \bQ_p]$ equipped with the canonical action of $k$.  Then $\bM_a$ is a $k$-local system on $W$ of rank one.  Let $\tau: k \otimes_{\bQ_p} k \to k$ be the multiplication map, and let $\tau M$ denote the pushout via $\tau$ of any $k \otimes_{\bQ_p} k$-module $M$.  Since $\pi_{W, *}(\cO_V) \cong \tau \DdRalg\bigl(\pi_{W, \et, *}(k)\bigr)$, by Theorem \ref{thm-log-RH-arith}\Refenum{\ref{thm-log-RH-arith-push}}, $\cL^{(a)^{-1}}|_W \cong \tau \DdRalg(\bM_a)$, which uniquely extends to $\cL^{(a)^{-1}} \cong \tau \Ddlalg(\bM_a)$ by \cite[\aCh 1, \aProp 4.7]{Andre/Baldassarri:2001-DDA} or \cite[\aThm 11.2.2]{Andre/Baldassarri/Cailotto:2020-DDA(2)}, because both sides have eigenvalues of residues in $\bQ \cap [0, 1)$, by the above and Theorem \ref{thm-log-RH-arith}\Refenum{\ref{thm-log-RH-arith-res}}.

    Since $\frac{e_Z}{N} \leq \frac{e_Z}{N_0} < 1$ for each irreducible component $Z$ of $D$, we have $\cL^{(1)^{-1}} = \cL^{-1}$.  Since $\overline{\cE}|_U \cong \DdRalg(\bL)$, the residue of $\overline{\cE}$ along $H$ is zero.  By Lemma \ref{lem-DdR-tensor}, we have $\cL^{-1}|_W \otimes_{\cO_W} \overline{\cE}|_W \cong \tau \DdRalg(\bM_1 \otimes_{\bQ_p} \bL|_W)$, which uniquely extends to $\cL^{-1} \otimes_{\cO_X} \overline{\cE} \cong \tau \Ddlalg(\bM_1 \otimes_{\bQ_p} \bL|_W)$, again because both sides have eigenvalues of residues in $\bQ \cap [0, 1)$.  Thus, the Hodge--de Rham spectral sequences for $\Ddlalg(\bM_1 \otimes_{\bQ_p} \bL|_W)$ and $\cL^{-1} \otimes_{\cO_X} \overline{\cE}$ degenerate by Theorem \ref{thm-HT-degen-comp}, and \Refeq{\ref{eq-thm-van-gen-mod}} is equivalent to
    \begin{equation}\label{eq-thm-van-gen-mod-log-dR}
        H^i\bigl(X, \cL^{-1} \otimes_{\cO_X} \DR_{\log(D + H)}(\overline{\cE})\bigr) = 0, \; \Utext{for all $i < d$}.
    \end{equation}
    Since the eigenvalue of the residue of $\cL^{-1} \otimes_{\cO_X} \overline{\cE}$ along $H$ is positive, by \cite[\aLem 2.10]{Esnault/Viehweg:1992-LVT-B}, for any $b \geq 0$, the statement \Refeq{\ref{eq-thm-van-gen-mod-log-dR}} is in turn equivalent to
    \begin{equation}\label{eq-thm-van-gen-mod-log-dR-twist}
        H^i\bigl(X, \cL^{-1}(-b H) \otimes_{\cO_X} \DR_{\log(D + H)}(\overline{\cE})\bigr) = 0, \; \Utext{for all $i < d$}.
    \end{equation}
    Finally, by considering the filtration spectral sequence, it suffices to show that, for \emph{some} $b \geq 0$, we have
    \begin{equation}\label{eq-thm-van-gen-mod-log-Hodge-twist}
        H^i\bigl(X, \cL^{-1}(-b H) \otimes_{\cO_X} \gr \DR_{\log(D + H)}(\overline{\cE})\bigr) = 0, \; \Utext{for all $i < d$}.
    \end{equation}
    Since the divisor $H$ is ample, and since $\gr \DR_{\log(D + H)}(\overline{\cE})$ is a complex of finite locally free $\cO_X$-modules concentrated in degrees $[0, d]$, by considering the spectral sequence associated with the stupid \Pth{\Qtn{b\^ete}} filtration, the last statement \Refeq{\ref{eq-thm-van-gen-mod-log-Hodge-twist}} holds for some $b \gg 0$, by Serre vanishing and Serre duality, as desired.
\end{proof}

\subsection{De Rham local systems at the boundary}\label{sec-dR-bd}

In this subsection, we apply the results in Section \ref{sec-comp-nearby} to study nearby cycles in some simple cases.  We will leave a more general treatment to a future work.

Let $X$ be an algebraic variety with a divisor $D$ over $k$.  Suppose that there exist an \'etale neighborhood $D \to W \to X$ and a morphism $f: W \to \bA^1$ over $k$ such that $f^{-1}(0) = D$.  In this case, there is the notion of \emph{unipotent and quasi-unipotent nearby cycles} due to Beilinson \Pth{see \cite{Beilinson:1987-hgps}; \Refcf{} \cite{Reich:2010-nbhgp}}.  Let us briefly recall the definition.  Let $\bG_m := \bA^1 - \{0\}$ be the multiplicative group scheme over $k$.  We have a canonical isomorphism $\pi_1(\bG_m, 1) \cong \pi_1(\bG_{m, \AC{k}}, 1) \rtimes \Gal(\AC{k} / k)$, and $\pi_1(\bG_{m, \AC{k}}, 1) \cong \widehat{\bZ}(1)$ as $\Gal(\AC{k} / k)$-modules.  For each $r \geq 1$, let $\bJ_r$ denote the rank $r$ unipotent \'etale $\bQ_p$-local system on $\bG_m$ defined by the representation of $\pi_1(\bG_m, 1)$ on $\bQ_p^r$ such that a topological generator $\gamma \in \pi_1(\bG_{m, \AC{k}}, 1)$ acts as a principal unipotent matrix $J_r$ and such that $\Gal(\AC{k} / k)$ acts diagonally on $\bQ_p^r$ and trivially on $\ker(J_r - 1)$.  There is an obvious inclusion $\bJ_r \Em \bJ_{r + 1}$, and a projection $\bJ_{r + 1} \to \bJ_r(-1)$ such that the composition $\bJ_r \to \bJ_r(-1)$ is given by the monodromy action.  For each $m \geq 1$, let $[m]$ denote the $m$-th power homomorphism of $\bG_m$, and let $\bK_m := [m]_*(\bQ_p)$.  If $m \mid m'$, there is a natural inclusion $\bK_m \Em \bK_{m'}$ \Pth{defined by adjunction}.  Let $U := W - D$, and let $\imath: D \to W$ and $\jmath: U \to W$ denote the canonical morphisms.  We shall also denote by $\bJ_r$ and $\bK_m$ their pullbacks to $U$.  Then for each $\bQ_p$-perverse sheaf $\cF$ on $U_\et$, its unipotent and quasi-unipotent nearby cycles are
\[
    R\Psi^\unip_f(\cF) :=  \varinjlim_r \, \imath^{-1} \, R\jmath_*(\cF \otimes_{\bQ_p} \bJ_r) \quad \Utext{and} \quad R\Psi^\qunip_f(\cF) :=  \varinjlim_m \, R\Psi^\unip_f(\cF \otimes_{\bQ_p} \bK_m),
\]
respectively, where the limits are taken in the category of perverse sheaves on $D_\et$.

Let $\bL$ be a $\bQ_p$-local system on $U_\et$.  Let $f^\an: W^\an \to \bA^{1, \an}$ denote the analytification of $f$, whose pullback under $\bD \Em \bA^{1, \an}$ we denote by $f^\an_\bD$.  If the reduced subspace of $D^\an$ is a normal crossings divisor in $X^\an$, the quasi-unipotent nearby cycles $R\Psi^\qunip_{f^\an_\bD}(\bL^\an)$ has been introduced in \cite[\aDef \logadicdefnearby]{Diao/Lan/Liu/Zhu:lasfr}.

\begin{lemma}\label{prop-unip-nearby-ana-alg-comp}
    In the above setting, we have $\bigl(R\Psi_f^{\qunip}(\bL)\bigr)^\an \cong R\Psi^\qunip_{f^\an_\bD}(\bL^\an)$.
\end{lemma}
\begin{proof}
   This follows from \cite[\aProp 2.1.4 and \aThm 3.8.1]{Huber:1996-ERA} and \cite[\aLem \logadiclemclimmketmor{} and \aThm \logadicthmpurity]{Diao/Lan/Liu/Zhu:lasfr}.
\end{proof}

Suppose moreover that $f$ is smooth, and that $(F, \nabla)$ is a vector bundle with an integrable connection on $U = W - D$ that \Pth{necessarily uniquely} extends to a vector bundle $\overline{F}$ on $W$ with a log connection $\overline{\nabla}$ whose eigenvalues of residues along $\overline{D}$ belong to $\bQ \cap [0, 1)$.  Then we can define the $\bQ$-filtration $V^\bullet$ on $F(* D) := \cup_n \, F(n D)$, as in Lemma \ref{lem-V-fil}; and $R\Psi_f^\unip(F, \nabla)$ and $R\Psi_f^\qunip(F, \nabla)$ \Pth{which are defined much more generally using the theory of holonomic algebraic $D$-modules} are canonically isomorphic to $\gr_V^0 F(* D)$ \Pth{\resp $\oplus_{\alpha \in (-1, 0]} \, \bigl(\gr_V^\alpha F(* D)\bigr)$}, with canonically induced integrable connections and filtrations \Pth{\Refcf{} \cite[(5.1.3.3)]{Saito:1988-mhp}}.  By \cite{Nagata:1962-iavcv, Hironaka:1964-rsavz-1, Hironaka:1964-rsavz-2} again, we can compactify $(W, D)$ to some $(\overline{W}, \overline{D})$, where $\overline{W}$ is proper, and where $\overline{D}$ is a simple normal crossings divisor such that $D = W \cap \overline{D}$ and the closure of $D$ in $\overline{D}$ is a union of smooth irreducible components of $\overline{D}$.  Since we have a log connection $\Ddlalg(\bL)$ on $\overline{W}$ as in Section \ref{sec-DdR-alg}, its restriction to $W$ gives an extension of $\DdRalg(\bL)$ as in the last paragraph, and hence we have $R\Psi^\unip_f\bigl(\DdRalg(\bL)\bigr) \cong \gr_V^0 \DdRalg(\bL)(* D)$ and $R\Psi^\qunip_f\bigl(\DdRalg(\bL)\bigr) \cong \oplus_{\alpha \in (-1, 0]} \, \bigl(\gr_V^\alpha \DdRalg(\bL)(* D)\bigr)$.
\begin{thm}\label{thm-nearby-comp-alg}
    Assume that $f$ is smooth.  Let $\bL$ be a de Rham $\bQ_p$-local system on $U_\et$.  Then $R\Psi^{\qunip}_f(\bL)$ is a de Rham $\bQ_p$-local system on $D_\et$, and there is a canonical isomorphism $\DdRalg\bigl(R\Psi^\qunip_f(\bL)\bigr) \cong R\Psi^\qunip_f\bigl(\DdRalg(\bL)\bigr)$ which restricts to an isomorphism $\DdRalg\bigl(R\Psi^\unip_f(\bL)\bigr) \cong R\Psi^\unip_f\bigl(\DdRalg(\bL)\bigr)$, as filtered \Pth{integrable} connections.
\end{thm}
\begin{proof}
    As explained in Lemma \ref{lem-DdR-tensor}, all the exponents of $\DdRalg\bigl(R\Psi^\qunip_f(\bL)\bigr)$ are non-Liouville numbers.  Moreover, since the eigenvalues of the residues of the log connection $\Ddlalg(\bL)$ on $\overline{W}$ along the irreducible divisors of $\overline{D}$ are all in $\bQ \cap [0, 1)$, the exponents of the connection $\Ddlalg(\bL)|_D^0$ are also non-Liouville numbers.  Thus, the theorem follows from the algebraization of the canonical isomorphisms in Theorem \ref{thm-nearby-comp-arithm}, by using Lemma \ref{prop-unip-nearby-ana-alg-comp} and the fact \Refenum{\ref{lem-DdR-tensor-1}} in the proof of Lemma \ref{lem-DdR-tensor}.
\end{proof}

\begin{rk}\label{rem-nearby-full}
    As the geometric monodromy of $\bL$ along $D$ is quasi-unipotent \Pth{see \cite[\aDef \logadicdefunipqunipmonod{} and \aRem \logadicrkunipqunipmonodalgcl]{Diao/Lan/Liu/Zhu:lasfr} and the proof of Lemma \ref{lem-poss-eval-N-infty-xi-r}}, and as the eigenvalues of the residue of $\Ddlalg(\bL)$ along $D$ are in $\bQ \cap [0, 1)$, the quasi-unipotent nearby cycles of $\bL$ and $\DdRalg(\bL)$ coincide with their respective full nearby cycles.
\end{rk}

When $X$ is a smooth curve over $k$, Theorem \ref{thm-nearby-comp-alg} has the following concrete interpretation.  In this case, $D = x$ is a $k$-point, and $f = z$ is an \'etale local coordinate of $X$ at $x$.  We can identify $R\Psi_z(\bL)$ with the finite-dimensional $\bQ_p$-representation $\bL_{\AC{\eta}_x}$ of $\Gal(\AC{K}_x / K_x)$, where $K_x$ is the local field around $x$, and $\AC{\eta}_x$ is a geometric point above $\eta_x = \Spec(K_x)$, which specializes to a geometric point $\AC{x} = \Spec(\AC{k})$ above $x$.  The coordinate $z$ splits the natural projection $\Gal(\AC{K}_x / K_x) \to \Gal(\AC{k} / k)$, and so we may regard $R\Psi_z(\bL)$ as a representation of $\Gal(\AC{k}/k)$.
\begin{cor}\label{cor-nearby-curve}
    If $\bL$ is a de Rham $\bQ_p$-local system on $(X - x)_\et$, then $R\Psi_z(\bL)$ is a de Rham representation of $\Gal(\AC{k} / k)$ \Pth{with the choice of coordinate $z$}.
\end{cor}

\section{Application to Shimura Varieties}\label{sec-Sh-var}

In this section, we shall prove Theorem \ref{thm-intro-Sh}, which serves as an evidence of Conjecture \ref{conj-intro}, and also Corollary \ref{cor-intro-GM}.  In order to avoid confusion, the symbol $K$ will be reserved for levels \Pth{rather than fields}.  For simplicity, we shall continue to use the term \emph{filtered log connection} to mean a filtered vector bundle equipped with an integrable connection satisfying the Griffiths transversality, as in Section \ref{sec-DdR-alg}.

\subsection{The setup}\label{sec-loc-syst-setup}

Let $(\Grp{G}, \Shdom)$ be any Shimura datum.  That is, $\Grp{G}$ is a connected reductive $\bQ$-group, and $\Shdom$ is a hermitian symmetric domain parameterizing a conjugacy class of homomorphisms
\begin{equation}\label{eq-h}
    \hd: \DelS := \Res_{\bC / \bR} \Gm{\bC} \to \Grp{G}_\bR,
\end{equation}
satisfying a list of axioms \Pth{see \cite[2.1.1]{Deligne:1979-vsimc} and \cite[\aDef 5.5]{Milne:2005-isv}}.  For each \emph{neat} \Pth{see \cite[0.6]{Pink:1989-Ph-D-Thesis}} open compact subgroup $\levcp$ of $\Grp{G}(\bAi)$, we denote by $\Model_\levcp = \Sh_\levcp(\Grp{G}, \Shdom)$ the canonical model of the associated Shimura variety at level $\levcp$, which is a smooth quasi-projective algebraic variety over a number field $\ReFl\subset\bC$, called the \emph{reflex field} $\ReFl$ of $(\Grp{G}, \Shdom)$.  Recall that, essentially by definition, the analytification of its base change $\Model_{\levcp, \bC}$ from $\ReFl$ to $\bC$ is the \emph{complex manifold}
\begin{equation}\label{eq-Sh-var-C-an}
    \Model_{\levcp, \bC}^\an \cong \Grp{G}(\bQ) \Lquot \bigl( \Shdom \times \Grp{G}(\bAi) \bigr) / \levcp,
\end{equation}
where $\Grp{G}(\bQ)$ acts diagonally on $\Shdom \times \Grp{G}(\bAi)$ from the left, and where $\levcp$ acts trivially on $\Shdom$ and canonically on $\Grp{G}(\bAi)$ from the right.  Note that right multiplication by $g \in \Grp{G}(\bAi)$ induces an isomorphism $[g]: \Model_{g \levcp g^{-1}, \bC}^\an \Mi \Model_{\levcp, \bC}^\an$, which algebraizes and descends to an isomorphism $\Model_{g \levcp g^{-1}} \Mi \Model_\levcp$, still denoted by $[g]$.  \Pth{See \cite{Milne:2005-isv, Lan:2017-ebisv} and the references there for basic facts concerning Shimura varieties.}

Given neat open compact subgroups $\levcp_1$ and $\levcp_2$ such that $\levcp_1$ is a normal subgroup of $\levcp_2$, we obtain a finite \'etale cover $\Model_{\levcp_1} \to \Model_{\levcp_2}$ with a canonical $\levcp_2 / \levcp_1$-action.  It will be convenient to consider the projective system $\{ \Model_\levcp \}_\levcp$, which can be viewed as the scheme $\Model := \varprojlim_\levcp \Model_\levcp$ over $\ReFl$, which admits the canonical right action of $\Grp{G}(\bAi)$ described above.  We call these actions \Pth{and their various extensions to other objects} \emph{Hecke actions} of $\Grp{G}(\bAi)$ \Pth{sometimes with $\Grp{G}(\bAi)$ omitted}.

Let $\Grp{G}^c$ be the quotient of $\Grp{G}$ by the minimal subtorus $Z_s(\Grp{G})$ of the center $Z(\Grp{G})$ of $\Grp{G}$ such that the torus $Z(\Grp{G})^\circ / Z_s(\Grp{G})$ has the same split ranks over $\bQ$ and $\bR$.  \Pth{This is equivalent to the definition in \cite[\aCh III]{Milne:1990-cmsab} when $(\Grp{G}, \Shdom)$ satisfies \cite[(II.2.1.4)]{Milne:1990-cmsab}.}  Let $\Grp{G}^\der$ denote the derived group of $\Grp{G}$, and let $\Grp{G}^{\der, c}$ denote the image of $\Grp{G}^\der$ in $\Grp{G}^c$.  Let $\Grp{G}^\ad$ denote the adjoint quotient of $\Grp{G}$.  We have the canonical central isogenies $\Grp{G}^\der \to \Grp{G}^{\der, c} \to \Grp{G}^\ad$ of connected semisimple $\bQ$-algebraic groups.

For each field $F$, let $\Rep_F(\Grp{G}^c)$ denote the category of finite-dimensional algebraic representations of $\Grp{G}^c$ over $F$, which we also view as an algebraic representation of $\Grp{G}$ by pullback.  Let $\AC{\bQ}$ denote the algebraic closure of $\bQ$ in $\bC$, and let $\AC{\bQ}_p$ be an algebraic closure of $\bQ_p$, together with a fixed isomorphism $\ACMap: \AC{\bQ}_p \Mi \bC$, which induces an injective field homomorphism $\ACMap^{-1}|_{\AC{\bQ}}: \AC{\bQ} \Em \AC{\bQ}_p$.

\subsection{Local systems on Shimura varieties}\label{sec-loc-syst-constr}

Let us begin with the complex analytic constructions.  For any $\rep \in \Rep_{\AC{\bQ}}(\Grp{G}^c)$, we define the \emph{\Pth{Betti} $\AC{\bQ}$-local system}
\[
    \BSh{\rep} := \Grp{G}(\bQ) \Lquot \bigl( ( \Shdom \times \rep) \times \Grp{G}(\bAi) \bigr) / \levcp
\]
on $\Model_{\levcp, \bC}^\an$.  \Pth{See Proposition \ref{prop-loc-syst-infty} below for some formal properties.}

Let us also explain the construction of $\BSh{\rep}$ more concretely via the representation of the fundamental groups of \Pth{connected components} of $\Model_{\levcp, \bC}^\an$, under the classical correspondence between local systems and fundamental group representations.

Suppose that we have a connected component of $\Model_{\levcp, \bC}^\an$ \Pth{see \Refeq{\ref{eq-Sh-var-C-an}}} given by
\begin{equation}\label{eq-Sh-var-conn}
    \Gamma^+_{\levcp, g_0} \Lquot \Shdom^+ \cong \Grp{G}(\bQ)_+ \Lquot \bigl( \Shdom^+ \times ( \Grp{G}(\bQ)_+ g_0 \levcp ) \bigr) / \levcp
\end{equation}
\Pth{\Refcf{} \cite[2.1.2]{Deligne:1979-vsimc} or \cite[\aLem 5.13]{Milne:2005-isv}}, where $\Shdom^+$ is a fixed connected component of $\Shdom$ and $g_0 \in \Grp{G}(\bAi)$, and where $\Grp{G}(\bQ)_+$ is the stabilizer of $\Shdom^+$ in $\Grp{G}(\bQ)$ and
\[
    \Gamma^+_{\levcp, g_0} := \Grp{G}(\bQ)_+ \cap (g_0 \levcp g_0^{-1})
\]
is a \emph{neat} \Pth{see \cite[17.1]{Borel:1969-IGA}} arithmetic subgroup of $\Grp{G}(\bQ)$.  It follows from the definitions that $\Gamma^+_{\levcp, g_0}$ is neat when $\levcp$ is.  Let $\Gamma^{+, c}_{\levcp, g_0}$ and $\Gamma^{+, \ad}_{\levcp, g_0}$ denote the images of $\Gamma^+_{\levcp, g_0}$ in $\Grp{G}^c(\bQ)$ and $\Grp{G}^\ad(\bQ)$, respectively, so that we have surjective homomorphisms
\begin{equation}\label{eq-Gamma+-surj}
    \Gamma^+_{\levcp, g_0} \Surj \Gamma^{+, c}_{\levcp, g_0} \Surj \Gamma^{+, \ad}_{\levcp, g_0}.
\end{equation}

\begin{lemma}\label{lem-Gamma+-c-isom-ad}
    The subgroup $\Gamma^{+, c}_{\levcp, g_0}$ of $\Grp{G}^c(\bQ)$ is contained in $\Grp{G}^{\der, c}(\bQ)$, and the second homomorphism in \Refeq{\ref{eq-Gamma+-surj}} is an isomorphism $\Gamma^{+, c}_{\levcp, g_0} \Mi \Gamma^{+, \ad}_{\levcp, g_0}$.
\end{lemma}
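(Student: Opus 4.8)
\textbf{Proof proposal for Lemma \ref{lem-Gamma+-c-isom-ad}.}

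The plan is to analyze the three groups $\Grp{G}^\der$, $\Grp{G}^{\der, c}$, $\Grp{G}^\ad$ together with the arithmetic subgroup $\Gamma^+_{\levcp, g_0}$ and exploit two facts: first, that $\Gamma^+_{\levcp, g_0}$ consists of elements stabilizing $\Shdom^+$, and second, that the kernels of the central isogenies $\Grp{G}^\der \to \Grp{G}^{\der, c} \to \Grp{G}^\ad$ are \emph{finite} central subgroups. First I would recall why $\Gamma^{+, c}_{\levcp, g_0} \subseteq \Grp{G}^{\der, c}(\bQ)$: an element $\gamma \in \Grp{G}(\bQ)_+$ that preserves $\Shdom^+$ fixes, via its adjoint action, the conjugacy class of $\hd$ (or rather acts on $\Shdom^+$), and the standard axioms of a Shimura datum — in particular that $\Grp{G}^\ad$ has no nontrivial $\bQ$-factor on which $\hd$ projects trivially, and that the image of $\hd$ generates $\Grp{G}$ modulo the center — force $\gamma$ to lie in the subgroup generated by $\Grp{G}^\der(\bQ)$ and the center $Z(\Grp{G})(\bQ)$; but then neatness of $\Gamma^+_{\levcp, g_0}$ kills the central contribution in $\Grp{G}^c$, since the image of a central element in a neat group must be trivial (a neat subgroup of $\Grp{G}(\bAi)$ has torsion-free image, and the relevant central torus quotient is anisotropic-mod-split). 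This is the content of \cite[\aCh III]{Milne:1990-cmsab}, and I would cite that directly rather than redo it. So $\Gamma^{+, c}_{\levcp, g_0}$ lands in $\Grp{G}^{\der, c}(\bQ)$, and in particular the surjection $\Gamma^{+, c}_{\levcp, g_0} \Surj \Gamma^{+, \ad}_{\levcp, g_0}$ factors through $\Grp{G}^{\der, c}(\bQ)$.

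Next I would prove the injectivity of $\Gamma^{+, c}_{\levcp, g_0} \to \Gamma^{+, \ad}_{\levcp, g_0}$. The kernel of $\Grp{G}^{\der, c} \to \Grp{G}^\ad$ is a finite central group scheme $\mu$ over $\bQ$; hence the kernel of $\Gamma^{+, c}_{\levcp, g_0} \to \Gamma^{+, \ad}_{\levcp, g_0}$ is contained in $\mu(\bQ)$, which is a finite group. But $\Gamma^{+, c}_{\levcp, g_0}$ is the image of the neat arithmetic group $\Gamma^+_{\levcp, g_0}$; a standard property of neat subgroups is that their images under any homomorphism of algebraic groups over $\bQ$ remain torsion-free (neatness is defined precisely so that the subgroup generated by the eigenvalues of each element is torsion-free, a condition stable under such homomorphisms — see \cite[17.1]{Borel:1969-IGA} and \cite[0.6]{Pink:1989-Ph-D-Thesis}). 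A torsion-free group has no nontrivial finite subgroups, so the intersection of $\Gamma^{+, c}_{\levcp, g_0}$ with $\mu(\bQ)$ is trivial, and the map is injective. Combined with its surjectivity (which is part of \Refeq{\ref{eq-Gamma+-surj}} and follows by definition, since $\Gamma^{+, \ad}_{\levcp, g_0}$ is \emph{defined} as the image of $\Gamma^{+, c}_{\levcp, g_0}$), this gives the desired isomorphism $\Gamma^{+, c}_{\levcp, g_0} \Mi \Gamma^{+, \ad}_{\levcp, g_0}$.

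The main obstacle, such as it is, is assembling the precise form of the ``no-central-part'' statement for $\Gamma^{+, c}_{\levcp, g_0}$: one must be careful that the relevant central subtorus being quotiented out in the definition of $\Grp{G}^c$ is exactly the one whose $\bQ$-points could otherwise contribute non-torsion elements, and that neatness is strong enough to exclude this. I expect this to be handled cleanly by quoting \cite[\aCh III]{Milne:1990-cmsab} and the definition of $\Grp{G}^c$ from Section \ref{sec-loc-syst-setup}, together with the observation that a neat subgroup of $\Grp{G}(\bAi)$ intersects the $\bAi$-points of this central subtorus trivially (otherwise it would contain a nontrivial element of a torus whose $\bQ$-split rank is zero modulo the anisotropic part, contradicting torsion-freeness after passing to a suitable finite level). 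Everything else is formal manipulation with finite central kernels and torsion-free groups, so I would keep that part terse.
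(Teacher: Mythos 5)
Your second step---the injectivity of $\Gamma^{+, c}_{\levcp, g_0} \to \Gamma^{+, \ad}_{\levcp, g_0}$, granted the containment, via the finiteness of $\ker(\Grp{G}^{\der, c} \to \Grp{G}^\ad)$ and the torsion-freeness of (images of) neat groups---is exactly the paper's argument and is fine. The gap is in the first step, the containment $\Gamma^{+, c}_{\levcp, g_0} \subset \Grp{G}^{\der, c}(\bQ)$. The intermediate claim you rely on, that the axioms of a Shimura datum force every $\gamma \in \Grp{G}(\bQ)_+$ to lie in $\Grp{G}^\der(\bQ) \cdot Z(\Grp{G})(\bQ)$, is false, and it is not what \cite[Ch.~III]{Milne:1990-cmsab} proves. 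Already for $\Grp{G} = \GL_2$ the element $\mathrm{diag}(2,1)$ lies in $\Grp{G}(\bQ)_+$ but not in $\SL_2(\bQ) \cdot \bQ^\times$, since elements of the latter have square determinant. Nor does the claim become true after restricting to neat arithmetic subgroups: for the Hilbert modular datum $\Grp{G} = \Res_{F/\bQ} \GL_2$ with $F = \bQ(\sqrt{3})$, the element $\mathrm{diag}\bigl((2+\sqrt{3})^3, 1\bigr)$ is congruent to the identity modulo $5$, hence lies in a neat principal congruence subgroup, yet its determinant is a totally positive unit of $F$ that is not a square in $F^\times$, so it is not a product of a derived element and a central one. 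Since such an element does not decompose at all, there is no ``central contribution'' for neatness to kill, and your argument does not get started; similarly, the observation in your last paragraph that a neat subgroup meets the central subtorus trivially concerns the intersection with $\ker(\Grp{G} \to \Grp{G}^c)$, which is not the relevant issue.

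What the containment actually requires---and this is the paper's one-line proof---is a statement about the image of the whole group rather than a decomposition of its individual elements: the image of $\Gamma^{+, c}_{\levcp, g_0}$ in the quotient torus $\Grp{G}^c / \Grp{G}^{\der, c}$ is a neat arithmetic subgroup of that torus, and because $\Grp{G}^c$ is obtained by killing precisely the maximal $\bQ$-anisotropic $\bR$-split subtorus of the center, the torus $\Grp{G}^c / \Grp{G}^{\der, c}$ is isogenous to a product of a $\bQ$-split torus and a torus of compact type; arithmetic subgroups of such a torus are finite, hence trivial when neat. (In the Hilbert example this is just the fact that the norm of the determinant is a root of unity, hence $1$, even though the determinant itself is a unit of infinite order.) Note that no axiom of the Shimura datum enters---only the definition of $\Grp{G}^c$ and standard properties of neat arithmetic groups. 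With the first step replaced by this argument, your injectivity step can stand unchanged.
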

\begin{proof}
    Since $\ker(\Grp{G} \to \Grp{G}^c)$ is the maximal $\bQ$-anisotropic $\bR$-split subtorus of the center of $\Grp{G}$, the quotient $\Grp{G}^c / \Grp{G}^{\der, c}$ is a torus isogenous to a product of a split torus and a torus of compact type \Pth{\ie, $\bR$-anisotropic} over $\bQ$.  Since all neat arithmetic subgroups of such a torus are trivial, the neat image $\Gamma^{+, c}_{\levcp, g_0}$ of $\Gamma^+_{\levcp, g_0}$ in $\Grp{G}^c(\bQ)$ is contained in $\Grp{G}^{\der, c}(\bQ)$.  Consequently, the second homomorphism in \Refeq{\ref{eq-Gamma+-surj}} is an isomorphism, because its kernel, being both neat and \emph{finite}, is trivial.
\end{proof}

\begin{cor}\label{cor-Gamma+-c-as-fund-grp}
    The connected component $\Gamma^+_{\levcp, g_0} \Lquot \Shdom^+$ is a smooth manifold whose fundamental group \Pth{with any base point of $\Shdom^+$} is canonically isomorphic to $\Gamma^{+, c}_{\levcp, g_0}$.
\end{cor}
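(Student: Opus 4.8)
The plan is to deduce Corollary \ref{cor-Gamma+-c-as-fund-grp} directly from Lemma \ref{lem-Gamma+-c-isom-ad} together with the standard theory of arithmetic quotients of hermitian symmetric domains. First I would recall that $\Shdom^+$ is a hermitian symmetric domain, hence in particular a connected, simply connected smooth (real-analytic, indeed complex) manifold, being contractible by the Cartan fixed point theorem. Next, since $\Gamma^+_{\levcp, g_0}$ is a neat arithmetic subgroup of $\Grp{G}(\bQ)$, it acts on $\Shdom^+$ through its image in $\Grp{G}^\ad(\bR) = \mathrm{Aut}(\Shdom^+)^\circ$; the kernel of the action is exactly $\Gamma^+_{\levcp, g_0} \cap Z(\Grp{G})(\bQ)$, where $Z(\Grp{G})$ is the center, because $\Grp{G}^\ad$ acts faithfully on $\Shdom^+$. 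Neatness of $\Gamma^+_{\levcp, g_0}$ forces this intersection to act freely (no nontrivial element has a fixed point, and in fact no nontrivial element acts trivially), so the action of the image $\Gamma^{+, \ad}_{\levcp, g_0}$ on $\Shdom^+$ is free and properly discontinuous. Therefore the quotient $\Gamma^{+, \ad}_{\levcp, g_0} \Lquot \Shdom^+$ is a connected smooth manifold and the covering map $\Shdom^+ \to \Gamma^{+, \ad}_{\levcp, g_0} \Lquot \Shdom^+$ is a universal covering, whence the fundamental group (with any base point of $\Shdom^+$) is canonically identified with $\Gamma^{+, \ad}_{\levcp, g_0}$.

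Then I would invoke Lemma \ref{lem-Gamma+-c-isom-ad}, which gives a canonical isomorphism $\Gamma^{+, c}_{\levcp, g_0} \Mi \Gamma^{+, \ad}_{\levcp, g_0}$. Composing, the fundamental group of the connected component $\Gamma^+_{\levcp, g_0} \Lquot \Shdom^+$ is canonically isomorphic to $\Gamma^{+, c}_{\levcp, g_0}$, as asserted. One small point to address carefully is that the statement refers to the component $\Gamma^+_{\levcp, g_0} \Lquot \Shdom^+$, whereas the honest free quotient is $\Gamma^{+, \ad}_{\levcp, g_0} \Lquot \Shdom^+$; but these two quotients coincide as sets and as manifolds precisely because $\Gamma^+_{\levcp, g_0}$ acts on $\Shdom^+$ through $\Gamma^{+, \ad}_{\levcp, g_0}$, and the displayed identification \Refeq{\ref{eq-Sh-var-conn}} already records that the $\Grp{G}(\bQ)_+$-action descends to this quotient. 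I would also note that freeness (rather than merely having finite stabilizers) is exactly what neatness buys us: a neat subgroup of $\Grp{G}(\bQ)$ has no nontrivial elements of finite order and no nontrivial elements with eigenvalue a nontrivial root of unity, so no nontrivial element fixes a point of $\Shdom^+$ (a fixed point would force the element to lie in a compact subgroup of $\Grp{G}^\ad(\bR)$, hence to be of finite order modulo the center).

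The main obstacle, such as it is, is bookkeeping rather than mathematics: one must be precise about which group acts on $\Shdom^+$ with which kernel, and in particular about the passage from $\Gamma^+_{\levcp, g_0}$ (a subgroup of $\Grp{G}(\bQ)$, the object appearing in the notation for the component) to its adjoint image (the object that actually acts freely), and then back to $\Gamma^{+, c}_{\levcp, g_0}$ via Lemma \ref{lem-Gamma+-c-isom-ad}. Once these identifications are pinned down, the corollary is immediate from the fact that a contractible space is the universal cover of any of its free, properly discontinuous quotients, so the deck transformation group equals the fundamental group. I expect the write-up to be only a few lines, essentially: cite contractibility of $\Shdom^+$; cite neatness to get freeness of the $\Gamma^{+, \ad}_{\levcp, g_0}$-action; conclude $\pi_1 \cong \Gamma^{+, \ad}_{\levcp, g_0}$; apply Lemma \ref{lem-Gamma+-c-isom-ad} to rewrite this as $\Gamma^{+, c}_{\levcp, g_0}$.
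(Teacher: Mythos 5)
Your proposal is correct and follows essentially the same route as the paper, whose entire proof is the one-sentence observation that $\Gamma^+_{\levcp, g_0}$ acts on $\Shdom^+$ through its image $\Gamma^{+, \ad}_{\levcp, g_0}$, after which Lemma \ref{lem-Gamma+-c-isom-ad} gives the identification with $\Gamma^{+, c}_{\levcp, g_0}$. You have simply made explicit the standard facts (contractibility of $\Shdom^+$, freeness of the action of the neat adjoint image) that the paper leaves implicit.
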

\begin{proof}
    As $\Gamma^+_{\levcp, g_0}$ acts on $\Shdom^+$ via $\Gamma^{+, \ad}_{\levcp, g_0} \subset \Grp{G}^\ad(\bQ)$, this follows from Lemma \ref{lem-Gamma+-c-isom-ad}.
\end{proof}

\begin{rk}\label{rem-Gamma+-ad-not-again}
    We shall not write $\Gamma^{+, \ad}_{\levcp, g_0}$ again in what follows.
\end{rk}

By taking $\Shdom^+$ as a universal cover of $\Gamma^+_{\levcp, g_0} \Lquot \Shdom^+$, and by fixing the choice of a base point on $\Shdom^+$, the pullback of $\BSh{\rep}$ to $\Gamma^+_{\levcp, g_0} \Lquot \Shdom^+$ determines and is determined by the fundamental group representation
\begin{equation}\label{eq-fund-grp-rep}
    \Frep^+_{\levcp, g_0}(\rep): \Gamma_{\levcp, g_0}^{+, c} \to \GL_{\AC{\bQ}}(\rep),
\end{equation}
which coincides with the restriction of the representation of $\Grp{G}^c$ on $\rep$.  In particular, it is compatible with the change of levels $K' \subset K$.

Moreover, given $g \in g_0^{-1} \Grp{G}(\bQ)_+ g_0$, so that $g_0 g = \gamma g_0$ for some $\gamma \in \Grp{G}(\bQ)_+$, we have $\Gamma^{+, c}_{g \levcp g^{-1}, g_0} = \gamma \Gamma^{+, c}_{\levcp, g_0} \gamma^{-1}$, and the Hecke action $[g]$ induces a morphism
\begin{equation}\label{eq-Hecke-g-conn-comp}
    \Gamma^+_{g \levcp g^{-1}, g_0} \Lquot \Shdom^+ \Mi \Gamma^+_{\levcp, g_0} \Lquot \Shdom^+,
\end{equation}
which is nothing but the isomorphism defined by left multiplication by $\gamma^{-1}$.  It follows that the canonical isomorphism $[g]^{-1}(\BSh{\rep}) \Mi \BSh{\rep}$ of local systems corresponds to the following equality of fundamental group representations
\begin{equation}\label{eq-Hecke-g-fund-rep}
    \Frep^+_{g \levcp g^{-1}, g_0}(\rep) = \gamma \bigl( \Frep^+_{\levcp, g_0}(\rep) \bigr),
\end{equation}
where $\gamma \bigl( \Frep^+_{\levcp, g_0}(\rep) \bigr)$ means the representation of $\Gamma_{g \levcp g^{-1}, g_0}^{+, c} = \gamma \Gamma_{\levcp, g_0}^{+, c} \gamma^{-1}$ defined by conjugating the values of $\Frep^+_{\levcp, g_0}(\rep)$ by $\gamma$ in $\GL(\rep)$.

Now, by base change along $\AC{\bQ}\subset \bC$ via the canonical homomorphism, we obtain the object $\rep_\bC := \rep \otimes_{\AC{\bQ}} \bC$ in $\Rep_\bC(\Grp{G}^c)$, as well as the $\bC$-local system
\[
    \BSh{\rep}_\bC := \BSh{\rep} \otimes_{\AC{\bQ}} \bC
\]
on $\Model_\levcp(\bC)$, which via the classical Riemann--Hilbert correspondence \Pth{as reviewed in the introduction} corresponds to the \Pth{complex analytic} integrable connection
\[
    (\dRSh{\rep}_\bC^\an, \nabla) := (\BSh{\rep}_\bC \otimes_\bC \cO_{\Model_{\levcp, \bC}^\an}, 1 \otimes d).
\]
Moreover, any $\hd \in \Shdom$ \Pth{as in \Refeq{\ref{eq-h}}} induces a homomorphism $\hd_\bC: \Gm{\bC} \times \Gm{\bC} \to \Grp{G}_\bC$, whose restriction to the first factor defines the so-called \emph{Hodge cocharacter}
\begin{equation}\label{eq-mu-h}
    \hc_\hd: \Gm{\bC} \to \Grp{G}_\bC,
\end{equation}
inducing a \Pth{decreasing} filtration $\Fil^\bullet$ on $\dRSh{\rep}_\bC^\an$ satisfying the Griffiths transversality condition.  Then we obtain a \emph{filtered integrable connection} $(\dRSh{\rep}_\bC^\an, \nabla, \Fil^\bullet)$.

Let $\Torcpt{\Model}_\levcp$ be a toroidal compactification of $\Model_\levcp$ \Pth{as in \cite{Pink:1989-Ph-D-Thesis}}, which we assume to be projective and smooth, with the boundary divisor $\NCD := \Torcpt{\Model}_\levcp - \Model_\levcp$ \Pth{with its reduced subscheme structure} a normal crossings divisor, whose base change from $\ReFl$ to $\bC$ and whose further complex analytification are denoted by $\Torcpt{\Model}_{\levcp, \bC}$ and $\Model_{\levcp, \bC}^{\Tor, \an}$, respectively.  As explained in \cite[\aSec 6.1]{Lan/Suh:2013-vttag}, $\BSh{\rep}_\bC$ has \emph{unipotent monodromy} along $\NCD_\bC^\an$.  Therefore, by \cite[II, 5]{Deligne:1970-EDR} and \cite[\aSecs VI and VII]{Katz:1970-rtag}, $(\dRSh{\rep}_\bC^\an, \nabla)$ uniquely extends to an integrable log connection $(\dRSh{\rep}_\bC^{\canext, \an}, \nabla)$, with \emph{nilpotent residues} along $\NCD_\bC^\an$.  By \cite[II, 5.2(d)]{Deligne:1970-EDR}, $V \mapsto (\dRSh{\rep}_\bC^{\canext, \an}, \nabla)$ defines a tensor functor from $\Rep_\bC(\Grp{G}^c)$ to the category of integrable log connections on $\Model_{\levcp, \bC}^{\Tor, \an}$.  Moreover, by \cite{Schmid:1973-vhssp} \Pth{see also \cite{Cattani/Kaplan/Schmid:1987-vhsam}}, the filtration $\Fil^\bullet$ on $\dRSh{\rep}_\bC^\an$ uniquely extends to a filtration on $\dRSh{\rep}_\bC^{\canext, \an}$ \Pth{by subbundles}, still denote by $\Fil^\bullet$.  The extended $\nabla$ and $\Fil^\bullet$ still satisfy the Griffiths transversality, and therefore $(\dRSh{\rep}_\bC^{\canext, \an}, \nabla, \Fil^\bullet)$ is an analytic \emph{filtered log connection}.  By GAGA \Pth{see the proof of \cite[II, 5.9]{Deligne:1970-EDR}}, this triple canonically algebraizes to an algebraic filtered log connection
\[
    (\dRSh{\rep}_\bC^\canext, \nabla, \Fil^\bullet).
\]
\Pth{These $\dRSh{\rep}_\bC^{\canext, \an}$ and $\dRSh{\rep}_\bC^\canext$ agree with the canonical extensions defined differently in \cite[\aSec 4]{Harris:1989-ftcls}, and also \cite{Harris:1990-afcvs} and \cite{Milne:1990-cmsab}.}  The restriction of $(\dRSh{\rep}_\bC^\canext, \nabla, \Fil^\bullet)$ then defines an algebraic filtered regular connection
\[
    (\dRSh{\rep}_\bC, \nabla, \Fil^\bullet)
\]
on $\Model_{\levcp, \bC}$, whose complex analytification is isomorphic to $(\dRSh{\rep}_\bC^\an, \nabla, \Fil^\bullet)$.  We call $(\dRSh{\rep}_\bC, \nabla)$ the \emph{automorphic vector bundle} associated with $\rep_\bC$.  We summarize the above discussions as the following:
\begin{prop}\label{prop-loc-syst-infty}
    The assignment of $\BSh{\rep}$ \Pth{\resp $(\dRSh{\rep}_\bC, \nabla, \Fil^\bullet)$} to $\rep$ defines a tensor functor from $\Rep_{\AC{\bQ}}(\Grp{G}^c)$ to the category of $\Grp{G}(\bAi)$-equivariant $\AC{\bQ}$-local systems \Pth{\resp filtered regular connections} on $\{ \Model_{\levcp, \bC}^\an \}_\levcp$ \Pth{\resp $\{ \Model_{\levcp, \bC} \}_\levcp$}, which is functorial with respect to pullbacks under morphisms between Shimura varieties induced by morphisms between Shimura data.  Hence, the assignment of $(\dRSh{\rep}_\bC, \nabla)$ \Pth{\resp $(\dRSh{\rep}_\bC, \nabla, \Fil^\bullet)$} to $\rep$ defines a $\Grp{G}(\bAi)$-equivariant $\Grp{G}^c$-bundle with an integrable connection $(\cE_\bC, \nabla)$ \Pth{\resp a $\Grp{P}^c_\bC$-bundle $\cE_{\Grp{P}^c_\bC}$} on $\{ \Model_{\levcp, \bC} \}_\levcp$, where $\Grp{P}^c_\bC$ is the parabolic subgroup of $\Grp{G}^c_\bC$ defined by some $\hc_\hd$ as in \Refeq{\ref{eq-mu-h}} \Pth{\Refcf{} \cite[\aRem 4.1(i)]{Liu/Zhu:2017-rrhpl}}.  By forgetting filtrations, we obtain a $\Grp{G}(\bAi)$-equivariant morphism $\cE_{\Grp{P}^c_\bC} \to \cE_\bC$.
\end{prop}

\begin{rk}\label{rem-partial-flag}
    As explained in \cite[\aRem 4.1(i)]{Liu/Zhu:2017-rrhpl}, the conjugacy class of $\hc_\hd$ as in \Refeq{\ref{eq-mu-h}} defines a partial flag variety $\Fl_\bC \cong \Grp{G}^c_\bC / \Grp{P}^c_\bC$ parameterizing the associated conjugacy class of parabolic subgroups, which depends only on the Shimura datum $(\Grp{G}, \Shdom)$ and descends to a partial flag variety $\Fl$ of $\Grp{G}^c$ over the reflex field $\ReFl$.  Let $\cE_{\Fl_\bC} := \cE_\bC \times^{\Grp{G}^c_\bC} \Fl_\bC$.  Then the filtrations on $\dRSh{\rep}_\bC$'s as in Proposition \ref{prop-loc-syst-infty} define a section of $\cE_{\Fl_\bC}$ over $\{ \Model_{\levcp, \bC} \}_\levcp$.  For any particular choice of $\Grp{P}^c_\bC$ in $\Fl(\bC)$, this section amounts to the reduction of $\cE_\bC$ to a $\Grp{P}^c_\bC$-bundle $\cE_{\Grp{P}^c_\bC}$ as in Proposition \ref{prop-loc-syst-infty}.  Moreover, if $(\cE, \nabla)$ is the canonical model of $(\cE_\bC, \nabla)$ as in \cite[\aCh III, \aThm 4.3]{Milne:1990-cmsab}, we also have the canonical model $\cE_\Fl := \cE \times^{\Grp{G}^c} \Fl$ of $\cE_{\Grp{P}^c_\bC} \cong \cE_{\Fl_\bC}$, over $\Model_\levcp$.
\end{rk}

Next, let us turn to the $p$-adic analytic constructions.  Given any $\rep \in \Rep_{\AC{\bQ}}(\Grp{G}^c)$ as above, by base change via $\ACMap^{-1}|_{\AC{\bQ}}: \AC{\bQ} \Em \AC{\bQ}_p$, we obtain the object $\rep_{\AC{\bQ}_p} := \rep \otimes_{\AC{\bQ}} \AC{\bQ}_p$ in $\Rep_{\AC{\bQ}_p}(\Grp{G}^c)$.  As explained in \cite[\aSec 3]{Lan/Stroh:2018-ncaes-2} \Pth{see also \cite[\aSec 4.2]{Liu/Zhu:2017-rrhpl}}, given such a finite-dimensional representation $\rep_{\AC{\bQ}_p}$ of $\Grp{G}^c$ over $\AC{\bQ}_p$, there is a canonical \emph{automorphic $\AC{\bQ}_p$-\'etale local system} \Pth{\ie, lisse $\AC{\bQ}_p$-\'etale sheaf} $\etSh{\rep}_{\AC{\bQ}_p}$ on $\Model_\levcp$ \Pth{with stalks isomorphic to $\rep_{\AC{\bQ}_p}$}.  In fact, by the very construction of $\etSh{\rep}_{\AC{\bQ}_p}$, for each finite extension $\Coef$ of $\bQ_p$ in $\AC{\bQ}_p$ such that $\rep_{\AC{\bQ}_p}$ has a model $\rep_\Coef$ over $\Coef$, we have an $\Coef$-\'etale local system $\etSh{\rep}_\Coef$ on $\Model_\levcp$ \Pth{with stalks isomorphic to $\rep_\Coef$} such that
\begin{equation}\label{eq-loc-syst-et-coef-base-ch}
    \etSh{\rep}_\Coef \otimes_\Coef \AC{\bQ}_p \cong \etSh{\rep}_{\AC{\bQ}_p}.
\end{equation}
In addition, by \cite[XI, 4.4]{SGA:4} \Pth{or by using the canonical homomorphism from the fundamental group to the \'etale fundamental group}, its pullback to $\Model_{\levcp, \bC}$ induces a $\AC{\bQ}_p$-local system $\BSh{\rep}_{\AC{\bQ}_p}$, together with a canonical isomorphism
\begin{equation}\label{eq-loc-syst-comp-et-B}
    \BSh{\rep}_{\AC{\bQ}_p} \otimes_{\AC{\bQ}_p, \ACMap} \bC \cong \BSh{\rep}_\bC.
\end{equation}
Note that this implies that $\etSh{\rep}_{\AC{\bQ}_p}$ has unipotent geometric monodromy along $D_{\AC{\bQ}}$.

Suppose that $\rep_{\AC{\bQ}_p}$ has a model $\rep_\Coef$ over a finite extension $\Coef$ of $\bQ_p$ in $\AC{\bQ}_p$.  Let $\BFp$ be a finite extension of the composite of $\Coef$ and the image of $\ReFl \Emn{\can} \AC{\bQ} \Emn{~~\ACMap^{-1}} \AC{\bQ}_p$ in $\AC{\bQ}_p$.  Let us denote with an additional subscript \Qtn{$\BFp$} \Pth{\resp \Qtn{$\AC{\bQ}_p$}} the base changes of $\Model_\levcp$ \etc from $\ReFl$ to $\BFp$ \Pth{\resp $\AC{\bQ}_p$} via the above composition.  We will adopt a similar notation for sheaves.  We can view the $\Coef$-\'etale local system $\etSh{\rep}_\Coef$ as a $\bQ_p$-\'etale local system with compatible $\Coef$-actions.  By \cite[\aThm 1.2]{Liu/Zhu:2017-rrhpl}, the pullback of $\etSh{\rep}_\Coef$ to $\Model_{\levcp, \BFp}$, which we still denote by the same symbols, is de Rham.  By working as in Section \ref{sec-DdR-alg}, and by pushing out via the multiplication homomorphism
\begin{equation}\label{eq-coef-proj}
    \CoefMap: \Coef \otimes_{\bQ_p} \BFp \to \BFp: \; a \otimes b \mapsto a b,
\end{equation}
we obtain a filtered log connection $(\pdRSh{\rep}_\BFp^\canext := \Ddlalg(\etSh{\rep}_\Coef) \otimes_{(\Coef \otimes_{\bQ_p} \BFp), \CoefMap} \BFp, \nabla, \Fil^\bullet)$ on $\Torcpt{\Model}_{\levcp, \BFp}$, which has nilpotent residues along $D_k$ by \cite[\aCor \logadicpropnearby]{Diao/Lan/Liu/Zhu:lasfr}, Theorem \ref{thm-unip-vs-nilp}, and GAGA \Pth{see \cite{Kopf:1974-efava}}; and also a filtered regular connection $(\pdRSh{\rep}_\BFp := \DdRalg(\etSh{\rep}_\Coef) \otimes_{(\Coef \otimes_{\bQ_p} \BFp), \CoefMap} \BFp, \nabla, \Fil^\bullet)$.  These constructions are compatible with replacements of $\Coef$ and $\BFp$ with extension fields satisfying the same conditions.  Thus, we can canonically assign to each $\etSh{\rep}_{\AC{\bQ}_p}$ as above the filtered log connection
\begin{equation}\label{eq-loc-syst-p-log-dR}
    (\pdRSh{\rep}_{\AC{\bQ}_p}^\canext, \nabla, \Fil^\bullet) := (\pdRSh{\rep}_\BFp^\canext, \nabla, \Fil^\bullet) \otimes_\BFp \AC{\bQ}_p
\end{equation}
on $\Torcpt{\Model}_{\levcp, \AC{\bQ}_p}$, whose restriction to $\Model_{\levcp, \AC{\bQ}_p}$ is the filtered regular connection
\begin{equation}\label{eq-loc-syst-p-dR}
    (\pdRSh{\rep}_{\AC{\bQ}_p}, \nabla, \Fil^\bullet) :=  (\pdRSh{\rep}_\BFp, \nabla, \Fil^\bullet) \otimes_\BFp \AC{\bQ}_p.
\end{equation}
Both \Refeq{\ref{eq-loc-syst-p-log-dR}} and \Refeq{\ref{eq-loc-syst-p-dR}} are independent of the choices of $\Coef$ and $\BFp$ for a given $\rep$.

Since $(\pdRSh{\rep}_{\AC{\bQ}_p}, \nabla, \Fil^\bullet)$ is \emph{algebraic}, its base change under $\ACMap: \AC{\bQ}_p \Mi \bC$ is a filtered regular connection $(\pdRSh{\rep}_\bC, \nabla, \Fil^\bullet)$ on $\Model_{\levcp, \bC}$, the horizontal sections of whose \emph{complex analytification} defines a $\bC$-local system $\pBSh{\rep}_\bC$ on $\Model_{\levcp, \bC}^\an$.  Since $\DdRalg$ is a tensor functor uniquely determined by $\DdR$ via the analytification functor \Pth{see Lemma \ref{lem-DdR-tensor} and its proof}, by \cite[\aThm 3.9(ii)]{Liu/Zhu:2017-rrhpl}, we obtain the following:
\begin{prop}\label{prop-loc-syst-p}
    The analogue of Proposition \ref{prop-loc-syst-infty} holds for the assignments of $\pBSh{\rep}_\bC$, $(\pdRSh{\rep}_{\AC{\bQ}_p}, \nabla, \Fil^\bullet)$, and $(\pdRSh{\rep}_\bC, \nabla, \Fil^\bullet)$.  In particular, they define a $\Grp{G}(\bAi)$-equivariant $\Grp{G}^c$-bundle with an integrable connection $({}_p\cE_\bC, {}_p\nabla)$ \Pth{\resp a $\Grp{P}^c_\bC$-bundle ${}_p\cE_{\Grp{P}^c_\bC}$} on $\{ \Model_{\levcp, \bC} \}_\levcp$, with a $\Grp{G}(\bAi)$-equivariant morphism ${}_p\cE_{\Grp{P}^c_\bC} \to {}_p\cE_\bC$.
\end{prop}

Likewise, the base change of $(\pdRSh{\rep}_{\AC{\bQ}_p}^\canext, \nabla, \Fil^\bullet)$ under $\ACMap$ is a filtered log connection $(\pdRSh{\rep}_\bC^\canext, \nabla, \Fil^\bullet)$ on $\Torcpt{\Model}_{\levcp, \bC}$, with nilpotent residues along $D_\bC^\an$.  The analogues of Proposition \ref{prop-loc-syst-p} for $(\pdRSh{\rep}_{\AC{\bQ}_p}^\canext, \nabla, \Fil^\bullet)$ and $(\pdRSh{\rep}_\bC^\canext, \nabla, \Fil^\bullet)$ also hold.

\begin{rk}[{\Refcf{} Remark \ref{rem-partial-flag}}]\label{rem-loc-syst-p-model}
    By construction \Pth{based on \Refeq{\ref{eq-loc-syst-p-dR}}}, $({}_p\cE_\bC, {}_p\nabla)$ \Pth{\resp ${}_p\cE_{\Grp{P}^c_\bC} \cong {}_p\cE_{\Fl_\bC}$} canonically admits a model $({}_p\cE_\BFp, {}_p\nabla)$ \Pth{\resp ${}_p\cE_{\Fl_\BFp}$} over $\Model_{\levcp, \BFp}$, where $\BFp$ is the completion of $\ReFl$ at the place determined by $\ACMap$.
\end{rk}

\subsection{Statement of theorem}\label{sec-loc-syst-main-comp}

It is natural to ask whether the Betti local systems $\pBSh{\rep}_\bC$ and $\BSh{\rep}_\bC$ \Pth{\resp the filtered connections $(\pdRSh{\rep}_\bC, \nabla, \Fil^\bullet)$ and $(\dRSh{\rep}_\bC, \nabla, \Fil^\bullet)$} on $\Model_{\levcp, \bC}^\an$ \Pth{\resp $\Model_{\levcp, \bC}$} are canonically isomorphic to each other, as in the following summarizing diagram:
\[
    \xymatrix{ {\BSh{\rep}_\bC} \ar@{:}[rd]^(.55){?} & {\rep \in \Rep_{\AC{\bQ}}(\Grp{G}^c)} \ar@{|->}[l]_-{\begin{smallmatrix} \text{\tiny coefficient} \\ \text{\tiny base change via} \\ \text{\tiny $\can: \AC{\bQ} \Em \bC$} \end{smallmatrix}} \ar@{|->}[r]^-{\begin{smallmatrix} \text{\tiny coefficient} \\ \text{\tiny base change via} \\ \text{\tiny $\ACMap^{-1}|_{\AC{\bQ}}: \AC{\bQ} \Em \AC{\bQ}_p$} \end{smallmatrix}} & {\etSh{\rep}_{\AC{\bQ}_p}} \ar@{|->}[d]^-{\text{\tiny $p$-adic (log) RH}} \\
    {(\dRSh{\rep}_\bC, \nabla, \Fil^\bullet)} \ar@{|->}[u]^-{\text{\tiny classical RH}} \ar@{:}[rd]_(.45){?} & {\pBSh{\rep}_\bC} & {(\pdRSh{\rep}_{\AC{\bQ}_p}, \nabla, \Fil^\bullet)} \ar@/^1pc/@{|->}[ld]^-{\quad \begin{smallmatrix} \text{\tiny base change} \\ \text{\tiny via $\ACMap: \AC{\bQ}_p \Mi \bC$} \end{smallmatrix}} \\
    & {(\pdRSh{\rep}_\bC, \nabla, \Fil^\bullet)} \ar@{|->}[u]_(.55){\text{\tiny classical RH}} & }
\]

The following theorem provides affirmative \Pth{and finer} answers:
\begin{thm}\label{thm-loc-syst-comp}
    We have canonical isomorphisms $\pBSh{\rep}_\bC \cong \BSh{\rep}_\bC$ over $\Model_{\levcp, \bC}^\an$ and $(\pdRSh{\rep}_\bC, \nabla, \Fil^\bullet) \cong (\dRSh{\rep}_\bC, \nabla, \Fil^\bullet)$ over $\Model_{\levcp, \bC}$, compatible with each other under the complex Riemann--Hilbert correspondence.  Furthermore, we have canonical $\Grp{G}(\bAi)$-equivariant isomorphisms between the relevant pairs of tensor functors in Propositions \ref{prop-loc-syst-p} and \ref{prop-loc-syst-infty}, compatible with pullbacks under morphisms between Shimura varieties induced by morphisms of Shimura data, inducing compatible canonical $\Grp{G}(\bAi)$-equivariant isomorphisms $({}_p\cE_\bC, {}_p\nabla) \cong (\cE_\bC, \nabla)$ and ${}_p\cE_{\Grp{P}^c_\bC} \cong \cE_{\Grp{P}^c_\bC}$.

    These isomorphisms are compatible with the formation of canonical models in the sense that they descend to canonical $\Grp{G}(\bAi)$-equivariant isomorphisms $({}_p\cE_\BFp, {}_p\nabla) \cong (\cE, \nabla) \otimes_\ReFl \BFp$ and ${}_p\cE_{\Fl_\BFp} \cong \cE_\Fl \otimes_\ReFl \BFp$, respectively, if $(\cE, \nabla)$ and $\cE_\Fl$ are the canonical models of $(\cE_\bC, \nabla)$ and $\cE_{\Grp{P}^c_\bC} \cong \cE_{\Fl_\bC}$, respectively, as in \cite[\aCh III, \aThm 4.3]{Milne:1990-cmsab} and Remark \ref{rem-partial-flag}, and if $({}_p\cE_\BFp, {}_p\nabla)$ and ${}_p\cE_{\Fl_\BFp}$ are as in Remark \ref{rem-loc-syst-p-model}.

    The analogous assertions hold for the filtered log connections $(\pdRSh{\rep}_\bC^\canext, \nabla, \Fil^\bullet)$ and $(\dRSh{\rep}_\bC^\canext, \nabla, \Fil^\bullet)$ \Pth{and the associated torsors}.
\end{thm}

The proofs of Theorem \ref{thm-loc-syst-comp} will be given in the remaining subsections.  Note that it verifies, in particular, the conjecture in \cite[\aRem 4.1(ii)]{Liu/Zhu:2017-rrhpl}.

Assuming this theorem for the moment, since every irreducible algebraic representation of $\Grp{G}^c$ over $\AC{\bQ}_p$ has a model over $\AC{\bQ}$, we obtain the following:
\begin{cor}\label{cor-intro-Sh}
    Theorem \ref{thm-intro-Sh} also holds.
\end{cor}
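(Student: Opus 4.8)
The plan is to deduce Theorem \ref{T: intro Shi} from Theorems \ref{thm-loc-syst-B-comp} and \ref{thm-loc-syst-dR-comp} by identifying the two tensor functors of Conjecture \ref{C: intro}, applied to the automorphic local systems, with the two families of functors built in Sections \ref{sec-loc-syst-infty}--\ref{sec-loc-syst-p}. First I would reduce to $\AC{\bQ}$-coefficients. A $p$-adic \'etale local system on $\Model_\levcp$ coming from $\Rep_{\bQ_p}(\Grp{G}^c)$ is $\etSh{\rep_0}_{\bQ_p}$ for some $\rep_0 \in \Rep_{\bQ_p}(\Grp{G}^c)$; since every irreducible algebraic representation of $\Grp{G}^c$ over $\AC{\bQ}_p$ has a model over $\AC{\bQ}$, we may write $\rep_0 \otimes_{\bQ_p} \AC{\bQ}_p \cong \bigl( \oplus_j \rep_j \bigr) \otimes_{\AC{\bQ}} \AC{\bQ}_p$ with $\rep_j \in \Rep_{\AC{\bQ}}(\Grp{G}^c)$, so that $\etSh{\rep_0}_{\bQ_p} \otimes_{\bQ_p} \AC{\bQ}_p \cong \oplus_j \etSh{(\rep_j)}_{\AC{\bQ}_p}$. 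All the operations occurring in the conjecture---classical Riemann--Hilbert, $\DdRalg$, analytification, and coefficient base change---are exact and commute with such finite extensions of the coefficient field; for $\DdRalg$ this is exactly what the construction via the projection $\CoefMap$ of \Refeq{\ref{eq-coef-proj}} records. It therefore suffices to treat $\LL = \etSh{\rep}_{\AC{\bQ}_p}$ with $\rep \in \Rep_{\AC{\bQ}}(\Grp{G}^c)$. Such an $\LL$ is geometric by \cite[\aThm 1.2]{Liu/Zhu:2017-rrhpl}, so the hypotheses of the conjecture are met and $\DdRalg(\LL|_{\Model_{\levcp, \ReFl_v}})$, with its Hodge filtration, is defined.

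Next I would match the functors, taking first $\sigma = \sigma_0$ the canonical inclusion $\ReFl \Em \bC$, so that $\sigma_0 X = \Model_{\levcp, \bC}$ and $\ReFl_v$ is the completion determined by $\ReFl \Em \bC \Misn{\ACMap^{-1}} \AC{\bQ}_p$. On the Betti side, $\ACMap\LL_{\sigma_0}$ is the local system $\BSh{\rep}_\bC$ of \Refeq{\ref{eq-loc-syst-comp-et-B}}, and the regular integrable connection that classical Riemann--Hilbert attaches to it is, by the construction recalled in Section \ref{sec-loc-syst-infty}, the automorphic vector bundle $(\dRSh{\rep}_\bC, \nabla)$ of Proposition \ref{prop-dR}. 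On the $p$-adic side, unwinding Section \ref{sec-loc-syst-p}---a routine chase through the intermediate base changes from $\Coef$ to $\BFp$ to $\AC{\bQ}_p$ to $\bC$---identifies $\DdRalg(\LL|_{\Model_{\levcp, \ReFl_v}}) \otimes_{\ReFl_v, \ACMap} \bC$, with its filtration, with $(\pdRSh{\rep}_\bC, \nabla, \Fil^\bullet)$ of Proposition \ref{prop-p-dR}. Theorem \ref{thm-loc-syst-dR-comp} now supplies a canonical, tensor- and $\Grp{G}(\bAi)$-equivariant isomorphism $(\pdRSh{\rep}_\bC, \nabla, \Fil^\bullet) \cong (\dRSh{\rep}_\bC, \nabla, \Fil^\bullet)$; forgetting the filtration yields the first assertion of Conjecture \ref{C: intro}. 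For the second assertion it remains to recall that each automorphic vector bundle $(\dRSh{\rep}_\bC, \nabla, \Fil^\bullet)$ is a polarizable complex variation of Hodge structures---immediate from its $\Grp{G}(\bR)$-equivariant construction out of the Hodge cocharacter $\hc_\hd$ of \Refeq{\ref{eq-mu-h}}---so the above isomorphism transports this structure to $\bigl(\DdRalg(\LL|_{\Model_{\levcp, \ReFl_v}}) \otimes_{\ReFl_v, \ACMap} \bC, \Fil^\bullet\bigr)$. Alternatively one may use Theorem \ref{thm-loc-syst-B-comp} to identify $\pBSh{\rep}_\bC$ with $\BSh{\rep}_\bC$ and read off the Hodge structure on the Betti side.

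Finally, for a general embedding $\sigma: \ReFl \to \bC$, I would choose $\tau \in \Aut(\bC)$ with $\tau \circ \sigma_0 = \sigma$, and use the theory of conjugates of Shimura varieties and of canonical models of automorphic vector bundles \cite{Milne:1990-cmsab} to present $\sigma X$ as the complex Shimura variety of a conjugate Shimura datum, compatibly with the \'etale local systems $\etSh{\rep}$ and with the automorphic bundles; since Theorems \ref{thm-loc-syst-B-comp} and \ref{thm-loc-syst-dR-comp} hold for all Shimura data and are functorial in the datum, the case of $\sigma_0$ applied to the conjugate datum then gives the conjecture for $\sigma$. The main obstacle is not located in this deduction at all: it has been absorbed into the proofs of Theorems \ref{thm-loc-syst-B-comp} and \ref{thm-loc-syst-dR-comp}. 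Within the corollary the only points needing care are the bookkeeping of the successive coefficient base changes and the passage from a general $\sigma$ to the canonical embedding, both of which are routine.
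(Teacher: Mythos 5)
Your proposal is correct and takes essentially the same route as the paper, whose own "proof" consists of the single observation that every irreducible algebraic representation of $\Grp{G}^c$ over $\AC{\bQ}_p$ has a model over $\AC{\bQ}$, so that Theorems \ref{thm-loc-syst-B-comp} and \ref{thm-loc-syst-dR-comp} apply. You additionally spell out the steps the paper leaves implicit (the bookkeeping of coefficient base changes, the identification of the two functors of Conjecture \ref{C: intro} with $\dRSh{\rep}_\bC$ and $\pdRSh{\rep}_\bC$, the complex-variation-of-Hodge-structures assertion via $\hc_\hd$, and the passage from the canonical embedding to a general $\sigma$ via conjugation of Shimura data), all of which are sound.
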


Next, we turn to Corollary \ref{cor-intro-GM}.  Consider the \Pth{$p$-adic} analytic $\Grp{G}^{c, \an}_\BFp$-torsor with an integrable connection $({}_p\cE^\an_\BFp, {}_p\nabla)$ on $\Model_{\levcp, \BFp}^\an$ defined by the assignment of the $p$-adic analytification $\pdRSh{\rep}^\an_\BFp$ of $\pdRSh{\rep}_\BFp$ to $\rep \in \Rep_{\bQ_p}(\Grp{G}^c)$, where $k$ is the completion of $\ReFl$ with respect to the $p$-adic place determined by $\iota$.  As in Remark \ref{rem-partial-flag}, the filtrations on $\pdRSh{\rep}^\an_\BFp$, for all $\rep$, define a section of ${}_p\cE_{\Fl_\BFp}^\an := {}_p\cE^\an_\BFp \times^{\Grp{G}^{c, \an}_\BFp} \Fl_\BFp^\an$ over $\Model_{\levcp, \BFp}^\an$.  Now let $x \in \Model_\levcp(\BFp')$, where $\BFp'$ is a finite extension of $\BFp$ in $\AC{\bQ}_p$.  Then there is an analytic neighborhood $U$ of $x$ in $\Model_{\levcp, \BFp'}^\an$ trivializing \Pth{the pullback of} $({}_p\cE^\an_\BFp, {}_p\nabla)$ as a $\Grp{G}^{c, \an}_{\BFp'}$-torsor with an integrable connection.  Then the above section of ${}_p\cE_{\Fl_\BFp}^\an$ over $\Model_{\levcp, \BFp}^\an$ defines the so-called \emph{Grothendieck--Messing period map} $\pi_{\Utext{GM}}: U \to \Fl^\an_{\BFp'}$.

\begin{cor}[{restatement of Corollary \ref{cor-intro-GM}}]\label{cor-etale-GM-period}
    This morphism $\pi_{\Utext{GM}}$ is \'etale.
\end{cor}
\begin{proof}
    Let $\cO_{U, x}^\wedge$ \Pth{\resp $\cO_{\Fl, \pi_{\Utext{GM}}(x)}^\wedge$} denote the completion of the local ring $\cO_{U, x}$ \Pth{\resp $\cO_{\Fl^\an_{\BFp'}, \pi_{\Utext{GM}}(x)}$}.  It suffices to show that the homomorphism $\psi: \cO_{\Fl, \pi_{\Utext{GM}}(x)}^\wedge \to \cO_{U, x}^\wedge$ induced by $\pi_{\Utext{GM}}$ is an isomorphism \Pth{\Refcf{} \cite[\aProp 1.7.11]{Huber:1996-ERA}}.  Note that the composition of $\Spec(\cO_{U, x}^\wedge) \Mapn{\Spec(\psi)} \Spec(\cO_{\Fl, \pi_{\Utext{GM}}(x)}^\wedge) \Mapn{\can} \Fl_\BFp$ is determined by the induced section of ${}_p\cE_{\Fl_\BFp}$ over $\Spec(\cO_{U, x}^\wedge)$ and the universal property of the \emph{algebraic} partial flag variety $\Fl$.  By transporting via the isomorphisms $({}_p\cE_\bC, {}_p\nabla) \cong (\cE_\bC, \nabla)$ and ${}_p\cE_{\Grp{P}^c_\bC} \cong \cE_{\Grp{P}^c_\bC}$ in Theorem \ref{thm-loc-syst-comp} the pullbacks via $\BFp' \Emn{\can} \AC{\bQ}_p \Misn{\ACMap} \bC$ of the trivialization of $({}_p\cE_\BFp, {}_p\nabla)$ and the section of ${}_p\cE_{\Fl_\BFp}$ over $\Spec(\cO_{U, x}^\wedge)$, the pullback $\psi_\bC$ of $\psi$ can be identified with the corresponding homomorphism for the usual complex analytic period map defined by $\cE_{\Grp{P}^c_\bC}^\an$ on $\Model_{\levcp, \bC}^\an$, whose induced morphisms from the spectra of completions of local rings are also determined by the universal property of $\Fl$.  By the complex analytic construction in Section \ref{sec-loc-syst-constr}, this latter period map is locally an open immersion of complex analytic spaces, given by the Borel embedding $\Shdom^+ \Em \Fl_\bC^\an$ \Pth{see \cite[\aCh III, \aSec 1]{Milne:1990-cmsab} and \cite[\aCh VIII, \aSec 7]{Helgason:2001-DLS}}.  Thus, $\psi_\bC$ is an isomorphism, and so is $\psi$, as desired.
\end{proof}

\begin{rk}\label{rem-loc-syst-comp}
    Theorem \ref{thm-loc-syst-comp} and Corollary \ref{cor-etale-GM-period} are not surprising when there are families of motives whose relative Betti, de Rham, and $p$-adic \'etale realizations define the local systems $\BSh{\rep}_\bC$, $(\dRSh{\rep}_\bC, \nabla, \Fil^\bullet)$, and $\etSh{\rep}_{\AC{\bQ}_p}$, respectively.  This is the case, for example, when $\Model_\levcp$ is a Shimura variety of Hodge type.  \Pth{We will take advantage of this in Section \ref{sec-case-rank-one-} below.}  But Theorem \ref{thm-loc-syst-comp} and Corollary \ref{cor-etale-GM-period} also apply to Shimura varieties associated with exceptional groups, over which there are \Pth{as yet} no known families of motives defining our local systems as above.
\end{rk}

\begin{rk}\label{rem-loc-syst-Hodge-degen}
    By Theorem \ref{thm-loc-syst-comp} and Deligne's comparison result in \cite[II, 6]{Deligne:1970-EDR}, and by Theorem \ref{thm-HT-degen-comp}, the spectral sequence
    \[
        E_1^{a, b} = H^{a, b}_{\log \Hdg}(\Torcpt{\Model}_{\levcp, \bC}, \dRSh{\rep}^\canext_\bC) \Rightarrow H^{a + b}_{\log \dR}(\Torcpt{\Model}_{\levcp, \bC}, \dRSh{\rep}^\canext_\bC) \cong H^{a + b}_\dR(\Model_{\levcp, \bC}, \dRSh{\rep}_\bC)
    \]
    degenerates on the $E_1$ page.  While this degeneration was already known thanks to Saito's direct image theorem \Pth{see \cite[\aThm 2.14]{Saito:1990-mhm} and \cite[\aSec 4]{Suh:2018-vmhma}}, we have a new proof here based on $p$-adic Hodge theory.  Also, we can determine the Hodge--Tate weights of $H^i(\Model_{\levcp, \AC{\bQ}_p}, \etSh{\rep}_{\AC{\bQ}_p})$ in terms of $\dim_\bC H^{a, i - a}_{\log \Hdg}(\Torcpt{\Model}_{\levcp, \bC}, \dRSh{\rep}^\canext_\bC)$, for all $a \in \bZ$, which can be computed using the \emph{dual BGG decomposition} and relative Lie algebra cohomology.  \Pth{We will explain these in more detail in \cite{Lan/Liu/Zhu:dcpdr}.}
\end{rk}

\begin{rk}\label{rem-loc-sym-var}
    The comparison isomorphisms in Theorem \ref{thm-loc-syst-comp} over Shimura varieties induce similar isomorphisms on general locally symmetric varieties, by pullback and by finite \'etale descent.  Consequently, the analogue of the statements in Remark \ref{rem-loc-syst-Hodge-degen} for general locally symmetric varieties also hold.
\end{rk}

\begin{rk}\label{rem-van-aut}
    By replacing the input \cite{Suh:2018-vmhma} in the proof of \cite[\aThm 4.3]{Lan:2016-vtcac} with Theorem \ref{thm-van-gen}, and by Theorem \ref{thm-loc-syst-comp} and Remark \ref{rem-loc-sym-var}, we obtain new $p$-adic Hodge-theoretic proofs of the vanishing results for the coherent and de Rham cohomology in \cite[\aThms 4.1, 4.4, 4.7, and 4.10]{Lan:2016-vtcac}, generalizing the characteristic-zero cases of previous results in \cite{Lan/Suh:2012-vttac, Lan/Suh:2013-vttag, Lan/Stroh:2014-rcfps, Lan:2016-hkp}.
\end{rk}

\subsection{Proof of theorem: preliminary reductions}\label{sec-proof-comp-prelim}

Let us fix a connected component $\Gamma^+_{\levcp, g_0} \Lquot \Shdom^+$ of $\Model_{\levcp, \bC}^\an$ as in \Refeq{\ref{eq-Sh-var-conn}}, which is the analytification of a quasi-projective variety defined over some finite extension $\ReFl^+$ of $\ReFl$ in $\AC{\bQ}$.  Let $\hd \in \Shdom^+$ be a \emph{special point} such that \Refeq{\ref{eq-h}} factors through $\Grp{T}_\bR$ for some maximal torus $\Grp{T}$ of $\Grp{G}$ \Pth{over $\bQ$}.  \Pth{Recall that special points are dense in $\Shdom^+$---see the proof of \cite[\aLem 13.5]{Milne:2005-isv}.}  Up to replacing $\ReFl^+$ with a finite extension in $\AC{\bQ}$, we may assume that the image of $\hd \in \Shdom^+$ in $\Gamma^+_{\levcp, g_0} \Lquot \Shdom^+$ is defined over $\ReFl^+$.

The pullbacks of $\BSh{\rep}_\bC$ to $\hd \in \Shdom^+$ can be canonically identified with $\rep_\bC$ by its very construction.  On the other hand, the pullback of $\pBSh{\rep}_\bC$ can also be canonically identified with $\rep_\bC$.  In fact, in both cases, we have slightly more:
\begin{prop}\label{prop-ident-spec-pt}
    The pullbacks of $\BSh{\rep}_\bC$ and $\pBSh{\rep}_\bC$ to $(\Grp{G}(\bQ) \hd) \times \Grp{G}(\bAi)$ are canonically and $\Grp{G}(\bQ) \times \Grp{G}(\bAi)$-equivariantly isomorphic to the trivial local system $(\Grp{G}(\bQ) \hd) \times \rep_\bC \times \Grp{G}(\bAi)$ \Pth{on which $\Grp{G}(\bQ)$ acts by diagonal left multiplication on all three factors, and $\Grp{G}(\bAi)$ acts by right multiplication on the last factor.}
\end{prop}
\begin{proof}
    The assertion for $\BSh{\rep}_\bC$ follows from its very construction.  As for the assertion for $\pBSh{\rep}_\bC$, let us first identify the pullback of $\etSh{\rep}_{\AC{\bQ}_p}$ to the images of $(\hd, g)$, for $g \in \Grp{G}(\bAi)$, by recalling the arguments on which \cite[\aLem 4.8]{Liu/Zhu:2017-rrhpl} is based.  We shall write $\Gamma^+_{\levcp, g} := \Grp{G}(\bQ)_+ \cap (g \levcp g^{-1})$ \Pth{\Refcf{} \Refeq{\ref{eq-Sh-var-conn}}}, so that $\Gamma^+_{\levcp, g} \Lquot \Shdom^+$ gives the connected component of $\Model_{\levcp, \bC}^\an$ containing the image of $(\hd, g)$.

    By assumption, $\hd: \DelS = \Res_{\bC / \bR} \Gm{\bC} \to \Grp{G}_\bR$ \Pth{as in \Refeq{\ref{eq-h}}} factors through $\Grp{T}_\bR$, and the Hodge cocharacter \Pth{as in \Refeq{\ref{eq-mu-h}}} induces a cocharacter $\hc_\hd: \Gm{\bC} \to \Grp{T}_\bC$, which is the base change of some cocharacter $\hc: \Gm{F} \to \Grp{T}_F$ defined over some number field $F$ in $\AC{\bQ}$.  Then the composition of $\hc$ with the norm map from $\Grp{T}_F$ to $\Grp{T}$ defines a homomorphism $\OP{N} \hc: \Res_{F / \bQ} \Gm{F} \to \Grp{T}$ of tori over $\bQ$, and we have a composition of homomorphisms $F^\times \Lquot \bA_F^\times \Mapn{\OP{N} \hc} \Grp{T}(\bQ) \Lquot \Grp{T}(\bA) \to \overline{\Grp{T}(\bQ)} \Lquot \Grp{T}(\bAi)$, where $\overline{\Grp{T}(\bQ)}$ denotes the closure of $\Grp{T}(\bQ)$ in $\Grp{T}(\bAi)$, which factors through
    \begin{equation}\label{eq-Gal-F}
        F^\times \Lquot \bA_F^\times \Mapn{\Art_F} \Gal(F^\ab / F) \Mapn{r(\hc)} \overline{\Grp{T}(\bQ)} \Lquot \Grp{T}(\bAi),
    \end{equation}
    where $F^\ab$ is the maximal abelian extension of $F$ in $\AC{\bQ}$.  If $F_{\levcp, g}$ is the subfield of $F^\ab$ such that $\Gal(F^\ab / F_{\levcp, g})$ is the preimage of $(g \levcp g^{-1} \cap \overline{\Grp{T}(\bQ)}) \Lquot (g \levcp g^{-1} \cap \Grp{T}(\bAi))$ under \Refeq{\ref{eq-Gal-F}}, then we have an induced Galois representation
    \begin{equation}\label{eq-Gal-E+}
        r(\hc)^+_{\levcp, g}: \Gal(\AC{\bQ} / F_{\levcp, g}) \to (g \levcp g^{-1} \cap \overline{\Grp{T}(\bQ)}) \Lquot (g \levcp g^{-1} \cap \Grp{T}(\bAi)).
    \end{equation}
    \Pth{If $g = g_0$, then $F_{\levcp, g} \subset \ReFl^+$, since the image of $\hd$ in $\Gamma^+_{\levcp, g_0} \Lquot \Shdom^+$ is defined over $\ReFl^+$.}

    Since $\Grp{T}_\bR$ stabilizes the special point $\hd$, it is $\bR$-anisotropic modulo the center of $\Grp{G}$, and hence its maximal $\bQ$-anisotropic $\bR$-split subtorus is the same as that of the center of $\Grp{G}$.  Therefore, as explained in the proof of \cite[\aLem 4.5]{Liu/Zhu:2017-rrhpl}, the pullback of $\rep$ to $\Grp{T}$ satisfies the requirement that its restriction to $g \levcp g^{-1} \cap \overline{\Grp{T}(\bQ)}$ is trivial as in \cite[(4.4)]{Liu/Zhu:2017-rrhpl} \Pth{with the neatness of $g \levcp g^{-1}$ here implying that the open compact subgroup $\levcp$ there is sufficiently small}.  Thus, the composition $\Grp{T}(\bAi) \to \Grp{T}(\bQ_p) \to \Grp{G}(\bQ_p) \to \GL_{\AC{\bQ}_p}(\rep_{\AC{\bQ}_p})$ factors through $(g \levcp g^{-1} \cap \overline{\Grp{T}(\bQ)}) \Lquot \Grp{T}(\bAi)$ and induces, by composition with \Refeq{\ref{eq-Gal-E+}}, a Galois representation $r(\hc, \rep)^+_{\levcp, g, p}: \Gal(\AC{\bQ} / F_{\levcp, g}) \to \GL_{\AC{\bQ}_p}(\rep_{\AC{\bQ}_p})$ describing the pullback of $\etSh{\rep}_{\AC{\bQ}_p}$ to the geometric point above the image of $h$ in $\Gamma^+_{\levcp, g} \Lquot \Shdom^+$ given by the composition of $F_{\levcp, g} \Emn{\can} \AC{\bQ} \Emn{~~\ACMap^{-1}} \AC{\bQ}_p$.

    Let $\Coef$, $\rep_\Coef$, $\BFp$, and $\CoefMap$ be as in Section \ref{sec-loc-syst-constr}, giving us maps $\bQ_p \Em \Coef \Em \BFp \Em \AC{\bQ}_p \Misn{\ACMap} \bC$ in the remainder of this paragraph.  Without loss of generality, we may assume that $\BFp$ also contains the image of $F_{\levcp, g}$ in $\AC{\bQ}_p$.  Then the image of $r(\hc, \rep)^+_{\levcp, g, p}$ is contained in the subgroup $\GL_\Coef(\rep_\Coef)$ of $\GL_{\AC{\bQ}_p}(\rep_{\AC{\bQ}_p})$, and we can view this representation over $\Coef$ as a representation over $\bQ_p$ with an additional action of $\Coef$, as usual.  By \cite[\aLem 4.4]{Liu/Zhu:2017-rrhpl}, this representation is potentially crystalline.  Since it factors through the abelian group $\Grp{T}(\bQ_p)$, by \cite[\aThm 2.3.13 and its proof]{Patrikis:2019-VTT} \Pth{based on \cite[\aChs II and III]{Serre:1968-ARE} and \cite[\aCh IV]{Deligne/Milne/Ogus/Shih:1982-HMS}; \Refcf{} \cite[\S 6, \aProp{}, and its proof]{Fontaine/Mazur:1993-ggr}}, it is isomorphic to the $p$-adic \'etale realization $M_p$ of some object $M$ in the Tannakian category $(\textrm{CM})_{F_{\levcp, g}}$ of motives \Pth{for absolute Hodge cycles} over $F_{\levcp, g}$ generated by Artin motives and abelian varieties potentially of CM type, as defined in \cite[\aCh IV]{Deligne/Milne/Ogus/Shih:1982-HMS}.  Let $M_\B$ and $M_\dR$ denote the Betti and de Rham realizations of $M$.  Via the canonical $p$-adic \'etale--Betti and Betti--de Rham comparison isomorphisms, we have canonically induced actions of $\Coef$ on $M_{\B, \bQ_p} := M_\B \otimes_\bQ \bQ_p$, and hence on $M_{\B, \bC} := M_\B \otimes_\bQ \bC$ and $M_{\dR, \bC} := M_\dR \otimes_{F_{\levcp, g}} \bC$.  Moreover, we have canonically induced isomorphisms from $\rep_\bC$ to the pushouts of $M_{\B, \bC}$ and $M_{\dR, \bC}$ via the base change $\CoefMap_\bC: \Coef \otimes_{\bQ_p} \bC \to \bC$ of $\CoefMap: \Coef \otimes_{\bQ_p} \BFp \to \BFp$.  By \cite[\aThm 0.3]{Blasius:1994-phcav}, the absolute Hodge cycles defining objects in $(\textrm{CM})_{F_{\levcp, g}}$ are compatible with the $p$-adic \'etale--de Rham comparison isomorphism of Faltings's, and hence we have a canonically induced comparison isomorphism $\DdR(M_p) \Mi M_\dR \otimes_{F_{\levcp, g}} \BFp$, where $\DdR$ is defined over $k$.  Thus, we can canonically identify the pullbacks of $\dRSh{\rep}_\bC$ and $\pdRSh{\rep}_\bC$ to $\hd$ via the pushout of $M_{\dR, \bC}$ via $\CoefMap_\bC$.  \Pth{These identifications do not depend on the choices of $M$ and $r(\hc, \rep)^+_{\levcp, g, p} \Mi M_p$ because, by \cite[\aCh IV, (D)]{Deligne/Milne/Ogus/Shih:1982-HMS}, $\Hom_{(\textrm{CM})_{F_{\levcp, g}}}(M, M') \otimes_\bQ \bQ_p \Misn{\can} \Hom_{\Gal(\AC{\bQ} / F_{\levcp, g})}(M_p, M'_p)$, for any $M'$ in $(\textrm{CM})_{F_{\levcp, g}}$, and hence any isomorphism $r(\hc, \rep)^+_{\levcp, g, p} \Mi M'_p$ canonically induce isomorphisms $M_p \Mi M'_p$, $M_{\B, \bQ_p} \Mi M'_{\B, \bQ_p}$, $M_{\B, \bC} \Mi M'_{\B, \bC}$, and $M_{\dR, \bC} \Mi M'_{\dR, \bC}$, which are compatible with the various canonical comparison isomorphisms above for $M$ and $M'$.}  Accordingly, by using the trivial complex Riemann--Hilbert correspondence over the single $\bC$-point $\hd$, we can canonically identify the pullbacks of $\BSh{\rep}_\bC$ and $\pBSh{\rep}_\bC$ to $\hd$ via the corresponding pushout of $M_{B, \bC}$ via $\CoefMap_\bC$.  Consequently, we can canonically identify the pullback of $\pBSh{\rep}_\bC$ to $h$ with $\rep_\bC$.

    For each $\gamma \in \Grp{G}(\bQ)$, by conjugating all the actions on the base changes of $\rep$ and the model $\rep_\Coef$ by $\gamma$ in the above, we can also canonically identify the pullback of $\pBSh{\rep}_\bC$ to $\gamma \hd$ with $\rep_\bC$.  When put together as a canonical identification over the whole $\bigl(\Grp{G}(\bQ) \hd\bigr) \times \Grp{G}(\bAi)$, it is $\Grp{G}(\bQ)$-equivariant by what we just explained, and is $\Grp{G}(\bAi)$-equivariant because it does not involve the second factor $\Grp{G}(\bAi)$ at all.
\end{proof}

\begin{prop}\label{prop-loc-syst-B-comp-imply-dR-comp}
    Suppose that the assertions for Betti local systems in Theorem \ref{thm-loc-syst-comp} hold.  Then the remaining assertions in Theorem \ref{thm-loc-syst-comp} also hold.
\end{prop}
\begin{proof}
    Since $\pBSh{\rep}_\bC \cong \BSh{\rep}_\bC$ over $\Model_{\levcp, \bC}^\an$, we have $(\pdRSh{\rep}_\bC, \nabla) \cong (\dRSh{\rep}_\bC, \nabla)$ over $\Model_{\levcp, \bC}$, because both sides admit extensions $(\pdRSh{\rep}_\bC^\canext, \nabla)$ and $(\dRSh{\rep}_\bC^\canext, \nabla)$ over $\Torcpt{\Model}_{\levcp, \bC}$ \Pth{and hence have regular singularities along $\NCD_\bC = \Torcpt{\Model}_{\levcp, \bC} - \Model_{\levcp, \bC}$}.  Since both $(\pdRSh{\rep}_\bC^\canext, \nabla)$ and $(\dRSh{\rep}_\bC^\canext, \nabla)$ have nilpotent residues along $\NCD_\bC$ \Pth{as explained in Section \ref{sec-loc-syst-constr}}, these two extensions are also canonically isomorphic to each other \Pth{by \cite[\aCh 1, \aProp 4.7]{Andre/Baldassarri:2001-DDA} or \cite[\aThm 11.2.2]{Andre/Baldassarri/Cailotto:2020-DDA(2)}}.  To verify that the filtrations are respected by such isomorphisms, it suffices to do so at the special points, or just at the arbitrary special point $\hd$ we have chosen, because special points are dense in the complex analytic topology \Pth{see the proof of \cite[\aLem 13.5]{Milne:2005-isv}}.

    Consider the Galois representation $r(\hc, \rep)^+_{\levcp, g, p}$ in the proof of Proposition \ref{prop-ident-spec-pt}.  By decomposing the representation $\rep_{\AC{\bQ}_p}$ of $\Grp{T}(\AC{\bQ}_p)$ into a direct sum of characters of $\Grp{T}(\AC{\bQ}_p)$, we obtain a corresponding decomposition of $r(\hc, \rep)^+_{\levcp, g, p}$ into a direct sum of characters $\Gal(\AC{\bQ} / F_{\levcp, g}) \to \AC{\bQ}_p^\times$.  By construction, the restrictions of these characters of $\Gal(\AC{\bQ} / F_{\levcp, g})$ to the decomposition group at the place $v$ of $F_{\levcp, g}$ given by the composition $F_{\levcp, g} \Emn{\can} \AC{\bQ} \Emn{~~\ACMap^{-1}} \AC{\bQ}_p$ are \emph{locally algebraic} \Pth{in the sense of \cite[\aCh III, \aSec 1.1, \aDef{}]{Serre:1968-ARE}}, because they are induced \Pth{up to a sign convention} by the composition of the local Artin map, the cocharacter $\AC{\bQ}_p^\times \to T(\AC{\bQ}_p)$ given by the base change of $\hc$ under the same $F_{\levcp, g} \Em \AC{\bQ}_p$, and the corresponding characters of $\Grp{T}(\AC{\bQ}_p)$.  \Pth{This local algebraicity played a crucial role in the references we cited in the proof of Proposition \ref{prop-ident-spec-pt}.}  Thus, the Hodge filtrations on the pullbacks of $\pdRSh{\rep}_\bC$ and $\dRSh{\rep}_\bC$ to $\hd$ are both determined by the Hodge cocharacter $\hc_\hd: \Gm{\bC} \to \Grp{G}_\bC$.

    The remainder of Theorem \ref{thm-loc-syst-comp} follows from the fact that the formations of $(\cE_\ReFl, \nabla)$ \Pth{\resp $\cE_\Fl$} and $({}_p\cE_\BFp, \nabla)$ \Pth{\resp ${}_p\cE_{\Fl_\BFp}$} are determined by their pullbacks to special points, which are compatible with each other by the arguments in the proof of Proposition \ref{prop-ident-spec-pt} \Pth{\resp of this proposition}; and that the descent data for such torsors extend to their canonical extensions as in \cite[\aCh V, \aSec 6]{Milne:1990-cmsab}.
\end{proof}

By Proposition \ref{prop-loc-syst-B-comp-imply-dR-comp}, it remains to prove the assertions for Betti local systems in Theorem \ref{thm-loc-syst-comp}.  As explained before \Pth{\Refcf{} \Refeq{\ref{eq-fund-grp-rep}}}, the pullbacks of $\pBSh{\rep}_\bC$ and $\BSh{\rep}_\bC$ to $\Gamma^+_{\levcp, g_0} \Lquot \Shdom^+$ determine and are determined by the fundamental group representations
\[
    \Frep^{+, (p)}_{\levcp, g_0}(\rep): \Gamma_{\levcp, g_0}^{+, c} \to \GL_\bC(\rep_\bC) \quad \Utext{and} \quad \Frep^+_{\levcp, g_0}(\rep): \Gamma_{\levcp, g_0}^{+, c} \to \GL_\bC(\rep_\bC),
\]
respectively, by canonically identifying the pullbacks of the local systems $\pBSh{\rep}_\bC$ and $\BSh{\rep}_\bC$ to the image of $\hd \in \Shdom^+$ in $\Gamma_{\levcp, g_0}^+ \Lquot \Shdom^+$ using Proposition \ref{prop-ident-spec-pt}.  Then it suffices to show that $\Frep^{+, (p)}_{\levcp, g_0}(\rep)$ and $\Frep^+_{\levcp, g_0}(\rep)$ \emph{coincide} as representations of $\Gamma_{\levcp, g_0}^+$.  In this case, they are isomorphic via the identity morphism on $\rep_\bC$, and therefore the choice of such an isomorphism is functorial in $\rep$ and compatible with tensor products and duals because the assignment of $\Frep^+_{\levcp, g_0}(\rep)$ to $\rep$ is, with Hecke actions because of Proposition \ref{prop-ident-spec-pt}, and with morphisms between Shimura varieties induced by morphisms of Shimura data because all constructions involved are.

Since $\Gamma_{\levcp, g_0}^{+, c}$ is contained in $\Grp{G}^{\der, c}(\bQ)$, and since $\Frep^+_{\levcp, g_0}(\rep)$ depends only on the restriction $\rep_\bC|_{\Grp{G}^{\der, c}}$ of $\rep_\bC$ to $\Grp{G}^{\der, c}$, it remains to prove the following proposition.
\begin{prop}\label{prop-ext-der}
    The representation $\Frep^{+, (p)}_{\levcp, g_0}(\rep)$ extends to an algebraic representation of $\Grp{G}^{\der, c}$ that coincides with the representation $\rep_\bC|_{\Grp{G}^{\der, c}}$ of $\Grp{G}^{\der, c}$ on $\rep_\bC$.
\end{prop}

\begin{rk}\label{rem-Borel-density}
    Such an extension is necessarily unique, as arithmetic subgroups of semisimple groups without compact $\bQ$-simple factors are Zariski dense, by the \emph{Borel density theorem} \Pth{see, for example, \cite[\aProp 15.12]{Borel:1969-IGA}}.
\end{rk}

\begin{lemma}\label{lem-isog}
    Let $(\breve{\Grp{G}}, \breve{\Shdom})$, $\breve{\levcp}$, $\breve{g}_0$, and $\breve{\hd} \in \breve{\Shdom}^+$ be a Shimura datum and some additional choices of data analogous to $(\Grp{G}, \Shdom)$, $\levcp$, $g_0$, and $\hd \in \Shdom^+$.  Let $\breve{\Grp{G}}^\der \to \Grp{G}^\der$ be a central isogeny inducing some $\breve{\Shdom}^+ \Mi \Shdom^+$ mapping $\breve{\hd}$ to $\hd$ and a finite covering map $f: \Gamma_{\breve{\levcp}, \breve{g}_0}^+ \Lquot \breve{\Shdom}^+ \to \Gamma_{\levcp, g_0}^+ \Lquot \Shdom^+$.  Let $\breve{\Gamma} := \Gamma_{\breve{\levcp}, \breve{g}_0}^+ \subset \breve{\Grp{G}}^{\der, c}(\bQ)$, which \Pth{by neatness} is mapped isomorphically to a finite index subgroup $\overline{\Gamma}$ of $\Gamma := \Gamma_{\levcp, g_0}^+ \subset \Grp{G}^{\der, c}(\bQ)$.  Let $\breve{\rep} \in \Rep_{\AC{\bQ}}(\breve{\Grp{G}}^c)$, with an isomorphism between the pullbacks to $\breve{\Grp{G}}^{\der, c}$ of $\breve{\rep}$ and $\rep$.  Then the similarly defined representations $\breve{\Frep}^+_{\breve{\levcp}, \breve{g}_0}(\breve{\rep}), \breve{\Frep}^{+, (p)}_{\breve{\levcp}, \breve{g}_0}(\breve{\rep}): \breve{\Gamma} \to \GL_\bC(\breve{\rep}_\bC)$ are canonically induced, respectively, by the pullbacks of $\Frep^+_{\levcp, g_0}(\rep), \Frep^{+, (p)}_{\levcp, g_0}(\rep): \Gamma \to \GL_\bC(\rep_\bC)$.  Moreover, if the analogue of Proposition \ref{prop-ext-der} holds for $\breve{\Frep}^{+, (p)}_{\breve{\levcp}, \breve{g}_0}(\breve{\rep})$ and $\breve{\rep}_\bC|_{\breve{\Grp{G}}^{\der, c}}$, then Proposition \ref{prop-ext-der} holds for $\Frep^{+, (p)}_{\levcp, g_0}(\rep)$ and $\rep_\bC|_{\Grp{G}^{\der, c}}$.
\end{lemma}
\begin{proof}
    All statements but the last one follow immediately from the functoriality of the constructions.  Let $\BSh{\breve{\rep}}_\bC$ and $\pBSh{\breve{\rep}}_\bC$ denote the corresponding local system on $\Sh_{\breve{\levcp}, \bC}^\an$.  By Proposition \ref{prop-ident-spec-pt}, and by considering the descent data for local systems with respect to the covering map $\breve{\Gamma} \Lquot \breve{\Shdom}^+ \to \Gamma \Lquot \Shdom^+$, the pullbacks to $\hd$ of $f_*(\BSh{\breve{\rep}}_\bC|_{\breve{\Gamma} \Lquot \breve{\Shdom}^+})$ and $f_*(\pBSh{\breve{\rep}}_\bC|_{\breve{\Gamma} \Lquot \breve{\Shdom}^+})$ can be canonically identified with
    \[
        \Ind_{\overline{\Gamma}}^{\Gamma}(\rep_\bC|_{\overline{\Gamma}}) \cong \{ \varphi: \Gamma \to \rep_\bC : \varphi(\overline{\gamma} \gamma') = \overline{\gamma}^{-1} \bigl(\varphi(\gamma')\bigr), \text{for all $\overline{\gamma} \in \overline{\Gamma}$ and $\gamma' \in \Gamma$} \},
    \]
    and the pullbacks to $\hd$ of the sub-local systems $\BSh{\rep}_\bC|_{\Gamma \Lquot \Shdom^+}$ and $\pBSh{\rep}_\bC|_{\Gamma \Lquot \Shdom^+}$ can be canonically identified with the equalizer of $\Ind_{\overline{\Gamma}}^{\Gamma}(\rep_\bC|_{\overline{\Gamma}}) \rightrightarrows \Ind_{\overline{\Gamma}}^{\Gamma}\bigl( \Ind_{\overline{\Gamma}}^{\Gamma}(\rep_\bC|_{\overline{\Gamma}}) \bigr)$, where the two morphisms are canonical, which can be in turn canonically identified with
    \[
        \rep_\bC|_\Gamma \cong \{ \varphi: \Gamma \to \rep_\bC : \varphi(\gamma \gamma') = \gamma^{-1} \bigl(\varphi(\gamma')\bigr), \text{for all $\gamma, \gamma' \in \Gamma$} \},
    \]
    where $\overline{\Gamma}$ and $\Gamma$ act on the codomains $\rep_\bC$ by restrictions of $\rep_\bC|_{\Grp{G}^{\der, c}}$.  If the analogue of Proposition \ref{prop-ext-der} holds for $\breve{\Frep}^{+, (p)}_{\breve{\levcp}, \breve{g}_0}(\breve{\rep})$ and $\breve{\rep}_\bC|_{\breve{\Grp{G}}^{\der, c}}$, then $\BSh{\breve{\rep}}_\bC|_{\breve{\Gamma} \Lquot \breve{\Shdom}^+} \cong \pBSh{\breve{\rep}}_\bC|_{\breve{\Gamma} \Lquot \breve{\Shdom}^+}$, and $f_*(\BSh{\breve{\rep}}_\bC|_{\breve{\Gamma} \Lquot \breve{\Shdom}^+}) \cong f_*(\pBSh{\breve{\rep}}_\bC|_{\breve{\Gamma} \Lquot \breve{\Shdom}^+})$ matches $\BSh{\rep}_\bC|_{\Gamma \Lquot \Shdom^+}$ and $\pBSh{\rep}_\bC|_{\Gamma \Lquot \Shdom^+}$ as sub-local systems.  As a result, Proposition \ref{prop-ext-der} also holds for $\Grp{G}^{\der, c}$ and $\rep_\bC|_{\Grp{G}^{\der, c}}$.
\end{proof}

\begin{lemma}\label{lem-scc}
    It suffices to prove Proposition \ref{prop-ext-der} in the special case where $\Grp{G}^\der$ and $\Grp{G}^{\der, c}$ are $\bQ$-simple and simply-connected as algebraic groups over $\bQ$.
\end{lemma}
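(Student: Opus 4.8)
The plan is to reduce the general case of Lemma~\ref{lem-ext-der} to the $\bQ$-simple simply-connected case in two steps: first passing to the simply-connected covering $\widetilde{\Grp{G}}^\der \to \Grp{G}^{\der,c}$ of the derived group, and then splitting $\widetilde{\Grp{G}}^\der = \prod_i \Grp{G}_i$ into its $\bQ$-simple simply-connected factors. Two general facts will be used repeatedly: the assignment $\rep \mapsto \Frep^{+,(p)}_{\levcp,g_0}(\rep)$ is a tensor functor depending only on $\rep_\bC|_{\Grp{G}^{\der,c}}$, and by Lemma~\ref{lem-Gamma+-c-isom-ad} the group $\Gamma^{+,c}_{\levcp,g_0}$ is an arithmetic subgroup of $\Grp{G}^\ad$.

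For the first step, I would invoke Deligne's construction of a Shimura datum $(\Grp{G}_1,\Shdom_1)$ whose derived group $\Grp{G}_1^\der$ is the simply-connected cover of $\Grp{G}^\der$ (see \cite{Deligne:1979-vsimc} and \cite{Milne:2005-isv}), adjusting the construction so that $\Grp{G}_1^\der$ coincides with its image $\Grp{G}_1^{\der,c}$; this $(\Grp{G}_1,\Shdom_1)$ is related to $(\Grp{G},\Shdom)$ through morphisms of Shimura data factoring over $(\Grp{G}^\ad,\Shdom^\ad)$. By the functoriality of Theorem~\ref{thm-loc-syst-B-comp} with respect to such morphisms, together with the identification of the relevant fundamental groups as arithmetic subgroups of $\Grp{G}^\ad$ (Lemma~\ref{lem-Gamma+-c-isom-ad}), this reduces Lemma~\ref{lem-ext-der} to the case $\Grp{G}^\der = \Grp{G}^{\der,c}$ simply connected, at the cost of replacing $\Gamma^{+,c}_{\levcp,g_0}$ by a commensurable group. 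To climb back from a finite-index subgroup $\Gamma'$ to all of $\Gamma^{+,c}_{\levcp,g_0}$, I would use the Borel density theorem (see Remark~\ref{rem-Borel-density}): once $\Frep^{+,(p)}_{\levcp,g_0}(\rep)$ agrees with $\rep_\bC|_{\Gamma^{+,c}_{\levcp,g_0}}$ on the Zariski-dense subgroup $\Gamma'$, the \Qtn{discrepancy} $\gamma \mapsto \rep_\bC(\gamma)^{-1}\,\Frep^{+,(p)}_{\levcp,g_0}(\rep)(\gamma)$ is, on each irreducible summand of $\rep_\bC$, a finite-order scalar character of $\Gamma^{+,c}_{\levcp,g_0}$; the compatibility of $\Frep^{+,(p)}_{\levcp,g_0}(-)$ with exterior powers and the triviality of the determinant of every representation of the semisimple group $\Grp{G}^{\der,c}$ then force this character to be trivial. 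Alternatively, since $\Frep^{+,(p)}_{\levcp,g_0}(\rep)$ is the monodromy of $\pBSh{\rep}_\bC$, which has unipotent monodromy along the boundary of a toroidal compactification, one may instead conclude that the discrepancy is trivial on the boundary loops, which again suffices.

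For the second step, write $\Grp{G}^\der = \Grp{G}^{\der,c} = \prod_i \Grp{G}_i$ with each $\Grp{G}_i$ $\bQ$-simple and simply connected, so that $\Grp{G}^\ad = \prod_i \Grp{G}_i^\ad$ and $(\Grp{G}^\ad,\Shdom^\ad) = \prod_i (\Grp{G}_i^\ad,\Shdom_i^\ad)$, each factor being covered, again by Deligne's construction, by a Shimura datum whose derived group is $\bQ$-simple, simply connected, and isomorphic to $\Grp{G}_i$. An arithmetic subgroup of $\prod_i \Grp{G}_i(\bQ)$ is commensurable with a product $\prod_i \Gamma_i$ of arithmetic subgroups of the factors, and every algebraic representation of $\prod_i \Grp{G}_i$ is a direct sum of external tensor products $\boxtimes_i \rho_i$ of representations $\rho_i$ of the $\Grp{G}_i$. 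Using the morphism $(\Grp{G},\Shdom)\to(\Grp{G}^\ad,\Shdom^\ad)$, the product decomposition of $\Shdom^\ad$, and once more the functoriality of Theorem~\ref{thm-loc-syst-B-comp}, the restriction of $\Frep^{+,(p)}_{\levcp,g_0}(\rep)$ to such a product $\prod_i \Gamma_i$ gets identified with the external tensor product of the corresponding representations for the factorwise data. Thus Lemma~\ref{lem-ext-der} for each $\bQ$-simple simply-connected factor yields it, after restriction to $\prod_i \Gamma_i$, for $(\Grp{G},\Shdom)$; and the passage back to $\Gamma^{+,c}_{\levcp,g_0}$ is handled by the same Borel-density argument as in the first step.

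The main obstacle is precisely this repeated gap between the actual fundamental group $\Gamma^{+,c}_{\levcp,g_0}$ and the \Qtn{nice} arithmetic subgroups to which the simpler forms of the statement apply directly: each reduction lands only on a finite-index subgroup. The resolution is the combination of Borel density with the tensor-functoriality of $\rep \mapsto \Frep^{+,(p)}_{\levcp,g_0}(\rep)$ (and, if one prefers, the unipotence of the monodromy at infinity), which rigidifies $\Frep^{+,(p)}_{\levcp,g_0}(\rep)$ enough that agreement with $\rep_\bC|_{\Grp{G}^{\der,c}}$ on any finite-index subgroup propagates to the whole group. Shrinking the level $\levcp$ costs nothing and only makes the arithmetic groups smaller, so it may be used freely to arrange the lifts needed in both steps.
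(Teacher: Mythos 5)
The main gap is the transfer mechanism in both of your steps. You invoke ``the functoriality of Theorem \ref{thm-loc-syst-B-comp}'' with respect to morphisms of Shimura data factoring over $(\Grp{G}^\ad, \Shdom^\ad)$; but Theorem \ref{thm-loc-syst-B-comp} is exactly what Lemmas \ref{lem-ext-der} and \ref{lem-scc} are steps toward proving, so citing its functoriality is circular. Even under the charitable reading that you mean the functoriality of the two constructions themselves (Propositions \ref{prop-loc-syst-infty} and \ref{prop-p-B}), the transfer does not typecheck: there is no morphism of Shimura data between your auxiliary $(\Grp{G}_1, \Shdom_1)$ and $(\Grp{G}, \Shdom)$, only morphisms from each to the common adjoint datum, and a representation $\rep$ of $\Grp{G}^c$ does not factor through $\Grp{G}^\ad$; hence no local system on the adjoint Shimura variety pulls back to $\pBSh{\rep}_\bC$ or $\BSh{\rep}_\bC$, and the claimed reduction ``via the adjoint datum'' never produces an object to which the $\bQ$-simple simply-connected case can be applied. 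The paper's proof avoids transporting local systems altogether: it takes a product $\widetilde{\Gamma} = \prod_i \widetilde{\Gamma}_i$ of neat arithmetic subgroups of the $\bQ$-simple simply-connected factors of the universal cover $\widetilde{\Grp{G}}$ of $\Grp{G}^\der$ (each factor being part of a connected Shimura datum, by Deligne's Lemma 2.5.5), whose image $\overline{\Gamma}$ is a \emph{normal} finite-index subgroup of $\Gamma^{+, c}_{\levcp, g_0}$ and is isomorphic to $\widetilde{\Gamma}$ by neatness; the restriction of $\Frep^{+, (p)}_{\levcp, g_0}(\rep)$ to $\overline{\Gamma}$ is then handled directly by the assumed special case, purely at the level of fundamental-group representations.

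The second gap is in how you climb back from the finite-index subgroup to all of $\Gamma^{+, c}_{\levcp, g_0}$. That the discrepancy is a finite-order scalar character (on each irreducible piece, after reducing to $\rep$ irreducible) is correct, but it requires the commutation computation with a \emph{normal} finite-index subgroup, Borel density, and Schur's lemma, which is essentially the paper's argument and which you only gesture at. More importantly, both of your proposed ways of killing this character fail: the determinant of an irreducible $\rep$ is a character of $\Grp{G}^c$ that is trivial on $\Grp{G}^{\der, c}$ but in general nontrivial on $\Grp{G}^c$, so the triviality of $\Frep^{+, (p)}_{\levcp, g_0}$ of that character on $\Gamma^{+, c}_{\levcp, g_0}$ is not free---it is itself an instance of the statement being proven and would need a separate argument (e.g., that the associated rank-one local system is geometrically constant); and the boundary-monodromy alternative only yields triviality of the discrepancy on loops around the boundary, which do not generate $\Gamma^{+, c}_{\levcp, g_0}$ and do not exist at all when $\Model_\levcp$ is compact. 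The paper instead kills the character using Schur's lemma together with the neatness of $\Gamma^+_{\levcp, g_0}$.
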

\begin{proof}
    By \cite[\aLem 2.5.5]{Deligne:1979-vsimc}, there exists a connected Shimura datum with the semisimple algebraic group over $\bQ$ being the simply-connected cover $\widetilde{\Grp{G}}$ of $\Grp{G}^\der$.  Moreover, there is a decomposition $\widetilde{\Grp{G}} \cong \prod_{i \in I} \widetilde{\Grp{G}}_i$ of $\widetilde{\Grp{G}}$ into a product of its $\bQ$-simple factors such that each $\widetilde{\Grp{G}}_i$ is part of a connected Shimura datum.  Thus, we can choose some Shimura data $(\breve{\Grp{G}}, \breve{\Shdom})$ and $(\breve{\Grp{G}}_i, \breve{\Shdom}_i)$ with $\breve{\Grp{G}}^{\der, c} \Mi \widetilde{\Grp{G}}$ and $\breve{\Grp{G}}_i^{\der, c} \Mi \widetilde{\Grp{G}}_i$, choose levels and additional data such that the corresponding $\widetilde{\Gamma} \subset \widetilde{\Grp{G}}(\bQ)$ is of the form $\widetilde{\Gamma} = \prod_{i \in I} \widetilde{\Gamma}_i$ for some neat arithmetic subgroups $\widetilde{\Gamma}_i$ of $\widetilde{\Grp{G}}_i(\bQ)$ and such that its image $\overline{\Gamma}$ in $\Grp{G}^{\der, c}(\bQ)$ is a subgroup of $\Gamma_{\levcp, g_0}^{+, c}$, and apply Lemma \ref{lem-isog}.
\end{proof}

Consequently, in what follows, we may and we shall assume that $\Grp{G}^{\der, c}$ is simply-connected as an algebraic group over $\bQ$, so that $\Grp{G}^\der \cong \Grp{G}^{\der, c}$ also is.

\subsection{Cases of real rank one, or of abelian type}\label{sec-case-rank-one-}

In this subsection, we assume that $\Grp{G}^{\der, c}$ is $\bQ$-simple and simply-connected as an algebraic group over $\bQ$ \Pth{so that $\Grp{G}^\der \cong \Grp{G}^{\der, c}$}, and that either $\Grp{G}^\der_\bR$ is of type $\mathrm{A}$ or $\rank_\bR(\Grp{G}^\der_\bR) \leq 1$.

\begin{lemma}\label{lem-ab-type}
    Under the above assumptions, the Shimura datum $(\Grp{G}, \Shdom)$ is necessarily of abelian type \Pth{see, for example, \cite[\aDef 5.2.2.1]{Lan:2017-ebisv}}.  Up to replacing $\Grp{G}$ with another reductive algebraic group over $\bQ$ with the same derived group $\Grp{G}^\der$, we may assume that $(\Grp{G}, \Shdom)$ is of Hodge type \Pth{see, for example, \cite[\aDef 5.2.1.1]{Lan:2017-ebisv}}.
\end{lemma}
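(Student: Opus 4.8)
The plan is to reduce the assertion to Deligne's classification of connected Shimura data of abelian type. Since $\Grp{G}^\der \cong \Grp{G}^{\der, c}$ is $\bQ$-simple and simply-connected, I would first write $\Grp{G}^\der \cong \Res_{F/\bQ} \Grp{H}$, where $F$ is a totally real number field and $\Grp{H}$ is an absolutely simple simply-connected algebraic group over $F$ \Ref{see, for example, \cite[\aSec 2]{Deligne:1979-vsimc} or \cite[\aSec 10]{Milne:2005-isv}}. Then $\Grp{G}^\der_\bR \cong \prod_{v \mid \infty} \Grp{H}(F_v)$, the product running over the archimedean places $v$ of $F$, so that $\rank_\bR(\Grp{G}^\der_\bR) = \sum_{v \mid \infty} \rank_\bR(\Grp{H}(F_v))$.

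By the Shimura datum axioms, each simple factor of $\Grp{G}^\der_\bR$ is either compact or noncompact of hermitian type, and $\Shdom$ decomposes, via $\hd \mapsto (\hd_v)_v$, as a product of the irreducible hermitian symmetric domains attached to the noncompact factors $\Grp{H}(F_v)$; each such noncompact factor contributes at least $1$ to $\rank_\bR(\Grp{G}^\der_\bR)$. If $\rank_\bR(\Grp{G}^\der_\bR) = 0$, then $\Shdom^+$ is a point, $\Gamma_{\levcp, g_0}^{+, c}$ is finite hence trivial by neatness, and both sides of the comparison in Theorem \ref{thm-loc-syst-B-comp} are trivial; so I may assume $\rank_\bR(\Grp{G}^\der_\bR) = 1$. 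Then there is exactly one archimedean place $v_0$ of $F$ with $\Grp{H}(F_{v_0})$ noncompact, necessarily of real rank $1$, and $\Grp{H}(F_v)$ compact for all $v \neq v_0$. An irreducible hermitian symmetric domain of real rank $1$ is a complex ball; hence $\Grp{H}(F_{v_0})$ is isogenous to $\SU(n, 1)$ for some $n \geq 1$, and, being simply-connected, equals $\SU(n, 1)$. Therefore $\Grp{H}$ is an absolutely simple group of type $A_n$ over $F$ (the low-rank coincidences $A_1 = B_1 = C_1 = D_1$ being subsumed), and the connected Shimura datum underlying $(\Grp{G}, \Shdom)$ is of \emph{type $A$} in Deligne's sense.

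By Deligne's classification of connected Shimura data, those of type $A$ are of abelian type: there exists a Shimura datum $(\Grp{G}_1, \Shdom_1)$ of Hodge type together with an isogeny $\Grp{G}_1^\der \to \Grp{G}^\der$ inducing an isomorphism of the associated connected Shimura data --- concretely, $\Grp{H}$ may be realized as a unitary group over $F$ whose restriction of scalars carries a symplectic representation, producing a morphism into a Siegel datum \Ref{see \cite[\aSec 2.3, especially 2.3.2, 2.3.3, and 2.3.10]{Deligne:1979-vsimc}; \Refcf{} \cite[\aSec 5.2]{Lan:2017-ebisv}}. Since $\Grp{G}^\der$ is already simply-connected, this isogeny is an isomorphism and we may take $\Grp{G}_1^\der = \Grp{G}^\der$. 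Thus $(\Grp{G}, \Shdom)$ is of abelian type, and replacing $(\Grp{G}, \Shdom)$ by $(\Grp{G}_1, \Shdom_1)$ --- which alters neither $\Grp{G}^\der$, nor the adjoint Shimura datum $(\Grp{G}^\ad, \Shdom^\ad)$, nor the representation $\Frep^{+, (p)}_{\levcp, g_0}(\rep)$ of $\Gamma_{\levcp, g_0}^{+, c} \subset \Grp{G}^{\der, c}(\bQ)$ that remains to be analyzed --- we may assume $(\Grp{G}, \Shdom)$ is of Hodge type. The only substantive input is Deligne's classification together with the observation that the real-rank-$1$ hypothesis pins the type down to $A$; the passage from abelian type to Hodge type with the prescribed simply-connected derived group, and the bookkeeping with restriction of scalars and central isogenies, are routine but are where care must be taken.
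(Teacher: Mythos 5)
Your proposal is correct and follows essentially the same route as the paper: identify (via the classification of irreducible hermitian symmetric domains) that the real rank $\leq 1$ hypothesis forces the noncompact part of $\Grp{G}^\der(\bR)$ to be $\SU_{n,1}$, invoke Deligne's classification in \cite[2.3]{Deligne:1979-vsimc} to conclude abelian type, and use the simple-connectedness of $\Grp{G}^\der$ to upgrade the isogeny from a Hodge-type datum to an isomorphism. The extra structure you supply (the $\Res_{F/\bQ}\Grp{H}$ decomposition and the explicit dismissal of the rank-zero case) is a harmless elaboration of the same argument.
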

\begin{proof}
    Let $\Grp{G}^\der(\bR)_\nc$ denote the product of all noncompact simple factors of $\Grp{G}^\der(\bR)$ \Pth{as a real Lie group}.  By the classification of Hermitian symmetric domains \Pth{see \cite[\aCh X, \aSec 6]{Helgason:2001-DLS}}, any $\Grp{G}^\der(\bR)_\nc$ here satisfying $\rank_\bR(\Grp{G}^\der_\bR) \leq 1$ is isomorphic to $\SU_{n, 1}$, for some $n \geq 1$.  Thus, every $\Grp{G}^\der_\bR$ considered by this lemma is of type $\mathrm{A}$.  By the classification in \cite[2.3]{Deligne:1979-vsimc}, $(\Grp{G}, \Shdom)$ is necessarily of abelian type.  The last statement then also follows, essentially by definition.
\end{proof}

Consequently, for our purpose, we may assume that the Shimura datum $(\Grp{G}, \Shdom)$ is of Hodge type.  Note that $\Grp{G} \cong \Grp{G}^c$ in this case.  Moreover, there exists some faithful representation $\rep_0$ of $\Grp{G} \cong \Grp{G}^c$ over $\AC{\bQ}$, together with a perfect alternating pairing
\begin{equation}\label{eq-pairing}
    \rep_0 \times \rep_0 \to \AC{\bQ}(-1),
\end{equation}
where $(-1)$ denotes the formal Tate twist \Pth{induced by the pullback of the symplectic similitude character}, which are defined by some Siegel embedding in the definition of a Shimura datum of Hodge type, together with an abelian scheme $\AVstr: \AV \to \Model_\levcp$ with a polarization $\Pol$, whose $m$-fold self-fiber product we denote by $\AVstr^m: \AV^m \to \Model_\levcp$, such that, for all $i \geq 0$, we have $R^i\AVstr^m_{\bC, *}(\AC{\bQ}) \cong \Ex^i(\BSh{\rep}_0^m)$ over $\Model_{\levcp, \bC}$; $R^i\AVstr^m_{\et, *}(\AC{\bQ}_p) \cong \Ex^i(\etSh{\rep}_{0, \AC{\bQ}_p}^m)$ over $\Model_\levcp$, where $\rep_{0, \AC{\bQ}_p} := \rep_0 \otimes_{\AC{\bQ}} \AC{\bQ}_p$; and $\bigl(R^i\AVstr^m_*(\Omega^\bullet_{\AV^m / \Model_\levcp}) \otimes_\ReFl \bC, \nabla, \Fil^\bullet\bigr) \cong \bigl(\Ex^i(\dRSh{\rep}_{0, \bC}^m), \nabla, \Fil^\bullet\bigr)$ over $\Model_{\levcp, \bC}$, where $\rep_{0, \bC} := \rep_0 \otimes_{\AC{\bQ}} \bC$ and the $\nabla$ and $\Fil^\bullet$ at the left-hand side are the Gauss--Manin connection and the relative Hodge filtration, respectively.  The polarization $\Pol$ then compatibly induces \Pth{as in \cite[1.5]{Deligne/Pappas:1994-smhcd}} the pairings $\BSh{\rep}_0 \times \BSh{\rep}_0 \to \BSh{\AC{\bQ}}(-1)$, $\etSh{\rep}_{0, \AC{\bQ}_p} \times \etSh{\rep}_{0, \AC{\bQ}_p} \to \etSh{\AC{\bQ}_p}(-1)$, and $\dRSh{\rep}_{0, \bC} \times \dRSh{\rep}_{0, \bC} \to \dRSh{\bC}(-1)$ defined by \Refeq{\ref{eq-pairing}}, with $(-1)$ denoting the Tate twists in the respective categories.

\begin{lemma}\label{lem-std-comp}
    We have a canonical isomorphism
    \begin{equation}\label{eq-lem-std-comp-dR}
        \bigl(\dRSh{\rep}_{0, \bC}^{\otimes m}(-t), \nabla, \Fil^\bullet\bigr) \cong \bigl(\pdRSh{\rep}_{0, \bC}^{\otimes m}(-t), \nabla, \Fil^\bullet\bigr)
    \end{equation}
    for all $i \geq 0$, $m \geq 0$, and $t \in \bZ$.  Accordingly, we have a canonical isomorphism
    \begin{equation}\label{eq-lem-std-comp-B}
        \BSh{\rep}_{0, \bC}^{\otimes m}(-t) \cong \pBSh{\rep}_{0, \bC}^{\otimes m}(-t).
    \end{equation}
    Moreover, the pullback of \Refeq{\ref{eq-lem-std-comp-B}} to the image of the special point $\hd \in \Shdom^+$ in $\Gamma^+_{\levcp, g_0} \Lquot \Shdom^+$ \Pth{see the beginning of Section \ref{sec-proof-comp-prelim}}, which is defined over a finite extension $\ReFl^+$ of $\ReFl$ in $\AC{\bQ}$, is given by the identity morphism of $\rep_{0, \bC}^{\otimes m}(-t)$.
\end{lemma}
\begin{proof}
    Since $\Model_\levcp$ is defined over $\ReFl$ and so the $p$-adic analytification functor from the category of algebraic filtered connections to the category of $p$-adic analytic ones is fully faithful, by \cite[\aCh 4, \aCor 6.8.2]{Andre/Baldassarri:2001-DDA} or \cite[\aCor 34.6.2]{Andre/Baldassarri/Cailotto:2020-DDA(2)}; since the $p$-adic analytification of $\bigl(R^i\AVstr^m_*(\Omega^\bullet_{\AV^m / \Model_\levcp}) \otimes_\ReFl \BFp, \nabla, \Fil^\bullet\bigr) \cong \bigl(R^i\AVstr^m_{\BFp, *}(\Omega^\bullet_{\AV^m_\BFp / \Model_{\levcp, \BFp}}), \nabla, \Fil^\bullet\bigr)$ is canonically isomorphic to $\bigl(\DdR(R^i\AVstr^m_{\BFp, \et, *}(\AC{\bQ}_p)), \nabla, \Fil^\bullet\bigr)$, for any finite extension $\BFp$ of the composite of $\ReFl$ and $\bQ_p$ in $\AC{\bQ}_p$, by \cite[\aThms 8.8 and 9.1]{Scholze:2013-phtra}; and since such an isomorphism is functorial, we have \Refeq{\ref{eq-lem-std-comp-dR}}, from which \Refeq{\ref{eq-lem-std-comp-B}} follows by taking horizontal sections, because both sides of \Refeq{\ref{eq-lem-std-comp-dR}} can be canonically identified \Pth{up to the same Tate twist $(-t)$} with the image of $\bigl(R^m\AVstr^m_*(\Omega^\bullet_{\AV^m / \Model_\levcp}) \otimes_\ReFl \bC, \nabla, \Fil^\bullet\bigr)$ under $\varepsilon_m^*$, for some endomorphism $\varepsilon_m$ of the abelian scheme $\AVstr^m: \AV^m \to \Model_\levcp$, by Lieberman's trick \Pth{see, \eg, \cite[\aSec 3.2]{Lan/Suh:2012-vttac}}.  Since the comparison isomorphisms among the Betti, \'etale, and de Rham cohomology of an abelian variety defined over $\ReFl^+$ are all compatible with each other, the second assertion also follows.
\end{proof}

\begin{lemma}\label{lem-rep-summand}
    For each irreducible representation $\rep$ of $\Grp{G}$ over $\AC{\bQ}$, there exist integers $m_\rep \geq 0$ and $t_\rep$ \Pth{depending noncanonically on $\rep$} such that $\rep$ is a direct summand of $\rep_0^{\otimes m_\rep}(-t_\rep)$, where $(-t_\rep)$ denotes the formal Tate twist \Pth{which has no effect when restricted to the subgroup $\Grp{G}^\der$ of $\Grp{G}$}, so that $\rep = s_\rep \bigl(\rep_0^{\otimes m_\rep}(-t_\rep)\bigr)$ for some Hodge tensor $s_\rep \in \rep_0^\otimes$ \Pth{\ie, a tensor of weight $(0, 0)$ with respect to the induced Hodge structure on $\rep_0^\otimes$; \Refcf{} \cite[\aCh I, \aProp 3.4]{Deligne/Milne/Ogus/Shih:1982-HMS}}.
\end{lemma}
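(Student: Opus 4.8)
The plan is to exploit the symplectic structure on $\rep_0$ coming from the Siegel embedding, in order to reduce the assertion to the standard fact that a faithful representation of a reductive group tensor-generates its representation category. Since $\Grp{G}$ (which equals $\Grp{G}^c$ in the Hodge type situation at hand) is reductive and $\AC{\bQ}$ has characteristic zero, the category $\Rep_{\AC{\bQ}}(\Grp{G})$ is semisimple. The pairing \Refeq{\ref{eq-pairing}} is perfect, being induced by the symplectic form defining the Siegel embedding, so it yields an isomorphism $\rep_0^\dualsign \cong \rep_0(1)$ of algebraic representations of $\Grp{G}$ over $\AC{\bQ}$, where $(1)$ denotes the inverse of the formal Tate twist $(-1)$; note that this twist becomes trivial upon restriction to $\Grp{G}^\der$, consistently with the parenthetical remark in the statement.

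Next I would invoke Chevalley's theorem --- equivalently, the elementary Tannakian fact that a faithful $\rep_0$ tensor-generates $\Rep_{\AC{\bQ}}(\Grp{G})$; \Refcf{} \cite[\aCh II]{Deligne/Milne/Ogus/Shih:1982-HMS} --- to conclude that the given irreducible $\rep$ is a subquotient, hence (by semisimplicity) a direct summand, of some finite direct sum $\oplus_j \bigl( \rep_0^{\otimes a_j} \otimes (\rep_0^\dualsign)^{\otimes b_j} \bigr)$. Replacing each $\rep_0^\dualsign$ by $\rep_0(1)$, this direct sum becomes $\oplus_j \bigl( \rep_0^{\otimes (a_j + b_j)}(b_j) \bigr)$, and since $\rep$ is simple it must be a direct summand of a single term, say of $\rep_0^{\otimes m_\rep}(-t_\rep)$ with $m_\rep := a_j + b_j \geq 0$ and $t_\rep := -b_j$. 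This proves the first assertion.

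To produce the Hodge tensor, I would choose, again by semisimplicity, a $\Grp{G}$-equivariant idempotent $s_\rep \in \End_{\Grp{G}}\bigl( \rep_0^{\otimes m_\rep}(-t_\rep) \bigr)$ whose image is the subrepresentation isomorphic to $\rep$. Under the canonical identification $\End(W) \cong W \otimes W^\dualsign$ together with the isomorphism $\rep_0^\dualsign \cong \rep_0(1)$ of the previous step, this idempotent $s_\rep$ is identified with a $\Grp{G}$-invariant element of a tensor construction on $\rep_0$, i.e.\@\xspace with an element of $\rep_0^\otimes$. Because $\Grp{G}_\bR$ contains the image of every homomorphism $\hd\colon \DelS \to \Grp{G}_\bR$ parameterized by $\Shdom$, any such $\Grp{G}$-invariant tensor is fixed by the induced Hodge structure on $\rep_0^\otimes$, hence is of type $(0,0)$; that is, $s_\rep$ is a Hodge tensor in the sense of \cite[\aCh I, \aProp 3.4]{Deligne/Milne/Ogus/Shih:1982-HMS}. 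Then $\rep = s_\rep\bigl( \rep_0^{\otimes m_\rep}(-t_\rep) \bigr)$, as required.

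I expect the only delicate point to be the bookkeeping of the Tate twists: one must check that the twists arising from replacing each $\rep_0^\dualsign$ by $\rep_0(1)$ collapse into a single integer $t_\rep$, and that this twist is trivial on $\Grp{G}^\der$ --- both of which are immediate from the fact that the symplectic form on $\rep_0$ takes values in $\AC{\bQ}(-1)$ and that the similitude (multiplier) character is trivial on $\Grp{G}^\der$. The remaining ingredients --- semisimplicity of $\Rep_{\AC{\bQ}}(\Grp{G})$, Chevalley's tensor-generation statement, and the dictionary between idempotent projectors and invariant tensors --- are entirely standard.
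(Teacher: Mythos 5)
Your argument is correct, and it is essentially the same as the paper's: the paper simply cites \cite[\aProp 3.2]{Lan/Stroh:2018-ncaes}, whose proof is exactly this combination of tensor-generation by the faithful representation $\rep_0$ \Ref{\cite[\aCh I, \aProp 3.1(a)]{Deligne/Milne/Ogus/Shih:1982-HMS}}, the identification $\rep_0^\dualsign \cong \rep_0(1)$ from the polarization pairing, semisimplicity to pass from subquotients to direct summands of a single $\rep_0^{\otimes m_\rep}(-t_\rep)$, and the observation that a $\Grp{G}$-equivariant idempotent projector, viewed as an invariant element of $\rep_0^\otimes$, is fixed by every $\hd(\DelS)$ and hence of type $(0,0)$. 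So your write-up just makes the cited argument explicit, with the Tate-twist bookkeeping handled correctly.
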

\begin{proof}
    See \cite[\aProp 3.2]{Lan/Stroh:2018-ncaes}, which is based on \cite[\aCh I, \aProp 3.1(a)]{Deligne/Milne/Ogus/Shih:1982-HMS}.
\end{proof}

By combining Lemmas \ref{lem-std-comp} and \ref{lem-rep-summand}, we obtain the following:
\begin{cor}\label{cor-rep-summand}
    For each irreducible $\rep \in \Rep_{\AC{\bQ}}(\Grp{G})$, there exist some integers $m_\rep \geq 0$ and $t_\rep$ such that the local systems $\BSh{\rep}_\bC$ and $\pBSh{\rep}_\bC$ are direct summands of $\BSh{\rep}_{0, \bC}^{\otimes m_\rep}(-t_\rep)$ and $\pBSh{\rep}_{0, \bC}^{\otimes m_\rep}(-t_\rep)$, respectively.  Consequently, there is a morphism
    \begin{equation}\label{eq-comp-B}
        \BSh{\rep}_\bC \to \pBSh{\rep}_\bC
    \end{equation}
    defined by composing $\BSh{\rep}_\bC \Emn{\can} \BSh{\rep}_{0, \bC}^{\otimes m_\rep}(-t_\rep) \Misn{\Refeq{\ref{eq-lem-std-comp-B}}} \pBSh{\rep}_{0, \bC}^{\otimes m_\rep}(-t_\rep) \Surn{\can} \pBSh{\rep}_\bC$.
\end{cor}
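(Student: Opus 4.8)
The plan is to obtain the direct-summand statements by applying the tensor functors of Propositions \ref{prop-loc-syst-infty} and \ref{prop-p-B} to a $\Grp{G}^c$-equivariant splitting furnished by Lemma \ref{lem-rep-summand}, to define the morphism \Refeq{\ref{eq-comp-B}} as the indicated composition, and to deduce independence of the choices from Remark \ref{rem-rep-summand-ambiguity} together with the compatibilities recorded in Corollary \ref{cor-std-comp}.

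First I would invoke Lemma \ref{lem-rep-summand} together with the semisimplicity of $\Rep_{\AC{\bQ}}(\Grp{G}^c)$ (valid since $\Grp{G}^c$ is reductive over a field of characteristic zero) to choose integers $m_\rep \geq 0$ and $t_\rep$ and $\Grp{G}^c$-equivariant maps $i_\rep : \rep \Em \rep_0^{\otimes m_\rep}(-t_\rep)$ and $p_\rep : \rep_0^{\otimes m_\rep}(-t_\rep) \Surj \rep$ with $p_\rep \circ i_\rep = \Id_\rep$. Since the functors of Propositions \ref{prop-loc-syst-infty} and \ref{prop-p-B} are exact tensor functors, applying them to $i_\rep$ and $p_\rep$ exhibits $\BSh{\rep}_\bC$ and $\pBSh{\rep}_\bC$ as direct summands of $\BSh{\rep}_{0, \bC}^{\otimes m_\rep}(-t_\rep)$ and $\pBSh{\rep}_{0, \bC}^{\otimes m_\rep}(-t_\rep)$, respectively. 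One then defines \Refeq{\ref{eq-comp-B}} as the composition of $\BSh{i_\rep}$, the comparison isomorphism \Refeq{\ref{eq-std-comp-B-m}} at exponent $m_\rep$ (twisted by $-t_\rep$), and $\pBSh{p_\rep}$.

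For the independence, I would proceed as follows. Given two valid choices $(m, t)$ and $(m', t')$ of $(m_\rep, t_\rep)$, Remark \ref{rem-rep-summand-ambiguity} reduces us to the case $m' = m + 2s$ and $t' = t - s$ for some $s \geq 0$. Using the alternating pairing \Refeq{\ref{eq-pairing}} to identify $\dual{\rep_0}$ with $\rep_0(1)$, the identity endomorphism of $\rep_0$ produces a canonical $\Grp{G}^c$-equivariant coevaluation $\mathbf{1} \to \rep_0^{\otimes 2}(1)$, whose $s$-fold tensor power, tensored with the identity of $\rep_0^{\otimes m}(-t)$, gives a canonical $\Grp{G}^c$-equivariant inclusion $\iota : \rep_0^{\otimes m}(-t) \Em \rep_0^{\otimes m'}(-t')$ admitting a retraction $\pi$ built from the pairing (after dividing by the harmless scalar $(\dim \rep_0)^s$); then $(\iota \circ i_\rep, p_\rep \circ \pi)$ is an admissible splitting at $(m', t')$. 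Since, by Corollary \ref{cor-std-comp}, the isomorphisms \Refeq{\ref{eq-std-comp-B-m}} for varying exponent are compatibly induced from the case of exponent one and are compatible with the pairings \Refeq{\ref{eq-pairing-dR-C}} and \Refeq{\ref{eq-pairing-p-dR-C}}, the isomorphism \Refeq{\ref{eq-std-comp-B-m}} intertwines $\BSh{\iota}$ with $\pBSh{\iota}$ and $\BSh{\pi}$ with $\pBSh{\pi}$; a short diagram chase then shows that the morphism \Refeq{\ref{eq-comp-B}} built from $(m', t')$ and this induced splitting coincides with the one built from $(m, t)$. Finally, to absorb the residual freedom in choosing $i_\rep$ and $p_\rep$ at a fixed $(m, t)$, one uses Schur's lemma over the algebraically closed field $\AC{\bQ}$ (so that two splittings of a multiplicity-one summand differ by a scalar, which cancels in the composition), and more generally observes that, by the normalization in Corollary \ref{cor-std-comp}, the morphism \Refeq{\ref{eq-comp-B}} restricts at the chosen special point $\hd$ to $p_\rep \circ i_\rep = \Id_\rep$ under the canonical identifications of Proposition \ref{prop-ident-spec-pt}; by the Hecke-equivariance of all the constructions involved (which propagates this to a special point of each connected component) and the fact that a morphism of local systems is determined by its stalk at one point of each component, \Refeq{\ref{eq-comp-B}} is thereby uniquely pinned down. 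I expect the main obstacle to be essentially bookkeeping: normalizing the coevaluation map $\iota$ so that $\pi \circ \iota = \Id$, and keeping track of the formal Tate twists so that the compatibilities of Corollary \ref{cor-std-comp} apply verbatim.
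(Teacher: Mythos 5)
Your argument is correct and follows essentially the same route as the paper: the direct-summand statements come from Lemma \ref{lem-rep-summand} applied through the exact tensor functors, and the independence of $(m_\rep, t_\rep)$ is precisely the paper's appeal to Remark \ref{rem-rep-summand-ambiguity} together with the compatibility of \eqref{eq-std-comp-B-m} with the pairings recorded in Corollary \ref{cor-std-comp}, which you simply make explicit via the coevaluation/evaluation maps attached to the polarization pairing. Your supplementary discussion of the residual choice of splitting at a fixed $(m_\rep, t_\rep)$ goes beyond what the corollary asserts and essentially anticipates the special-point argument of Proposition \ref{prop-comp-B}, but this does not affect the correctness of the main claim.
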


\begin{prop}\label{prop-comp-B}
    The above morphism \Refeq{\ref{eq-comp-B}} is an isomorphism over the connected component $\Gamma_{\levcp, g_0}^+ \Lquot \Shdom^+$ of $\Model_{\levcp, \bC}$ that induces the identity morphism between the two representations $\Frep^{+, (p)}_{\levcp, g_0}(\rep)$ and $\Frep^+_{\levcp, g_0}(\rep)$.  In particular, Proposition \ref{prop-ext-der} holds under the assumptions of this subsection.
\end{prop}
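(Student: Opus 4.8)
The plan is to realize the morphism \Refeq{\ref{eq-comp-B}} as the restriction to a direct summand of the isomorphism \Refeq{\ref{eq-std-comp-B-m}}, and then to read off both assertions from the rigidity of endomorphisms of local systems over the connected base $\Gamma^+_{\levcp, g_0} \Lquot \Shdom^+$, together with the normalization at the chosen special point $\hd$ recorded in Corollary \ref{cor-std-comp}. Recall first, from Corollary \ref{cor-rep-summand}, that \Refeq{\ref{eq-comp-B}} is the composition of the inclusion $\BSh{\rep}_\bC \Em \BSh{\rep}_{0, \bC}^{\otimes m_\rep}(-t_\rep)$, the twist by $(-t_\rep)$ of \Refeq{\ref{eq-std-comp-B-m}}, and the projection $\pBSh{\rep}_{0, \bC}^{\otimes m_\rep}(-t_\rep) \Surj \pBSh{\rep}_\bC$, where the inclusion and projection are those attached to the decompositions of $\BSh{\rep}_{0, \bC}^{\otimes m_\rep}(-t_\rep)$ and $\pBSh{\rep}_{0, \bC}^{\otimes m_\rep}(-t_\rep)$ into direct summands, and where the twist is made sense of by identifying $\BSh{\bC}(-t_\rep)$ with $\pBSh{\bC}(-t_\rep)$ via the compatibility of \Refeq{\ref{eq-std-comp-B-m}} with the pairings \Refeq{\ref{eq-pairing-dR-C}} and \Refeq{\ref{eq-pairing-p-dR-C}} in Corollary \ref{cor-std-comp}. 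The relevant decomposition is that of $\rep_0^{\otimes m_\rep}$ as a $\Grp{G}^c$-representation; since $\Grp{G}^c$ is reductive and we are in characteristic zero, the summand $\rep$ is cut out by a $\Grp{G}^c$-equivariant idempotent $e_\rep \in \OP{End}_{\Grp{G}^c}(\rep_0^{\otimes m_\rep})$ (the Tate twist having no effect on the endomorphism ring), and, since $\rep \mapsto \BSh{\rep}_\bC$ and $\rep \mapsto \pBSh{\rep}_\bC$ are tensor functors (Propositions \ref{prop-loc-syst-infty} and \ref{prop-p-B}), $e_\rep$ induces an idempotent endomorphism of the local system $\BSh{\rep}_{0, \bC}^{\otimes m_\rep}$ with image $\BSh{\rep}_\bC$, and likewise an idempotent endomorphism of $\pBSh{\rep}_{0, \bC}^{\otimes m_\rep}$ with image $\pBSh{\rep}_\bC$. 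By Corollary \ref{cor-std-comp} — and, for the $\pBSh{}$ side, the identifications of Proposition \ref{prop-ident-spec-pt} — both of these idempotents restrict at $\hd$ to $e_\rep$ itself, under the canonical identifications of the stalks at $\hd$ with $\rep_{0, \bC}^{\otimes m_\rep}$.

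The key step is to show that \Refeq{\ref{eq-std-comp-B-m}} conjugates the idempotent induced by $e_\rep$ on its source to the one on its target. Since \Refeq{\ref{eq-std-comp-B-m}} is an isomorphism of local systems (arising by passing to horizontal sections of the isomorphism of filtered connections \Refeq{\ref{eq-std-comp-dR-m}}), conjugation by it turns the idempotent on the source into an idempotent endomorphism of $\pBSh{\rep}_{0, \bC}^{\otimes m_\rep}$; by Corollary \ref{cor-std-comp} the restriction of \Refeq{\ref{eq-std-comp-B-m}} to the stalk at $\hd$ is the identity on $\rep_{0, \bC}^{\otimes m_\rep}$, so this conjugated idempotent agrees at $\hd$ with the idempotent induced by $e_\rep$ on $\pBSh{\rep}_{0, \bC}^{\otimes m_\rep}$. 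An endomorphism of a local system over the connected base $\Gamma^+_{\levcp, g_0} \Lquot \Shdom^+$ is determined by its value on the stalk at any single point, since such endomorphisms are precisely the $\pi_1$-equivariant endomorphisms of that stalk; hence the two idempotents coincide over this component. Consequently \Refeq{\ref{eq-std-comp-B-m}} carries the image $\BSh{\rep}_\bC$ of the idempotent on the source isomorphically onto the image $\pBSh{\rep}_\bC$ of the idempotent on the target, and the induced isomorphism $\BSh{\rep}_\bC \Mi \pBSh{\rep}_\bC$ is exactly the composition defining \Refeq{\ref{eq-comp-B}}; thus \Refeq{\ref{eq-comp-B}} is an isomorphism over $\Gamma^+_{\levcp, g_0} \Lquot \Shdom^+$.

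It remains to compute the stalk of \Refeq{\ref{eq-comp-B}} at the base point $\hd$ of the universal cover $\Shdom^+$ of $\Gamma^+_{\levcp, g_0} \Lquot \Shdom^+$. By the above it is the composite of the inclusion of $\rep_\bC$ into $\rep_{0, \bC}^{\otimes m_\rep}(-t_\rep)$, the identity, and the projection onto $\rep_\bC$, the inclusion and projection being attached to the idempotent $e_\rep$; since $e_\rep$ acts as the identity on its image $\rep_\bC$, this composite is $\Id_{\rep_\bC}$ under the canonical identifications of $\BSh{\rep}_\bC|_\hd$ and $\pBSh{\rep}_\bC|_\hd$ with $\rep_\bC$ used in the definitions of $\Frep^+_{\levcp, g_0}(\rep)$ and $\Frep^{+, (p)}_{\levcp, g_0}(\rep)$. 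An isomorphism of local systems over a connected base which is the identity on the stalks at the base point conjugates one monodromy representation into the other by the identity; hence $\Frep^{+, (p)}_{\levcp, g_0}(\rep) = \Frep^+_{\levcp, g_0}(\rep)$ as representations of $\Gamma^{+, c}_{\levcp, g_0}$ on $\rep_\bC$, as asserted. The only point needing genuine care is the bookkeeping with the Tate twist $(-t_\rep)$ and the choice of a $\Grp{G}^c$-equivariant splitting idempotent; once these are dealt with via Corollary \ref{cor-std-comp} and the reductivity of $\Grp{G}^c$, the rest is a routine rigidity argument, and I do not anticipate a substantive obstacle here.
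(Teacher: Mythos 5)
Your overall frame is the same as the paper's: reduce, by rigidity of morphisms and endomorphisms of local systems over the connected base $\Gamma^+_{\levcp, g_0} \Lquot \Shdom^+$, to the stalk at the special point $\hd$, where Corollary \ref{cor-std-comp} normalizes \Refeq{\ref{eq-std-comp-B-m}} to the identity.  The gap is in the single claim that carries all the arithmetic content, namely that the idempotent endomorphism of $\pBSh{\rep}_{0, \bC}^{\otimes m_\rep}(-t_\rep)$ induced by $e_\rep$ restricts at $\hd$ to $e_\rep$ under the identification of Proposition \ref{prop-ident-spec-pt}.  Neither of the results you cite gives this: Corollary \ref{cor-std-comp} only normalizes the comparison for $\rep_{0}^{\otimes m_\rep}$ itself, and Proposition \ref{prop-ident-spec-pt} identifies the stalk of $\pBSh{\rep}_\bC$ at $\hd$ with $\rep_\bC$ for each \emph{fixed} $\rep$, equivariantly for the group actions, but asserts no functoriality in $\rep$ and in particular no compatibility with the projector $e_\rep$ \Pth{equivalently, with the Hodge tensor $s_\rep$ of Lemma \ref{lem-rep-summand}}.

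Unwinding the $p$-adic side at $\hd$, your claim amounts to the following: the composite of the $p$-adic \'etale--de Rham comparison for the fiber of the abelian scheme $\AVstr: \AV \to \Model_\levcp$ at the special point \Pth{a CM abelian variety over a number field} and its tensor powers, with the Betti--de Rham comparison over $\bC$, carries the \'etale component of the Hodge tensor $s_\rep$ to its Betti component, so that the two direct-sum decompositions of the stalk---the one from the Betti construction and the one obtained through $\DdR$ of the \'etale stalk---match.  This is not formal; it is precisely the statement that Hodge cycles on abelian varieties over number fields are \emph{absolute Hodge} \Pth{Deligne} and \emph{de Rham} \Pth{Blasius}, which is exactly the input the paper's proof invokes at this point, and without which the desired matching is essentially the proposition itself.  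So the rigidity and idempotent bookkeeping are fine, but the sentence asserting that both idempotents restrict at $\hd$ to $e_\rep$ needs the Deligne--Blasius theorems to be justified; your closing remark that only the Tate twist and the choice of splitting require care misplaces where the real difficulty lies.
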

\begin{proof}
    It suffices to show that, via the canonical isomorphisms in Proposition \ref{prop-ident-spec-pt}, the pullback of \Refeq{\ref{eq-comp-B}} to the image of $\hd$, as in Lemma \ref{lem-std-comp}, is given by the identity morphism of a subspace of $\rep_{0, \bC}^{\otimes m}(-t)$.  Since the comparison isomorphisms thus far are functorial and compatible with pullbacks to special points, over which the pullbacks of $\AV$ are abelian varieties potentially of CM type over number fields \Pth{see \cite[\aCor 14.11]{Milne:2005-isv}}, the pullback of \Refeq{\ref{eq-comp-B}} is induced by the comparison isomorphisms for the cohomology of such abelian varieties, which are compatible with the ones used in the proof of Proposition \ref{prop-ident-spec-pt} by Remark \ref{rem-dR-comp-for-CM-ab-var} below, and it suffices to note that the Hodge tensor $s_\rep$ in Lemma \ref{lem-rep-summand} are respected by such comparison isomorphisms, because Hodge cycles on such abelian varieties are \emph{absolute Hodge} \Pth{by \cite[\aCh I, Main \aThm 2.11]{Deligne/Milne/Ogus/Shih:1982-HMS}} and \emph{de Rham} \Pth{by \cite[\aThm 0.3]{Blasius:1994-phcav}}.
\end{proof}

\begin{rk}\label{rem-dR-comp-for-CM-ab-var}
    By \cite[\aSec 5]{Serre/Tate:1968-grav}, abelian varieties potentially of CM type over number fields have potential good reduction everywhere.  By \cite[\aSec 11]{Ito/Ito/Koshikawa:2021-ckfat}, the $p$-adic de Rham comparison isomorphisms of Faltings's \Pth{used in \cite{Blasius:1994-phcav} and the proof of Proposition \ref{prop-ident-spec-pt}} and Scholze's \Pth{used in the proof of Lemma \ref{lem-std-comp}} coincide \Pth{at least} for abelian varieties with potential good reductions over $p$-adic fields.
\end{rk}

\subsection{Cases of real rank at least two}\label{sec-case-rank-two+}

In this subsection, we assume that $\Grp{G}^{\der, c}$ is $\bQ$-simple and simply-connected as an algebraic group over $\bQ$ \Pth{so that $\Grp{G}^\der \cong \Grp{G}^{\der, c}$}, and that $\rank_\bR(\Grp{G}^\der_\bR) \geq 2$.  By Proposition \ref{prop-comp-B}, we may and we shall assume in addition that $\Grp{G}^\der_\bR$ is not of type $\mathrm{A}$.  We shall make use of the following special case of Margulis's \emph{superrigidity theorem}:
\begin{thm}\label{thm-superrig}
    Let $\Grp{H}$ be a $\bQ$-simple simply-connected connected algebraic group over $\bQ$, and let $\Gamma$ be an arithmetic subgroup of $\Grp{H}(\bQ)$.  Suppose that $\rank_\bR(\Grp{H}_\bR) \geq 2$.  Then, given any representation $\rho: \Gamma \to \GL_m(\bC)$, there exists a finite index normal subgroup $\Gamma_0$ of $\Gamma$ such that $\rho|_{\Gamma_0}$ extends to a \Pth{unique} group homomorphism $\widetilde{\rho}: \Grp{H}(\bC) \to \GL_m(\bC)$ that is induced by an algebraic group homomorphism $\Grp{H}_\bC \to \GL_{m, \bC}$, and such that $\rho(\gamma) = \delta(\gamma) \widetilde{\rho}(\gamma)$, for all $\gamma \in \Gamma$, for some representation $\delta: \Gamma / \Gamma_0 \to \GL_m(\bC)$ whose image commutes with $\widetilde{\rho}(\Grp{H}(\bC))$.
\end{thm}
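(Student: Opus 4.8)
The statement is a special case of Margulis's superrigidity theorem, and the plan is to deduce it from the general results of \cite{Margulis:1991-DSL} rather than to reprove superrigidity. The first point to record is that the hypotheses put us squarely in the setting of that theorem: since $\Grp{H}$ is $\bQ$-simple and simply connected, the arithmetic subgroup $\Gamma$ is an \emph{irreducible} lattice in the semisimple real Lie group $\Grp{H}(\bR)$ (cocompact precisely when $\Grp{H}$ is $\bQ$-anisotropic, but a lattice in either case), and the assumption $\rank_\bR(\Grp{H}_\bR)\geq 2$ places us in the higher-rank regime. Here irreducibility is genuinely needed, because $\Grp{H}_\bR$ may itself be a product of simple factors coming from a restriction of scalars; it is this $\bQ$-simplicity that guarantees $\Gamma$ does not split off a factor.

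Next I would invoke superrigidity in the form for linear representations over the local field $\bC$: after replacing $\Gamma$ by a finite-index subgroup, $\rho$ decomposes as a product of two homomorphisms with commuting images, one of which extends to a homomorphism $\Grp{H}(\bC)\to\GL_m(\bC)$ induced by a morphism of algebraic groups $\Grp{H}_\bC\to\GL_{m,\bC}$, and the other of which has relatively compact (bounded) image. This already absorbs the solvable radical of the Zariski closure of $\rho(\Gamma)$ and the various structural reductions. One then argues that the bounded part has \emph{finite} image: for a higher-rank lattice, any homomorphism to a compact Lie group has finite image — passing to the Zariski closure of the image one may assume the image is Zariski-dense in a complex algebraic group, and superrigidity extends such a homomorphism to a continuous map $\Grp{H}(\bR)\to$ (complexification), which, since $\Grp{H}(\bR)$ is connected semisimple with no nontrivial compact quotients, forces the target to be trivial. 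The algebraic extension is moreover unique, since $\Gamma$ (hence any finite-index subgroup) is Zariski-dense in $\Grp{H}$ by the Borel density theorem.

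It then remains to assemble these pieces into the precise form asserted. Let $\widetilde{\rho}\colon \Grp{H}(\bC)\to\GL_m(\bC)$ be the algebraic homomorphism produced above, let $\Gamma_0\trianglelefteq\Gamma$ be the normal core of the finite-index subgroup on which $\rho=\widetilde{\rho}$, and set $\delta(\gamma):=\rho(\gamma)\,\widetilde{\rho}(\gamma)^{-1}$ for $\gamma\in\Gamma$. One checks directly that $\delta|_{\Gamma_0}=1$, so $\delta$ factors through the finite group $\Gamma/\Gamma_0$; that $\delta$ is a homomorphism; and that its image commutes with $\widetilde{\rho}(\Grp{H}(\bC))$, using for $\gamma_0\in\Gamma_0$ the identity $\delta(\gamma)\,\widetilde{\rho}(\gamma\gamma_0\gamma^{-1})\,\delta(\gamma)^{-1}=\widetilde{\rho}(\gamma\gamma_0\gamma^{-1})$ together with the Zariski density of $\widetilde{\rho}(\Gamma_0)$ in $\widetilde{\rho}(\Grp{H}(\bC))$ (which follows from Borel density and the algebraicity of $\widetilde{\rho}$). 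This recovers the displayed decomposition $\rho(\gamma)=\delta(\gamma)\widetilde{\rho}(\gamma)$.

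The main obstacle is, of course, not in this bookkeeping but in Margulis's superrigidity theorem itself, whose proof rests on deep ergodic-theoretic and boundary-theoretic methods. Within the deduction above, the genuinely delicate points are the structural reduction — correctly handling the solvable radical of the Zariski closure and the bounded part so as to land exactly in the clean situation to which the cited form of superrigidity applies — and the verification that the irreducibility and higher-rank hypotheses are met in the form needed for $\Grp{H}$ possibly arising from a restriction of scalars.
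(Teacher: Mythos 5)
Your proposal is correct in substance but takes a genuinely different route from the paper. The paper's entire proof is a one-line citation: the statement \emph{is} Margulis's Theorem (B), part (iii), of \cite[\aCh VIII]{Margulis:1991-DSL}, specialized to $S = \{\infty\}$, $K = \bQ$, $\Lambda = \Gamma$, $\ell = \bC$, so nothing remains to be proved beyond translating notation. You instead rederive that packaged theorem from the more primitive local superrigidity statements (reduction to the Zariski closure, splitting off the part that extends algebraically, killing the bounded part, and the final bookkeeping with $\delta(\gamma) = \rho(\gamma)\widetilde{\rho}(\gamma)^{-1}$, Borel density, and the conjugation trick). That assembly is essentially Margulis's own derivation of Theorem (B)(iii), and your final bookkeeping paragraph is sound — it is in fact the same computation the paper carries out later in the proof of Lemma \ref{lem-scc}.

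The one step of your reconstruction that does not hold up as written is the finiteness of the bounded factor. You argue that a homomorphism of a higher-rank lattice to a compact Lie group has finite image because ``superrigidity extends such a homomorphism to a continuous map $\Grp{H}(\bR) \to$ (complexification)''; but archimedean superrigidity in the form you are invoking requires the image to be \emph{not} relatively compact in the target, so it simply does not apply to the bounded part. The correct argument (and the one underlying Margulis's Theorem (B)(iii)) uses superrigidity over \emph{all} local fields: the matrix entries of $\rho(\Gamma)$ generate a finitely generated field, non-archimedean superrigidity forces boundedness at every finite place, and a finitely generated group that is bounded at all places of a number field is discrete in a compact group, hence finite. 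If you want a self-contained deduction rather than the direct citation, you should either run this integrality argument or cite the specific result of Margulis for relatively compact images; as it stands, this step is a genuine gap in your version, though not in the theorem.
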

\begin{proof}
    This follows from \cite[\aCh VIII, \aThm (B), part (iii)]{Margulis:1991-DSL} with $S = \{ \infty \}$, $\Lambda = \Gamma$, $K = \bQ$, and $\ell = \bC$ \Pth{in the notation there}.
\end{proof}

By applying Theorem \ref{thm-superrig} with $\Grp{H} = \Grp{G}^{\der, c}$, $\Gamma = \Gamma_{\levcp, g_0}^{+, c}$, and $\rho = \Frep^{+, (p)}_{\levcp, g_0}(\rep)$ as in Section \ref{sec-proof-comp-prelim}, we see that there is a finite-index subgroup $\Gamma_{\levcp, g_0, 0}^{+, c}$ of $\Gamma_{\levcp, g_0}^{+, c}$ such that the restriction $\Frep^{+, (p)}_{\levcp, g_0}(\rep)|_{\Gamma_{\levcp, g_0, 0}^{+, c}}$ extends to an algebraic representation $\widetilde{\Frep}^{+, (p)}_{\levcp, g_0}(\rep): \Grp{G}^{\der, c}_\bC \to \GL_\bC(\rep_\bC)$.  If $\levcp_1 \subset \levcp_2$ are neat open compact subgroups of $\Grp{G}(\bAi)$, then $\Gamma_{\levcp_1, g_0}^{+, c} \subset \Gamma_{\levcp_2, g_0}^{+, c}$ and $\Frep^{+,(p)}_{\levcp_1, g_0}(\rep) = \Frep^{+,(p)}_{\levcp_2, g_0}(\rep)|_{\Gamma_{ \levcp_1, g_0}^{+, c}}$, and therefore
\begin{equation}\label{eq-rigidity-level}
    \Frep^{+,(p)}_{\levcp_1, g_0}(\rep)|_{\Gamma_{\levcp_1, g_0, 0}^{+, c} \cap \Gamma_{\levcp_2, g_0, 0}^{+, c}} = \Frep^{+,(p)}_{\levcp_2, g_0}(\rep)|_{\Gamma_{\levcp_1, g_0, 0}^{+, c} \cap \Gamma_{\levcp_2, g_0, 0}^{+, c}}.
\end{equation}
Since $\Gamma_{\levcp_1, g_0, 0}^{+, c} / (\Gamma_{\levcp_1, g_0, 0}^{+, c} \cap \Gamma_{\levcp_2, g_0, 0}^{+, c}) \subset \Gamma_{\levcp_2, g_0}^{+, c} / \Gamma_{\levcp_2, g_0, 0}^{+, c}$ is finite, $\Gamma_{\levcp_1, g_0, 0}^{+, c} \cap \Gamma_{\levcp_2, g_0, 0}^{+, c}$ is an arithmetic subgroup of $\Grp{G}^{\der, c}(\bQ)$.  By \Refeq{\ref{eq-rigidity-level}} and the Borel density theorem \Pth{see Remark \ref{rem-Borel-density}}, $\widetilde{\Frep}^{+, (p)}_{\levcp_1, g_0}(\rep) = \widetilde{\Frep}^{+, (p)}_{\levcp_2, g_0}(\rep)$.  Since $\levcp_1$ and $\levcp_2$ are arbitrary, there is a well-defined assignment \Pth{to $\rep$} of an algebraic representation
\[
    \widetilde{\Frep}^{+, (p)}_{g_0}(\rep): \Grp{G}^{\der, c}_\bC \to \GL_\bC(\rep_\bC)
\]
such that $\widetilde{\Frep}^{+, (p)}_{\levcp, g_0}(\rep) = \widetilde{\Frep}^{+, (p)}_{g_0}(\rep)$ for all neat open compact subgroups $\levcp$ of $\Grp{G}(\bAi)$.

Since $\Grp{G}^{\der, c}$ is $\bQ$-simple and simply-connected, since $\rank_\bR(\Grp{G}^\der_\bR) \geq 2$, and since $\Grp{G}^\der_\bR$ is not of type $\mathrm{A}$, by the known positive answers to the congruence subgroup problem \Pth{see \cite[\aProp 9.10, \aThm 9.15, and the summary in the last two pages of \aSec 9.5]{Platonov/Rapinchuk:1994-AGN}}, any $\Gamma_{\levcp, g_0, 0}^{+, c}$ obtained above is a congruence subgroup.  By taking any $\levcp' \subset \levcp$ such that $\Gamma_{\levcp', g_0}^{+, c} \subset \Gamma_{\levcp, g_0, 0}^{+, c}$, we see that $\Frep^{+, (p)}_{\levcp', g_0}(\rep) = \widetilde{\Frep}^{+, (p)}_{g_0}(\rep)|_{\Gamma_{\levcp', g_0}^{+, c}}$.  Thus, by Lemma \ref{lem-isog}, it suffices to show that $\widetilde{\Frep}^{+, (p)}_{g_0}(\rep)$ coincides with $\rep_\bC|_{\Grp{G}^{\der, c}_\bC}$.

By the above construction, and by Proposition \ref{prop-loc-syst-p}, the assignment of $\widetilde{\Frep}^{+, (p)}_{g_0}(\rep)$ to $\rep \in \Rep_\bC(\Grp{G}^c)$ defines a tensor functor from $\Rep_\bC(\Grp{G}^c)$ to $\Rep_\bC(\Grp{G}^{\der, c})$, and hence induces \Pth{as in \cite[\aCh II, \aCor 2.9]{Deligne/Milne/Ogus/Shih:1982-HMS}} a group homomorphism $\Grp{G}^{\der, c}_\bC \to \Grp{G}^c_\bC$.  Since $\Grp{G}^{\der, c}_\bC$ is semisimple, this homomorphism factors through
\begin{equation}\label{eq-B-p-B-twist}
    \Grp{G}^{\der, c}_\bC \to \Grp{G}^{\der, c}_\bC.
\end{equation}

For each $\gamma \in \Grp{G}^{\der, c}(\bQ)$, the Hecke action of $g = g_0^{-1} \gamma g_0 \in \Grp{G}(\bAi)$ induces an isomorphism $\Gamma^+_{g \levcp g^{-1}, g_0} \Lquot \Shdom^+ \Mi \Gamma^+_{\levcp, g_0} \Lquot \Shdom^+$ \Pth{see \Refeq{\ref{eq-Hecke-g-conn-comp}}} defined by left multiplication by $\gamma^{-1}$, compatible with the isomorphism $\Gamma^+_{g \levcp g^{-1}, g_0} \Mi \Gamma^+_{\levcp, g_0}$ induced by conjugation by $\gamma^{-1}$.  By Proposition \ref{prop-ident-spec-pt} and \Refeq{\ref{eq-Hecke-g-fund-rep}}, we have the compatibility
\begin{equation}\label{eq-Hecke-conj}
\begin{split}
    \widetilde{\Frep}^{+, (p)}_{g_0}(\rep)(\gamma \gamma' \gamma^{-1})
    & = \bigl(\widetilde{\Frep}^{+, (p)}_{g_0}(\rep)(\gamma)\bigr) \bigl(\widetilde{\Frep}^{+, (p)}_{g_0}(\rep)(\gamma')\bigr) \bigl(\widetilde{\Frep}^{+, (p)}_{g_0}(\rep)(\gamma)\bigr)^{-1} \\
    & = \bigl(\Hrep(\rep)(\gamma)\bigr) \bigl(\widetilde{\Frep}^{+, (p)}_{g_0}(\rep)(\gamma')\bigr) \bigl(\Hrep(\rep)(\gamma)\bigr)^{-1},
\end{split}
\end{equation}
for all $\gamma' \in \Gamma_{\levcp, g_0, 0}^{+, c} \cap \gamma^{-1} \Gamma_{g \levcp g, g_0, 0}^{+, c} \gamma$, where $g = g_0^{-1} \gamma g_0$, where
\[
    \Hrep(\rep): \Grp{G}^{\der, c}_\bC \to \GL_\bC(\rep_\bC)
\]
denotes the algebraic representation given by the restriction $\rep_\bC|_{\Grp{G}^{\der, c}_\bC}$.

\begin{lemma}\phantomsection\label{lem-appl-Schur}
    Suppose that the representation $\widetilde{\Frep}^{+, (p)}_{g_0}(\rep)$ is irreducible when $\rep$ is.  Then $\widetilde{\Frep}^{+, (p)}_{g_0}(\rep) = \Hrep(\rep)$ as algebraic representations of $\Grp{G}^{\der, c}_\bC$.
\end{lemma}
\begin{proof}
    By Proposition \ref{prop-loc-syst-p}, we may assume that $\Hrep := \Hrep(\rep)$ and hence $\widetilde{\Frep} := \widetilde{\Frep}^{+, (p)}_{g_0}(\rep)$ are irreducible.  Let us measure their difference by the algebraic morphism $\epsilon: \Grp{G}^{\der, c}_\bC \to \GL_\bC(\rep_\bC)$ \Pth{which is not shown to be a group homomorphism yet} defined by $\epsilon(g) = \widetilde{\Frep}(g)^{-1} \Hrep(g)$, for all $g \in \Grp{G}^{\der, c}(\bC)$.  By \Refeq{\ref{eq-Hecke-conj}}, we have
    \begin{equation}\label{eq-Hecke-comm}
        \widetilde{\Frep}(\gamma') = \epsilon(\gamma) \widetilde{\Frep}(\gamma') \epsilon(\gamma)^{-1}
    \end{equation}
    for all $\gamma' \in \Gamma' := \Gamma_{\levcp, g_0, 0}^{+, c} \cap \gamma^{-1} \Gamma_{g \levcp g, g_0, 0}^{+, c} \gamma$.  Since $\Gamma'$ is a neat arithmetic subgroup of $\Grp{G}^{\der, c}(\bQ)$, by the Borel density theorem \Pth{see Remark \ref{rem-Borel-density}}, we also have \Refeq{\ref{eq-Hecke-comm}} for all $\gamma' \in \Grp{G}^{\der, c}(\bC)$.  Then the morphism $\epsilon$ is a group homomorphism, because $\epsilon(\gamma \gamma') = \widetilde{\Frep}(\gamma \gamma')^{-1} \Hrep(\gamma \gamma') = \widetilde{\Frep}(\gamma')^{-1} \widetilde{\Frep}(\gamma)^{-1} \Hrep(\gamma) \Hrep(\gamma') = \widetilde{\Frep}(\gamma')^{-1} \epsilon(\gamma) \Hrep(\gamma') = \epsilon(\gamma) \widetilde{\Frep}(\gamma')^{-1} \Hrep(\gamma') = \epsilon(\gamma) \epsilon(\gamma')$, for all $\gamma, \gamma' \in \Grp{G}^{\der, c}(\bQ)$, and because $\Grp{G}^{\der, c}(\bQ)$ is Zariski dense in $\Grp{G}^{\der, c}$ \Pth{by \cite[\aCor 13.3.10]{Springer:1998-LAG(2)}, or still the Borel density theorem}.  By Schur's lemma \Pth{and this Zariski density}, $\epsilon$ factors through an algebraic group homomorphism $\Grp{G}^{\der, c}_\bC \to \Gm{\bC}$, which is trivial because $\Grp{G}^{\der, c}_\bC$ is semisimple.
\end{proof}

\begin{lemma}\label{lem-rep-tensor-equiv}
    The above homomorphism \Refeq{\ref{eq-B-p-B-twist}} is an automorphism.  In particular, the representation $\widetilde{\Frep}^{+, (p)}_{g_0}(\rep)$ is indeed irreducible when $\rep$ is.
\end{lemma}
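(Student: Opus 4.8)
The plan is to show that the homomorphism \eqref{eq-B-p-B-twist}, which I shall denote $\phi: \Grp{G}^{\der, c}_\bC \to \Grp{G}^{\der, c}_\bC$, is an automorphism; the asserted irreducibility is then formal, since $\widetilde{\Frep}^{+, (p)}_{g_0}(\rep) = \Hrep(\rep) \circ \phi$, the restriction to $\Grp{G}^{\der, c}$ of an irreducible representation of the connected reductive group $\Grp{G}^c$ is again irreducible, and precomposition with an automorphism of $\Grp{G}^{\der, c}_\bC$ preserves irreducibility. First I would record the structure of $\phi$. Since $\Grp{G}^{\der, c}$ is $\bQ$-simple and simply-connected, we have $\Grp{G}^{\der, c}_\bC \cong \prod_\sigma \Grp{H}^\sigma$, a product of its pairwise-conjugate, absolutely almost simple, simply-connected factors, and $\mathrm{Lie}(\Grp{G}^{\der, c}_\bC) = \oplus_\sigma \mathfrak{h}^\sigma$ with each $\mathfrak{h}^\sigma$ a $\Grp{G}^c_\bC$-submodule of the adjoint representation on which $\Grp{G}^c$ acts through the adjoint quotient $\Grp{H}^\sigma_\ad$. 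Applying the Hecke conjugation relation \eqref{eq-Hecke-conj} to all $\rep \in \Rep_{\AC{\bQ}}(\Grp{G}^c)$, and using that representations of $\Grp{G}^{\der, c}_\bC$ separate points together with the Zariski density of arithmetic subgroups (Remark \ref{rem-Borel-density}), one gets $\phi(g h g^{-1}) = g \phi(h) g^{-1}$ for all $g, h \in \Grp{G}^{\der, c}_\bC$; hence the differential $d\phi$ is $\mathrm{Ad}$-equivariant, so by Schur's lemma it is a direct sum of scalars on the isotypic pieces $\mathfrak{h}^\sigma$, and the requirement that $d\phi$ be a Lie algebra homomorphism forces each scalar to lie in $\{0, 1\}$. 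Consequently $\phi = \prod_\sigma \phi_\sigma$, with each $\phi_\sigma: \Grp{H}^\sigma \to \Grp{H}^\sigma$ either trivial or an isogeny, hence (both source and target being simply-connected) either trivial or an isomorphism. It thus remains to rule out the possibility that some $\phi_\sigma$ is trivial.

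Suppose $\phi_\sigma$ is trivial, so that $\phi(\Grp{G}^{\der, c}_\bC) = \prod_{\tau \neq \sigma} \Grp{H}^\tau$ acts trivially on $\mathfrak{h}^\sigma$. By Theorem \ref{thm-superrig} and Remark \ref{rem-Borel-density}, the Zariski closure $\Grp{N}$ of the monodromy group of $\pBSh{\rep}_\bC$ over the connected component $\Gamma^{+, c}_{\levcp, g_0} \Lquot \Shdom^+$ has identity component $\Grp{N}^\circ = \Hrep(\rep)(\phi(\Grp{G}^{\der, c}_\bC))$ and finite $\Grp{N}/\Grp{N}^\circ$. On the other hand, the de Rham comparison isomorphism of Theorem \ref{T: intro main} (applied to $\etSh{\rep}_\Coef$ over a suitable finite extension of $\bQ_p$, together with flat base change along $\ACMap: \AC{\bQ}_p \Mi \bC$, with the $\Grp{G}(\bAi)$-equivariance used to pass to a single geometrically connected component, and with the elementary fact that the algebraic de Rham $H^0$ of a regular connection computes its horizontal sections) gives, since $\dim H^0_\et\bigl(\text{component}_{\AC{\bQ}_p}, \etSh{\rep}_{\AC{\bQ}_p}\bigr) = \dim (\rep_\bC)^{\Grp{G}^{\der, c}_\bC}$ by Zariski density of $\Gamma^{+, c}_{\levcp, g_0}$, the equality
\[
    \dim_\bC (\rep_\bC)^{\Grp{N}} = \dim_\bC (\rep_\bC)^{\Grp{G}^{\der, c}_\bC} \quad \text{for every } \rep \in \Rep_{\AC{\bQ}}(\Grp{G}^c).
\]

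To reach a contradiction I would feed the tensor powers $\rep = (\mathfrak{h}^\sigma)^{\otimes m}$ into this identity. Since $\Grp{N}^\circ$ acts trivially on $\mathfrak{h}^\sigma$, the left-hand side equals $\dim_\bC\bigl((\mathfrak{h}^\sigma_\bC)^{\otimes m}\bigr)^{\Grp{N}/\Grp{N}^\circ} = |\Grp{N}/\Grp{N}^\circ|^{-1} \sum_{f} \chi_{\mathfrak{h}^\sigma}(f)^m$; any $f$ with $|\chi_{\mathfrak{h}^\sigma}(f)| = \dim \mathfrak{h}^\sigma$ acts on $\mathfrak{h}^\sigma$ by a scalar, which — as $\mathfrak{h}^\sigma$ is nonabelian — must be $1$, so the identity element's contribution dominates and the left-hand side stays $\geq c\,(\dim \mathfrak{h}^\sigma)^m$ for $m$ in a suitable residue class, for some $c > 0$. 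The right-hand side, however, is the multiplicity of the trivial representation in $(\mathfrak{h}^\sigma)^{\otimes m}$ as a module over the simple group $\Grp{H}^\sigma$, equal to the integral over a compact form of $\Grp{H}^\sigma$ of the $m$-th power of $\chi_{\mathfrak{h}^\sigma}$, which is $o\bigl((\dim \mathfrak{h}^\sigma)^m\bigr)$ because the normalized character $\chi_{\mathfrak{h}^\sigma}/\dim \mathfrak{h}^\sigma$ has absolute value strictly below $1$ off a subset of measure zero. This is impossible for $m$ large, so no $\phi_\sigma$ is trivial and $\phi$ is an automorphism.

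The main obstacle is precisely this last step: turning the Tannakian picture — in which $\phi$ is a priori only known to be a product of factor-automorphisms and trivial maps — into a genuine rigidity statement. The structural reductions in the first paragraph are routine; the crux is extracting from the $p$-adic–de Rham comparison of Theorem \ref{T: intro main} a constraint strong enough to see that the monodromy of $\pBSh{\rep}_\bC$ cannot lose a simple factor, and the tensor-power asymptotics above are the mechanism by which an equality of dimensions of invariants upgrades to $\phi(\Grp{G}^{\der, c}_\bC) = \Grp{G}^{\der, c}_\bC$. Once this lemma is established, Lemma \ref{lem-appl-Schur} — whose hypothesis it verifies — pins $\phi$ down to the identity.
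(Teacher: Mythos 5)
Your proof is correct in outline, but it takes a genuinely different route from the paper's. The paper reduces the lemma to exhibiting a single representation whose image under $\widetilde{\Frep}^{+, (p)}_{g_0}$ is nontrivial on every $\bC$-simple factor, and produces it by the Borovoi--Milne--Piatetski-Shapiro construction: embed $(\Grp{G}, \Shdom)$ into a datum $(\Grp{G}_1, \Shdom_1)$ with $\Grp{G}_1^{\der,c}$ $\bQ$-simple \Pth{a Weil restriction}, which also receives a datum $(\Grp{G}_2, \Shdom_2)$ of abelian type whose complexified derived group meets every $\bC$-simple factor of $\Grp{G}_{1,\bC}^{\der,c}$; since the comparison is already known in the abelian-type case \Pth{via absolute Hodge and de Rham cycles}, the nontriviality is transferred back using \cite[\aCh I, \aSec 3, \aLem 3.13]{Margulis:1991-DSL} and the Borel density theorem. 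You instead \Pth{i} upgrade the Hecke relation \Refeq{\ref{eq-Hecke-conj}} to the $\mathrm{Ad}$-equivariance of $\phi$ itself, so that Schur's lemma on the adjoint representation pins $\phi$ down to a product of identity maps and trivial maps on the simple factors \Pth{a structural statement stronger than what the paper extracts at this stage, and one that would also streamline Lemma \ref{lem-appl-Schur}}, and \Pth{ii} rule out a trivial factor by feeding the degree-zero case of the comparison isomorphism of Theorem \ref{T: intro main} \Pth{which forces $\dim (\rep_\bC)^{\Grp{N}} = \dim (\rep_\bC)^{\Grp{G}^{\der,c}_\bC}$ for all $\rep$} into a character-asymptotics estimate on $(\mathfrak{h}^\sigma)^{\otimes m}$. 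Your approach avoids the auxiliary Shimura data entirely but leans on the $p$-adic Riemann--Hilbert comparison for the Shimura variety itself, which is a heavier input though one already available at this point in the paper; the paper's approach is purely group-theoretic once the abelian-type case is in hand. Two glossed points you should tighten: the passage from the $\Coef \otimes_{\bQ_p} \BdR$-linear comparison isomorphism to the displayed dimension identity requires the idempotent decomposition via $\CoefMap$ as in \Refeq{\ref{eq-coef-proj}} and a geometrically connected component defined over a $p$-adic field; and your parenthetical that a scalar element of $\Grp{N}/\Grp{N}^\circ$ ``must be $1$'' is unjustified as stated, since that group need not act by Lie algebra automorphisms --- but your restriction to $m$ in a suitable residue class already makes this remark unnecessary.
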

\begin{proof}
    Since $\Grp{G}^{\der, c}_\bC$ is semisimple and simply-connected, it suffices to show that the Lie algebra of the kernel of \Refeq{\ref{eq-B-p-B-twist}}, which is a priori a product of $\bC$-simple factors of the Lie algebra of $\Grp{G}^{\der, c}_\bC$, is trivial.  Therefore, it suffices to show that \Refeq{\ref{eq-B-p-B-twist}} has nontrivial restrictions to all $\bC$-simple factors of $\Grp{G}^{\der, c}_\bC$, and it suffices to find some $\rep \in \Rep_\bC(\Grp{G}^c)$ such that $\widetilde{\Frep}^{+, (p)}_{g_0}(\rep)$ is nontrivial on all simple $\bC$-simple factors.

    As explained in \cite{Borovoi:1983/84-lcccs, Milne:1983-aacsv}, based on a construction due to Piatetski-Shapiro, there exist morphisms $\varphi_1: (\Grp{G}, \Shdom) \Em (\Grp{G}_1, \Shdom_1)$ and $\varphi_2: (\Grp{G}_2, \Shdom_2) \Em (\Grp{G}_1, \Shdom_1)$ between Shimura data such that the following hold:
    \begin{itemize}
        \item $\Grp{G}_1^{\der, c}$ is $\bQ$-simple, and we have $\Grp{G}^{\der, c} \Emn{~~\varphi_1} \Grp{G}_1^{\der, c} \Em \Res_{F / \bQ} \Grp{G}^{\der, c}_F$ for some totally real number field $F$, identifying $\Grp{G}_\bC^{\der, c}$ as a direct factor of $\Grp{G}_{1, \bC}^{\der, c}$.

        \item $\Grp{G}_2^{\der, c}$ is also $\bQ$-simple, and all $\bC$-simple factors of $\Grp{G}_{2, \bC}^{\der, c}$ are of type $A_1$.  In this case, as in the proof of Lemma \ref{lem-ab-type}, the Shimura datum $(\Grp{G}_2, \Shdom_2)$ is of abelian type, for which Proposition \ref{prop-comp-B} and hence Theorem \ref{thm-loc-syst-comp} hold.  Moreover, the homomorphism $\Grp{G}_{2, \bC}^{\der, c} \to \Grp{G}_{1, \bC}^{\der, c}$ induced by $\varphi_2$ embeds distinct $\bC$-simple factors of $\Grp{G}_{2, \bC}^{\der, c}$ into distinct $\bC$-simple factors of $\Grp{G}_{1, \bC}^{\der, c}$, and every simple factor of $\Grp{G}_{1, \bC}^{\der, c}$ meets $\Grp{G}_{2, \bC}^{\der, c}$ nontrivially.
    \end{itemize}
    Therefore, there exists some $\rep_1 \in \Rep_\bC(\Grp{G}_1^c)$ such that its pullback $\rep_2 \in \Rep_\bC(\Grp{G}_2^c)$ is nontrivial on all $\bC$-simple factors of $\Grp{G}_{2, \bC}^{\der, c}$.  By Proposition \ref{prop-loc-syst-p}, $\pBSh{\rep}_\bC$ and $\pBSh{\rep}_{2, \bC}$ are canonically isomorphic to pullbacks of $\pBSh{\rep}_{1, \bC}$, and we already know that the fundamental group representations associated with $\pBSh{\rep}_{2, \bC} \cong \BSh{\rep}_{2, \bC}$ are given by the restrictions of $\rep_{2, \bC}$.  Let $\widetilde{\Frep}^{+, (p)}_{g_0}(\rep_1)$ be associated with $\pBSh{\rep}_{1, \bC}$ as in the case of $\widetilde{\Frep}^{+, (p)}_{g_0}(\rep)$.  By \cite[\aCh I, \aSec 3, \aLem 3.13]{Margulis:1991-DSL}, the pullbacks of arithmetic subgroups of $\Grp{G}_1^{\der, c}(\bQ)$ to $\Grp{G}^{\der, c}(\bQ)$ and $\Grp{G}_2^{\der, c}(\bQ)$ contain arithmetic subgroups.  Therefore, by the Borel density theorem \Pth{see Remark \ref{rem-Borel-density}}, the pullback of $\widetilde{\Frep}^{+, (p)}_{g_0}(\rep_1)$ to $\Grp{G}_{2, \bC}^{\der, c}$ is nontrivial on all $\bC$-simple factors of $\Grp{G}_{2, \bC}^{\der, c}$, and hence $\widetilde{\Frep}^{+, (p)}_{g_0}(\rep_1)$ is nontrivial on all $\bC$-simple factors of $\Grp{G}_{1, \bC}^{\der, c}$.  By the Borel density theorem again, $\widetilde{\Frep}^{+, (p)}_{g_0}(\rep)$ is isomorphic to the pullback of $\widetilde{\Frep}^{+, (p)}_{g_0}(\rep_1)$, which is then nontrivial on all simple $\bC$-simple factors of $\Grp{G}^{\der, c}_\bC$, as desired.
\end{proof}

Thus, Proposition \ref{prop-ext-der} also holds under the assumptions of this subsection, by Lemmas \ref{lem-appl-Schur} and \ref{lem-rep-tensor-equiv}.  The proof of Theorem \ref{thm-loc-syst-comp} is now complete.

\appendix

\section{A formalism of decompletion}\label{sec-decompl}

In this appendix, we generalize the formalism of decompletion developed in \cite[\aSec 5]{Kedlaya/Liu:2016-RPH-2}, in order to treat the general Kummer towers.

\subsection{Results}\label{sec-decompl-results}

For a topological ring $A$ with a continuous action by a topological group $\Gamma$, let $\Proj_A(\Gamma)$ \Pth{\resp $\Rep_A(\Gamma)$} denote the category of finite projective \Pth{\resp finite free} $A$-modules equipped with a semilinear continuous $\Gamma$-action.  For simplicity, they are also called finite projective \Pth{\resp finite free} $\Gamma$-modules over $A$.  Note that, given a finite free $\Gamma$-module $L$ of rank $l$, after choosing a basis of $L$, the action of $\Gamma$ on $L$ can be represented by a $1$-cocycle $f \in C^1\bigl(\Gamma, \GL_l(A)\bigr)$, and any change of basis only results in a change of the cocycle by a coboundary.  It follows that the isomorphism classes of finite free $\Gamma$-modules of rank $l$ over $A$ are classified by the cohomology set $H^1\bigl(\Gamma, \GL_l(A)\bigr)$.  In addition, for $L_1, L_2 \in \Proj_A(\Gamma)$, we have
\begin{equation}\label{eq-Hom-Gamma}
    \Hom_{\Proj_A(\Gamma)}(L_1, L_2) \cong H^0(\Gamma, \dual{L}_1 \otimes_A L_2).
\end{equation}

From now on, we shall assume that our topological rings are all commutative, unless otherwise specified.  Let $\{ A_i \}_{i \in I}$ be a direct system of topological rings, where $I$ is a small filtered index category; and $\widehat{A}_\infty$ a complete topological ring with compatible continuous homomorphisms $A_i \to \widehat{A}_\infty$ such that the induced map $\varinjlim_{i \in I} A_i \to \widehat{A}_\infty$ has dense image.  Let $\Gamma$ be a topological group acting continuously and compatibly on $\{ A_i \}_{i \in I}$ and $\widehat{A}_\infty$.

\begin{defn}\label{def-decompl-syst}
    We call the triple $(\{ A_i \}_{i \in I}, \widehat{A}_\infty, \Gamma)$ a \emph{decompletion system} \Pth{\resp \emph{weak decompletion system}} if the following two conditions hold:
    \begin{enumerate}
        \item\label{def-decompl-syst-1}  For each finite projective \Pth{\resp finite free} $\Gamma$-module $L_\infty$ over $\widehat{A}_\infty$, there exist some $i \in I$, some finite projective \Pth{\resp finite free} $\Gamma$-module $L_i$ over $A_i$, and some $\Gamma$-equivariant continuous $A_i$-linear morphism $\iota_i: L_i \to L_\infty$ inducing an isomorphism $\iota_i \otimes 1: L_i \otimes_{A_i} \widehat{A}_\infty \Mi L_\infty$ of $\Gamma$-modules over $\widehat{A}_\infty$.  We shall call such a pair $(L_i, \iota_i)$ \Pth{or simply $L_i$} a \emph{model} of $L_\infty$ over $A_i$.

        \item\label{def-decompl-syst-2}  For each model $(L_i, \iota_i)$ over $A_i$, there exists some $i_0 \geq i$ such that, for every $i' \geq i_0$, the model $(L_{i'}, \iota_{i'}) := (L_i \otimes_{A_i} A_{i'}, \iota_i \otimes 1)$ is \emph{good} in the sense that the natural map $H^\bullet(\Gamma, L_{i'}) \to H^\bullet(\Gamma, L_\infty)$ is an isomorphism.
    \end{enumerate}
\end{defn}

\begin{rk}\label{rem-def-decompl-syst-equiv-cat}
    If $(\{ A_i \}_{i \in I}, \widehat{A}_\infty, \Gamma)$ is a decompletion system \Pth{\resp weak decompletion system}, then the natural functor $\varinjlim_{i \in I} \Proj_{A_i}(\Gamma) \to \Proj_{\widehat{A}_\infty}(\Gamma)$ \Pth{\resp $\varinjlim_{i \in I} \Rep_{A_i}(\Gamma) \to \Rep_{\widehat{A}_\infty}(\Gamma)$} is an equivalence of categories.  Indeed, the condition \Refenum{\ref{def-decompl-syst-1}} implies that the functor is essentially surjective, and \Refeq{\ref{eq-Hom-Gamma}} and the condition \Refenum{\ref{def-decompl-syst-2}} imply that the functor is fully faithful.  Similarly, the condition \Refenum{\ref{def-decompl-syst-2}} implies that, for any two models $(L_{i, 1}, \iota_{i, 1})$ and $(L_{i, 2}, \iota_{i, 2})$ of $L_\infty$ over $A_i$, there exists some $i' \geq i$ such that $(L_{i, 1} \otimes_{A_i} A_{i'}, \iota_{i, 1} \otimes 1) \cong (L_{i, 2} \otimes_{A_i} A_{i'}, \iota_{i, 2} \otimes 1)$ over $A_{i'}$.
\end{rk}

To give criterions when a triple $(\{ A_i \}_{i \in I}, \widehat{A}_\infty, \Gamma)$ is a \Pth{weak} decompletion system, we shall work with \Pth{nonarchimedean} Banach rings, as in \cite[\aSec 2.2]{Kedlaya/Liu:2015-RPH}.  For a Banach $A$-module $N$ and a closed $A$-submodule $M$, we shall equip $M$ with the induced norm and $N / M$ with the quotient norm.  Both are Banach $A$-modules.  The following lemma is straightforward.

\begin{lemma}\label{lem-Banach-module-split}
    Let $M \to N$ be an isometric homomorphism of Banach modules over a Banach ring $A$.  Then the following are equivalent:
    \begin{enumerate}
        \item The natural projection $\pi: N \to N / M$ admits an isometric section.

        \item The embedding $M \to N$ admits a submetric splitting $\pr: N \to M$.

        \item $N$ admits a closed $A$-submodule $L$ such that $M \oplus L \to N$ is an isometric isomorphism, where $M \oplus L$ is equipped with the supremum norm.
\end{enumerate}
\end{lemma}

If $L$ and $M$ are Banach modules over a Banach ring, we will often equip the completed tensor product $L \ho_A M$ with the \emph{product norm}, as in \cite[\aDef 2.1.10]{Kedlaya/Liu:2015-RPH}. If a Banach ring $A$ admits a continuous action by a profinite group $\Gamma$, and $M$ is a Banach $A$-module with a semilinear isometric $\Gamma$-action, then we equip the $A$-module $C^s(\Gamma, M)$ of continuous maps $\Gamma^s \to M$ with the supremum norm given by $\| f \| = \sup_{\gamma \in \Gamma^s} |f(\gamma)|$, for each degree $s$.  Then $C^\bullet(\Gamma, M)$ is a complex of Banach $A$-modules.  We will also make use of the following terminology.

\begin{defn}\label{def-unif-str-ex}
    A complex $(C^\bullet, d)$ of Banach modules over a Banach ring $A$ is called \emph{uniformly strict exact} with respect to some constant $c \geq 0$ if, for each degree $s$ and each cocycle $f \in C^s$, there exists $g \in C^{s - 1}$ such that $f = d g$ and $|g| \leq c |f|$.
\end{defn}

Now the following definition makes sense.

\begin{defn}\label{def-decompl-tuple}
    Let $(\{ A_i \}_{i \in I}, \widehat{A}_\infty, \Gamma)$ be as in the paragraph preceding Definition \ref{def-decompl-syst}.  Suppose moreover that each $A_i \to \widehat{A}_\infty$ is a closed embedding and $\Gamma$ is profinite.  We say that $(\{ A_i \}_{i \in I}, \widehat{A}_\infty, \Gamma)$ is \emph{weakly decompleting} if there exist:
    \begin{itemize}
        \item a norm $|\,\cdot\,|$ on $\widehat{A}_\infty$ making it a Banach ring \Pth{and therefore making each $A_i$ a Banach subring}; and

        \item an inverse system $\{ \Gamma_i \}_{i \in I}$ of closed normal subgroups converging to $1$ \Pth{\ie, each open neighborhood of $1$ in $\Gamma$ contains $\Gamma_i$, for some $i \in I$} such that the canonical map $\Gamma \to \Gamma / \Gamma_i$ admits a continuous section \Pth{which is not necessarily a homomorphism}, for each $i \in I$,
    \end{itemize}
    satisfying the following conditions:
    \begin{enumerate}
        \item\label{def-decompl-tuple-isometric}  The $\Gamma$-action on $\widehat{A}_\infty$ is isometric.

        \item\label{def-decompl-tuple-split}  \Pth{\emph{Splitting}.}  For each $i \in I$, the natural projection $\widehat{A}_\infty \to \widehat{A}_\infty / A_i$ admits an isometric section as Banach $A_i$-modules.

        \item\label{def-decompl-tuple-unif-str-ex}  \Pth{\emph{Uniform strict exactness}.}  There exists some $c > 0$ such that, for all $i \in I$, the complex $C^\bullet(\Gamma_i, \widehat{A}_\infty / A_i)$ is uniform strict exact with respect to $c$, as in Definition \ref{def-unif-str-ex}.  In particular, $\widehat{A}_\infty / A_i$ has totally trivial $\Gamma_i$-cohomology.
    \end{enumerate}
\end{defn}

\begin{rk}\phantomsection\label{rem-def-decompl-tuple}
    \begin{enumerate}
        \item\label{rem-def-decompl-tuple-norm}  If we keep the same choices of $\{ \Gamma_i \}_{i \in I}$, but modify the norm on $\widehat{A}_\infty$ up to equivalence such that the conditions \Refenum{\ref{def-decompl-tuple-isometric}} and \Refenum{\ref{def-decompl-tuple-split}} still hold, then the condition \Refenum{\ref{def-decompl-tuple-unif-str-ex}} also holds up to adjusting the constant $c > 0$.

        \item\label{rem-def-decompl-tuple-HS}  Since $\Gamma \to \Gamma / \Gamma_i$ admits a continuous section, the Hochschild--Serre spectral sequence holds for the continuous cohomology of the subgroup $\Gamma_i$ of $\Gamma$.

        \item\label{rem-def-decompl-tuple-split} By Lemma \ref{lem-Banach-module-split} above, the condition \Refenum{\ref{def-decompl-tuple-split}} in Definition \ref{def-decompl-tuple} is equivalent to the existence of a submetric splitting $\pr_i: \widehat{A}_\infty \Surj A_i$.  But unlike the classical Tate trace maps, the map $\pr_i$ is not required to be $\Gamma_i$-equivariant.
    \end{enumerate}
\end{rk}

Our first main result of this appendix is the following:
\begin{thm}\label{thm-decompl-weak}
    A weakly decompleting triple is a weak decompletion system.
\end{thm}

In order to obtain decompletion systems rather than weak decompletion systems, we shall consider those $A$ underlying stably uniform Huber pairs $(A, A^+)$ \Pth{as in \cite[\aDef 5.2.4]{Scholze/Weinstein:2020-BLG}} or stably uniform adic Banach rings \Pth{as in \cite[\aRem 2.8.5]{Kedlaya/Liu:2015-RPH}} over nonarchimedean fields.   For simplicity, we shall say such $A$ are \emph{stably uniform}.

\begin{defn}\label{def-decompl-tuple-stab}
    A triple $(\{ A_i \}_{i \in I}, \widehat{A}_\infty, \Gamma)$ as in the first two sentences of Definition \ref{def-decompl-tuple} is called \emph{stably decompleting} if:
    \begin{enumerate}[label=(\alph*), ref=\alph*]
        \item\label{def-decompl-tuple-stab-a} $A_i$'s and $\widehat{A}_\infty$ are stably uniform over a nonarchimedean field $k$.

        \item\label{def-decompl-tuple-stab-b} each rational subset $U$ of $\Spa(A_i, A_i^\circ)$ is stabilized by some open normal subgroup $\Gamma_U$ of $\Gamma$; and the pullback of $(\{ A_j \}_{j \geq i}, \widehat{A}_\infty, \Gamma_U)$ to each such $U$ is weakly decompleting.
    \end{enumerate}
\end{defn}

The second main result of this appendix is the following:
\begin{thm}\label{thm-decompl}
    A stably decompleting triple is a decompletion system.
\end{thm}

Now we start to prove Theorems \ref{thm-decompl-weak} and \ref{thm-decompl}.

\begin{lemma}\label{lem-basic-est-1}
    Let $(C^\bullet, d)$ be as in Definition \ref{def-unif-str-ex}.  Then, for each $f \in C^s$, there exists some $h \in C^{s - 1}$ such that $|h| \leq \max\bigl\{ c |f|, c^2 |df| \bigr\}$ and $|f - dh| \leq c |df|$.
\end{lemma}
\begin{proof}
    Since $d f$ is a cocycle, there exists some $g \in C^s$ such that $d f = d g$ and $|g| \leq c |d f|$.  Since $d (f - g) = 0$, there exists some $h \in C^{s - 1}$ such that $d h = f - g$ and $|h| \leq c |f - g| \leq \max\bigl\{ c |f|, c^2 |d f| \bigr\}$.
\end{proof}

\begin{lemma}\label{lem-weak-tot-triv}
    Let $A$ be a Banach ring with a continuous action by a profinite group $\Gamma$, and $M$ a Banach $A$-module with a semilinear isometric $\Gamma$-action.  Let $L = \oplus_{j = 1}^l A e_j$ be an object of $\Rep_A(\Gamma)$, equipped with the supremum norm.  Suppose that $C^\bullet(\Gamma, M)$ is uniformly strict exact with respect to some $c > 0$, as in Definition \ref{def-unif-str-ex}; and that there exists some $r > 1$ such that $|\gamma(e_j) - e_j| < \frac{1}{r c}$, for all $j$ and all $\gamma \in \Gamma$.  Then $C^\bullet(\Gamma, L \otimes_A M)$ is uniformly strict exact with respect to the same $c$.
\end{lemma}
\begin{proof}
    Let $f = \sum_{j = 1}^l (e_j \otimes f_j)$ be a cocycle with $f_j \in C^s(\Gamma, M)$ for all $1 \leq j \leq l$.  Note that the norm of $\sum_{j = 1}^l (e_j \otimes df_j) = \sum_{j = 1}^l (e_j \otimes d f_j) - d f = \sum_{j = 1}^l (e_j \otimes d f_j) - d \bigl(\sum_{j = 1}^l (e_j \otimes f_j) \bigr)$ is bounded by $\frac{\| f \|}{r c}$.  That is, for each $j$, we have $\| d f_j \| \leq \frac{\| f \|}{r c}$.  By Lemma \ref{lem-basic-est-1}, there exist $h_j \in C^{s - 1}(\Gamma, M)$ with $\| f_j - d h_j \| \leq \frac{\| f \|}{r}$ and $\| h_j \| \leq c \| f \|$, for all $j = 1, \ldots, l$.  Put $h = \sum_{j = 1}^l (e_j \otimes h_j)$.  Then $\| h \| \leq c \| f \|$, and the norm of $f - d h = \sum_{j = 1}^l (e_j \otimes (f_j - d h_j)) + \Bigl(\sum_{j = 1}^l (e_j \otimes d h_j) - d \bigl(\sum_{j = 1}^l (e_j \otimes h_j) \bigr)\Bigr)$ is bounded by $\frac{\| f \|}{r}$.  By iterating this process, we obtain cochains $H_1, H_2, \ldots \in C^{s - 1}(\Gamma, L \otimes_A M)$ satisfying $\| H_n \| \leq \frac{c \| f \|}{r^{n - 1}}$ and $\|f - d H_1 - \cdots - d H_n\| \leq \frac{\| f \|}{r^n}$, for all $n \geq 1$.  Then $f = d H$ for $H = \sum_{i = 1}^\infty H_i \in C^{s - 1}\bigl(\Gamma, L \otimes_A M)$, and $\| H \| \leq c \| f \|$, as desired.
\end{proof}

\begin{cor}\label{cor-weak-good-model}
    Let $(\{ A_i \}_{i \in I}, \widehat{A}_\infty, \Gamma)$ be weakly decompleting.  Let $L_i$ be a model of a finite free $\Gamma$-module $L_\infty$ over $\widehat{A}_\infty$, as in Definition \ref{def-decompl-syst}\Refenum{\ref{def-decompl-syst-1}}.  Then there exist some $i_0 \geq i$ such that $L_{i'}$ is a good model, as in Definition \ref{def-decompl-syst}\Refenum{\ref{def-decompl-syst-2}}, for each $i' \geq i_0$.
\end{cor}
\begin{proof}
    Take any basis $\{ e_j \}_{1 \leq j \leq l}$ of $L_i$ over $A_i$, and equip $L_i$ with the supremum norm, as in Lemma \ref{lem-weak-tot-triv}.  For any $r > 1$, since $\{ \Gamma_i \}_{i \in I}$ converges to $1$ in $\Gamma$, there exists some $i_0 \geq i$ such that $|\gamma(e_j) - e_j| < \frac{1}{r c}$, for all $\gamma \in \Gamma_{i_0}$ and $j$.  Hence, for each $i' \geq i_0$, by the assumption that $L_i$ is finite free, by the conditions in Definition \ref{def-decompl-tuple}, and by applying Lemma \ref{lem-weak-tot-triv} to $(A_{i'}, \Gamma_{i'}, \widehat{A}_\infty / A_{i'})$, we see that $L_i \otimes_{A_i} (\widehat{A}_\infty / A_{i'})$ has totally trivial $\Gamma_{i'}$-cohomology, and therefore has totally trivial $\Gamma$-cohomology, by the Hochschild--Serre spectral sequence \Pth{see Remark \ref{rem-def-decompl-tuple}\Refenum{\ref{rem-def-decompl-tuple-HS}}}.
\end{proof}

\begin{lemma}\label{lem-Banach-split}
    Let $A \to B$ be an isometric homomorphism of Banach rings.  If the natural projection $\pi: B \to B / A$ admits an isometric Banach $A$-module section $s: B / A \to B$, then $|\pi(b_1 b_2)| \leq \max\bigl\{|\pi(b_1)| \, |b_2|, |b_1| \, |\pi(b_2)| \bigr\}$, for all $b_1, b_2 \in B$.
\end{lemma}
\begin{proof}
    Write $b_i = a_i + s(\pi(b_i))$, for $i = 1, 2$, so that $\pi(b_1 b_2) = a_1 \pi(b_2) +  \pi(b_1) a_2 + \pi(s(\pi(b_1)) s(\pi(b_2)))$.  Then $|a_1 \pi(b_2)| \leq |a_1| \, |\pi(b_2)| \leq |b_1| \, |\pi(b_2)|$, by Lemma \ref{lem-Banach-module-split}.  Similarly, $|\pi(b_1) a_2| \leq |b_2| \, |\pi(b_1)|$.  Consequently, we have $|\pi(s(\pi(b_1)) s(\pi(b_2)))| \leq |s(\pi(b_1)) s(\pi(b_2))| \leq |(s(\pi(b_1))| \, |s(\pi(b_2))| = |\pi(b_1)| \, |\pi(b_2)|$.  Since $|\pi(b_1)| \, |\pi(b_2)| \leq \min\bigl\{ |\pi(b_1)| \, |b_2|, |b_1| \, |\pi(b_2)| \bigr\}$, the lemma follows.
\end{proof}

\begin{lemma}\label{lem-basic-est-2}
    Let $A \to B$ be a $\Gamma$-equivariant isometric homomorphism of Banach rings with isometric $\Gamma$-actions.  Assume that the natural projection $\pi: B \to B / A$ admits an isometric Banach $A$-module section $s$, and that $C^\bullet(\Gamma, B / A)$ is uniformly strict exact with respect to some constant $c \geq 1$ \Pth{as in Definition \ref{def-unif-str-ex}}.  Let $\M_l(B / A) := \M_l(B) / \M_l(A)$ \Pth{as Banach $A$-modules}.  Let $f$ be a cocycle in $C^1\bigl(\Gamma, \GL_l(B)\bigr)$.  Suppose that there exists some $r > 1$ such that $|f(\gamma) - 1| \leq \frac{1}{r c}$ for all $\gamma \in \Gamma$ and that $\| \overline{f} \| \leq \frac{1}{r c^2}$, where $\overline{f}$ is the image of $f$ in $C^1\bigl(\Gamma, \M_l(B / A)\bigr)$ \Pth{which is merely a cochain}.  \Pth{We shall also denote similar images by overlines in the proof below.}  Then $f$ is equivalent to a cocycle in $C^1\bigl(\Gamma, \GL_l(A)\bigr)$.
\end{lemma}
\begin{proof}
    We claim that there exists some $\varsigma \in \M_l(B)$ with $|\varsigma| \leq c \|\overline{f}\|$ such that the cocycle $f': \gamma \mapsto \gamma(1 + \varsigma) f(\gamma) (1 + \varsigma)^{-1}$ satisfies $|f'(\gamma) - 1| \leq \frac{1}{r c}$ for all $\gamma \in \Gamma$ and $\|\overline{f'}\| \leq \frac{\| \overline{f} \|}{r}$ in $C^1\bigl(\Gamma, \M_l(B / A)\bigr)$.  Granting the claim, by iterating this process, we can find a sequence $\varsigma_1, \varsigma_2, \ldots$ in $\M_l(B)$ with $|\varsigma_n| \leq \frac{c \| \overline{f} \|}{r^{n - 1}} \leq \frac{1}{r^n}$ such that $\bigl| \overline{\gamma(1 + \varsigma_n) \cdots \gamma(1 + \varsigma_1) f(\gamma) (1 + \varsigma_1)^{-1} \cdots (1 + \varsigma_n)^{-1}} \bigr| \leq \frac{\| \overline{f} \|}{r^n}$, for all $n \geq 1$.  Put $\varsigma_\infty = \lim_{n\rightarrow \infty} \bigl((1 + \varsigma_n) \cdots (1 + \varsigma_1)\bigr) \in \GL_l(B)$.  It follows that the cocycle $\widetilde{f}: \gamma \mapsto \gamma(\varsigma_\infty) f (\gamma) \varsigma_\infty^{-1}$ takes values in $\M_l(A) \cap \GL_l(B)$ and satisfies $|\widetilde{f}(\gamma) - 1| \leq \frac{1}{r c} \leq \frac{1}{r}$ for $\gamma \in \Gamma$.  This implies that $\widetilde{f}$ is a cocycle in $C^1\bigl(\Gamma, \GL_l(A)\bigr)$, and the lemma follows.

    It remains to prove the claim.  Note that $f(\gamma_1 \gamma_2) = \gamma_1(f(\gamma_2)) f(\gamma_1)$ because $f$ is cocycle in $C^1\bigl(\Gamma, \GL_l(B)\bigr)$.  By Lemma \ref{lem-Banach-split}, we have
    \[
        |d \overline{f}(\gamma_1, \gamma_2)| = |\overline{\gamma_1 f(\gamma_2) + f(\gamma_1) - f(\gamma_1 \gamma_2)}| = |\overline{(\gamma_1 f(\gamma_2) - 1)(f(\gamma_1) - 1)}| \leq \tfrac{\| \overline{f} \|}{r c}.
    \]
    By Lemma \ref{lem-basic-est-1} \Pth{applied to $-f$ instead}, there exists $\overline{h} \in \M_l(B / A)$ such that
    \begin{equation}\label{eq-lem-basic-est-2-lambda}
        |\overline{h}| \leq \max\bigl\{ c \| \overline{f} \|, c^2 \| d \overline{f} \|\bigr \} = c \| \overline{f} \| \leq \tfrac{1}{r c} \leq \tfrac{1}{r}
    \end{equation}
    and
    \begin{equation}\label{eq-lem-basic-est-2-g-bar}
        \| \overline{f} + d \overline{h} \| \leq c \| d \overline{f} \| \leq \tfrac{\| \overline{f} \|}{r}.
    \end{equation}
    By assumption, we can lift $\overline{h}$ to some $h \in \M_l(B)$ such that $|h| = |\overline{h}| \leq c \| \overline{f} \|$.  For $\gamma \in \Gamma$, by \Refeq{\ref{eq-lem-basic-est-2-lambda}}, we have $|\gamma(1 + h) f(\gamma) (1 + h)^{-1} - f(\gamma)| \leq |h| \leq \frac{1}{r c}$, and therefore $|\gamma(1 + h) f(\gamma) (1 + h)^{-1} -1 | \leq \frac{1}{r c}$.  Moreover, by \Refeq{\ref{eq-lem-basic-est-2-lambda}} again, we have $|\overline{\gamma(1 + h) f(\gamma) (1 + h)^{-1} - \gamma(1 + h) f(\gamma) (1 - h)}| \leq |h|^2 \leq \frac{1}{r c}\bigl(c \| \overline{f} \|\bigr) = \frac{\| \overline{f} \|}{r}$.  Also, by Lemma \ref{lem-Banach-split}, we get $|\overline{\gamma(1 + h) f(\gamma) (1 - h)} - \overline{f}(\gamma) - \gamma(\overline{h}) + \overline{h}| = |\overline{\gamma(h) (f(\gamma) - 1)} - \overline{(f(\gamma) - 1) h}) - \overline{\gamma(h) f(\gamma) h}| \leq \frac{\| \overline{f} \|}{r}$.  By combining these and \Refeq{\ref{eq-lem-basic-est-2-g-bar}}, we obtain the desired estimate $|\overline{\gamma(1 + h) f(\gamma) (1 + h)^{-1}}| \leq \frac{\| \overline{f} \|}{r}$, and the claim follows.
\end{proof}

\begin{proof}[Proof of Theorem \ref{thm-decompl-weak}]
    By Corollary \ref{cor-weak-good-model}, the condition \Refenum{\ref{def-decompl-syst-2}} in Definition \ref{def-decompl-syst} holds.  Hence, our main task is to verify the condition \Refenum{\ref{def-decompl-syst-1}} in Definition \ref{def-decompl-syst}.

    Let $L_\infty$ be a finite free $\Gamma$-module over $\widehat{A}_\infty$.  As before, by choosing an $\widehat{A}_\infty$-basis of $L_\infty$, the $\Gamma$-module structure of $L_\infty$ amounts to a cocycle $f\in C^1\bigl(\Gamma, \GL_l(\widehat{A}_\infty)\bigr)$.  By taking $i$ sufficiently large such that Lemma \ref{lem-basic-est-2} applies to the restriction of $f$ to a $1$-cocycle of $\Gamma_i$, we obtain a \Pth{free} model $L_i$ of the $\Gamma_i$-module $L_\infty$ over $A_i$.  Since the $\Gamma$-action on $L_\infty$ is $A_i$-semilinear, and since $\Gamma_i$ is normal in $\Gamma$, for each $g \in \Gamma$, the subset $g L_i := \{ g(x) : x \in L_i \}$ of $L_\infty$ is not only an $A_i$-submodule, but also a $\Gamma_i$-submodule.  Moreover, the canonical map $(g L_i) \otimes_{A_i} \widehat{A}_\infty \to L_\infty$ is an isomorphism of $\Gamma_i$-modules over $\widehat{A}_\infty$, as the canonical map $L_i \otimes_{A_i} \widehat{A}_\infty \to L_\infty$ is.  We would like to find some $i' \geq i$ such that $L_{i'} = g L_{i'}$ in $L_\infty$, for all $g \in \Gamma$.  \Pth{We emphasize that we need the same $i'$ to work for all $g \in \Gamma$.}  If so, then the semilinear $\Gamma_i$-action on $L_{i'}$ extends to a semilinear $\Gamma$-action, which makes $L_{i'}$ a model of the $\Gamma$-module $L_\infty$ over $A_{i'}$.  We shall adapt the proof of Corollary \ref{cor-weak-good-model}.

    Take any bases $\{ e_j \}_{1 \leq j \leq l}$ and $\{ e_j' \}_{1 \leq j \leq l}$ of $L_i$ and $\dual{L}_i$ over $A_i$, respectively.  Since the $\Gamma$-action on $L_\infty$ is $A_i$-semilinear, $\{ g e_j \}_{1 \leq j \leq l}$ is a basis of $g L_i$ over $A_i$, and $\{ e_j' \otimes g(e_{j'}) \}_{1 \leq j, j' \leq l}$ is a basis of $M := \dual{L}_i \otimes_{A_i} (g L_i)$ over $A_i$, for each $g \in \Gamma$.  We shall equip these modules with the supremum norms given by the chosen bases.  Then the norm on $M = \dual{L}_i \otimes_{A_i} (g L_i)$ is also the product norm.  Since $\Gamma$ acts isometrically on $A_i$, the $A_i$-semilinear map $g: L_i \to g L_i$ is also isometric.  Fix any $r > \max\{1, \frac{1}{c}\}$, and take any sufficiently large $i' \geq i$ such that $|\gamma g(e_j) - g(e_j)|_{g L_i} = |g(g^{-1} \gamma g)(e_j) - g(e_j)|_{g L_i} = |(g^{-1} \gamma g)(e_j) - e_j|_{L_i} < \frac{1}{r c}$ and $|\gamma(e_j') - e_j'|_{\dual{L}_i} < \frac{1}{r c}$, for all $g \in \Gamma$, $\gamma \in \Gamma_{i'}$, and $j$.  As a result, we have $\bigl|\gamma\bigl(e_j' \otimes g(e_{j'})\bigr) - e_j' \otimes g(e_{j'})\bigr|_M = \bigl|\bigl(\gamma(e_j') - e_j'\bigr) \otimes \bigl(\gamma g(e_{j'}) - g(e_{j'})\bigr) + e_j' \otimes \bigl(\gamma g(e_{j'}) - g(e_{j'})\bigr) + \bigl(\gamma(e_j') - e_j'\bigr) \otimes g(e_{j'})\bigr|_M < \frac{1}{r c}$.  Therefore, $(\dual{L}_i \otimes_{A_i} g L_i) \otimes_{A_i} (\widehat{A}_\infty / A_{i'})$ has totally trivial $\Gamma_i$-cohomology, by Lemma \ref{lem-weak-tot-triv} and the conditions in Definition \ref{def-decompl-tuple}.  Thus, since the canonical inclusions $L_{i'} \Em L_\infty$ and $g L_{i'} \Em L_\infty$ induce the isomorphisms $\Hom_{\Proj_{A_{i'}}(\Gamma_i)}(L_{i'}, g L_{i'}) \Mi \Hom_{\Proj_{\widehat{A}_\infty}(\Gamma_i)}(L_i \otimes_{A_i} \widehat{A}_\infty, g L_i \otimes_{A_i} \widehat{A}_\infty) \Mi \Hom_{\Proj_{\widehat{A}_\infty}(\Gamma_i)}(L_\infty, L_\infty)$ \Pth{\Refcf{} \Refeq{\ref{eq-Hom-Gamma}}}, we obtain $\Id_{L_\infty}(L_{i'}) = g L_{i'}$; \ie, $L_{i'} = g L_{i'}$ in $L_\infty$, as desired.
\end{proof}

\begin{lemma}\label{lem-cplx-cpt-Banach-surj}
    For any continuous surjective map $U \to V$ of Banach spaces over a nonarchimedean field $k$, and any compact topological space $X$, the natural map $C(X, U) \to C(X, V)$ between spaces of continuous functions on $X$ is also surjective.
\end{lemma}
\begin{proof}
    Let $f \in C(X, V)$.  Then the subspace $f(X)$ of $V$ is a compact metric space, which is separable and admits a countable dense subset.  Therefore, $f(X)$ is contained in a closed subspace $W$ of $V$ with a dense countable-dimensional $k$-subspace.  By \cite[\aSec 2.7.2, \aProp 8]{Bosch/Guntzer/Remmert:1984-NAA}, up to replacing the norm on $W$ with an equivalent one, $W$ admits an orthonormal Schauder $k$-basis $\{ e_j \}_{j \in J}$.  Thus, in order to lift $f$ to $C(X, U)$, it suffices to find some elements $\widetilde{e}_j$'s of $U$ lifting $e_j$'s such that $\sup_{j \in J}\{|\widetilde{e}_j|\} < \infty$.  Such elements $\widetilde{e}_j$'s exist because, by the open mapping theorem \Pth{see, for example, \cite[\aThm 2.2.8]{Kedlaya/Liu:2015-RPH}}, the unit ball of $W$ is contained in the image of the ball of some radius $C > 0$ of the preimage of $W$ in $U$.
\end{proof}

\begin{lemma}\label{lem-tot-triv}
    Let $(\{ A_i \}_{i \in I}, \widehat{A}_\infty, \Gamma)$ be stably decompleting.  Let $L$ be a finite projective $\Gamma$-module over $A_i$, for some $i \in I$.  Then there exists some $i_0 \geq i$ such that $L \otimes_{A_i} (\widehat{A}_\infty / A_{i'})$ has totally trivial $\Gamma$-cohomology, for each $i' \geq i_0$.
\end{lemma}
\begin{proof}
    Take any finite covering $\mathfrak{B}$ of $\Spa(A_i, A_i^\circ)$ by rational subsets over which the pullbacks of $L_i$ are free, and let $\Gamma'$ be an open normal subgroup of $\Gamma$ stabilizing every rational subset in $\mathfrak{B}$.  By Corollary \ref{cor-weak-good-model}, there exists some $i_0 \geq i$ such that, for every $i' \geq i_0$, the restrictions of $L \otimes_{A_i} (\widehat{A}_\infty / A_{i'})$ to all the rational subsets in $\mathfrak{B}$ as well as their intersections have totally trivial $\Gamma'$-cohomology.  Since $A_{i'}$ and $\widehat{A}_\infty$ are stably uniform, by \cite[\aThms 2.7.7, and 2.8.10]{Kedlaya/Liu:2015-RPH}, their \v{C}ech complexes over $\mathfrak{B}$ are acyclic.  Therefore, the \v{C}ech complex $\mathscr{C}^\bullet$ for $L \otimes_{A_i} (\widehat{A}_\infty / A_{i'})$ over $\mathfrak{B}$ is also acyclic.  Equip rational localizations of $A_{i'}$ and $\widehat{A}_\infty$ with spectral norms \Pth{as in \cite[\aDefs 2.1.9 and 2.8.1]{Kedlaya/Liu:2015-RPH}}, and $L$ with any Banach $A_i$-module structure.  Then $\mathscr{C}^\bullet$ becomes, in particular, a complex of Banach $k$-spaces.  Consider the double complex $\mathscr{C}^{\bullet, \bullet}$ with $\mathscr{C}^{\bullet, b} := C^\bullet(\Gamma', \mathscr{C}^b)$ the standard complex of continuous maps from $(\Gamma')^\bullet$ to $\mathscr{C}^b$ that computes the continuous group cohomology, which is exact, as explained above, for each $b \geq 0$; and with $\mathscr{C}^{a, \bullet}$ induced by $\mathscr{C}^\bullet$, which is acyclic, for each $a \geq 0$, by applying Lemma \ref{lem-cplx-cpt-Banach-surj} to $X = (\Gamma')^a$ and $U = \mathscr{C}^b \to V = \OP{im}(\mathscr{C}^b \to \mathscr{C}^{b + 1})$, for each $b \geq 0$.  By an elementary diagram chasing, $C^\bullet(\Gamma', H^0(\mathscr{C}^\bullet))$ is also exact.  Thus, $L \otimes_{A_i} (\widehat{A}_\infty / A_{i'}) \cong H^0(\mathscr{C}^\bullet)$ has totally trivial $\Gamma'$-cohomology, and therefore totally trivial $\Gamma$-cohomology, by the Hochschild--Serre spectral sequence, as desired.
\end{proof}

\begin{lemma}[{\Refcf{} \cite[\aLem 5.6.8]{Kedlaya/Liu:2016-RPH-2}, with simplified statements and a more detailed proof here}]\label{lem-proj-dense-subring}
    Let $\widehat{A}_\infty$ be a Banach ring, with a direct system of closed subrings $\{ A_i \}_{i \in I}$ such that $\cup_i A_i$ is dense in $\widehat{A}_\infty$.  Then each finite projective $\widehat{A}_\infty$-module arises by base change from some finite projective $A_i$-module, for some $i \in I$.
\end{lemma}
\begin{proof}
    Let $M$ be a finite projective $\widehat{A}_\infty$-module, and choose an $\widehat{A}_\infty$-linear surjection $F \to M$ with $F$ a finite free $\widehat{A}_\infty$-module.  Choose a projector on $F$ corresponding to a splitting of $F \to M$, and represent this projector by a matrix $U$ over $\widehat{A}_\infty$.  Note that $U^2 = U$ and $\bigl| U \bigr| \geq 1$.  Hence, we may choose a matrix $V$ over some $A_i$ such that $\bigl| U - V \bigr| < \bigl| U \bigr|^{-3}$, and so that $\bigl| V^2 - V \bigr| = \bigl| V (V - U) + (V - U) U + (U - V) \bigr| \leq \bigl| U - V \bigr| \bigl| U \bigr| < \bigl| U \bigr|^{-2}$.  Let us define a sequence $W_0, W_1, \ldots$ by Newton iteration, by taking $W_0 = V$ and $W_{l + 1} = 3 W_l^2 - 2 W_l^3$, for all $l \geq 0$.  Since $W_{l + 1} - W_l = (W_l^2 - W_l) (1 - 2 W_l)$ and $W_{l + 1}^2 - W_{l + 1} = (W_l^2 - W_l)^2 (4 W_l^2 - 4 W_l - 3)$, by induction on $l \geq 0$, we obtain $\bigl| W_l - U \bigr| < \bigl| V^2 - V \bigr| \bigl| U \bigr|$ and $\bigl| W_l^2 - W_l \bigr| \leq \bigl| U \bigr|^{-2} \bigl(\bigl| V^2 - V \bigr| \bigl| U \bigr|^2\bigr)^{2^l}$. Consequently, the matrices $W_l$ converge to a matrix $W$.  By the last inequality, $W^2 = W$, and so $W$ is a projector over $A_i$.  Let $F_i$ be the free $A_i$-module on the same basis as $F$.  Then $W$ represents a projector on $F_i$, whose image is a finite projective $A_i$-module which we denote by $M_i$.

    It remains to exhibit an isomorphism $M_i \otimes_{A_i} \widehat{A}_\infty \Mi M$.  Note that $M_i \otimes_{A_i} \widehat{A}_\infty$ and $M$ are the images of the projectors on $F$ represented by the matrices $U$ and $W$, respectively; and that $\bigl| U - W \bigr| < \bigl| U  \bigr|^{-2}$.  Then the matrix $X := U W + (1 - U) (1 - W)$ is invertible, because $X - 1 = U W + (1 - U) (1 - W) - 1 = 2 U W - U - W = U (W - U) + (U - W) W$ satisfies $\bigl| X - 1 \bigr| < 1$.  Since $U X = U W = X W$, the isomorphism $F \Mapn{X} F$ induces an isomorphism $M_i \otimes_{A_i} \widehat{A}_\infty \Mi M$, as desired.
\end{proof}

\begin{proof}[Proof of Theorem \ref{thm-decompl}]
    By Lemma \ref{lem-tot-triv}, the condition \Refenum{\ref{def-decompl-syst-2}} in Definition \ref{def-decompl-syst} follows from the same argument based on \Refeq{\ref{eq-Hom-Gamma}} as in the proof of Theorem \ref{thm-decompl-weak}.  It remains to verify the condition \Refenum{\ref{def-decompl-syst-1}} in Definition \ref{def-decompl-syst}, by constructing a model for each finite projective $\Gamma$-module $L_\infty$ over $\widehat{A}_\infty$.  By Lemma \ref{lem-proj-dense-subring}, $L_\infty$ is the base change to $\widehat{A}_\infty$ of a finite projective $A_i$-module $\widetilde{L}_i$ \Pth{without $\Gamma$-action}, for some $i \in I$.  Take any finite covering $\mathfrak{B}$ of $\Spa(A_i, A_i^\circ)$ by rational subsets over which the pullbacks of $\widetilde{L}_i$ are free, and let $\Gamma'$ be an open normal subgroup of $\Gamma$ stabilizing every rational subset in $\mathfrak{B}$.  By Theorem \ref{thm-decompl-weak}, for some $i' \geq i$, the \Pth{finite free} pullback of $L_\infty$ \Pth{as a $\Gamma'$-module} to each rational subset in $\mathfrak{B}$ admits a model \Pth{as a $\Gamma'$-module} over $A_{i'}$, and these models coincide on intersections of rational subsets in $\mathfrak{B}$.  Thus, they glue to a \Pth{finite projective} model $L_{i'}$ of $L_\infty$ \Pth{as a $\Gamma'$-module} over $A_{i'}$, by the Kiehl gluing property for stably uniform adic Banach rings \Pth{see, again, \cite[\aThms 2.7.7 and 2.8.10]{Kedlaya/Liu:2015-RPH}}.  For each $g \in \Gamma$, consider the $A_{i'}$-submodule $g L_{i'}$ of $L_\infty$, which is also a $\Gamma'$-submodule because $\Gamma'$ is a normal subgroup of $\Gamma$, as in the second paragraph of the proof of Theorem \ref{thm-decompl-weak}.  It suffices to show that there exists some $i'' \geq i'$ such that $L_{i''} = g L_{i''}$ in $L_\infty$ for all $g \in \Gamma$, and we may verify this after pullback to the rational subsets in $\mathfrak{B}$.  Since $\Gamma / \Gamma'$ and $\mathfrak{B}$ are both finite, this follows from Theorem \ref{thm-decompl-weak}, as desired.
\end{proof}

\begin{cor}\label{cor-decompl-char-twist}
    Let $(\{ A_i \}_{i \in I}, \widehat{A}_\infty, \Gamma)$ be weakly \Pth{\resp stably} decompleting.  Let $0 \in I$ be an initial object, and $\{ \psi_s: \Gamma \to A_0^\times \}_{s \in S}$ a collection of continuous group homomorphisms such that, for each open neighborhood $\mathscr{U}$ of $1$ in $A_0$, there exists some open neighborhood $\mathscr{V}$ of $1$ in $\Gamma$ such that $\psi_s(\mathscr{V}) \subset \mathscr{U}$ for all $s \in S$.  Let $L_\infty$ be a finite free \Pth{\resp finite projective} $\Gamma$-module over $\widehat{A}_\infty$.  For each $s \in S$, let $L_\infty(\psi_s) := L_\infty \otimes_{A_0} A_0(\psi_s)$, where $A_0(\psi_s)$ is $A_0$ equipped with the action of $\Gamma$ via $\psi_s$.  Let $L_i$ be a model of $L_\infty$ over $A_i$, for some $i \in I$.  Then we have the following:
    \begin{enumerate}
        \item\label{cor-decompl-char-twist-1} $L_i(\psi_s) := L_i \otimes_{A_0} A_0(\psi_s)$ is a model of $L_\infty(\psi_s)$ over $A_i$, for all $s \in S$.

        \item\label{cor-decompl-char-twist-2} There exists some $i_0 \geq i$ such that $L_{i'}(\psi_s) := L_{i'} \otimes_{A_0} A_0(\psi_s)$ is a good model of $L_\infty(\psi_s)$ over $A_{i'}$, for all $i' \geq i_0$ and all $s \in S$.
    \end{enumerate}
\end{cor}
\begin{proof}
    The assertion \Refenum{\ref{cor-decompl-char-twist-1}} is clear.  As for the assertion \Refenum{\ref{cor-decompl-char-twist-2}}, by the same argument as in the proof of Theorem \ref{thm-decompl}, we are reduced to the case where $L_i$ is a finite free $A_i$-module.  Given the argument in the proof of Corollary \ref{cor-weak-good-model}, it suffices to show that, if $L_i$ admits a basis $\{ e_j \}_{j \in J}$ over $A_i$ such that $|\gamma(e_j) - e_j| < \frac{1}{r c}$, for all $\gamma \in \Gamma_i$, for some $r$ and $c$ as in Lemma \ref{lem-weak-tot-triv}, then there exists some $i_0' \geq i$ such that the corresponding basis $\{ e_j' := e_j \otimes 1 \}_{j \in J}$ of $L_{i'}(\psi_s) = L_{i'} \otimes_{A_0} A_0(\psi_s)$ over $A_{i'}$ also satisfies $|\gamma(e_j') - e_j'| < \frac{1}{r c}$, for all $i' \geq i_0'$, $s \in S$, and $\gamma \in \Gamma_{i'}$.  To see this, note that $|\gamma(e_j') - e_j'| = |(\gamma(e_j) - e_j) \otimes (\psi_s(\gamma) - 1) + e_j \otimes (\psi_s(\gamma) - 1) + (\gamma(e_j) - e_j) \otimes 1|$.  Thus, it suffices to take any $i_0' \geq i$ such that $|\psi_s(\gamma) - 1| < \min\{1, \frac{1}{r c}\}$, for all $s \in S$, $\gamma \in \Gamma_{i_0'}$, and $j \in J$, which exists by our assumption on $\{ \psi_s \}_{s \in S}$.
\end{proof}

\subsection{Examples}\label{sec-decompl-ex}

We present three examples of decompletion systems.

\numberwithin{equation}{subsubsection}

\subsubsection{Arithmetic towers}\label{sec-arith-tower}

Consider $\bQ_p(\Grpmu_{p^\infty}) := \cup_{l \geq 0} \, \bQ_p(\Grpmu_{p^l})$ and $\bZ_p(\Grpmu_{p^\infty}) := \cup_{l \geq 0} \, \bZ_p(\Grpmu_{p^l})$, equipped with the $p$-adic norms extending the standard ones on $\bQ_p$ and $\bZ_p$.  Let $(A, A^+)$ be any Huber pair over $(\bQ_p, \bZ_p)$.  For each $l \geq 0$, let $(A_{p_l}, A^+_{p^l}) := \bigl(A \otimes_{\bQ_p} \bQ_p(\Grpmu_{p^l}), A^+ \otimes_{\bZ_p} \bZ_p(\Grpmu_{p^l})\bigr)$.  Let $(\widehat{A}_{p^\infty}, \widehat{A}_{p^\infty}^+)$ be the $p$-adic completion of $(\cup_{l \geq 0} \, A_{p^l}, \cup_{l \geq 0} \, A_{p^l}^+)$.  Suppose that $(A_{p^l}, A_{p^l}^+)$ and $(\widehat{A}_{p^\infty}, \widehat{A}_{p^\infty}^+)$ are stably uniform.  Let $\Gamma_{p^l} := \Gal\bigl(\bQ_p(\Grpmu_{p^\infty}) / \bQ_p(\Grpmu_{p^l})\bigr)$.  By the Tate--Sen formalism with Banach-algebra coefficients developed in \cite{Berger/Colmez:2008-frdrm}, we have the following:

\begin{prop}\label{prop-arith-decompl}
    $(\{ A_{p^l} \}_{l \geq 0}, \widehat{A}_{p^\infty}, \Gamma_1)$ is stably decompleting.
\end{prop}
\begin{proof}
    It suffices to show that any $(\{ A_{p^l} \}_{l \geq 0}, \widehat{A}_{p^\infty}, \Gamma_1)$ as above is weakly decompleting, since its pullbacks to rational localizations of $A$ satisfy the same assumptions.  We may use the product norm on $\widehat{A}_\infty \cong A \ho_{\bQ_p} \bQ_p(\Grpmu_{p^\infty})$, where $A$ is equipped with the spectral norm with unit ball $A^+$ \Pth{as in \cite[\aDefs 2.1.9 and 2.8.1]{Kedlaya/Liu:2015-RPH}} and use the open subgroups $\{ \Gamma_{p^l} \}_{l \geq 0}$ of $\Gamma_1$.  Then the condition \Refenum{\ref{def-decompl-tuple-isometric}} of Definition \ref{def-decompl-tuple} holds.  As for the condition \Refenum{\ref{def-decompl-tuple-split}} of Definition \ref{def-decompl-tuple}, it suffices to note that $\bQ_p(\Grpmu_{p^m})$ admits a norm-direct supplement in $\bQ_p(\Grpmu_{p^n})$ as normed $\bQ_p(\Grpmu_{p^l})$-vector spaces, whenever $l \leq m \leq n$, by \cite[\aSec 2.4.2, \aProp 3, and \aSec 2.4.1, \aProp 5]{Bosch/Guntzer/Remmert:1984-NAA}.  It remains to verify the condition \Refenum{\ref{def-decompl-tuple-unif-str-ex}} of Definition \ref{def-decompl-tuple}.  By \cite[\aProps 4.1.1 and 3.1.4, and, in particular, TS(3) in \aDef 3.1.3]{Berger/Colmez:2008-frdrm}, for any $c > \frac{1}{p-1}$, and for all sufficiently large $l$ \Pth{depending on $c$} and any topological generator $\gamma$ of $\Gamma_{p^l}$, the endomorphism $1-\gamma: \widehat{A}_{p^\infty} / A_{p^l} \to \widehat{A}_{p^\infty} / A_{p^l}$ admits a continuous inverse of norm $\leq p^c$.  As a result, $H^0(\Gamma_{p^l}, \widehat{A}_{p^\infty}^+ / A_{p^l}^+) = 0$, and $H^1(\Gamma_{p^l}, \widehat{A}_{p^\infty}^+ / A_{p^l}^+)$ is annihilated by $p^2$.  Since $\Gamma_{p^l}$ is procyclic, $H^i(\Gamma_{p^l}, \widehat{A}_{p^\infty}^+ / A_{p^l}^+) = 0$, for $i \geq 2$.  In this case, we claim that the condition \Refenum{\ref{def-decompl-tuple-unif-str-ex}} of Definition \ref{def-decompl-tuple} holds with $c = p^2$.  To see this, let $f$ be a cocycle in $C^i(\Gamma_{p^l}, \widehat{A}_{p^\infty} / A_{p^l})$.  Up to replacing $f$ with a scalar multiple, we may suppose that $f$ lies in $C^i(\Gamma_{p^l}, \widehat{A}_{p^\infty}^+ / A_{p^l}^+)$.  Since $H^i(\Gamma_{p^l}, \widehat{A}_{p^\infty}^+ / A_{p^l}^+)$ is annihilated by $p^2$, there exists some $h \in C^{i - 1}(\Gamma_{p^l}, \widehat{A}_{p^\infty}^+ / A_{p^l}^+)$ such that $\| h \| \leq \| f \|$ and $d h = p^2 f$.  Thus, $g := p^{-2} h \in C^{i - 1}(\Gamma_{p^l}, \widehat{A}_{p^\infty}^+ / A_{p^l}^+)$ satisfies $\| g \| \leq p^2 \| f \|$ and $d g = f$, as desired.
\end{proof}

\begin{thm}\label{thm-arith-decompl}
    $(\{ A_{p^l} \}_{l \geq 0}, \widehat{A}_{p^\infty}, \Gamma_1)$ is a decompletion system.
\end{thm}
\begin{proof}
    Combine Proposition \ref{prop-arith-decompl} and Theorem \ref{thm-decompl}.
\end{proof}

\begin{rk}\label{rem-arith-decompl-DVF}
    Let $k$ be a $p$-adic field, and let $A$ be a Banach $k$-algebra.  For each $l \geq 0$, put $A_{p^l} := A \otimes_k k(\Grpmu_{p^l})$ and $\Gamma_{p^l} := \Gal\bigl(k(\Grpmu_{p^\infty}) / k(\Grpmu_{p^l})\bigr)$.  Note that there exists some sufficiently large $l_0 \geq 0$ such that, for all $l \geq l_0$, we have $A_{p^l} \cong A_{p^{l_0}} \otimes_{\bQ_p(\Grpmu_{p^{l_0}})} \bQ_p(\Grpmu_{p^l})$, and $\Gamma_{p^l} \cong \Gal\bigl(\bQ_p(\Grpmu_{p^\infty}) / \bQ_p(\Grpmu_{p^l})\bigr)$.  In this case, by Theorem \ref{thm-arith-decompl}, $(\{ A_{p^l} \}_{l \geq 0}, \widehat{A}_{p^\infty}, \Gamma_1)$ is still a decompletion system.
\end{rk}

\subsubsection{Geometric towers}\label{sec-geom-tower}

In this example, we shall follow the setup in Section \ref{sec-OBdl-explicit}, with $X = \Spa(A, A^+)$ and $\widehat{\widetilde{X}} = \Spa(\widehat{A}_\infty, \widehat{A}_\infty^+)$, in the notation there.  Write $X_{m, \widehat{k}_\infty} = \Spa(A_{m, \widehat{k}_\infty}, A_{m, \widehat{k}_\infty}^+)$, for all $m \geq 1$, so that $(\widehat{A}_\infty, \widehat{A}_\infty^+)$ is the $p$-adic completion of $\varinjlim_m (A_{m, \widehat{k}_\infty}, A_{m, \widehat{k}_\infty}^+)$.  Let $\Gamma_1 := \Hom(P^\gp_\bQ / P^\gp, \Grpmu_\infty) \cong \Hom\bigl(P^\gp, \widehat{\bZ}(1)\bigr)$, as in \cite[(\logadiceqgeomtowerGal)]{Diao/Lan/Liu/Zhu:lasfr}, and let $\Gamma_m := \Hom(P^\gp_\bQ / \frac{1}{m} P^\gp, \Grpmu_\infty) \subset \Gamma_1$, which acts on $A_{m, \widehat{k}_\infty}$'s and $\widehat{A}_\infty$ by $\gamma T^a = \gamma(a) \, T^a$, for all $\gamma \in \Gamma_1$ and $a \in P_{\bQ_{\geq 0}}$ \Pth{and acts trivially on $A$ and $\widehat{k}_\infty$}.  Note that the actions of $\Gamma_1$ on $A_{m, \widehat{k}_\infty}$'s and $\widehat{A}_\infty$ naturally extend to $\widetilde{\Gamma} := \Gamma_1 \rtimes \Gal(k_\infty / k)$, with $\Gal(k_\infty / k)$ acting on $\Gamma_1$ via the cyclotomic character.

\begin{prop}\label{prop-geom-decompl}
    $(\{ A_{m, \widehat{k}_\infty} \}_{m \geq 1}, \widehat{A}_\infty, \widetilde{\Gamma})$ is stably decompleting.
\end{prop}
\begin{proof}
    Since $X_m$'s are reduced rigid analytic spaces over $k$, and since $\widehat{\widetilde{X}}$ is perfectoid, $A_{m, \widehat{k}_\infty}$'s are closed subrings of $\widehat{A}_\infty$ satisfying the condition \Refenum{\ref{def-decompl-tuple-stab-a}} in Definition \ref{def-decompl-tuple-stab}.  By \cite[\aLem 2.1.3]{Berkovich:2007-IPA}, each rational subspace of $X_{m, \widehat{k}_\infty}$ is the base change of a rational subspace of $X_{m, k'}$, for some $[k' : k] < \infty$, and hence is stabilized by the open subgroup $\Gamma_m \rtimes \Gal(k_\infty / k')$ of $\widetilde{\Gamma}$.  Since rational subsets of $X_m$ are also strictly \'etale over $\bE_m$, in order to verify the condition \Refenum{\ref{def-decompl-tuple-stab-b}} in Definition \ref{def-decompl-tuple-stab}, up to replacing $X$ with $X_m$, and replacing $k$ with a finite extension, it suffices to show that any $(\{ A_{m, \widehat{k}_\infty} \}_{m \geq 1}, \widehat{A}_\infty, \widetilde{\Gamma})$ as above is weakly decompleting.  We shall use the spectral norm on $\widehat{A}_\infty$, and use the subgroups $\{ \Gamma_m \}_{m \geq 1}$ of $\widetilde{\Gamma}$.  Firstly, the condition \Refenum{\ref{def-decompl-tuple-isometric}} of Definition \ref{def-decompl-tuple} is satisfied, by \cite[\aThm 2.3.10 and \aRem 2.8.3]{Kedlaya/Liu:2015-RPH}.  Secondly, according to the $\Gamma_1$-action, we have a canonical decomposition of $k_\infty^+[P]$-modules
    \begin{equation}\label{eq-kP-decomp}
        k_\infty^+[P_{\bQ_{\geq 0}}] = k_\infty^+[P] \oplus \bigl(\oplus_{\chi \neq 1} (k_\infty^+[P_{\bQ_{\geq 0}}]_\chi)\bigr),
    \end{equation}
    where $\chi$ runs over all nontrivial finite-order characters of $\Gamma_1$; and so the condition \Refenum{\ref{def-decompl-tuple-split}} of Definition \ref{def-decompl-tuple} follows from the completed tensor product of \Refeq{\ref{eq-kP-decomp}} with $A^+$.  Finally, note that \Refeq{\ref{eq-kP-decomp}} also induces \Pth{by completed tensor product as above} a $\Gamma_m$-equivariant isomorphism $\widehat{A}_\infty^+ / A_m^+ \cong \varprojlim_n \, (\oplus_{\chi \neq 1} \, M_{n, \chi})$, where $M_{n, \chi} := (A_m^+ / p^n) \otimes_{(\widehat{k}_\infty^+ / p^n)[\frac{1}{m}\overline{P}]} \bigl((k^+ / p^n)[\overline{P}_{\bQ_{\geq 0}}]_\chi\bigr)$ and the direct sum is over all nontrivial finite-order characters $\chi$ of $\Gamma_m$.  Thus, by using \cite[\aLem \logadiclemGalcohafg]{Diao/Lan/Liu/Zhu:lasfr} and proceeding as in the proof of Proposition \ref{prop-arith-decompl}, we see that the condition \Refenum{\ref{def-decompl-tuple-unif-str-ex}} of Definition \ref{def-decompl-tuple} holds with $c = \max_{m > 1}\bigl\{ |\zeta_m - 1|^{-1} \bigr\} = p^{\frac{1}{p - 1}}$, as desired.
\end{proof}

\begin{thm}\label{thm-geom-decompl}
    $(\{ A_m \}_{m \geq 1}, \widehat{A}_\infty, \widetilde{\Gamma})$ is a decompletion system.
\end{thm}
\begin{proof}
    Combine Proposition \ref{prop-geom-decompl} and Theorem \ref{thm-decompl}.
\end{proof}

\begin{rk}\label{rem-geom-decompl}
    Both Proposition \ref{prop-geom-decompl} and Theorem \ref{thm-geom-decompl} remain true if we replace $\widetilde{\Gamma}$ with a closed subgroup $\widetilde{\Gamma}'$ containing $\Gamma_1$, because if the conditions in Definitions \ref{def-decompl-tuple} and \ref{def-decompl-tuple-stab} hold for $\widetilde{\Gamma}$, then they also hold for $\widetilde{\Gamma}'$.
\end{rk}

\subsubsection{Deformation of geometric towers}\label{sec-geom-tower-deform}

In this example, we shall continue to follow the setup in Section \ref{sec-OBdl-explicit}.  Let us fix the choice of a uniformizer $\varpi$ of $k$.  For each $r \geq 1$, equip $\BdRp / \xi^r$ with the norm $|x| := \inf\bigl\{ |\varpi|^n : n \in \bZ, \, \varpi^{-n} x \in \Ainf / \xi^r \bigr\}$, for each $r \geq 1$.  This norm on $\BdRp / \xi^r$ extends the norm on $k$ and makes $\BdRp / \xi^r$ a Banach $k$-algebra.  For any toric monoid $P$, let us equip $(\BdRp / \xi^r)\Talg{P}$ and $(\BdRp / \xi^r)\Talg{\tfrac{1}{m}P}$ with the supremum norm, and equip $A \ho_k (\BdRp / \xi^r)$ and
\begin{equation}\label{eq-def-B-r-m}
    \bB_{r, m} := \bigl(A \ho_k (\BdRp / \xi^r)\bigr) \otimes_{(\BdRp / \xi^r)\Talg{P}} (\BdRp / \xi^r)\Talg{\tfrac{1}{m} P}
\end{equation}
with the product norms, as in \cite[\aDef 2.1.10]{Kedlaya/Liu:2015-RPH}.  Note that $\bB_{r, m}$ is equipped with a natural isometric action of $\Gamma_1$, by \Refeq{\ref{eq-Gamma-act-mono-alg}}.  Then $\{ \bB_{r, m} \}_{r \geq 1}$ is a direct system of Banach $k$-algebras with compatible actions of $\Gamma_1$, with completed direct limit
\[
    \widehat{\bB}_{r, \infty} := \bigl(A \ho_k (\BdRp / \xi^r)\bigr) \ho_{(\BdRp / \xi^r)\Talg{\overline{P}}} (\BdRp / \xi^r)\Talg{\overline{P}_{\bQ_{\geq 0}}}
\]
canonically isomorphic to $\BBdRp(\widetilde{X}) / \xi^r$ as topological rings, by Lemma \ref{lem-str-BdR}.  Since \Refeq{\ref{eq-kP-decomp}} induces \Pth{by completed tensor product with $A \ho_k (\BdRp / \xi^r)$} submetric surjections $\widehat{\bB}_{r, \infty} \to \bB_{r, m}$, the system $\{ \bB_{r, m} \}_{r \geq 1}$ satisfies the condition \Refenum{\ref{def-decompl-tuple-split}} in Definition \ref{def-decompl-tuple}.  Moreover, when $r = 1$, the topological rings $\bB_{1, m}$ and $\widehat{\bB}_{1, \infty}$ can be identified with $A_{m, \widehat{k}_\infty}$ and $\widehat{A}_\infty$, respectively, with compatibly equivalent norms.  Therefore, by Remarks \ref{rem-geom-decompl} and \ref{rem-def-decompl-tuple}\Refenum{\ref{rem-def-decompl-tuple-norm}}, if we use the same subgroups $\{ \Gamma_m \}_{m \geq 1}$ of $\Gamma_1$ as in the proof of Proposition \ref{prop-geom-decompl}, then $(\{ \bB_{1, m} \}_{m \geq 1}, \widehat{\bB}_{1, \infty}, \Gamma_1)$ is stably decompleting, and hence is a decompletion system, as $(\{ A_{m, \widehat{k}_\infty} \}_{m \geq 1}, \widehat{A}_\infty, \Gamma_1)$ is.

\begin{lemma}\label{lem-sect-theta}
    The natural projection $\theta: \BdRp / \xi^r \to \widehat{k}_\infty$ admits a section $s$ in the category of $k$-Banach spaces whose operator seminorm satisfies $|s| \leq 2 |\varpi|^{-1}$.
\end{lemma}
\begin{proof}
    By \cite[\aSec 2.7.2, \aProp 3]{Bosch/Guntzer/Remmert:1984-NAA}, we can find a Schauder $k$-basis $\{ e_j \}_{j \in J}$ of $\widehat{k}_\infty$ such that $\max_{j \in J}\bigl\{ |b_j e_j| \bigr\} \leq 2 \bigl|\sum_{j \in J} b_j e_j\bigr|$, for every convergent sum $\sum_{j \in J} b_j e_j$.  Moreover, we can rescale $e_j$ such that $|\varpi| < |e_j| \leq 1$ for all $j \in J$, and lift each $e_j$ to some element $\widetilde{e}_j$ in $\Ainf$.  Then we can define the desired section $s$ by mapping each convergent sum $\sum_{j \in J} b_j e_j$ to $\sum_{j \in J} b_j \widetilde{e}_j$.
\end{proof}

\begin{prop}\label{prop-BdR-decompl}
    $(\{ \bB_{r, m} \}_{m \geq 1}, \widehat{\bB}_{r, \infty}, \Gamma_1)$ is weakly decompleting, for all $r \geq 1$.
\end{prop}
\begin{proof}
    With the chosen norms, and with the open subgroups $\{ \Gamma_m \}_{m \geq 1}$ of $\Gamma_1$, it remains to verify the uniform strict exactness condition.  As explained above, $(\{ \bB_{1, m} \}_{m \geq 1}, \widehat{\bB}_{1, \infty}, \Gamma_1)$ is stably \Pth{and hence weakly} decompleting, which satisfies the condition with some constant $c \geq 1$ \Pth{when we equip norms compatibly as above and use the same subgroups $\{ \Gamma_m \}_{m \geq 1}$ of $\Gamma_1$}.  We shall show by induction that $(\{ \bB_{r, m} \}_{m \geq 1}, \widehat{\bB}_{r, \infty}, \Gamma_1)$ satisfies the condition with the constant $(2 |\varpi|^{-1})^{r - 1} c^r$, starting with the known case $r = 1$.  For each $r > 1$, let $f$ be a cocycle in $C^\bullet(\Gamma_m, \widehat{\bB}_{r, \infty} / \bB_{r, m})$.  Then its image $\overline{f}$ in $C^\bullet(\Gamma_m, \widehat{\bB}_{1, \infty} / \bB_{1, m})$ satisfies $\| \overline{f} \| \leq \| f \|$.  Let $\overline{g}$ be a cochain satisfying  $d \overline{g} = \overline{f}$ with $\| \overline{g} \| \leq c \| \overline{f} \| \leq c \| f \|$.  By Lemma \ref{lem-sect-theta}, we can lift $\overline{g}$ to a cochain $\widetilde{g} \in C^\bullet(\Gamma_m, \widehat{\bB}_{r, \infty} / \bB_{r, m})$ with $\|\widetilde{g}\| \leq 2 |\varpi|^{-1} \|\overline{g}\| \leq 2 |\varpi|^{-1} c \| f \|$.  Accordingly, there is a cochain $f_1 \in C^\bullet(\Gamma_m, \widehat{\bB}_{r - 1, \infty} / \bB_{r - 1, m})$ such that $f - d \widetilde{g} = \xi f_1$ via the isometry $\BdRp / \xi^{r - 1} \cong \xi \BdRp / \xi^r$ induced by multiplication by $\xi$, and $\| f_1 \| = \| \xi f_1 \| = \| f - d \widetilde{g} \| \leq 2 |\varpi|^{-1} c \| f \|$.  By the inductive hypothesis, we can find a cochain $g_1 \in C^\bullet(\Gamma_m, \widehat{\bB}_{r - 1, \infty} / \bB_{r - 1, m})$ satisfying $d g_1 = f_1$ with $\| g_1 \| \leq (2 |\varpi|^{-1})^{r - 2} c^{r - 1} \| f_1 \| \leq (2 |\varpi|^{-1})^{r - 1} c^r \| f \|$.  Now put $g := \widetilde{g} + \xi g_1$; here again $\xi g_1$ is a cochain in $C^\bullet(\Gamma_m, \widehat{\bB}_{r, \infty} / \bB_{r, m})$ via the isometry $\BdRp / \xi^{r - 1} \cong \xi \BdRp / \xi^r$.  Then we have $d g = f$ and $\| g \| \leq \max\bigl\{ \| \widetilde{g} \|, \| g_1 \| \bigr\} \leq (2 |\varpi|^{-1})^{r - 1} c^r \| f \|$, as desired.
\end{proof}

\begin{thm}\label{thm-BdR-decompl}
    $(\{ \bB_{r, m} \}_{m \geq 1}, \widehat{\bB}_{r, \infty}, \Gamma)$ is a decompletion system, for all $r \geq 1$.
\end{thm}
\begin{proof}
    Let $L_\infty$ be a finite projective $\Gamma$-module over $\widehat{\bB}_{r, \infty}$.  Note that, if $(L_m, \iota_m)$ is a model of $L_\infty$ over $\bB_{r, m}$, then $(\xi^{s - 1} L_m / \xi^s L_m, \overline{\iota}_m)$ is a model of $L_\infty / \xi L_\infty$ over $\bB_{1, m}$, for all $1 \leq s \leq r$.  Since $(\{ \bB_{1, m} \}_{m \geq 1}, \widehat{\bB}_{1, \infty}, \Gamma)$ is a decompletion system, there exists some sufficiently divisible $m' \geq 1$ such that, for all $1 \leq s \leq r$, the base change of $(\xi^{s - 1} L_m / \xi^s L_m, \overline{\iota}_m)$ to $\bB_{1, m'}$ is a good model.  Thus, the base change of $(L_m, \iota_m)$ to $\bB_{r, m'}$ is a good model, and we have verified the condition \Refenum{\ref{def-decompl-syst-2}} in Definition \ref{def-decompl-syst}.  Moreover, by the same argument based on \Refeq{\ref{eq-Hom-Gamma}} as in the proof of Theorem \ref{thm-decompl-weak}, this property also ensures that any two models over $\bB_{r, m}$ becomes identical in $L_\infty$ after base change to $\bB_{r, m'}$ for some sufficiently large multiple $m'$ of $m$.

    It remains to show the existence of a model of $L_\infty$.  Firstly, by the same argument as in the proof of Theorem \ref{thm-decompl}, for some $m \geq 1$, we can find a finite covering of $(X_m)_{\widehat{k}_\infty}$ by rational subsets over which $L_\infty / \xi L_\infty$ are free.  \Pth{More precisely, we mean $L_\infty / \xi L_\infty$ is free over the pullbacks of $\widehat{\widetilde{X}}$ to these rational subsets.  Since rational subsets of $X_m$ are also strictly \'etale over $\bE_m$, we still have compatible actions of $\Gamma_m$ on such pullbacks.  For simplicity, we shall adopt a similar language in the following.}  As explained in the proof of Proposition \ref{prop-geom-decompl}, by \cite[\aLem 2.1.3]{Berkovich:2007-IPA}, there is an open subgroup of $\Gal(k_\infty / k)$ stabilizing all rational subsets in the above finite covering.  Hence, up to replacing $X$ with $X_m$, and replacing $k$ with a finite extension, we may assume that there exists a finite covering $X = \cup_{i \in I} \, \Spa(R_i, R_i^+)$ by rational subsets such that $L_\infty / \xi L_\infty$ is free over each $\Spa(R_i, R_i^+)_{\widehat{k}_\infty}$.  Since $\xi$ is a nilpotent element, the base change $L_{\infty, i}$ of $L_\infty$ under $A \ho_k (\BdRp / \xi^r) \to R_i \ho_k (\BdRp / \xi^r)$ is also free.  By Proposition \ref{prop-BdR-decompl} and Theorem \ref{thm-decompl-weak}, for some $m \geq 1$, each $L_{\infty, i}$ admits a free model $(L_{m, i}, \iota_{m, i})$ over $\bigl(R_i \ho_k (\BdRp / \xi^r)\bigr) \otimes_{(\BdRp / \xi^r)\Talg{P}} (\BdRp / \xi^r)\Talg{\frac{1}{m} P}$, and we may assume that these models coincide on the intersections of rational subsets in the covering.  Thus, by \cite[\aProp 3.3]{Liu/Zhu:2017-rrhpl}, these models glue to a model of $L_\infty$, as desired.
\end{proof}



\begin{thebibliography}{DMOS82}

\bibitem[AB01]{Andre/Baldassarri:2001-DDA}
Y.~Andr{\'e} and F.~Baldassarri, \emph{De {Rham} cohomology of differential
  modules on algebraic varieties}, Progress in Mathematics, vol. 189,
  Birkh{\"a}user, Basel, Boston, Berlin, 2001.

\bibitem[ABC20]{Andre/Baldassarri/Cailotto:2020-DDA(2)}
Y.~Andr{\'e}, F.~Baldassarri, and M.~Cailotto, \emph{De {Rham} cohomology of
  differential modules on algebraic varieties}, 2nd ed., Progress in
  Mathematics, vol. 189, Birkh{\"a}user, Basel, Boston, Berlin, 2020.

\bibitem[AGT16]{Abbes/Gros/Tsuji:2016-pSC}
A.~Abbes, M.~Gros, and T.~Tsuji, \emph{The {$p$}-adic simpson correspondence},
  Annals of Mathematics Studies, vol. 193, Princeton University Press,
  Princeton, 2016.

\bibitem[AGV73]{SGA:4}
M.~Artin, A.~Grothendieck, and J.-L. Verdier (eds.), \emph{Th{\'e}orie des
  topos et cohomologie {\'e}tale des sch{\'e}mas \parenthesis{{SGA} 4}},
  Lecture Notes in Mathematics, vol. 269, 270, 305, Springer-Verlag, Berlin,
  Heidelberg, New York, 1972, 1972, 1973.

\bibitem[AI13]{Andreatta/Iovita:2013-cisfs}
F.~Andretta and A.~Iovita, \emph{Comparison isomorphisms for smooth formal
  schemes}, J. Inst. Math. Jussieu \textbf{12} (2013), no.~1, 77--151.

\bibitem[AN54]{Akizuki/Nakano:1954-nkspl}
Y.~Akizuki and S.~Nakano, \emph{Note on {Kodaira}-{Spencer}'s proof of
  {Lefschetz}'s theorem}, Proc. Japan Acad. \textbf{30} (1954), no.~4,
  266--272.

\bibitem[Bal88]{Baldassarri:1988-ccapc-2}
F.~Baldassarri, \emph{Comparaison entre la cohomologie alg{\'e}brique et la
  cohomologie {$p$}-adique rigide {\`a} coefficients dans un module
  diff{\'e}rentiel. {II}. {Cas} de singularit{\'e}s r{\'e}guli{\`e}rs {\`a}
  plusieurs variables}, Math. Ann. \textbf{280} (1988), 417--439.

\bibitem[BC08]{Berger/Colmez:2008-frdrm}
L.~Berger and P.~Colmez, \emph{Familles de repr{\'e}sentations de de {Rham} et
  monodromie {$p$}-adique}, Repr{\'e}sentations {$p$}-adiques de groupes
  {$p$}-padiques {I}: representations galoisiennes et {$(\varphi,
  \Gamma)$}-modules (L.~Berger, C.~Breuil, and P.~Colmez, eds.),
  Ast{\'e}risque, no. 319, Soci{\'e}t{\'e} Math{\'e}matique de France, Paris,
  2008, pp.~303--337.

\bibitem[Bei87]{Beilinson:1987-hgps}
A.~A. Beilinson, \emph{How to glue perverse sheaves}, {$K$}-theory, arithmetic
  and geometry \parenthesis{{Moscow}, 1984--1986}, Lecture Notes in
  Mathematics, vol. 1289, Springer-Verlag, Berlin, Heidelberg, New York, 1987,
  pp.~42--51.

\bibitem[Bei12]{Beilinson:2012-ppddc}
A.~Beilinson, \emph{{$p$}-adic periods and derived de {Rham} cohomology}, J.
  Amer. Math. Soc. \textbf{25} (2012), no.~3, 715--738.

\bibitem[Ber07]{Berkovich:2007-IPA}
V.~G. Berkovich, \emph{Integration of one-forms on {$p$}-adic analytic spaces},
  Annals of Mathematics Studies, vol. 162, Princeton University Press,
  Princeton, 2007.

\bibitem[BGR84]{Bosch/Guntzer/Remmert:1984-NAA}
S.~Bosch, U.~G{\"u}ntzer, and R.~Remmert, \emph{Non-archimedean analysis: {A}
  systematic approach to rigid analytic geometry}, Grundlehren der
  mathematischen Wissenschaften, vol. 261, Springer-Verlag, Berlin, Heidelberg,
  New York, 1984.

\bibitem[Bla94]{Blasius:1994-phcav}
D.~Blasius, \emph{A {$p$}-adic property of {Hodge} classes on abelian
  varieties}, Motives (U.~Jannsen, S.~Kleiman, and J.-P. Serre, eds.),
  Proceedings of Symposia in Pure Mathematics, vol. 55, Part 2, American
  Mathematical Society, Providence, Rhode Island, 1994, pp.~293--308.

\bibitem[Bor69]{Borel:1969-IGA}
A.~Borel, \emph{Introduction aux groupes arithm{\'e}tiques}, Publications de
  l'Institut de Math{\'e}matique de l'Universit{\'e} de Strasbourg {XV},
  Actualit{\'e}s scientifiques et industrielles, vol. 1341, Hermann, Paris,
  1969.

\bibitem[Bor84]{Borovoi:1983/84-lcccs}
M.~V. Borovo{\u\i}, \emph{Langlands' conjecture concerning conjugation of
  connected {Shimura} varieties}, Selecta Math. Soviet. \textbf{3} (1983/84),
  no.~1, 3--39.

\bibitem[Bri08]{Brinon:2008-RPR}
O.~Brinon, \emph{Repr{\'e}sentations {$p$}-adiques cristallines et de de {Rham}
  dans le cas relatif}, M{\'e}m. Soc. Math. Fr. (N.S.) \textbf{112} (2008),
  vi+159.

\bibitem[CKS87]{Cattani/Kaplan/Schmid:1987-vhsam}
E.~Cattani, A.~Kaplan, and W.~Schmid, \emph{Variations of polarized {Hodge}
  structure: asymptotics and monodromy}, Hodge theory (E.~Cattani,
  F.~Guill{\'e}n, A.~Kaplan, and F.~Puerta, eds.), Lecture Notes in
  Mathematics, vol. 1246, Springer-Verlag, Berlin, Heidelberg, New York, 1987,
  pp.~16--31.

\bibitem[CN17]{Colmez/Niziol:2017-scpnc}
P.~Colmez and W.~Nizio{\l}, \emph{Syntomic complexes and {$p$}-adic nearby
  cycles}, Invent. Math. \textbf{208} (2017), no.~1, 1--108.

\bibitem[Con99]{Conrad:1999-icrs}
B.~Conrad, \emph{Irreducible components of rigid spaces}, Ann. Inst. Fourier.
  Grenoble \textbf{49} (1999), no.~2, 473--541.

\bibitem[Del70]{Deligne:1970-EDR}
P.~Deligne, \emph{Equations diff{\'e}rentielles {\`a} points singuliers
  r{\'e}guliers}, Lecture Notes in Mathematics, vol. 163, Springer-Verlag,
  Berlin, Heidelberg, New York, 1970.

\bibitem[Del79]{Deligne:1979-vsimc}
\bysame, \emph{Vari{\'e}t{\'e}s de {Shimura}: {Interpr{\'e}tation} modulaire,
  et techniques de construction de mod{\`e}les canoniques}, Automorphic Forms,
  Representations and {$L$}-Functions (A.~Borel and W.~Casselman, eds.),
  Proceedings of Symposia in Pure Mathematics, vol. 33, Part 2, American
  Mathematical Society, Providence, Rhode Island, 1979, pp.~247--290.

\bibitem[DI87]{Deligne/Illusie:1987-rdcdr}
P.~Deligne and L.~Illusie, \emph{Rel{\`e}vements modulo {$p^2$} et
  d{\'e}compositions du complex de {de Rham}}, Invent. Math. \textbf{89}
  (1987), 247--270.

\bibitem[DLLZ]{Diao/Lan/Liu/Zhu:lasfr}
H.~Diao, K.-W. Lan, R.~Liu, and X.~Zhu, \emph{Logarithmic adic spaces: some
  foundational results}, arXiv:1912.09836.

\bibitem[DMOS82]{Deligne/Milne/Ogus/Shih:1982-HMS}
P.~Deligne, J.~S. Milne, A.~Ogus, and K.~Shih, \emph{Hodge cycles, motives, and
  {Shimura} varieties}, Lecture Notes in Mathematics, vol. 900,
  Springer-Verlag, Berlin, Heidelberg, New York, 1982.

\bibitem[DP94]{Deligne/Pappas:1994-smhcd}
P.~Deligne and G.~Pappas, \emph{Singularit{\'e}s des espaces de modules de
  {Hilbert}, en les caract{\'e}ristiques divisant le discriminant}, Compositio
  Math. \textbf{90} (1994), 59--79.

\bibitem[EV92]{Esnault/Viehweg:1992-LVT-B}
H.~Esnault and E.~Viehweg, \emph{Lectures on vanishing theorems}, DMV Seminar,
  vol.~20, Birkh{\"a}user Verlag, Basel, 1992.

\bibitem[Fal89]{Faltings:1989-ccpgr}
G.~Faltings, \emph{Crystalline cohomoloy and {$p$}-adic
  {Galois}-representations}, Algebraic Analysis, Geometry, and Number Theory
  (J.-I. Igusa, ed.), The Johns Hopkins University Press, Baltimore, 1989,
  pp.~25--80.

\bibitem[Fal02]{Faltings:2002-aee}
\bysame, \emph{Almost \'etale extensions}, Cohomologies {$p$}-adiques et
  applications arithm{\'e}tiques {(II)} (P.~Berthelot, J.-M. Fontaine,
  L.~Illusie, K.~Kato, and M.~Rapoport, eds.), Ast{\'e}risque, no. 279,
  Soci{\'e}t{\'e} Math{\'e}matique de France, Paris, 2002, pp.~185--270.

\bibitem[Fal05]{Faltings:2005-psc}
\bysame, \emph{A {$p$}-adic {Simpson} correspondence}, Adv. Math. \textbf{198}
  (2005), no.~2, 847--862.

\bibitem[FM97]{Fontaine/Mazur:1993-ggr}
J.-M. Fontaine and B.~Mazur, \emph{Geometric {Galois} representations},
  Elliptic Curves, Modular Forms \& {Fermat}'s Last Theorem (J.~Coates and
  S.~T. Yau, eds.), International Press, Cambridge, Massachusetts, 1997,
  pp.~190--227.

\bibitem[Har89]{Harris:1989-ftcls}
M.~Harris, \emph{Functorial properties of toroidal compactifications of locally
  symmetric varieties}, Proc. London Math. Soc. (3) \textbf{59} (1989), 1--22.

\bibitem[Har90]{Harris:1990-afcvs}
\bysame, \emph{Automorphic forms and the cohomology of vector bundles on
  {Shimura} varieties}, Automorphic Forms, {Shimura} Varieties, and
  {$L$}-Functions. {Volume} {II} (L.~Clozel and J.~S. Milne, eds.),
  Perspectives in Mathematics, vol.~11, Academic Press Inc., Boston, 1990,
  pp.~41--91.

\bibitem[Hel01]{Helgason:2001-DLS}
S.~Helgason, \emph{Differential geometry, {Lie} groups, and symmetric spaces},
  Graduate Studies in Mathematics, vol.~34, American Mathematical Society,
  Providence, Rhode Island, 2001.

\bibitem[Hir64a]{Hironaka:1964-rsavz-1}
H.~Hironaka, \emph{Resolution of singularities of an algebraic variety over a
  field of characteristic zero: {I}}, Ann. Math. (2) \textbf{79} (1964), no.~1,
  109--203.

\bibitem[Hir64b]{Hironaka:1964-rsavz-2}
\bysame, \emph{Resolution of singularities of an algebraic variety over a field
  of characteristic zero: {II}}, Ann. Math. (2) \textbf{79} (1964), no.~2,
  205--326.

\bibitem[Hub96]{Huber:1996-ERA}
R.~Huber, \emph{{\'E}tale cohomology of rigid analytic varieties and adic
  spaces}, Aspects of Mathematics, vol. E30, Friedr. Vieweg \& Sohn,
  Braunschweig/Wiesbaden, 1996.

\bibitem[IIK21]{Ito/Ito/Koshikawa:2021-ckfat}
K.~Ito, T.~Ito, and T.~Koshikawa, \emph{{CM} liftings of {$K3$} surfaces over
  finite fields and their applicatons to the tate conjecture}, Forum Math.
  Sigma \textbf{9} (2021), e29, 70.

\bibitem[IKN05]{Illusie/Kato/Nakayama:2005-qulrh}
L.~Illusie, K.~Kato, and C.~Nakayama, \emph{Quasi-unipotent logarithmic
  {Riemann}--{Hilbert} correspondences}, J. Math. Sci. Univ. Tokyo \textbf{12}
  (2005), no.~1, 1--66.

\bibitem[IKN07]{Illusie/Kato/Nakayama:2007-qulrh-err}
\bysame, \emph{Erratum to: ``{Quasi}-unipotent logarithmic {Riemann}--{Hilbert}
  correspondences''}, J. Math. Sci. Univ. Tokyo \textbf{14} (2007), no.~1,
  113--116.

\bibitem[Ill90]{Illusie:1990-rsdcd}
Luc Illusie, \emph{R{\'e}duction semi-stable et d{\'e}composition de complexes
  de {de Rham}}, Duke Math. J. \textbf{60} (1990), no.~1, 139--185.

\bibitem[Kat71]{Katz:1970-rtag}
N.~M. Katz, \emph{The regularity theorem in algebraic geometry}, Actes du
  {Congr{\`e}s} {International} des {Math{\'e}maticiens}, 1970, publi{\'e}s
  sous la direction du {Comit{\'e}} d'{Organisation} du {Congr{\`e}s},
  Gauthier-Villars, Paris, 1971, pp.~437--443.

\bibitem[Kis99]{Kisin:1999-afzlc}
M.~Kisin, \emph{Analytic functions on {Zariski} open sets, and local
  cohomology}, J. Reine Angew. Math. \textbf{506} (1999), 117--144.

\bibitem[Kis02]{Kisin:2002-pspec}
\bysame, \emph{Potential semi-stability of {$p$}-adic \'etale cohomology},
  Israel J. Math. \textbf{129} (2002), 157--173.

\bibitem[KL15]{Kedlaya/Liu:2015-RPH}
K.~S. Kedlaya and R.~Liu, \emph{Relative {$p$}-adic {Hodge} theory:
  Foundations}, Ast{\'e}risque, no. 371, Soci{\'e}t{\'e} Math{\'e}matique de
  France, Paris, 2015.

\bibitem[KL16]{Kedlaya/Liu:2016-RPH-2}
\bysame, \emph{Relative {$p$}-adic {Hodge} theory, {II}: {Imperfect} period
  rings}, preprint, 2016.

\bibitem[Kod53]{Kodaira:1953-odgma}
K.~Kodaira, \emph{On a differential-geometric method in the theory of analytic
  stacks}, Proc. Nat. Acad. Sci. USA \textbf{39} (1953), no.~12, 1268--1273.

\bibitem[K{\"o}p74]{Kopf:1974-efava}
U.~K{\"o}pf, \emph{{\"U}ber eigentliche {Familien} algebraischer
  {Variet{\"a}ten} {\"u}ber affinoiden {R{\"a}umen}}, Schr. Math. Inst. Univ.
  M{\"u}nster (2) \textbf{7} (1974), iv+72.

\bibitem[Lan16a]{Lan:2016-hkp}
K.-W. Lan, \emph{Higher {Koecher's} principle}, Math. Res. Lett. \textbf{23}
  (2016), no.~1, 163--199.

\bibitem[Lan16b]{Lan:2016-vtcac}
\bysame, \emph{Vanishing theorems for coherent automorphic cohomology}, Res.
  Math. Sci. \textbf{3} (2016), article no.~39, 43 pp.

\bibitem[Lan17]{Lan:2017-ebisv}
\bysame, \emph{An example-based introduction to {Shimura} varieties}, preprint,
  2017, to appear in the proceedings of the ETHZ Summer School on Motives and
  Complex Multiplication.

\bibitem[LLZ]{Lan/Liu/Zhu:dcpdr}
K.-W. Lan, R.~Liu, and X.~Zhu, \emph{De {Rham} comparison and {Poincar\'e}
  duality for rigid varieties}, arXiv:1912.13030.

\bibitem[LP19]{Li/Pan:2019-ldrco}
S.~Li and X.~Pan, \emph{Logarithmic de {Rham} comparison for open rigid
  spaces}, Forum Math. Sigma \textbf{7} (2019), e32, 53.

\bibitem[LS12]{Lan/Suh:2012-vttac}
K.-W. Lan and J.~Suh, \emph{Vanishing theorems for torsion automorphic sheaves
  on compact {PEL}-type {Shimura} varieties}, Duke Math. J. \textbf{161}
  (2012), no.~6, 1113--1170.

\bibitem[LS13]{Lan/Suh:2013-vttag}
\bysame, \emph{Vanishing theorems for torsion automorphic sheaves on general
  {PEL}-type {Shimura} varieties}, Adv. Math. \textbf{242} (2013), 228--286.

\bibitem[LS14]{Lan/Stroh:2014-rcfps}
K.-W. Lan and B.~Stroh, \emph{Relative cohomology of cuspidal forms on
  {PEL}-type {Shimura} varieties}, Algebra Number Theory \textbf{8} (2014),
  no.~8, 1787--1799.

\bibitem[LS18a]{Lan/Stroh:2018-ncaes}
\bysame, \emph{Nearby cycles of automorphic \'etale sheaves}, Compos. Math.
  \textbf{154} (2018), no.~1, 80--119.

\bibitem[LS18b]{Lan/Stroh:2018-ncaes-2}
\bysame, \emph{Nearby cycles of automorphic \'etale sheaves, {II}}, Cohomology
  of Arithmetic Groups: On the Occasion of Joachim Schwermer's 66th birthday,
  Bonn, Germany, June 2016 (J.~Cogdell, G.~Harder, S.~Kudla, and F.~Shahidi,
  eds.), Springer Proceedings in Mathematics \& Statistics, vol. 245, Springer
  International Publishing, 2018, pp.~83--106.

\bibitem[LZ17]{Liu/Zhu:2017-rrhpl}
R.~Liu and X.~Zhu, \emph{Rigidity and a {Riemann}--{Hilbert} correspondence for
  {$p$}-adic local systems}, Invent. Math. \textbf{207} (2017), 291--343.

\bibitem[Mar91]{Margulis:1991-DSL}
G.~A. Margulis, \emph{Discrete subgroups of semisimple {Lie} groups},
  Ergebnisse der Mathematik und ihrer Grenzgebiete, 3. Folge, vol.~17,
  Springer-Verlag, Berlin, Heidelberg, New York, 1991.

\bibitem[Mil83]{Milne:1983-aacsv}
J.~S. Milne, \emph{The action of an automorphism of {$\mathbf{C}$} on a
  {Shimura} variety and its special points}, Arithmetic and Geometry, {Vol}.
  {I}. Arithmetic. {Papers} dedicated to {I}. {R}. {Shafarevich} on the
  occasion of his sixtieth birthday (M.~Artin and J.~Tate, eds.), Progress in
  Mathematics, vol.~35, Birkh{\"a}user, Boston, 1983, pp.~239--265.

\bibitem[Mil90]{Milne:1990-cmsab}
\bysame, \emph{Canonical models of (mixed) {Shimura} varieties and automorphic
  vector bundles}, Automorphic Forms, {Shimura} Varieties, and {$L$}-Functions.
  {Volume} {I} (L.~Clozel and J.~S. Milne, eds.), Perspectives in Mathematics,
  vol.~10, Academic Press Inc., Boston, 1990, pp.~283--414.

\bibitem[Mil05]{Milne:2005-isv}
\bysame, \emph{Introduction to {Shimura} varieties}, Harmonic Analysis, The
  Trace Formula, and {Shimura} Varieties (J.~Arthur, D.~Ellwood, and
  R.~Kottwitz, eds.), Clay Mathematics Proceedings, vol.~4, American
  Mathematical Society, Providence, Rhode Island, Clay Mathematics Institute,
  Cambridge, Massachusetts, 2005, pp.~265--378.

\bibitem[Nag62]{Nagata:1962-iavcv}
M.~Nagata, \emph{Imbedding of an abstract variety in a complete variety}, J.
  Math. Kyoto Univ. \textbf{2} (1962), 1--10.

\bibitem[Niz08]{Niziol:2008-scvkt}
W.~Nizio{\l}, \emph{Semistable conjecture via {$K$}-theory}, Duke Math. J.
  \textbf{141} (2008), no.~1, 151--178.

\bibitem[Niz09]{Niziol:2009-upapm}
\bysame, \emph{On uniqueness of {$p$}-adic period morphisms}, Pure Appl. Math.
  Q. \textbf{5} (2009), no.~1, 163--212.

\bibitem[Pat19]{Patrikis:2019-VTT}
S.~Patrikis, \emph{Variations on a theorem of {Tate}}, vol. 258, Memoirs of the
  American Mathematical Society, no. 1238, American Mathematical Society,
  Providence, Rhode Island, March 2019.

\bibitem[Pin89]{Pink:1989-Ph-D-Thesis}
R.~Pink, \emph{Arithmetic compactification of mixed {Shimura} varieties}, Ph.D.
  thesis, Rheinischen Friedrich-Wilhelms-Universit{\"a}t, Bonn, 1989.

\bibitem[PR94]{Platonov/Rapinchuk:1994-AGN}
V.~Platonov and A.~Rapinchuk, \emph{Algebraic groups and number theory}, Pure
  and Applied Mathematics, vol. 139, Academic Press Inc., New York and London,
  1994, translated by Rachel Rowen.

\bibitem[Rei10]{Reich:2010-nbhgp}
R.~Reich, \emph{Notes on {Beilinson}'s ``{How} to glue perverse sheaves''}, J.
  Singul. \textbf{1} (2010), 94--115.

\bibitem[Sai88]{Saito:1988-mhp}
M.~Saito, \emph{Modules de {Hodge} polarisables}, Publ. Res. Inst. Math. Sci.
  \textbf{24} (1988), no.~6, 849--995.

\bibitem[Sai90]{Saito:1990-mhm}
\bysame, \emph{Mixed {Hodge} modules}, Publ. Res. Inst. Math. Sci. \textbf{26}
  (1990), no.~2, 221--333.

\bibitem[Sch73]{Schmid:1973-vhssp}
W.~Schmid, \emph{Variation of {Hodge} structure: the singularity of the period
  mapping}, Invent. Math. \textbf{22} (1973), 211--319.

\bibitem[Sch12]{Scholze:2012-ps}
P.~Scholze, \emph{Perfectoid spaces}, Publ. Math. Inst. Hautes {\'E}tud. Sci.
  \textbf{116} (2012), 245--313.

\bibitem[Sch13]{Scholze:2013-phtra}
\bysame, \emph{{$p$}-adic {Hodge} theory for rigid-analytic varieties}, Forum
  Math. Pi \textbf{1} (2013), e1, 77.

\bibitem[Sch16]{Scholze:2016-phtra-corr}
\bysame, \emph{{$p$}-adic {Hodge} theory for rigid-analytic
  varieties---corrigendum}, Forum Math. Pi \textbf{4} (2016), e6, 4.

\bibitem[Ser66]{Serre:1966-pfac}
J.-P. Serre, \emph{Prolongement de faisceaux analytiques coh{\'e}rents}, Ann.
  Inst. Fourier. Grenoble \textbf{16} (1966), no.~1, 363--374.

\bibitem[Ser68]{Serre:1968-ARE}
\bysame, \emph{Abelian {$l$}-adic representations and elliptic curves}, W. A.
  Benjamin, Inc., New York, 1968.

\bibitem[Spr98]{Springer:1998-LAG(2)}
T.~A. Springer, \emph{Linear algebraic groups}, 2nd ed., Progress in
  Mathematics, vol.~9, Birkh{\"a}user, Boston, 1998.

\bibitem[ST68]{Serre/Tate:1968-grav}
J.-P. Serre and J.~Tate, \emph{Good reduction of {Abelian} varieties}, Ann.
  Math. (2) \textbf{88} (1968), no.~3, 492--517.

\bibitem[Suh18]{Suh:2018-vmhma}
J.~Suh, \emph{Vanishing theorems for mixed {Hodge} modules and applications},
  J. Eur. Math. Soc. \textbf{20} (2018), no.~11, 2589--2606.

\bibitem[SW20]{Scholze/Weinstein:2020-BLG}
P.~Scholze and J.~Weinstein, \emph{Berkeley lectures on {$p$}-adic geometry},
  Annals of Mathematics Studies, vol. 207, Princeton University Press,
  Princeton, 2020.

\bibitem[Tsu99]{Tsuji:1999-pacst}
T.~Tsuji, \emph{{$p$}-adic {\'e}tale cohomology and crystalline cohomology in
  the semi-stable reduction case}, Invent. Math. \textbf{137} (1999), 233--411.

\bibitem[Yam11]{Yamashita:2011-phtov}
G.~Yamashita, \emph{{$p$}-adic {Hodge} theory for open varieties}, C. R. Math.
  Acad. Sci. Paris \textbf{349} (2011), no.~21-22, 1127--1130.

\end{thebibliography}

\providecommand{\bysame}{\leavevmode\hbox to3em{\hrulefill}\thinspace}
\providecommand{\MR}{\relax\ifhmode\unskip\space\fi MR }
\providecommand{\MRhref}[2]{%
  \href{http://www.ams.org/mathscinet-getitem?mr=#1}{#2}
}
\providecommand{\href}[2]{#2}

\end{document}